\title{Dehn Twists in Heegaard Floer Homology} 
\author{Bijan Sahamie} 
\address{Mathematisches Institut der Universit\"{a}t zu K\"{o}ln, Weyertal 86-90, 50931 K\"{o}ln Germany
\footnote{Current address: Mathematisches Institut der LMU M\"{u}nchen, Theresienstrasse 39, 80333 M\"{u}nchen Germany}}
\email{sahamie@math.lmu.de}
\urladdr{http://www.math.lmu.de/~sahamie}
\theoremstyle{plain} 
\newtheorem{theorem}{Theorem}[section]   
\newtheorem{lem}[theorem]{Lemma}         
\newtheorem{prop}[theorem]{Proposition}
\newtheorem{cor}[theorem]{Corollary}
\theoremstyle{definition}
\newtheorem{definition}[theorem]{Definition}   
\newtheorem{rem}{Remark}
\numberwithin{equation}{section}
\begin{document}
\newcommand{\sone}{\mathbb{S}^1}
\newcommand{\lra}{\longrightarrow}
\newcommand{\lmt}{\longmapsto}
%

%
%
\newcommand{\ztwo}{\mathbb{Z}_2}
\newcommand{\RP}{\mbox{\rm RP}}
\newcommand{\spinc}{\mbox{\rm Spin}^c}
\def\co{\colon\thinspace}
\newcommand{\stwo}{\mathbb{S}^2}
\newcommand{\pd}{\text{PD}}
\newcommand{\sprung}{\\[0.3cm]}
\newcommand{\fund}{\pi_1}
\newcommand{\kerg}{\mbox{\rm Ker}_G\,}
\def\Ker#1{\mbox{\rm Ker}_{#1}\,}
\newcommand{\im}{\mbox{\rm Im}\,}
\newcommand{\id}{\mbox{\rm id}}
\newcommand{\sothree}{\mathbb{SO}_3}
\newcommand{\sthree}{\mathbb{S}^{3}}
\newcommand{\disc}{\mbox{\rm D}}
\newcommand{\systwo}{C^{\infty}(Y,\stwo)}
\newcommand{\inner}{\text{int}}
\newcommand{\bk}{\backslash}

%
%
\newcommand{\contstand}{(\mathbb{R}^3,\xi_0)}
\newcommand{\cont}{(M,\xi)}
\newcommand{\sym}{\xi_{sym}}
\newcommand{\xistd}{\xi_{std}}
\newcommand{\cyl}{\mbox{\rm Cyl}_{r_0}^\mu}
\newcommand{\stap}{\mbox{\rm S}_+}
\newcommand{\stam}{\mbox{\rm S}_-}
\newcommand{\stapm}{\mbox{\rm S}_\pm}
\newcommand{\modulo}{\;\;\mbox{\rm mod}\,}

%
%
\newcommand{\crit}{\mbox{\rm Crit}}
\newcommand{\MC}{\mbox{\rm MC}}
\newcommand{\bmorse}{\partial^{\mbox{\rm \begin{tiny}M\!C\end{tiny}}}}
\newcommand{\M}{\mathcal{M}}
\newcommand{\stable}{\mbox{\rm W}^s}
\newcommand{\unstable}{\mbox{\rm W}^u}
\newcommand{\ind}{\mbox{\rm ind}}
\newcommand{\Mhat}{\widehat{\M}}
\newcommand{\modphi}{\mathcal{M}_\phi}
%
%
%

%
%
\newcommand{\moduli}{\mathcal{M}_{J_s}(x,y)}
\newcommand{\modulit}{\mathcal{M}_{J_{s,t}}}
\newcommand{\modulittau}{\mathcal{M}_{J_{s,t(\tau)}}}
\newcommand{\modulittaubig}{\mathcal{M}_{J_{s,t,\tau}}}
\newcommand{\modhat}{\widehat{\mathcal{M}}_{J_s}(x,y)}
\newcommand{\modhatone}{\widehat{\mathcal{M}}_{J_{s,1}}}
\newcommand{\modhatzero}{\widehat{\mathcal{M}}_{J_{s,0}}}
\newcommand{\modhatphi}{\widehat{\mathcal{M}}_{\phi}}
\newcommand{\moduliiso}{\mathcal{M}^{\Psi_t}}
\newcommand{\modspace}{\mathcal{M}}
\newcommand{\phihat}{\widehat{\phi}}
\newcommand{\Phiinfty}{\Phi^\infty}
\newcommand{\Dhat}{\widehat{\D}}
\newcommand{\cops}{\partial_{J_s}}
\newcommand{\talpha}{\mathbb{T}_\alpha}
\newcommand{\tbeta}{\mathbb{T}_\beta}
\newcommand{\tgamma}{\mathbb{T}_\gamma}
\def\marge#1{\marginpar{\scriptsize{#1}}}
\def\br#1{\begin{rotate}{90}#1\end{rotate}}
\newcommand{\hfhat}{\widehat{\mbox{\rm HF}}}
\newcommand{\sfh}{\mbox{\rm SFH}}
\newcommand{\sbottom}{\underline{s}}
\newcommand{\tbottom}{\underline{t}}
\newcommand{\cfhat}{\widehat{\mbox{\rm CF}}}
\newcommand{\cfinfty}{\mbox{\rm CF}^\infty}
\def\cfbb#1{\mbox{\rm CF}^+_{\leq #1}}
\def\cfbo#1{\mbox{\rm CF}^-_{\geq -#1}}
\newcommand{\cfleq}{\mbox{\rm CF}^{\leq 0}}
\newcommand{\cfcirc}{\mbox{\rm CF}^\circ}
\newcommand{\cfkinfty}{\mbox{\rm CFK}^{\infty}}
\newcommand{\cfkhat}{\widehat{\mbox{\rm CFK}}}
\newcommand{\cfkminus}{\mbox{\rm CFK}^{-}}
\newcommand{\cfkpstar}{\mbox{\rm CFK}^{+,*}}
\newcommand{\cfkostar}{\mbox{\rm CFK}^{0,*}}
\newcommand{\hfkcirc}{\mbox{\rm HFK}^\circ}
\newcommand{\hfkhat}{\widehat{\mbox{\rm HFK}}}
\newcommand{\hfkplus}{\mbox{\rm HFK}^+}
\newcommand{\hfkminus}{\mbox{\rm HFK}^-}
\def\cfinftyfilt#1#2{\mbox{\rm CFK}^{#1,#2}}
\newcommand{\hfinfty}{\mbox{\rm HF}^\infty}
\newcommand{\hfinftwist}{\underline{\mbox{\rm {HF}}^\infty}}
\newcommand{\fhat}{\widehat{f}}
\newcommand{\fcirc}{f^\circ}
\newcommand{\Fhat}{\widehat{F}}
\newcommand{\Fcirc}{F^\circ}
\newcommand{\hattheta}{\widehat{\Theta}}
\newcommand{\shattheta}{\widehat{\theta}}
\newcommand{\cfminus}{\mbox{\rm CF}^-}
\newcommand{\hfminus}{\mbox{\rm HF}^-}
\newcommand{\cfplus}{\mbox{\rm CF}^+}
\newcommand{\hfplus}{\mbox{\rm HF}^+}
\newcommand{\hfcirc}{\mbox{\rm HF}^\circ}
\def\hfbb#1{\hfplus_{\leq #1}}
\def\hfbo#1{\hfminus_{\geq -#1}}
\newcommand{\Hs}{\mathcal{H}_s}
\newcommand{\gr}{\mbox{gr}}
\newcommand{\parinfty}{\partial^\infty}
\newcommand{\parhat}{\widehat{\partial}}
\newcommand{\parplus}{\partial^+}
\newcommand{\parminus}{\partial^-}
\newcommand{\symg}{\mbox{\rm Sym}^g(\Sigma)}
\newcommand{\symgg}{\mbox{\rm Sym}^{2g}(\Sigma)}
\newcommand{\symc}{\mbox{\rm Sym}^g(\mathbb{C})}
\newcommand{\pitwo}{\pi_2}
\newcommand{\pitwoham}{\pi_2^{\Psi_t}}
\newcommand{\symcon}{\mbox{\rm Sym}^g(\Sigma_1\#\Sigma_2)}
\newcommand{\symgone}{\mbox{\rm Sym}^{g_1}(\Sigma_1)}
\newcommand{\symgtwo}{\mbox{\rm Sym}^{g_2}(\Sigma_2)}
\newcommand{\symgmo}{\mbox{\rm Sym}^{g-1}(\Sigma)}
\newcommand{\symggmo}{\mbox{\rm Sym}^{2g-1}(\Sigma)}
\newcommand{\dom}{\mathcal{D}}
\newcommand{\bigtrans}{\left.\bigcap\hspace{-0.27cm}\right|\hspace{0.1cm}}
\newcommand{\tlt}{\times\ldots\times}
\newcommand{\Isotopy}{\Gamma^\infty_{\Psi_t}}
\newcommand{\orient}{\mathnormal{o}}
\newcommand{\ob}{\mathnormal{ob}}
\newcommand{\SL}{\mbox{\rm SL}}
\newcommand{\rhotilde}{\widetilde{\rho}}
\newcommand{\domstar}{\dom_*}
\newcommand{\domststar}{\dom_{**}}
\newcommand{\betaprime}{\beta'}
\newcommand{\betapp}{\beta''}
\newcommand{\betatilde}{\widetilde{\beta}}
\newcommand{\deltaprime}{\delta'}
\newcommand{\tbetaprime}{\mathbb{T}_{\beta'}}
\newcommand{\talphaprime}{\mathbb{T}_{\alpha'}}
\newcommand{\tdelta}{\mathbb{T}_{\delta}}
\newcommand{\phidelta}{\phi^{\Delta}}
\newcommand{\domtilde}{\widetilde{\dom}}
\newcommand{\loss}{\widehat{\mathcal{L}}}
\newcommand{\bargamma}{\overline{\Gamma}}
\newcommand{\alphaprime}{\alpha'}
\newcommand{\ga}{\Gamma_{\alpha;\beta',\beta''}}
\newcommand{\gbone}{\Gamma_{\alpha;\beta,\widetilde{\beta}}^{w,1}}
\newcommand{\gbtwo}{\Gamma_{\alpha;\beta,\widetilde{\beta}}^{w,2}}
\newcommand{\gbthree}{\Gamma_{\alpha;\beta,\widetilde{\beta}}^{w,3}}
\newcommand{\gbfour}{\Gamma_{\alpha;\beta,\widetilde{\beta}}^{w,4}}
\newcommand{\gcone}{\Gamma_{\alpha;\delta,\delta'}^{w,1}}
\newcommand{\gctwo}{\Gamma_{\alpha;\delta,\delta'}^{w,2}}
\newcommand{\gcthree}{\Gamma_{\alpha;\delta,\delta'}^{w,3}}
\newcommand{\gcfour}{\Gamma_{\alpha;\delta,\delta'}^{w,4}}

\newcommand{\eab}{\epsilon_{\alpha\beta}}
\newcommand{\ead}{\epsilon_{\alpha\delta}}
\newcommand{\hqhat}{\widehat{\mbox{\rm HQ}}}
\newcommand{\cupb}{\cup_\partial}
\newcommand{\oa}{\overline{a}}
\newcommand{\ab}{\alpha\beta}
\newcommand{\ad}{\alpha\delta}
\newcommand{\adb}{\alpha\delta\beta}
\newcommand{\tila}{\widetilde{a}}
\newcommand{\tilb}{\widetilde{b}}
\def\pdehn#1#2{D_{#1}^{+,#2}} 
\def\ndehn#1#2{D_{#1}^{-,#2}}

%
%
\newcommand{\bund}{\mathcal{P}}
\newcommand{\diag}{\Delta^{\!\!E}}
\newcommand{\inter}{m_{\diag}}
\newcommand{\ozs}{Ozsv\'{a}th}
\newcommand{\sza}{Szab\'{o}}

\begin{abstract}    
We derive a new exact sequence in the hat-version of 
Heegaard Floer homology. As a consequence we see a
functorial connection between the invariant of Legendrian
knots $\loss$ and the contact element. As an application 
we derive two vanishing results of the contact element making
it possible to easily read off its vanishing out of
a surgery presentation in suitable situations. 
\end{abstract}
\maketitle

\section{Introduction}\label{parone}
Heegaard Floer homology is a Floer-type homology theory developed by 
P. \ozs$\,$ and Z. \sza. There are two invariants in Heegaard Floer 
homology interesting for contact geometry. First to mention is the contact 
element, introduced in \cite{OsZa05} by \ozs$\,$ and \sza. This contact
element is an isotopy invariant of contact structures and an obstruction to 
overtwistedness. It is useful in the sense that there are examples 
of contact structures (see \cite{Stip01},\cite{Stip02},\cite{Stip03}) where 
conventional techniques fail to detect, but the contact element is 
able to detect, tightness versus overtwistedness. The second invariant to mention
is the isotopy invariant $\loss$ of Legendrian knots found 
by Lisca, \ozs, Stipsicz and \sza. There is also an isotopy invariant of contact
manifolds with boundary (see \cite{HKM2}) in Sutured Floer homology (see \cite{AJU}).
As a variant of this contact geometric invariant, Honda, Kazez and Mati\'{c} found and 
isotopy invariant of Legendrian knots, called $EH$, in the Sutured Floer theory.
This invariant is related to $\loss$ as shown by Stipsicz and Vertesi in \cite{StipVert}.\vspace{0.3cm}\\
In this paper we start with the observation that the hat-version 
of knot Floer homology can be defined, and is well-defined, even 
for knots that are not null-homologous 
(see \S\ref{knotfloerhomology}). Furthermore, we see that in case of the
hat-version of the knot Floer homology we can relax the admissibility
condition (see \S\ref{knotfloerhomology}). After that, in \S\ref{introd}, 
we will derive 
in what way the hat-version of Heegaard Floer homology behaves on 
Dehn twist changes of the gluing map 
(see Propositions~\ref{THMTHM} and \ref{THMTHM2}). The representation
found naturally imposes the existence of an exact sequence 
(see Corollaries~\ref{motiv} and \ref{motiv2}). In \S\ref{naturality}
we set up invariance properties (see Propositions~\ref{ch3invar01} 
to \ref{ch3invar04}) 
suitable for showing that the maps involved in the sequence are 
topological, i.e.~only depend on the cobordism that can 
be associated to the Dehn twist (see Theorem~\ref{completeinvar}). 
One of the maps involved in the sequence preserves 
contact geometric information when representing a $(+1)$-contact 
surgery (see Theorem~\ref{maps}). This leads to a 
functorial connection between the 
invariant $\loss$ and the contact element when performing 
$(+1)$-contact surgeries (see \S\ref{contsetup}). 
Finally, in \S\ref{applics} we give some applications.
The first to mention is Proposition~\ref{result01} in which we
give a new proof for a result that can already be derived using
known results (see \S\ref{applics}). A second application is Proposition~\ref{calculation}
where we give a calculation of a contact element after performing
a $(+1)$-contact surgery by using the invariant of Legendrian knots with
Theorem~\ref{maps}. Finally, with Theorem~\ref{result02}, we derive a new vanishing result 
of the contact element which can be easily read off from a surgery representation
of the underlying contact manifold. Everything here is done with $\ztwo$-coefficients.
A suitable introduction of coherent orientations will be given
in a future article.
\paragraph{Acknowledgements.} 
The present article contains the results, given in chapter $3$ of the 
author's Ph.D.~thesis. The author wants to thank his 
advisor Hansj\"org Geiges for his constructive comments 
which helped make the exposition clearer at numerous spots. The 
author wishes to warmly thank Andr\'{a}s Stipsicz for his help and his constructive
criticism on the first version of this article.
\section{Introduction to Heegaard Floer theory}
\subsection{Handle decompositions and Heegaard diagrams}\label{handles}
We briefly review the connection between Heegaard diagrams and handle
decompositions to fix our point of view on the subject.\vspace{0.3cm}\\
Let $Y$ be a closed, oriented $3$-manifold. $Y$ admits a handle 
decomposition with one $0$-handle $h^0$ and one $3$-handle $h^3$,
a number $l$ of $1$-handles and a number $g$ of $2$-handles. The union
\[
  H_0:=h^0\cupb h^1_{0,1}\cupb\dots\cupb h^1_{0,l}
\]
is a handlebody of genus $l$. By dualizing the $2$-handles and $3$-handles
in the handle decomposition of $Y$ we see that the union of 
these is a handlebody $H_1$ of genus $g$. Since 
$Y=H_0\cupb H_1$ is closed obviously the genera of $H_0$ and $H_1$ coincide.
The manifold $Y$ is determined by the following data: The images of the 
attaching circles of the $2$-handles on $\Sigma:=\partial H_0$.
We can equivalently interpret this handle decomposition as a decomposition
relative to the splitting surface $\Sigma$. By dualizing the handlebody
$H_0$ we can write the manifold $Y$ as
\begin{equation}
  (h^3_0\cupb h^2_{0,1}\cupb\dots\cupb h^2_{0,g})
  \cupb 
  (\Sigma\times[0,1])
  \cupb
  (h^2_{1,1}\cupb \dots\cupb h^2_{1,g}\cupb h^3_1).
  \label{handledecomp}
\end{equation}
The information necessary to describe the $3$-manifold $Y$ 
in terms of a handle decomposition like $(\ref{handledecomp})$
is a triple $(\Sigma,\alpha,\beta)$, where $\Sigma$ is the 
splitting surface used in the decomposition $(\ref{handledecomp})$,
$\alpha=\{\alpha_1,\dots,\alpha_g\}$ are the images of the 
attaching circles of the $h^2_{0,i}$ in $\Sigma\times\{0\}$ 
and $\beta=\{\beta_1,\dots,\beta_g\}$ the images of the attaching circles of 
the $2$-handles $h^2_{1,i}$ in $\Sigma\times\{1\}$. Observe that the
$\alpha$-curves are the co-cores of the $1$-handles in the dual picture, 
and that sliding the $1$-handle $h^1_{0,i}$ over $h^1_{0,j}$ 
means, in the dual picture, that $h^2_{0,j}$ is slid over $h^2_{0,i}$.

\subsection{Heegaard Floer homologies}\label{prelim:01:1}
The Heegaard Floer homology groups $\hfminus(Y)$ and $\hfhat(Y)$ of a
$3$-manifold $Y$ were introduced in \cite{OsZa01}. The definition was 
extended for the case where $Y$ is equipped with a null-homologous 
knot $K\subset Y$ to
variants $\hfkminus(Y,K)$, $\hfkhat(Y,K)$ in \cite{OsZa04}.\vspace{0.3cm}\\
A $3$-manifold $Y$ can be described by a Heegaard diagram, which is a
triple $(\Sigma,\alpha,\beta)$, where $\Sigma$ is an oriented genus-g surface
and $\alpha=\{\alpha_1,\dots,\alpha_g\}$, 
$\beta=\{\beta_1,\dots,\beta_g\}$ are two sets of pairwise disjoint simple closed 
curves in $\Sigma$ called {\bf attaching circles} (cf.~\S\ref{prelim:01:1}). 
Each set of curves $\alpha$ and $\beta$ is required to consist of 
linearly independent curves in $H_1(\Sigma,\Z)$. In the 
following we will talk about the curves in the set $\alpha$ (resp.~$\beta$) as  
{\bf $\alpha$-curves} (resp.~{\bf $\beta$-curves}). Without loss 
of generality we may assume that the 
$\alpha$-curves and $\beta$-curves intersect 
transversely. To a Heegaard diagram we may associate the triple
$(\symg,\talpha,\tbeta)$ consisting of the $g$-fold symmetric power of
$\Sigma$, 
\[
  \symg=\Sigma^{\times g}/S_g,
\] 
and the submanifolds $\talpha=\alpha_1\times\dots\times\alpha_g$
and $\tbeta=\beta_1\times\dots\times\beta_g$. We define 
$\cfminus(\Sigma,\alpha,\beta)$ as the free $\ztwo[U]$-module 
generated by the set
$\talpha\cap\tbeta$. In the following
we will just write $\cfminus$. For two intersection
points $x,y\in\talpha\cap\tbeta$ define $\pitwo(x,y)$ to be the set of
homology classes of {\bf Whitney discs} 
$\phi\co\disc\lra\symg$ ($\disc\subset\C$) that 
{\bf connect $x$ with $y$}. The map $\phi$ is called {\bf Whitney} if 
$\phi(\disc\cap\{Re<0\})\subset\talpha$ and $\phi(\disc\cap\{Re>0\})\subset\tbeta$. 
We call $\disc\cap\{Re<0\}$ the {\bf $\alpha$-boundary of $\phi$} and
$\disc\cap\{Re>0\}$ the {\bf $\beta$-boundary of $\phi$}. Such 
a Whitney disc {\bf connects $x$ with $y$} if $\phi(i)=x$ and $\phi(-i)=y$. 
Note that $\pitwo(x,y)$ can be interpreted as the subgroup of elements in
$H_2(\symg,\talpha\cup\tbeta)$ represented by discs with appropriate 
boundary conditions. We endow 
$\symg$ with a symplectic structure~$\omega$. By choosing an almost complex 
structure $J$ on $\symg$ suitably (cf.~\cite{OsZa01})
all moduli spaces of holomorphic Whitney discs are Gromov-compact manifolds.
Denote by $\modphi$ the set of holomorphic Whitney discs in the equivalence
class $\phi$, and $\mu(\phi)$ the formal dimension of $\modphi$. Denote by 
$\modhatphi=\modphi/\R$ the quotient under the translation action of 
$\R$ (cf.~\cite{OsZa01}). Define $H(x,y,k)$ to be the subset of classes in
$\pitwo(x,y)$ that admit moduli spaces of dimension $k$. Fix a point 
$z\in\Sigma\backslash(\alpha\cup\beta)$ and define the map 
\[
  n_z\co\pitwo(x,y)\lra\Z,\,\phi\lmt\#(\phi,\{z\}\times\symgmo).
\] 
A boundary operator $\partial^-\co\cfminus\lra\cfminus$ is given by defining it
on the generators $x$ of $\cfminus$ by
\[
  \partial^- x=\sum_{y\in\talpha\cap\tbeta}\sum_{\phi\in H(x,y,1)}\#\modhatphi\cdot U^{n_z(\phi)}y.
\]
Define $\cfhat$ to be the free $\ztwo$-module generated by 
$\talpha\cap\tbeta$. By sending $U$ to zero we can define a projection
$\pi\co\cfminus\lra\cfhat$. With this projection the differential $\parminus$
induces a morphism $\parhat$ on $\cfhat$. The almost-complex structure
$J$ on $\symg$ is chosen in such a way that $\{z\}\times\symgmo$ is a complex
submanifold of $\symg$. This means a holomorphic Whitney disc intersects 
$\{z\}\times\symgmo$ always positively. Thus $\parhat$ is a differential on
$\cfhat$. We define
\[
  \hfminus(Y):=H_*(\cfminus,\partial^-)\quad\mbox{\rm and }\quad
  \hfhat(Y):=H_*(\cfhat,\parhat).
\]
These homology groups are topological invariants of the manifold $Y$. 
We would like to note that not all Heegaard diagrams are suitable
for defining Heegaard Floer homology; there is an additional
condition that has to be imposed called {\bf admissibility}. 
This is a technical condition in the compactification of the moduli spaces
of holomorphic Whitney discs. A detailed knowledge of this condition 
is not important in the remainder of the present article since all
constructions are done nicely so that there will never be a 
problem. We advise the interested reader to \cite{OsZa01} .

\subsubsection{Knot Floer Homology}\label{knotfloerhomology}
Knot Floer homology is a variant of the Heegaard Floer homology of a
manifold. We briefly introduce the theory here and finally argue why
the construction carries over verbatim to give an invariant even for
knots that are not necessarily null-homologous. For a more detailed treatment
we point the reader to \cite{Saha}.\vspace{0.3cm}\\
Given a knot $K\subset Y$, we can specify a certain subclass of 
Heegaard diagrams.
\begin{definition} \label{knotdiagram} A Heegaard 
diagram $(\Sigma,\alpha,\beta)$ is said to
be {\bf subordinate} to the knot $K$ if $K$ is isotopic to a knot lying
in $\Sigma$ and $K$ intersects $\beta_1$ once transversely and is
disjoint from the other $\beta$-circles.
\end{definition}
Since $K$ intersects $\beta_1$ once and is disjoint from the other 
$\beta$-curves we know that $K$
intersects the core disc of the $2$-handle represented by $\beta_1$ once
and is disjoint from the others (after possibly isotoping the knot $K$).
\begin{lem}\label{help} Every pair $(Y,K)$ admits a Heegaard diagram
subordinate to $K$.
\end{lem}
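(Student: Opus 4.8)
The plan is to construct such a Heegaard diagram directly from a handle decomposition of the pair $(Y,K)$, building on the correspondence between handle decompositions and Heegaard diagrams reviewed in \S\ref{handles}. First I would isotope $K$ into a convenient position with respect to a self-indexing Morse function on $Y$. Concretely, choose a Morse function $f\co Y\lra[0,3]$ with a single index-$0$ and a single index-$3$ critical point, inducing the handle decomposition $(\ref{handledecomp})$ with splitting surface $\Sigma$. After a small isotopy of $K$, arrange that $K$ is disjoint from all the critical points of $f$ and transverse to the level sets; then push $K$ so that it lies on a single level set, which we take to be $\Sigma$. This realizes the first requirement of Definition~\ref{knotdiagram}, namely that $K$ be isotopic to a knot lying in $\Sigma$.

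The heart of the argument is to arrange the $\beta$-curves so that $K$ meets exactly one of them, transversely in a single point, and is disjoint from the rest. Recall from \S\ref{handles} that the $\beta$-curves are the attaching circles of the $2$-handles dual to $H_1$, equivalently the belt circles of the co-cores; they form a basis for $\Ker{}(H_1(\Sigma)\to H_1(H_1))$. Given $K\subset\Sigma$, I would first choose a simple closed curve $\beta_1\subset\Sigma$ that is isotopic to $K$ in $Y$ but meets $K$ transversely in exactly one point on $\Sigma$ (for instance take $\beta_1$ a small pushoff of $K$ within $\Sigma$, adjusted by a finger move so as to create a single transverse intersection with $K$). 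I then complete $\beta_1$ to a full system $\{\beta_1,\dots,\beta_g\}$ bounding compressing discs in $H_1$, choosing the remaining curves $\beta_2,\dots,\beta_g$ so that they lie in the complement $\Sigma\setminus\nu(K)$ and hence are disjoint from $K$. Together with any valid choice of $\alpha$-curves (the attaching circles for $H_0$, which are irrelevant to the knot condition), this produces a Heegaard diagram $(\Sigma,\alpha,\beta)$ for $Y$ with $K$ meeting $\beta_1$ once and disjoint from the other $\beta$-curves, exactly as required.

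The main obstacle to watch is the simultaneous satisfaction of the two constraints on the $\beta$-system: the curves $\beta_2,\dots,\beta_g$ must still form, together with $\beta_1$, a basis of compressing discs for the handlebody $H_1$ (so that $(\Sigma,\alpha,\beta)$ genuinely presents $Y$), while being confined to the complement of $K$. I would address this by working with a Morse function adapted to $K$: one where $K$ sits on $\Sigma$ and traverses a single $2$-handle of $H_1$. In the dual picture $K$ intersects the core disc of precisely one $2$-handle once, and the belt circle of that handle is $\beta_1$; the other handles can be chosen with cores disjoint from $K$, so their belt circles avoid $K$ automatically. This reduces the problem to the standard existence of a handle decomposition in which the knot passes through exactly one $1$-handle (in the $H_0$-picture) or equivalently one dual $2$-handle, which is achieved by a routine handle-cancellation and -sliding argument once $K$ has been made transverse to the handles. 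The only genuine care needed is ensuring the single transverse intersection $K\cap\beta_1$, which the finger-move adjustment guarantees.
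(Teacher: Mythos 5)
There is a genuine gap, and it sits exactly at the load-bearing step. Your first paragraph claims that after a small isotopy $K$ can be pushed onto the middle level surface of an \emph{arbitrarily chosen} self-indexing Morse function; this is false. For a fixed splitting a knot generally cannot be isotoped onto $\Sigma$ at all (in $\sthree$ with the genus-one splitting only torus knots lie on the Heegaard torus), and in fact a closed curve can never be made ``transverse to the level sets'', since $f|_K$ must have critical points. You acknowledge the problem in your last paragraph by asking for a Morse function ``adapted to $K$'', but the existence of such an adapted decomposition --- one in which $K$ traverses a single $2$-handle, or dually is the core of a single $1$-handle --- \emph{is} the content of Lemma~\ref{help}; declaring it ``a routine handle-cancellation and -sliding argument'' replaces the proof by the statement. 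The paper produces it concretely: by surgery theory (see \cite{GoSt}) the complement $Y\backslash\nu K$ admits a handle decomposition relative to its boundary torus,
\[
  Y\backslash\nu K\cong(T^2\times[0,1])\cupb h^1_2\cupb\dots\cupb h^1_g\cupb h^2_1\cupb\dots\cupb h^2_g\cupb h^3,
\]
and regluing $\nu K$ as a meridional $2$-handle plus a $3$-handle, then dualizing that pair into a $0$-handle and a $1$-handle, exhibits $K$ as the core of that $1$-handle. The belt circle of this $1$-handle is a meridian of $K$, which becomes the distinguished curve of Definition~\ref{knotdiagram}, and the knot is automatically disjoint from the remaining curves of that family. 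This relative decomposition is not an optional device: it is also what makes Lemma~\ref{helplem} (the moves respecting the knot complement) work, so it cannot be bypassed.

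Your concrete recipe for $\beta_1$ also fails, for a parity reason. A small pushoff of $K$ inside $\Sigma$ is homologous to $K$ in $\Sigma$, hence has mod-$2$ intersection number $0$ with $K$, and a finger move changes the geometric intersection number by $2$; therefore no isotopy of such a pushoff within $\Sigma$ can ever meet $K$ in exactly one transverse point. The curve that meets $K$ once is not a parallel copy of $K$ but a \emph{meridian} of $K$ --- it bounds the meridional disc of $\nu K$ in the handlebody on the far side of $\Sigma$ --- and in general it is not isotopic to $K$ in $Y$, so your geometric picture of $\beta_1$ is the wrong one. Finally, even granting some curve meeting $K$ once, you would still need it to bound a compressing disc of $H_1$ and to extend to a full disc system $\beta_1,\dots,\beta_g$ with $\beta_2,\dots,\beta_g$ disjoint from $K$; arranging this for an arbitrary splitting is again precisely the adapted-decomposition statement above, so your second paragraph presupposes what was to be proved.
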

\begin{proof}
By surgery theory (see \cite{GoSt}, p. 104) 
we know that there is a handle decomposition of $Y\backslash\nu K$, i.e.
\[
  Y\backslash\nu K
  \cong
  (T^2\times [0,1])
  \cupb 
  h^1_{2}\cupb\dots h^1_{g}
  \cupb 
  h^2_1\cupb\dots\cupb h^2_g
  \cupb h^3
\]
We close up the boundary $T^2\times\{0\}$ with an 
additional $2$-handle $h^{2*}_1$ and a $3$-handle $h^3$ to obtain
\begin{equation}
  Y\cong
  h^3\cupb h^{2*}_1
  \cupb
  (T^2\times I)
  \cupb 
  h^1_2\cupb\dots h^1_g
  \cupb 
  h^2_1\cupb\dots\cupb h^2_g
  \cupb h^3.\label{handledecomp02}
\end{equation}
We may interpret $h^3\cupb h^{2*}_1\cupb(T^2\times[0,1])$ as 
a $0$-handle $h^0$
and a $1$-handle $h^{1*}_1$. Hence, we obtain the following decomposition of
$Y$:
\[
  h^0
  \cupb
  h^{1*}_1
  \cupb
  h^{1}_2
  \cupb
  \dots
  \cupb
  h^1_g
  \cupb
  h^2_1
  \cupb
  \dots
  \cupb
  h^2_g
  \cupb
  h^3.
\]
We get a Heegaard diagram $(\Sigma,\alpha,\beta)$ where
$\alpha=\{\alpha_1\}^*\cup\{\alpha_2,\dots,\alpha_g\}$ are the co-cores 
of the $1$-handles and $\beta=\{\beta_1,\dots,\beta_g\}$ are 
the attaching circles of the $2$-handles.
\end{proof}
Having fixed such a Heegaard diagram $(\Sigma,\alpha,\beta)$ we can encode 
the knot $K$ in a pair of points. After isotoping $K$ onto $\Sigma$, 
we fix a small interval $I$ in $K$ containing the intersection point 
$K\cap\beta_1$. This interval should be chosen small enough such 
that $I$ does not contain any other intersections of $K$ with other 
attaching curves. The boundary $\partial I$ of $I$ determines two 
points in $\Sigma$ that lie in the complement of the attaching circles, 
i.e.~$\partial I=z-w$, where the orientation of $I$ is given by the 
knot orientation. This leads to a doubly-pointed Heegaard diagram 
$(\Sigma,\alpha,\beta,w,z)$. Conversely, a doubly-pointed Heegaard 
diagram uniquely determines a topological knot class: Connect 
$z$ with $w$ in the complement of the attaching circles $\alpha$ 
and $\beta\backslash\beta_1$ with an arc $\delta$ that crosses 
$\beta_1$ once. Connect $w$ with $z$ in the complement of $\beta$
using an arc $\gamma$. The union $\delta\cup\gamma$ is represents the 
knot klass $K$ represents. The orientation on $K$ is given by orienting $\delta$ such 
that $\partial\delta=z-w$. If we use a different path 
$\widetilde{\gamma}$ in the complement of $\beta$, we observe that 
$\widetilde{\gamma}$ is isotopic to $\gamma$ (in $Y$): Since  
$\Sigma\backslash\beta$ is a sphere with holes an isotopy can 
move $\gamma$ across the holes by doing handle slides. Isotope 
the knot along the core discs of the $2$-handles to cross the 
holes of the sphere. Indeed, the knot class does not depend
on the specific choice of $\delta$-curve.\vspace{0.3cm}\\
The knot chain complex $\cfkhat(Y,K)$ is the free $\ztwo$-module 
generated by the intersections $\talpha\cap\tbeta$. 
The boundary operator $\parhat^w$, for $x\in\talpha\cap\tbeta$, is 
defined by
\[
  \parhat^w(x)
  =
  \sum_{y\in\talpha\cap\tbeta}
  \sum_{\phi\in H(x,y,1)}
  \#\modhatphi\cdot y,
\]
where $H(x,y,1)\subset\pitwo(x,y)$ are the homotopy classes
with $\mu=1$ and $n_z=n_w=0$. We denote by $\hfkhat(Y,K)$
the associated homology theory $H_*(\cfkhat(Y,K),\parhat^w)$.
The crucial observation for showing invariance is, that two 
Heegaard diagrams subordinate to a given knot can be connected 
by moves that {\it respect the knot complement}.
\begin{lem}(\cite{OsZa04})\label{helplem} Let 
$(\Sigma,\alpha,\beta,z,w)$ and $(\Sigma',\alpha',\beta',z',w')$ 
be two Heegaard diagrams subordinate to a given knot $K\subset Y$. 
Let $I$ denote the interval inside $K$ connecting $z$ with $w$,
interpreted as sitting in $\Sigma$. Then these two diagrams 
are isomorphic after a sequence of the following moves:
\begin{enumerate}
  \item[($m_1$)] Handle slides and isotopies among the 
  $\alpha$-curves. These isotopies may not cross~$I$.
  \item[($m_2$)] Handle slides and isotopies among 
  the $\beta_2,\dots,\beta_g$. These isotopies may 
  not cross $I$.
  \item[($m_3$)] Handle slides of $\beta_1$ over 
  the $\beta_2,\dots,\beta_g$ and isotopies.
  \item[($m_4$)] Stabilizations/destabilizations.
\end{enumerate}
\end{lem}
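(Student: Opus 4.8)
The plan is to reduce the statement to the relative Reidemeister--Singer theorem applied to the knot complement $Y\setminus\nu K$. Recall from the proof of Lemma~\ref{help} that a Heegaard diagram subordinate to $K$ is not an arbitrary diagram of $Y$: it arises from a handle decomposition of $Y\setminus\nu K$ together with the closing-up data (the capping $2$-handle $h^{2*}_1$ and a $3$-handle) that fill in one boundary torus to recover $Y$. Under this picture the curves $\alpha_1,\dots,\alpha_g$ are the co-cores of the $1$-handles of the complement, the curves $\beta_2,\dots,\beta_g$ are the attaching circles of its $2$-handles, and the distinguished curve $\beta_1$ is the attaching circle of the capping $2$-handle, i.e.~it plays the role of a meridian of $K$ on the boundary torus. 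The interval $I$, carrying the basepoints $z$ and $w$, sits on $\Sigma$ exactly across this meridian, so preserving the knot amounts to preserving $I$ together with the point $K\cap\beta_1$.

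First I would make this correspondence precise in both directions: each subordinate diagram determines a relative handle decomposition of $(Y\setminus\nu K,\partial)$, and conversely such a decomposition together with a choice of meridian produces a subordinate diagram. Under this dictionary the four moves translate as follows. The handle slides and isotopies of $m_1$, and those among $\beta_2,\dots,\beta_g$ in $m_2$, are exactly the handle slides and isotopies among the interior $1$- and $2$-handles of the complement; $m_4$ is the creation or cancellation of a complementary $1$-/$2$-handle pair; and $m_3$, sliding $\beta_1$ over the $\beta_2,\dots,\beta_g$, records the ambiguity in the choice of meridian, since the $\beta_{\geq 2}$-curves bound in the complementary handlebody and sliding $\beta_1$ over them yields an equally valid attaching circle for the capping handle without changing $Y$ or $K$.

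Second, I would invoke Cerf theory in its relative form for $3$-manifolds with torus boundary: any two handle decompositions of $Y\setminus\nu K$ with a single $0$-handle and a single $3$-handle, relative to the fixed boundary, are connected by a sequence of interior isotopies, handle slides, and (de)stabilizations. Because all of these moves are supported in the interior, away from the boundary torus, they never disturb the meridian and hence never cross the interval $I$; this is precisely what produces moves $m_1$, $m_2$, and $m_4$ with the stated non-crossing constraint. The one remaining degree of freedom, the capping $2$-handle, contributes the meridian ambiguity captured by the unconstrained move $m_3$.

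The main obstacle is the bookkeeping around the distinguished curve $\beta_1$ and the interval $I$. One must verify that the relative Cerf moves can be arranged so that, on the one hand, no $\alpha$-isotopy or $\beta_{\geq 2}$-isotopy is ever forced across $I$ — which amounts to keeping all such moves in the interior of the complement rather than on the boundary torus — and, on the other hand, $\beta_1$ is only ever slid \emph{over} the remaining $\beta$-curves and never the reverse, so that throughout the sequence it stays the unique $\beta$-curve meeting $K$ transversely in a single point. Tracking the basepoints $(z,w)$ through each elementary move, and confirming that the meridian remains embedded and disjoint from the evolving $\beta_{\geq 2}$-curves, is where the care is required; once this is in place the lemma follows. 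This is the argument of \ozs\ and \sza\ in \cite{OsZa04}.
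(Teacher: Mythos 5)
Your proposal takes essentially the same route as the paper's own proof: the paper likewise reduces the lemma to the uniqueness of relative handle decompositions of $Y\setminus\nu K$ (citing Theorem 4.2.12 of \cite{GoSt}, which is exactly the relative Cerf-theoretic statement you invoke) and then translates the resulting isotopies, handle slides and handle creations/annihilations of the complement's handles into the moves ($m_1$) to ($m_4$), with the capping $2$-handle $h^{2*}_1$ accounting for the distinguished, asymmetric role of $\beta_1$. The only difference is cosmetic: the paper obtains ($m_3$) from relative isotopies of the $1$-handles along the boundary-parallel torus $T^2\times\{1\}$ --- dually, slides of $h^{2*}_1$ over the $h^2_i$ --- whereas you phrase it as the ambiguity in realizing the meridian on $\Sigma$; these are the same mechanism seen from the two dual sides.
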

For the convenience of the reader we include a short proof of this lemma.
\begin{proof} 
By Theorem 4.2.12 of \cite{GoSt} we can transform two 
relative handle decompositions into each other by 
isotopies, handle slides and handle creation/annihilation of 
the handles written at the right of $T^2\times[0,1]$ in 
$(\ref{handledecomp02})$. Observe 
that the $1$-handles may be isotoped along the boundary 
$T^2\times\{1\}$. Thus, we can transform two Heegaard diagrams 
into each other by handle slides, isotopies, creation/annihilation 
of the $2$-handles $h^2_i$ and we may slide the $h^1_i$ over 
$h^1_j$ and over $h^{1*}_1$ (the latter corresponds to $h^1_i$ 
sliding over the boundary $T^2\times\{1\}\subset T^2\times I$ 
by an isotopy). But we are not allowed to move $h^{1*}_1$ off 
the $0$-handle. In this case we would lose the relative 
handle decomposition. In terms of Heegaard diagrams 
we see that these moves exactly translate into the moves given 
in ($m_1$) to ($m_4$). Just note that sliding the $h^1_i$ over $h^{1*}_1$,
in the dual picture, looks like sliding $h^{2*}_1$ over the $h^2_i$. 
This corresponds to move ($m_3$).
\end{proof}
\begin{prop}[\cite{Saha}, Proposition 2.4.4]\label{knotfloer} Let $K\subset Y$ be an arbitrary knot. 
The knot Floer homology group $\hfkhat(Y,K)$ is a topological invariant
of the knot type of $K$ in $Y$. These homology groups split with 
respect to $\spinc(Y)$.
\end{prop}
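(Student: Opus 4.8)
The plan is to follow the \ozs--\sza$\,$ invariance scheme for $\hfhat$ (\cite{OsZa01}), adapted so as to keep track of the two basepoints $z$ and $w$ throughout. By Lemma~\ref{help} a diagram subordinate to $K$ exists, and by Lemma~\ref{helplem} any two such diagrams differ by a finite sequence of the moves $(m_1)$--$(m_4)$ together with a change of the analytic data (the symplectic form and the path of almost complex structures). It therefore suffices to produce, for each of these moves, a chain homotopy equivalence on $\cfkhat$; since such equivalences compose, $\hfkhat(Y,K)$ will be independent of all choices and hence a topological invariant of the pair $(Y,K)$.

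First I would dispose of the analytic data together with the isotopy parts of $(m_1)$, $(m_2)$ and $(m_3)$. Independence of the chosen path of almost complex structures is the standard continuation-map argument of \cite{OsZa01}; the only extra observation needed is that the continuation maps count discs and that the constraint $n_z=n_w=0$ is a closed condition preserved along the relevant one-parameter families, so these maps descend to $\cfkhat$ and remain chain homotopy equivalences there. For the isotopies occurring in $(m_1)$--$(m_3)$, the hypothesis that they do not cross $I$ means precisely that they avoid $z$ and $w$; realizing them through exact Hamiltonian isotopies (equivalently, through a time-dependent almost complex structure) produces continuation maps that again preserve $n_z=n_w=0$.

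Next I would treat the handle slides in $(m_1)$, $(m_2)$, $(m_3)$ and the stabilization $(m_4)$. For a handle slide I would introduce a third attaching set $\gamma$, consisting of small Hamiltonian translates of the unslid curves together with a band sum realizing the slide, and use the holomorphic-triangle map determined by the top generator $\hattheta$ of the Floer homology of the standard diagram formed from the old and new attaching sets, counting only triangles with $n_z=n_w=0$. The usual associativity argument, restricted to $n_z=n_w=0$, then shows this map is a chain homotopy equivalence. Stabilization $(m_4)$ is handled exactly as in \cite{OsZa01}: it creates a single new transverse intersection point while the basepoints stay in the unstabilized region, so the two complexes are canonically identified. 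The one step deserving real care is move $(m_3)$, the slide of $\beta_1$ --- the curve that $K$ crosses --- over the remaining $\beta$-curves: here I must check that $z$ and $w$ can be kept inside the thin region swept out by the slide so that the triangle count genuinely preserves both $n_z=0$ and $n_w=0$. This is exactly where the placement of $I$ across $\beta_1$ is used, and it is the main obstacle in the argument.

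Finally, for the $\spinc$-splitting I would use the basepoint $z$ to define, as in \cite{OsZa01}, a map $x\mapsto\mathfrak{s}(x)\in\spinc(Y)$ on generators. Whenever $\pitwo(x,y)$ contains a class with $n_z=0$ one has $\mathfrak{s}(x)=\mathfrak{s}(y)$; since $\parhat^w$ counts precisely such classes (indeed with $n_z=n_w=0$), it respects the decomposition of $\cfkhat$ into the summands indexed by $\spinc(Y)$, and the splitting passes to homology. I expect the genuinely new content to be minimal: none of the steps above uses that $K$ is null-homologous, a hypothesis that entered \cite{OsZa04} only in defining the Alexander filtration --- which the hat-theory discards --- so the \ozs--\sza$\,$ construction carries over verbatim to the general case.
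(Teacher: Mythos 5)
Your proposal is correct and takes essentially the same route as the paper: the paper's own argument likewise rests on the observations that Lemmas~\ref{help} and \ref{helplem} need no homological hypothesis on $K$, and that the Ozsv\'{a}th--Szab\'{o} invariance maps for $\hfhat$ (continuation maps for isotopies and almost complex structures, triangle maps for handle slides, the stabilization identification) descend to $\cfkhat$ once restricted to discs and triangles with $n_z=n_w=0$, which positivity of intersections and additivity of intersection numbers guarantee. The $\spinc$-splitting via the basepoint $z$ that you describe is also the standard argument implicitly invoked there.
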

There are no homological requirements on the knot $K$ needed for proving 
Lemma~\ref{help} and Lemma~\ref{helplem}. Thus we may define the knot Floer homology for
an arbitrarily chosen pair $(Y,K)$. To conclude that the defined groups
are indeed invariants of the pair $(Y,K)$ we have to observe that every
move, ($m_1$) to ($m_4$), induces an isomorphism between the respective
knot Floer homologies. The invariance proof of knot Floer homology Ozsv\'{a}th
and Szab\'{o} give in \cite{OsZa04} uses the maps from the invariance
proof of Heegaard Floer homology with just one slight modification. 
In knot Floer homology they require the holomorphic discs counted to 
have trivial intersection number $n_w$. The positivity of intersections
in the holomorphic case and the additivity of the intersection number
imply that the associated maps between the knot Floer homologies 
are isomorphisms. We do not need any homological information of the 
knot $K$. For details we point the reader to \cite{Saha}.
\subsubsection{Admissibility of $\hfkhat$}\label{admsec}
Finally, we would like to address the admissibility conditions imposed
on the Heegaard diagrams, used in the definition of the knot Floer homologies.
We may relax the admissibility condition, given by Ozsva\'{a}th and Szab\'{o} (see \cite{OsZa04})
and still get well-defined knot invariants. A {\bf periodic domain} $\dom$ is a linear
combination of the components $\Sigma\backslash\{\alpha\cup\beta\}$ such that
the boundary of $\dom$ consists of a linear combination of $\alpha$-curves and
$\beta$-curves and such that $n_z(\dom)=0$, where $n_z(\dom)$ is the multiplicity
of $\dom$ at the region containing the base point $z$. Furthermore, we denote by
$\mathcal{H}(\dom)$ its associated homology class which is given by closing the
boundary components of $\dom$ with the cores of the $2$-handles associated to the
$\alpha$-curves and $\beta$-curves.
\begin{definition}\label{extweakadm} We call a 
doubly-pointed Heegaard diagram
$(\Sigma,\alpha,\beta,w,z)$ {\bf extremely weakly admissible}
for the $\spinc$-structure $s$ if for every non-trivial periodic 
domain, with $n_w=0$ and $\left<c_1(s),\mathcal{H}(\dom)\right>=0$, 
the domain has both positive and negative coefficients.
\end{definition}
It is not hard to see that the following result holds.
\begin{theorem}[\cite{Saha}, Theorem 2.4.6] Let $(\Sigma,\alpha,\beta,w,z)$ be an extremely
weakly admissible Heegaard diagram, then $\parhat^w$ is
well-defined and a differential.\hfill$\square$
\end{theorem}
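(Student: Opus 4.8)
The plan is to establish the two assertions in turn: first that for each generator $x$ the sum defining $\parhat^w(x)$ is finite (so that $\parhat^w$ is well-defined), and then that $\parhat^w\circ\parhat^w=0$ (so that it is a differential). Throughout I would use the positivity of intersections: any holomorphic Whitney disc meets every region of $\Sigma\bk(\alpha\cup\beta)$ non-negatively, so a class $\phi$ carrying a holomorphic representative has a domain $\dom(\phi)$ with only non-negative multiplicities, and in particular $n_z(\phi)\geq 0$ and $n_w(\phi)\geq 0$.

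For well-definedness I would argue by contradiction. Since $\talpha\cap\tbeta$ is finite, it suffices to bound, for a fixed pair $x,y$, the number of classes $\phi\in H(x,y,1)$ carrying holomorphic representatives. Suppose there were infinitely many such classes $\phi_n$. Fixing one of them, say $\phi_0$, the differences $P_n:=\phi_n-\phi_0$ form a set of pairwise distinct periodic domains. Since every $\phi_n$ and $\phi_0$ satisfy $n_z=n_w=0$, additivity of $n_z$ and $n_w$ gives $n_z(P_n)=n_w(P_n)=0$; and since $\mu(\phi_n)=\mu(\phi_0)=1$, the relation $\mu(\phi_n)-\mu(\phi_0)=\langle c_1(s),\mathcal{H}(P_n)\rangle$ between the Maslov index and the first Chern class forces $\langle c_1(s),\mathcal{H}(P_n)\rangle=0$. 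Hence each non-trivial $P_n$ is exactly of the type controlled by Definition~\ref{extweakadm}, so it has both positive and negative coefficients.

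The heart of the finiteness argument is then a convexity observation. The $P_n$ all lie in the lattice $L$ of periodic domains satisfying $n_w=0$ and $\langle c_1(s),\mathcal{H}(\cdot)\rangle=0$, and they satisfy $\dom(\phi_0)+P_n=\dom(\phi_n)\geq 0$, that is $P_n\geq-\dom(\phi_0)$ coefficient-wise. Thus they are integral points of the rational polyhedron $\mathcal{P}=\{P\in L\otimes\R\,:\,P\geq-\dom(\phi_0)\}$. If $\mathcal{P}$ contained infinitely many integral points it would be unbounded, and its recession cone, which is rational with respect to $L$ and contained in $\{P\geq 0\}$, would contain a non-zero integral direction $Q\in L$ with $Q\geq 0$. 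But such a $Q$ is a non-trivial periodic domain with $n_w=0$, $\langle c_1(s),\mathcal{H}(Q)\rangle=0$ and only non-negative coefficients, contradicting extreme weak admissibility. Therefore $\mathcal{P}$ is a polytope, contains only finitely many lattice points, and the sum defining $\parhat^w(x)$ is finite. I expect this convexity step, extracting a non-negative periodic domain from the recession cone, to be the main obstacle; it is precisely the place where the relaxed hypothesis of Definition~\ref{extweakadm} is used, the point being that the periodic domains $P_n$ produced by the hat differential automatically carry $n_w=0$ and $\langle c_1(s),\mathcal{H}(\cdot)\rangle=0$.

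For $\parhat^w\circ\parhat^w=0$ I would follow the standard scheme. For generators $x,y$, the coefficient of $y$ in $(\parhat^w)^2 x$ counts modulo $2$ the ends of the one-dimensional moduli spaces $\modhatphi$ associated to classes $\phi\in\pitwo(x,y)$ with $\mu(\phi)=2$ and $n_z(\phi)=n_w(\phi)=0$. By the finiteness just proved only finitely many such classes are relevant, so these moduli spaces are compact, and their Gromov compactification is a compact $1$-manifold whose boundary consists of two-storey broken discs $\phi_1*\phi_2$. Here I would again invoke positivity of intersections together with additivity: since $n_z(\phi)=n_w(\phi)=0$ and $n_z,n_w\geq 0$ on each holomorphic factor, both $\phi_1$ and $\phi_2$ necessarily satisfy $n_z=n_w=0$, so every boundary point is a concatenation of two classes counted by $\parhat^w$. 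As a compact $1$-manifold has an even number of boundary points, this yields $(\parhat^w)^2=0$. Finally, the choice of $J$ making $\{z\}\times\symgmo$ (and $\{w\}\times\symgmo$) complex rules out the disc and sphere degenerations that would escape this count, so no further boundary contributions arise.
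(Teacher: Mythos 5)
The paper itself gives no proof of this theorem: it is stated with a concluding square and a citation to the author's thesis (\cite{Saha}, Theorem 2.4.6), so there is no argument here to compare yours against line by line. Judged on its own merits, your proof is correct, and it is the natural adaptation of Ozsv\'ath--Szab\'o's finiteness argument for weakly admissible diagrams (Lemma 4.14 of \cite{OsZa01}) to the relaxed hypothesis of Definition~\ref{extweakadm}. You correctly identify the one point where the relaxation must be justified: the differences $P_n=\phi_n-\phi_0$ of classes contributing to $\parhat^w$ automatically satisfy $n_z=n_w=0$ (by additivity and the defining constraints of $H(x,y,1)$) and $\left<c_1(s),\mathcal{H}(P_n)\right>=0$ (by the Maslov index formula for periodic classes), so extreme weak admissibility controls exactly the periodic domains that can arise. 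The recession-cone step is also sound: the cone of the rational polyhedron $\{P\geq-\dom(\phi_0)\}$ is $\{P\geq 0\}$, and by Minkowski--Weyl a non-trivial rational cone contains a non-zero integral point, which would be a non-negative non-trivial periodic domain violating admissibility. Two minor remarks: the finiteness argument should be invoked again verbatim for the $\mu=2$ classes before compactifying the one-dimensional moduli spaces (your phrase ``the finiteness just proved'' implicitly does this, since nothing in the argument used $\mu=1$); and the absence of sphere bubbling and boundary degenerations is really a consequence of $n_z=0$ together with positivity of intersection with the complex submanifold $\{z\}\times\symgmo$, which is how the paper itself phrases it in the proof of Proposition~\ref{THMTHM}.
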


\subsection{Contact Structures}
A $3$-dimensional contact manifold is a pair $(Y,\xi)$ where $Y$ is a 
$3$-dimensional manifold and $\xi\subset TY$ a hyperplane bundle, 
 that can be written as the kernel of a $1$-form 
$\alpha$ with the property
\begin{equation}
  \alpha\wedge d\alpha\not=0. \label{contcond}
\end{equation}
$1$-forms with the property $(\ref{contcond})$ are called {\bf contact
forms}. The contact form $\alpha$ is not uniquely determined. The
existence of a contact form implies that $TY/\xi$ is a $1$-dimensional
trivial bundle. Thus there are non-vanishing vector fields on $Y$ 
transverse to $\xi$. The vector field $R_\alpha$ defined by the conditions
\[
  \alpha(R_\alpha)\not=0
  \;\mbox{\rm and }\;\;
  \iota_{R_\alpha}d\alpha=0
\]
is called {\bf Reeb field} of the contact form $\alpha$. Two contact 
manifolds $(Y,\xi)$ and $(Y',\xi')$ are called 
{\bf contactomorphic} if there is a diffeomorphism 
$\phi\co Y\lra Y'$ such that $T\phi(\xi)=\xi'$. A diffeomorphism
preserving contact structures in this manner 
is called {\bf contactomorphism}. Every contact manifold is 
locally contactomorphic to the standard contact
space $(\R^3,\xistd)$, where $\xistd$ is the contact structure 
given by the kernel of the $1$-form $dz-y\,dx$ 
({\bf Darboux's theorem}). This property tells us that locally 
contact manifolds cannot be distinguished, and ,thus, invariants
of contact manifolds always have to be of global nature. An important
property of contact structures is known as {\bf Gray stability}. Gray stability means 
that each smooth homotopy of contact structures $(\xi_t)_{t_\in[0,1]}$ 
is induced by an ambient isotopy $\phi_t$ of the underlying 
manifold, i.e.~such that $T\phi_t(\xi_0)=\xi_t$. This property 
naturally gives a connection between contact structures and
the topology of the manifold. Submanifolds tangent to the contact 
structure are also interesting objects to study. The contact 
condition implies that on a $3$-dimensional contact manifold 
$(Y,\xi)$ only $1$-dimensional submanifolds, i.e.~knots and 
links, can be tangent to $\xi$. These submanifolds are 
called {\bf Legendrian knots and links}. Their investigation
is naturally motivated by the contact-analogue of surgery, called 
{\bf contact surgery}. Contact surgery in arbitrary dimensions 
was introduced by Eliashberg in \cite{eliash2}. His construction, in dimension $3$, corresponds to
$(-1)$-contact surgeries. For $3$-dimensional contact manifolds Ding and Geiges gave
in \cite{DiGei04} a definition of contact-$r$-surgeries (cf.~also \cite{DiGei}) for 
arbitrary $r\in\Q>0$. It is nowadays one of the most significant tools for $3$-dimensional 
contact geometry.

\subsection{Open Books, the Contact Element and the Invariant LOSS}\label{prelim:01:2}
\subsubsection{Open Books and the Contact Element}
We start by recalling some facts about open book decompositions of
$3$-manifolds. For details we point the reader to 
$\cite{Etnyre01}$.\vspace{0.3cm}\\
An {\bf open book} is a pair $(P,\phi)$ consisting of an oriented 
genus-$g$ surface $P$ with boundary and a homeomorphism $\phi\co P\lra P$ 
that is the identity near the boundary of $P$. The surface $P$ is called
{\bf page} and $\phi$ the {\bf monodromy}. Recall that
an open book $(P,\phi)$ gives rise to a $3$-manifold by the following 
construction: Let $c_1,\dots,c_k$ denote the boundary components of
$P$. Observe that
\begin{equation}
  (P\times[0,1])/(p,1)\sim(\phi(p),0) \label{ob:01}
\end{equation}
is a $3$-manifold with boundary given by the tori
\[
  \left((c_i\times[0,1])/(p,1)\sim(p,0)\right)\cong c_i\times\sone.
\]
Fill in each of the holes with a full torus $\disc^2\times\sone$: we glue
a meridional disc $\disc^2\times\{\star\}$ onto $\{\star\}\times\sone\subset c_i\times\sone$.
In this way we define a closed, oriented $3$-manifold $Y(P,\phi)$. Denote by
$B$ the union of the cores of the tori $\disc^2\times\sone$. The set $B$
is called {\bf binding}. Observe that the definition of $Y(P,\phi)$ defines
a fibration
\[
  P\hookrightarrow Y(P,\phi)\backslash B\lra\sone.
\]
Consequently, an open book gives rise to a Heegaard decomposition 
of $Y(P,\phi)$ and, thus, induces a Heegaard diagram of $Y(P,\phi)$. To see 
this we have to identify a splitting surface of $Y(P,\phi)$, i.e.~a surface 
$\Sigma$ that splits the manifold into two components. Observe that the boundary
of each fiber lies on the binding $B$. Thus
gluing together two fibers yields a closed surface $\Sigma$ of genus $2g$.
The surface $\Sigma$ obviously splits $Y(P,\phi)$ into two components and
 can therefore be used to define a Heegaard 
decomposition of $Y(P,\phi)$ (cf.~\cite{Geiges}).
\begin{figure}[ht!]
\labellist\small\hair 2pt
\pinlabel {Page $P\!\times\!\{1/2\}$ of the open book} [bl] at 29 187
\pinlabel {$z$} [bl] at 189 112
\pinlabel {$a_i$} [t] at 76 22
\pinlabel {$b_i$} [t] at  153 22
\endlabellist
\centering
\includegraphics[height=3cm]{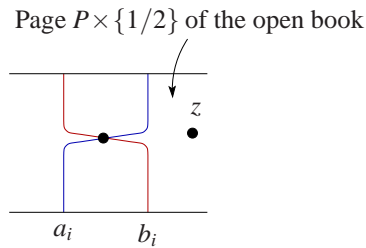}
\caption{Definition of $b_i$ and positioning of the point $z$.}
\label{Fig:zpointpos}
\end{figure}

Let $a=\{a_1,\dots,a_n\}$ be a {\bf cut system} of $P$, i.e.~a set of
disjoint properly embedded arcs such that $P\backslash\{a_1,\dots,a_n\}$
is a disc. One can easily show that being a cut system implies that
$n=2g$. Choose the splitting surface
\[
  \Sigma:=P\times\{1/2\}\cup_{\partial} (-P)\times\{1\}
\]
and let $\oa_i$ be the curve $a_i\subset P\times\{1/2\}$ with opposite 
orientation, interpreted as a curve in $(-P)\times\{0\}$.
Then define $\alpha_i:=a_i\cup\oa_i$. The curves $b_i$ are isotopic push-offs of the $a_i$. We
choose them like indicated in Figure~\ref{Fig:zpointpos}: We push the $b_i$
off the $a_i$ by following with $\partial b_i$ the positive boundary 
orientation of $\partial P$. Finally set $\beta_i:=b_i\cup\overline{\phi(b_i)}$.
The data $(\Sigma,\alpha,\beta)$ define a Heegaard diagram of $Y(P,\phi)$ 
(cf.~\cite{HKM}). There is one intersection point of $\talpha\cap\tbeta$ 
sitting on $P\times\{1/2\}$. Denote this point by $EH(P,\phi,a)$. \vspace{0.3cm}\\
There is a natural way to define a cohomology theory from a given homology
 (see \cite{Bredon}): Use the ${\rm Hom}$-functor to define a cochain-module
 and use the naturally induced boundary to give the module the structure of
  a chain complex. We can define the {\bf Heegaard Floer cohomology} of a manifold $Y$ the same way.
One can easily show that the Heegaard Floer cohomology of a manifold $Y$ is 
isomorphic to the Heegaard Floer homology of $-Y$ (see \cite{OsZa01}). Observe that if
$(\Sigma,\alpha,\beta)$ is a Heegaard diagram for $Y$ then $(-\Sigma,\alpha,\beta)$
is a Heegaard diagram for $-Y$. The change of the surface orientation affects
the boundary operator through a modification of the boundary conditions of
the Whitney discs: we count holomorphic discs $\phi$ with 
$\phi(i)=x$, $\phi(-i)=y$ and $\phi(\disc^2\cap{Re<0})\subset\tbeta$ and 
$\phi(\disc^2\cap{Re<0})\subset\talpha$ (note that we switched the boundary 
conditions).
Hence the Heegaard Floer cohomology of $Y$ is given by the data
$(\Sigma,\beta,\alpha)$ (we changed the position of $\alpha$ and $\beta$).
The point $EH(P,\phi,a)$ can be interpreted as a generator of $\cfhat(-Y)$.
In this case $EH(P,\phi,a)$ is indeed a cycle and thus defines a cohomology
class $c(P,\phi)\in\hfhat(-Y)$. The class $[EH(P,\phi,a)]$ does not
depend on the choice of cut system $a$.\vspace{0.3cm}\\
Recall the connection between open books and contact structures on $3$-manifolds
(cf.~\cite{Etnyre01}). Every contact structure gives rise to an adapted
open book decomposition. The open book is uniquely
determined up to positive Giroux stabilizations. Given a contact structure 
$\xi$ on a manifold $Y$ we may define $c(Y,\xi):=c(P,\phi)$, where 
$(P,\phi)$ is an open book decomposition of $Y$ adapted to the contact 
structure $\xi$. The class $c(P,\phi)$ is invariant under handle slides, 
isotopies and positive Giroux stabilizations (see \cite{HKM}).
Thus $c(P,\phi)$ does not depend on the specific choice of adapted open
book and is an isotopy invariant of the contact manifold $(Y,\xi)$. This
cohomology class is called {\bf contact element}.

\subsubsection{The Invariant LOSS}\label{invariantLOSS}
Ideas very similar to those used to define the contact element are can
be utilized to define an invariant of Legendrian knots we will 
briefly call LOSS. This invariant is due to
{\bf L}isca, {\bf O}zsv\'{a}th, {\bf S}tipsicz and {\bf S}zab\'{o}
and was defined in \cite{LOSS}. It is basically the contact element,
but now it is interpreted as sitting in a filtered Heegaard Floer complex.
The filtration is constructed with respect to a fixed Legendrian 
knot.
\begin{figure}[ht!]
\labellist\small\hair 2pt
\pinlabel {Page $P\!\times\!\{1/2\}$ of the open book} [bl] at 131 189
\pinlabel $w$ [l] at 95 116
\pinlabel $z$ [b] at 167 104
\pinlabel $w$ [tl] at 332 53
\pinlabel $z$ [t] at 408 85
\endlabellist
\centering
\includegraphics[height=3cm]{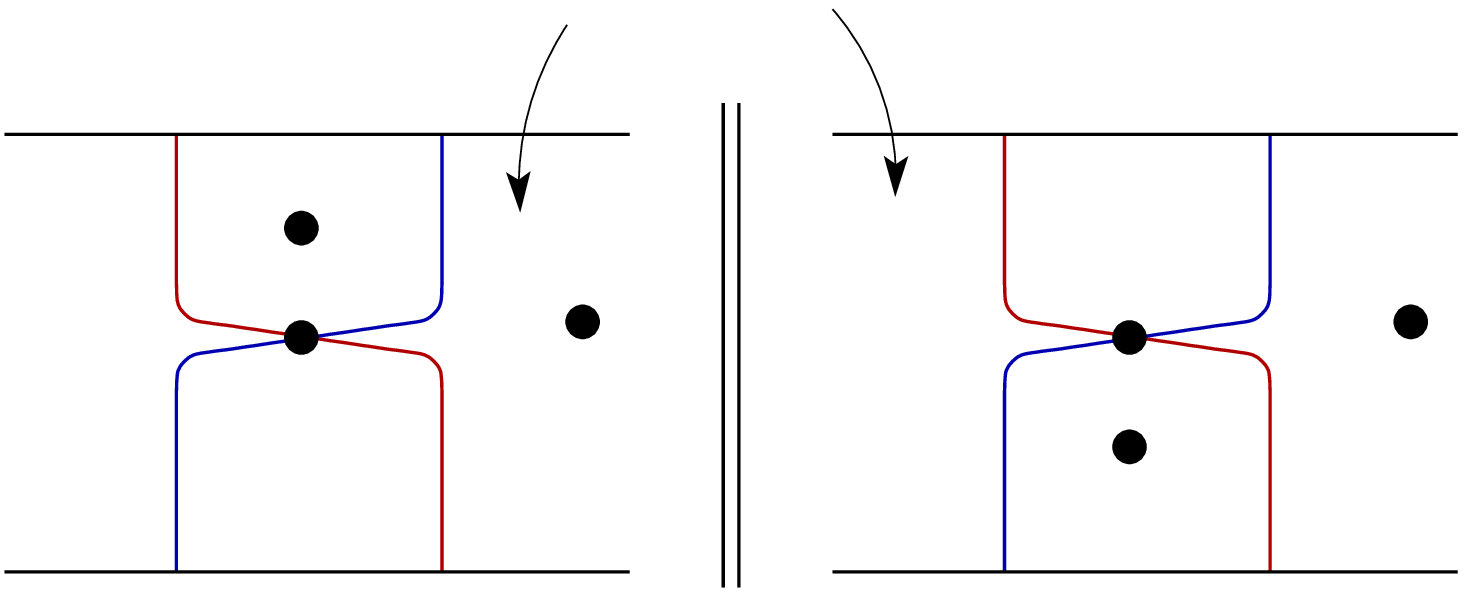}
\caption{Positioning of the point $w$ depending on the knot orientation.}
\label{Fig:wpointpos}
\end{figure}

Let $(Y,\xi)$ be a contact manifold and $L\subset Y$ a Legendrian 
knot. There is an open book decomposition of $Y$
subordinate to $\xi$ such that $L$ sits on the page $P\times\{1/2\}$ of 
the open book.
Choose a cut system that induces an $L$-adapted Heegaard 
diagram (cf.~\S\ref{postwist}, Definition~\ref{DefOne} and Lemma~\ref{LemOne}).
Figure~\ref{Fig:wpointpos} illustrates the positioning of a point $w$ in
the Heegaard diagram induced by the open book. 
Similar to the case
of the contact element there is one specific generator of $\cfhat(-Y)$
sitting of $P\times\{1/2\}$. This element may be interpreted as
sitting in $\cfkhat(-Y,L)$ and is a cycle there, too. The induced
element in the knot Floer homology is denoted by $\loss(L)$.
\begin{rem} Since this is an important issue we would like to recall the
relation between the pair $(w,z)$ and the knot orientation. In homology
we connect $z$ with $w$ in the complement of the $\alpha$-curves and 
$w$ with $z$ in the complement of the $\beta$-curves (oriented as is
obvious from definition). In {\bf cohomology} we orient in the opposite
manner, i.e.~we move from $z$ to $w$ in the complement of the $\beta$-curves
and from $w$ to $z$ in the complement of the $\alpha$-curves.
\end{rem}

\section{Algebraic Preliminaries}\label{prelim:01:3}
We outline some algebraic tools used in the next sections. We 
present this material for the sake of completeness.
\begin{lem}\label{alprim01} Suppose we are given two complexes $(C,\partial_C)$ and 
$(D,\partial_D)$ and a morphism $f\co D\lra C$ of complexes. Then 
$(C\oplus D,\partial^f)$ is a chain complex where 
$\partial^f:=\partial_C+f-\partial_D$, i.e.
\[
  \partial^f=\left(
  \begin{matrix}\partial_C & f \\ 
                         0 & -\partial_D
  \end{matrix}\right).
\]
\end{lem}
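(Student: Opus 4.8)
The plan is to verify directly that $\partial^f \circ \partial^f = 0$ by computing the matrix square. We are given a chain map $f \colon D \lra C$, meaning $\partial_C \circ f = f \circ \partial_D$, and we know $\partial_C^2 = 0$ and $\partial_D^2 = 0$ since $(C,\partial_C)$ and $(D,\partial_D)$ are complexes. The claim is that the operator on $C \oplus D$ represented by the upper-triangular matrix squares to zero.

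First I would write out the composition using the matrix form. We compute
\[
  \partial^f \circ \partial^f
  =
  \left(
  \begin{matrix}\partial_C & f \\ 0 & -\partial_D \end{matrix}\right)
  \left(
  \begin{matrix}\partial_C & f \\ 0 & -\partial_D \end{matrix}\right)
  =
  \left(
  \begin{matrix}\partial_C^2 & \partial_C f - f\partial_D \\ 0 & \partial_D^2 \end{matrix}\right).
\]
The two diagonal entries vanish immediately because $\partial_C^2 = 0$ and $\partial_D^2 = 0$. The upper-right entry is $\partial_C f - f \partial_D$, which is precisely the failure of $f$ to be a chain map; since $f$ is assumed to be a morphism of complexes we have $\partial_C f = f \partial_D$, so this entry also vanishes. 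Hence $\partial^f \circ \partial^f = 0$, which is exactly the condition required for $(C \oplus D, \partial^f)$ to be a chain complex.

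I should also note at the outset that $\partial^f$ genuinely maps $C \oplus D$ to itself and respects whatever grading is implicit (it lowers degree by one, assuming $f$ preserves degree and the $\partial$'s drop degree by one), so that the matrix expression is meaningful; this is a routine bookkeeping check. The sign on $-\partial_D$ is what makes the off-diagonal term come out as the chain-map relation rather than its negative, but over $\ztwo$-coefficients, as used throughout this article, the sign is immaterial and either choice works equally well. The only real content is the single identity $\partial_C f = f \partial_D$, so there is no genuine obstacle here; the lemma is purely formal and the entire proof is the one matrix multiplication above.
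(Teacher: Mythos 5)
Your proof is correct and takes essentially the same approach as the paper: both verify $(\partial^f)^2=0$ directly from $\partial_C^2=0$, $\partial_D^2=0$, and the chain-map identity $\partial_C f = f\partial_D$, with your matrix multiplication being just a reorganization of the paper's element-wise computation. No gap; the argument is complete.
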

\begin{proof} For $(p,q)\in C\oplus D$ we calculate
\begin{eqnarray*}
(\partial^f)^2(p,q)
&=&
\partial^f\Bigl(\partial_Cp+f(q),-\partial_D q\Bigr)\\
&=&\Bigl(\partial_C^2p+\partial_Cf(p)+f(-\partial_Dp),\partial_D^2p\Bigr)\\
&=&0,
\end{eqnarray*}
where the last equality holds, since $\partial_C$ and $\partial_D$ are differentials 
and $f$ is a chain map.
\end{proof}
A nice, immediate consequence of this construction is the 
following Lemma.
\begin{lem} \label{propexact} There is a long exact sequence
\begin{diagram}[size=2em,labelstyle=\scriptstyle]
 \dots & \rTo^{-f_*} &H_*(C,\partial_C)&&\rTo^{\Gamma_1}
 &&H_*(C\oplus D,\partial^f)&&\rTo^{\Gamma_2}&&H_*(D,-\partial_D)
 &\rTo^{-f_*}&\dots,
\end{diagram}
\end{lem}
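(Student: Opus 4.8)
The plan is to exhibit the three complexes as a short exact sequence of chain complexes and then feed this into the classical zig-zag lemma of homological algebra. Concretely, I would define $\Gamma_1\co C\lra C\oplus D$ by $\Gamma_1(p)=(p,0)$ and $\Gamma_2\co C\oplus D\lra D$ by $\Gamma_2(p,q)=q$; these are exactly the maps that should appear as the labelled arrows $\Gamma_1,\Gamma_2$ in the asserted sequence. Reading off the matrix form of $\partial^f$ from Lemma~\ref{alprim01}, a one-line check gives $\partial^f\circ\Gamma_1=\Gamma_1\circ\partial_C$ and $\Gamma_2\circ\partial^f=(-\partial_D)\circ\Gamma_2$, so both are chain maps once the target $D$ is equipped with the differential $-\partial_D$ (this is precisely why that sign appears in the third term of the statement). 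Since $\Gamma_1$ is injective, $\Gamma_2$ is surjective, and $\ker\Gamma_2=C\oplus 0=\im\Gamma_1$, we obtain a short exact sequence of complexes
\[
  0\lra (C,\partial_C)\stackrel{\Gamma_1}{\lra}(C\oplus D,\partial^f)\stackrel{\Gamma_2}{\lra}(D,-\partial_D)\lra 0.
\]

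The long exact sequence associated to a short exact sequence of chain complexes then supplies every arrow in the statement except the connecting homomorphism, whose identification with $-f_*$ is the only point needing care. I would compute it by the standard snake-lemma recipe: given a cycle $q$ of $(D,-\partial_D)$, lift it to $(0,q)\in C\oplus D$, apply the differential to get $\partial^f(0,q)=(f(q),-\partial_D q)=(f(q),0)$, and read the resulting boundary back as an element of $H_*(C,\partial_C)$ via $\Gamma_1$. This shows the connecting map sends the class $[q]$ to $[f(q)]$; that is, it agrees with the map induced by $f$ up to the overall sign fixed by one's chosen convention for the snake lemma, and it lowers degree by one, so the sequence closes up periodically as displayed.

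The only genuinely delicate matter is therefore the bookkeeping of this sign, and the ambient hypothesis of the paper makes it a non-issue: since everything is done with $\ztwo$-coefficients, $-f_*=f_*$ and $-\partial_D=\partial_D$, so the sign ambiguity disappears entirely and the sequence as written is exactly the zig-zag sequence of the short exact sequence above. I do not expect any real obstacle beyond this routine verification; no input is needed beyond Lemma~\ref{alprim01} and the classical zig-zag lemma, which is why the argument is purely formal.
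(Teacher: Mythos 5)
Your proposal is correct and follows essentially the same route as the paper: both exhibit the short exact sequence $0\to(C,\partial_C)\to(C\oplus D,\partial^f)\to(D,-\partial_D)\to 0$, invoke the zig-zag lemma, and identify the connecting homomorphism by the standard snake computation. The only divergence is your choice $\Gamma_2(p,q)=q$ versus the paper's $\gamma_2(p,q)=-q$ (which is why the paper gets $-f_*$ on the nose while you get $f_*$ up to convention), and you correctly observe that over $\ztwo$ this sign discrepancy is vacuous.
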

where $f_*$ is the map in homology induced by $f$, and $\Gamma_1$ 
and $\Gamma_2$ are given as follows:
\begin{itemize}
  \item $\Gamma_1$ is induced by the map 
  \[
    \gamma_1\co(C,\partial_C)\lra (C\oplus D,\partial^f),\;x\lmt x\oplus 0;
  \]
  \item $\Gamma_2$ is induced by the map 
  \[
    \gamma_2\co(C\oplus D,\partial^f)\lra(D,-\partial_D),\;x\oplus y\lmt -y.
  \]
\end{itemize}
\begin{proof} We first have to see that $\gamma_1$ and $\gamma_2$ 
are chain maps. Given an element $c\in C$, observe that
\[
  \gamma_1(\partial_C c)
  =
  \partial_C c
  =
  \partial^f c
  =
  \partial^f\gamma_1(c).
\] 
Furhtermore, we see that  
\[
  \gamma_2(\partial^f(c\oplus 0))
  =
  \gamma_2(\partial_C c)
  =
  0
  =
  \gamma_2(c\oplus 0)
  =
  -\partial_D(\gamma_2(c\oplus 0)).
\]
We continue with an element $d\in D$:
\[
  \gamma_2(\partial^f(0\oplus d))
  =
  \gamma_2(f(d)-\partial_D(d))
  =
  \partial_D(d)
  =
  -\partial_D(\gamma_2(0\oplus d)).
\]
Thus, both $\gamma_1$ and $\gamma_2$ are chain maps. 
Finally, $\gamma_1$ and $\gamma_2$ obviously fit into 
the short exact sequence
\begin{diagram}[size=2em,labelstyle=\scriptstyle]
0&\rTo&(C,\partial_C)&&\rTo^{\gamma_1}&&(C\oplus D,\partial^f)
&&\rTo^{\gamma_2}&&(D,-\partial_D)&\rTo&0
\end{diagram}
of chain complexes. Hence, by standard results in Algebraic 
Topology (see \cite{Bredon}) this short
exact sequence induces a long exact sequence
\begin{diagram}[size=2em,labelstyle=\scriptstyle]
 \dots & \rTo^{\partial_*} &H_*(C,\partial_C)
 &&\rTo^{\Gamma_1}&&H_*(C\oplus D,\partial^f)
 &&\rTo^{\Gamma_2}&&H_*(D,-\partial_D)&\rTo^{\partial_*}
 &\dots
\end{diagram}
It remains to show that the connecting homomorphism $\partial_*$ 
equals $-f_*$. Recall that for $d\in\ker(\partial_D)$ the morphism
$\partial_*$ is defined by
\[
  \partial_*[d]
  =
  [\gamma_1^{-1}(\partial^f(\gamma_2^{-1}(d)))].
\]
Of course, $\gamma_1$ and $\gamma_2$ are not necessarily invertible. However, 
we take the preimages as given in the equation, and, by standard algebraic topology, all
the elements in the preimage will belong to the same equivalence class. Observe:
\begin{eqnarray*}
  \partial_*[d]
  &=&[\gamma_1^{-1}(\partial^f(\gamma_2^{-1}(d)))]\\
  &=&[\gamma_1^{-1}(\partial^f(0\oplus-d))]\\
  &=&[\gamma_1^{-1}(-f(d))]\\
  &=&-[f(d)]\\
  &=&-f_*[d]
\end{eqnarray*}
\end{proof}
Of course, the whole construction works if $f$ goes the other way, i.e.~$f\co C\lra D$. 
In this case we form the complex $C\oplus D$ with the differential
\[
  \partial_f
  =\left(
  \begin{matrix} 
    \partial_C& 0\\
    f & -\partial_D
  \end{matrix}\right).
\]
In an analogous manner we obtain a long exact sequence
\begin{diagram}[size=2em,labelstyle=\scriptstyle]
 \dots & \rTo^{-f_*} &H_*(D,-\partial_D)&&\rTo^{\Gamma_1}
 &&H_*(C\oplus D,\partial_f)&&\rTo^{\Gamma_2}&&H_*(C,\partial_C)
 &\rTo^{-f_*}&\dots
\end{diagram}

\section{Two new Exact Sequences in Heegaard Floer Homology}\label{introd}
\subsection{Positive Dehn twists}\label{postwist}
Let an open book $(P,\phi)$ and a homologically essential closed 
curve $\delta$ in $P$ be given. We first ask how a Dehn twist along $\delta$ would 
change the associated Heegaard Floer homology. To do that, we first
have to see that there is a specific choice of attaching circles  
(cf.~\S\ref{prelim:01:2})
that are -- in a sense -- adapted to the closed curve $\delta$.
\begin{definition}\label{DefOne} Let a Heegaard diagram $(\Sigma,\alpha,\beta)$ 
and a homologically essential, simple, closed curve $\delta$ on $\Sigma$ be given. The
Heegaard diagram $(\Sigma,\alpha,\beta)$ is called
{\bf $\delta$-adapted} if the following conditions hold. 
\begin{enumerate}
  \item It is induced by an open book and the pair $\alpha$, $\beta$ is 
  induced by a cut system (cf.~\S\ref{prelim:01:2}) for this open book.
  \item The curve $\delta$ intersects $\beta_1$ once and does not intersect any 
  other of the $\beta_i$, $i\geq 2$.
\end{enumerate}
\end{definition}
We can always find $\delta$-adapted Heegaard diagrams. This is already
stated in \cite{HKM} and \cite{LOSS} but not proved. We wish to give
a proof because this specific choice is crucial throughout this article.
\begin{lem}\label{LemOne} Let $(P,\phi)$ be an open book and 
$\delta\subset P$ a homologically essential closed curve. There is a 
choice of cut system on $P$ that induces a $\delta$-adapted 
Heegaard diagram.
\end{lem}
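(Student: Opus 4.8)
The plan is to reduce the statement to a purely topological claim about arcs on the page $P$, and then to build the required cut system by hand. Recall that for a cut system $a=\{a_1,\dots,a_n\}$ the induced Heegaard diagram has $\beta_i=b_i\cup\overline{\phi(b_i)}$, where $b_i$ is a small push-off of $a_i$; the arc $\overline{\phi(b_i)}$ lies on the second page $(-P)\times\{1\}$ and is therefore disjoint from $\delta\subset P\times\{1/2\}$, so $\delta\cap\beta_i=\delta\cap b_i$. Pushing $\delta$ slightly into the interior of $P$ and taking the push-off $b_i$ inside a thin collar of $a_i$, the geometric intersection $\delta\cap b_i$ agrees with $\delta\cap a_i$. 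Thus the induced diagram is $\delta$-adapted as soon as the cut system satisfies $|\delta\cap a_1|=1$ and $\delta\cap a_i=\emptyset$ for $i\geq 2$ (condition (1) of Definition~\ref{DefOne} being automatic, as we use the given open book and a genuine cut system). So it suffices to produce such a cut system.

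First I would construct a proper arc $a_1$ meeting $\delta$ exactly once. If $\delta$ is non-separating, then $P\setminus\delta$ is connected, and extending a short transversal segment through a point of $\delta$ out to $\partial P$ on either side within $P\setminus\delta$ produces an embedded proper arc $a_1$ with $|a_1\cap\delta|=1$. If $\delta$ separates $P$, then homological essentialness ($[\delta]\neq 0$ in $H_1(P;\mathbb{Z})$) forces each side to contain a boundary component of $P$, and a proper arc joining the two sides meets $\delta$ an odd number of times, which can be reduced to one by surgering away excess intersections along $\delta$. (Alternatively, both cases follow at once from the nondegeneracy of the intersection pairing $H_1(P)\times H_1(P,\partial P)\to\mathbb{Z}$.) Since a closed curve crosses a separating arc an even number of times, the single crossing forces $a_1$ to be non-separating; hence $P':=P$ cut along $a_1$ is \emph{connected}, and $\delta$ becomes a properly embedded arc $\delta'\subset P'$ with its two endpoints on the two copies of $a_1$.

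The heart of the argument, and the step I expect to be the main obstacle, is to complete $\{a_1\}$ to a cut system by arcs disjoint from $\delta$. Equivalently, I must cut $P'$ into a single disc using properly embedded arcs $a_2,\dots,a_n$ whose endpoints lie on the $\partial P$-part of $\partial P'$ and which are disjoint from $\delta'$, leaving $\delta'$ sitting inside the resulting disc. I would establish this by induction on $-\chi(P')$: as long as the current surface is not a disc, its complement of a regular neighbourhood of the single arc $\delta'$ still carries an essential proper arc with endpoints on the allowed boundary, because deleting a neighbourhood of one arc changes the Euler characteristic in a controlled way and cannot destroy all essential arcs of a non-disc surface; cutting along such an arc strictly lowers $-\chi$, so the process terminates. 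The delicate point throughout is combining the endpoint constraint with disjointness from $\delta'$, and arranging that the procedure ends in exactly one disc rather than several: this is bookkeeping on the number of components, controlled by the fact that cutting $P'$ into a disc requires precisely $n-1=-\chi(P)$ arcs, which is exactly the count the induction produces.

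Finally I would assemble the pieces. The cut system $\{a_1,a_2,\dots,a_n\}$ meets $\delta$ only along $a_1$, and there exactly once, so by the reduction of the first paragraph the induced Heegaard diagram $(\Sigma,\alpha,\beta)$ satisfies both conditions of Definition~\ref{DefOne} and is $\delta$-adapted, which proves the lemma.
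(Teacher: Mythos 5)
Your first two paragraphs are sound and match the paper's own reduction: it suffices to produce a cut system with $|\delta\cap a_1|=1$ and $\delta\cap a_i=\emptyset$ for $i\geq 2$, and your construction of $a_1$ is a legitimate variant of the paper's (which obtains the same arc by cutting along $\delta$ and joining the two resulting boundary components to $\partial P$ by two arcs). The gap is exactly where you flag it, in the completion step, and it is genuine: the claim driving your induction --- that as long as the cut-open surface is not a disc there is an essential arc avoiding $\delta'$ with endpoints on the allowed boundary, and that Euler-characteristic bookkeeping makes the greedy process end in a single disc --- is not proved, and as a statement about the kind of configuration your induction produces it is false. Concretely, let $Q$ be a one-holed torus, $\delta'\subset Q$ a non-separating properly embedded arc, and suppose the allowed part of $\partial Q$ lies entirely inside one of the two arcs $E_1$, $E_2$ into which $\partial\delta'$ divides $\partial Q$, say inside $E_1$. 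Capping $\partial Q$ with a disc (the two endpoint pairs are unlinked on the circle, so the capping chords may be taken disjoint) shows that any arc $c$ disjoint from $\delta'$ with both endpoints allowed closes up to an embedded curve on the torus disjoint from the closed-up $\delta'$; hence $c$ is either separating in $Q$ or homologically parallel to $\delta'$. Cutting along such a parallel $c$ yields an annulus in which both endpoints of $\delta'$ lie on one boundary circle; since $\delta'\cup E_2$ is homologically non-trivial in $Q$, the disc that $\delta'$ cuts off this annulus cannot be bounded by $\delta'\cup E_2$, so it is bounded by $\delta'$ and the \emph{other} complementary arc, which contains every allowed segment of that circle. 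Consequently no arc with allowed endpoints and disjoint from $\delta'$ can reach the second boundary circle, and the process is stuck one arc short of a cut system even though every step taken so far was admissible. So which arcs you choose matters, and nothing in your argument controls that choice.

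This is precisely the difficulty the paper's proof is organized to avoid, and there are two equivalent ways to repair your argument along its lines. First, one can complete the system on the surface obtained by cutting along \emph{both} $a_1$ and $\delta$: then disjointness from $\delta$ is automatic for every further arc, and the only residual constraint is on endpoints, which is removed by sliding arc endpoints along the boundary off the finitely many segments created by the copies of $\delta$ and $a_1$. Second, in your set-up one can prescribe the first step of the induction: take as $a_2$ the band sum of $\delta$ with a small arc to $\partial P$ (equivalently, a push-off of $\delta'$ whose endpoints are slid into the allowed segments flanking the copies of $a_1$). After cutting along $a_1$ and this $a_2$, the arc $\delta'$ lies in a collar region and is parallel into the boundary, so it imposes no further constraint and the remaining $n-2$ arcs may be chosen freely. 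This distinguished arc $a_2$ is exactly the paper's first move; without some such prescription (or an invariant ruling out configurations like the one above), the inductive claim you rely on does not hold.
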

Observe that  $a_1,\dots,a_{n}$ to be a cut system of a page $P$ 
essentially means to be a basis of $H_1(P,\partial P)$:
Suppose the curves are not linearly independent. In this
case we are able to identify a surface $F\subset P$, $F\not=P$,
bounding a linear combination of some of the curves $a_i$.
But this means the cut system disconnects the page $P$ in 
contradiction to the definition. Conversely, suppose the curves
in the cut system are homologically linearly independent.
In this case the curves cannot disconnect the page. If they
disconnected, we could identify a surface $F$ in $P$ with boundary
a linear combination of some of the $a_i$. But this contradicts
their linear independence. The fact that $\Sigma\backslash\{a_1,\dots,a_n\}$
is a disc shows that every element in $H_1(P,\partial P)$ can be
written as a linear combination of the curves $a_1,\dots,a_n$.
\begin{proof} Without loss of generality, we assume that $P$ 
has connected boundary: Suppose the boundary of $P$ has two
components. Choose a properly embedded arc connecting both
components of $\partial P$. Define this curve to be the first
curve $a_0$ in a cut system. Cutting out this curve $a_0$, we obtain a
surface with connected boundary. The curve $a_0$ determines two segments $S_1$
and $S_2$ in the connected boundary. We can continue using
the construction process for connected binding we state below. We 
just have to check the boundary points of the curves to 
remain outside of the segments $S_1$ and $S_2$. Given that $P$ has more
than two boundary components, we can, with this algorithm, inductively 
decrease the number of boundary components.\vspace{0.3cm}\\
The map $\phi$ is an element of the mapping 
class group of $P$. Thus, if $\{a_1,\dots,a_{n}\}$
is a cut system, then $\{\phi(a_1),\ldots,\phi(a_{n})\}$ is a 
cut system, too. It suffices to show that there is a cut system 
$\{a_1,\ldots,a_{n}\}$ such that $\delta$ intersects $a_i$ once 
if and only if $i=1$.
\begin{figure}[ht!]
\labellist\small\hair 2pt
\pinlabel $\gamma$ [bl] at 199 214
\endlabellist
\centering
\includegraphics[height=3cm]{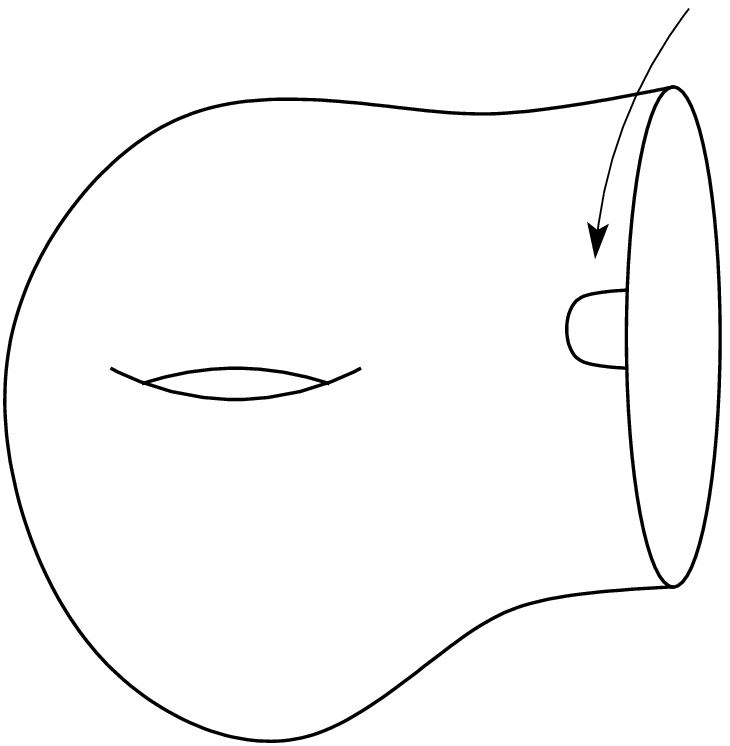}
\caption{Possible choice of curve $\gamma$.}
\label{Fig:figfour}
\end{figure}

We start by taking a band sum of $\delta$ with a small 
arc $\gamma$ as shown in Figure~\ref{Fig:figfour}. We are 
free to choose the arc $\gamma$.
Denote the result of the band sum by $a_2$. The arc $a_2$ indeed bounds a compressing disc 
in the respective handlebody because its boundary lies 
on $\partial P$. Because of our prior observation it suffices
to show that $a_2$ is a primitive class 
in $H_1(P,\partial P)$. This can be seen with an elementary homological
argument using a Mayer-Vietoris computation and the lemma below.
Cut open the surface along $\delta$. We obtain two new boundary components, $C_1$
and $C_2$ say, which we can connect with the boundary of $P$ with two arcs. These
two arcs, in $P$, determine a properly embedded curve, $a_1$ say, whose boundary
lies on $\partial P$. Furthermore, $a_1$ intersects $\delta$ in one single point, transversely.
The curve $a_1$ is primitve, too. To see, that we can extend to a cut system such that $\delta$ is
disjoint from $a_3,\dots,a_n$, cut open the surface $P$ along $\delta$ and $a_1$.
We obtain a surface $P'$ with one boundary component. The curves $\delta$ and $a_1$ determine
$4$ segments, $S_1,\dots, S_4$ say, in this boundary. We extend $a_2$ to a cut system 
$a_2,\dots,a_n$ of $P'$ and arrange the boundary points of the curves $a_3,\dots, a_n$ to 
be disjoint from $S_1,\dots,S_4$. The set $a_1,\dots,a_n$ is a cut system of $P$ with the
desired properties.
\end{proof}
As a consequence of the proof we may arrange $\delta$
to be a push-off of $a_2$ outside a small neighborhood where the
band sum is performed. Geometrically spoken, we cut open $\delta$
at one point, and move the boundaries to $\partial P$ to get
$a_2$. Given a positive Giroux stabilization, we can find a special 
cut system which is adapted to the curve $\gamma$. It is not hard to 
see that there is only one homotopy class of triangles that connect
the old with the new contact element and that the associated moduli 
space is a one-point space.
\begin{lem}\label{primitive} An embedded circle $\delta$ in an 
orientable, closed surface $\Sigma$ which is homologically essential 
is a primitive class of $H_1(\Sigma,\Z)$.
\end{lem}
\begin{proof}
Cut open the surface $\Sigma$ along $\delta$. We obtain a connected surface $S$
with two boundary components since $\delta$ is homologically essential 
in $\Sigma$. We can recover the surface $\Sigma$ by connecting 
both boundary components of $S$ with a $1$-handle and then capping off 
with a disc. There is a knot $K\subset S\cup h^1$ 
intersecting the co-core of $h^1$ only once and intersecting 
$\delta$ only once, too. To construct this knot take a union of 
two arcs in $S\cup h^1$ in the following 
way: Namely, define  $a$ as the core 
of $h^1$, i.e.~as $D^1\times\{0\}\subset D^1\times D^1\cong h^1$ 
and let $b$ be 
a curve in $S$, connecting the two components of the attaching 
sphere $h^1$ in $\partial S$. This curve exists since $\delta$ is
homologically essential which implies that it is non-separating. We 
define $K$ to be $a\cup b$. 
Obviously,
\[
  \pm1
  =
  \#(K,\delta)
  =
  \bigl<PD[K],[\delta]\bigr>.
\]
Since $H_1(\Sigma;\Z)$ is torsion free 
$H^1(\Sigma;\Z)\cong\mbox{\rm Hom}(H_1(\Sigma;\Z),\Z)$. Thus, 
$[\delta]$ is primitive.
\end{proof}
Figure~\ref{Fig:figone} depicts a small neighborhood of
the point $\delta\cap\beta_1$ in the Heegaard 
diagram induced by the open book decomposition. The page at 
the right side of the boundary pictured in 
Figure~\ref{Fig:figone} is $P\times\{1/2\}$. The dotted line
indicates the neighborhood of $\partial P$ where the monodromy
$\phi$ is the identity. The proof of Lemma~\ref{LemOne} shows 
that we can arrange a neighborhood of $\delta\cap\beta_1$ to 
look like in Figure~\ref{Fig:figone}, i.e.~it
is possible to arrange the curve $\delta$ and the attaching 
circles like indicated in Figure~\ref{Fig:figone} due to the 
arguments given in the proof of Lemma~\ref{LemOne}.
\begin{figure}[ht!]
\labellist\small\hair 2pt
\pinlabel {boundary of $P$} [b] at 181 317
\pinlabel {$\delta$} [Bl] at 96 223
\pinlabel {$z$} [Bl] at 243 232
\pinlabel {$\dom_z$} [Bl] at 315 226
\pinlabel {$_2$} [l] at 345 122
\pinlabel {$_1$} [b] at 378 95
\pinlabel {$\beta_2$} [Bl] at 365 298
\pinlabel {$\alpha_2$} [Bl] at 365 262
\pinlabel {$\beta_1$} [Bl] at 365 190
\pinlabel {$\alpha_1$} [Bl] at 365 154
\pinlabel {$\beta_2$} [Bl] at 365 9
\pinlabel {$\alpha_2$} [Bl] at 365 46
\endlabellist
\centering
\includegraphics[height=5cm]{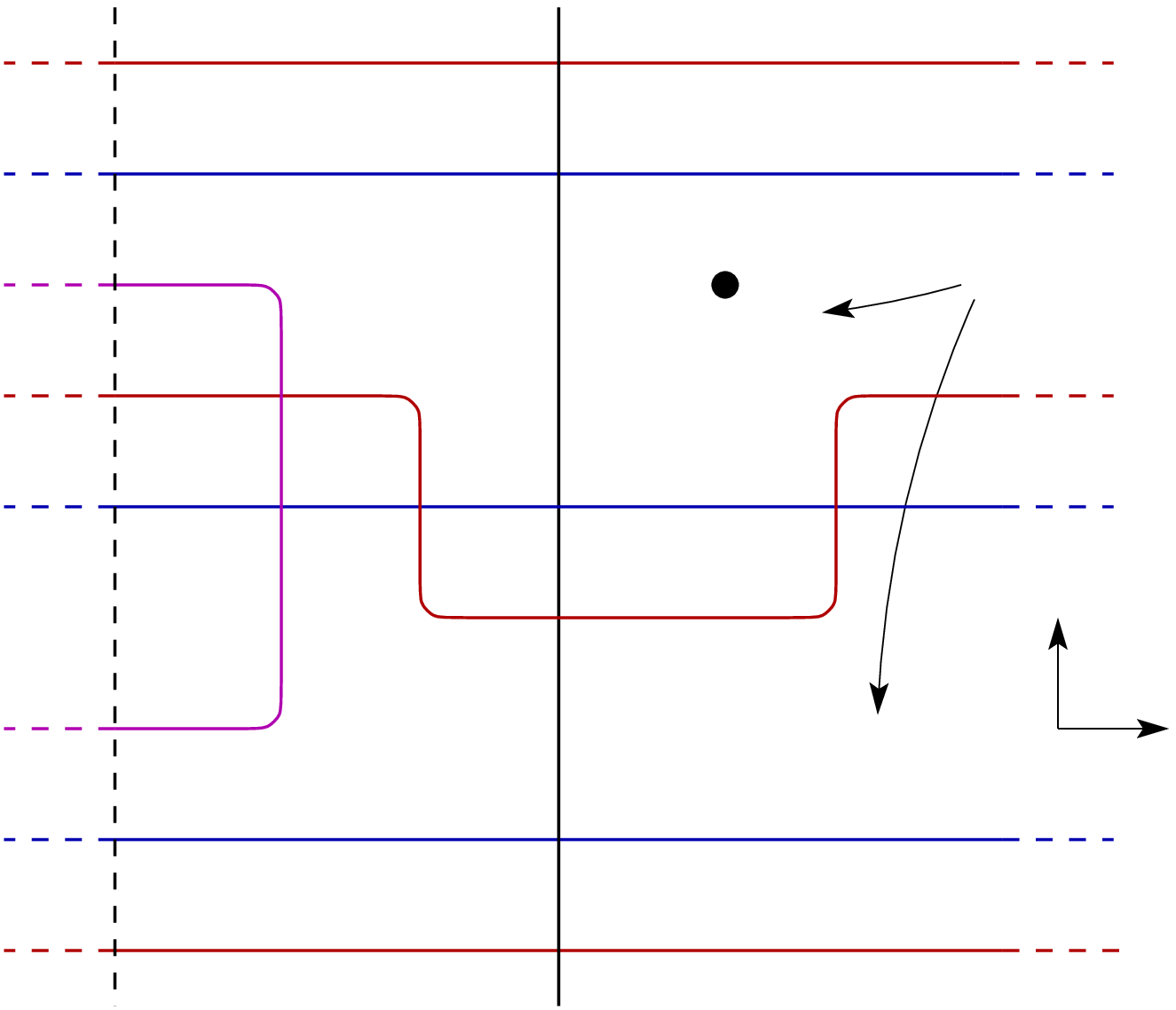}
\caption{A small neighborhood of $\delta\cap\beta_1$ in the Heegaard surface 
$\Sigma=P\times\{1/2\}\cup (-P)\times\{0\}$.}
\label{Fig:figone}
\end{figure}

With respect to the surface orientation given in Figure~\ref{Fig:figone}
this is the appropriate setup for performing a positive Dehn twist
along $\delta$: Denote by $\betaprime$ the $\beta$-curves after
performing the Dehn twist. Obviously, 
$\betaprime=\{\betaprime_1,\beta_2,\dots\beta_{2g}\}$. Observe that
\begin{equation}
  \talpha\cap\tbetaprime=\talpha\cap\tbeta\sqcup\talpha\cap\tdelta,
\end{equation}
where $\tdelta$ is given by the set 
$\delta=\{\delta,\beta_2,\dots,\beta_{2g}\}$ 
(by abuse of notation since $\delta$ also denotes the curve 
on $P$ but what is meant will be clear from the context). 
The set of curves $\delta$ may be interpreted as a set 
of attaching circles. In the following we will call the arc 
$\betaprime_1\cap\beta_1$ the 
{\bf $\beta$-part of $\betaprime_1$}
and the arc $\betaprime_1\cap\delta$ the 
{\bf $\delta$-part of $\betaprime_1$}.
Figure~\ref{Fig:figtwo} depicts the situation before and after 
the Dehn twist.
\begin{figure}[ht!]
\labellist\small\hair 2pt
\pinlabel {boundary of $P$} [b] at 235 328
\pinlabel {boundary of $P$} [b] at 667 328
\pinlabel {$z$} [r] at 268 250
\pinlabel {$z$} [r] at 703 250
\pinlabel {$\dom_*$} [Br] at 119 212
\pinlabel {$\dom_*$} [Br] at 544 212
\pinlabel {$\dom_{**}$} [b] at 160 168
\pinlabel {$\dom_{**}$} [b] at 597 168
\pinlabel {$w$} [l] at 105 187
\pinlabel {$w$} [l] at 520 187
\pinlabel {$\dom_z$} [Bl] at 319 243
\pinlabel {$\dom_z$} [Bl] at 747 243
\pinlabel {$_2$} [l] at 348 122
\pinlabel {$_1$} [B] at 375 105
\pinlabel {$_2$} [l] at 780 122
\pinlabel {$_1$} [B] at 808 105
\pinlabel {$\beta_2$} [l] at 365 312
\pinlabel {$\alpha_2$} [l] at 365 276
\pinlabel {$\beta_1$} [l] at 365 204
\pinlabel {$\alpha_1$} [l] at 365 168
\pinlabel {$\alpha_2$} [l] at 365 60
\pinlabel {$\beta_2$} [l] at 365 25
\pinlabel {$\beta_2$} [l] at 796 312
\pinlabel {$\alpha_2$} [l] at 796 276
\pinlabel {$\betaprime_1$} [l] at 796 204
\pinlabel {$\alpha_1$} [l] at 796 168
\pinlabel {$\alpha_2$} [l] at 796 60
\pinlabel {$\beta_2$} [l] at 796 25
\endlabellist
\centering
\includegraphics[height=5cm]{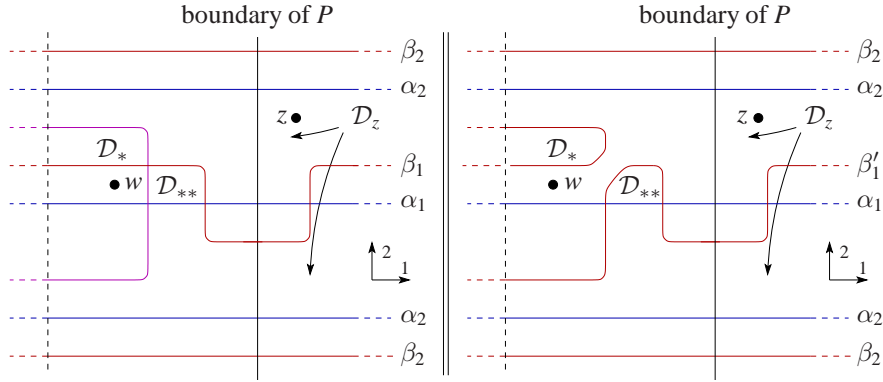}
\caption{Before and after the positive Dehn twist.}
\label{Fig:figtwo}
\end{figure}

The main observation is that there can be no holomorphic disc
in $(\Sigma,\alpha,\betaprime)$ that connects a 
$\talpha\cap\tbeta$-intersection of $\talpha\cap\tbetaprime$ with 
a $\talpha\cap\tdelta$-intersection of $\talpha\cap\tbetaprime$. 
Suppose there is a disc $\phi$ starting at 
$x\in\talpha\cap\tbeta$ and going to $y\in\talpha\cap\tdelta$
along its $\alpha$-boundary. Then, at the $\beta$-boundary, the 
disc $\phi$ has to run from $y$ to $x$ along the 
$\betaprime$-curves. Since $\delta\cap\beta_1$ contains only 
one point, namely the intersection that can be seen in 
Figures~\ref{Fig:figone} and \ref{Fig:figtwo}, the disc has 
to run through either $\domstar$ or
$\domststar$ (since $n_z(\phi)=0$ we cannot use 
the $\dom_z$-region). But since we are moving from the 
$\delta$-part of $\betaprime_1$ to the $\beta$-part
of $\betaprime_1$, we see that $n_*(\phi)<0$ or 
$n_{**}(\phi)<0$, in contradiction to holomorphicity. So, there
are just three choices for the $\beta$-boundary of a 
holomorphic disc.
\begin{enumerate}
  \item It starts at the $\delta$-part of $\beta_1'$ and stays 
  there. 
  \item It starts at the $\beta$-part of $\beta_1'$ and stays 
  there. 
  \item It starts at the $\beta$-part of $\beta_1'$ and runs 
  to the $\delta$-part of $\beta_1'$ and stays there. 
\end{enumerate}
This immediately shows that
\[
  \hfhat(Y^\delta)=H_*(\cfhat(\alpha,\beta)\oplus\cfhat(\alpha,\delta),\partial), 
\]
where $\partial$ is of the form 
\[
  \left(\begin{matrix}A & 
  C\\
  0&B\end{matrix}\right).
\]
If we perform a negative Dehn twist along $\delta$ in the
situation indicated in Figure~\ref{Fig:figone}, we would connect 
$\domstar$ with $\domststar$ and keep separate $\dom_w$ and $\dom_z$. 
Observe that we would have, a priori, no control of holomorphic 
discs like in the case of positive Dehn twists. To get back into
business, in case of negative Dehn twists, we have to first isotope 
$\delta$ inside the page of the open book appropriately (see \S\ref{negtwist}).

\begin{prop}\label{THMTHM} Let $(\Sigma,\alpha,\beta)$ be a 
$\delta$-adapted Heegaard diagram of $Y$ and denote by $Y^\delta$ 
the manifold obtained from $Y$ by composing the gluing map, given 
by the attaching curves $\alpha$, $\beta$, with a positive 
Dehn twist along $\delta$ as indicated in Figure~\ref{Fig:figtwo}. 
Then the following holds:
\[
  \hfhat(Y^\delta)
  \cong 
  H_*(\cfhat(\alpha,\beta)
  \oplus
  \cfhat(\alpha,\delta),
  \partial^{f}),
\]
where $\partial^{f}$ is of the form
\[
  \left(\begin{matrix}\parhat_{\alpha\beta}^w & 
  f\\
  0&\parhat_{\alpha\delta}^w\end{matrix}\right)
\]
with $f$ a chain map 
between $(\cfhat(\alpha,\delta),\parhat^w_{\alpha\delta})$ and 
$(\cfhat(\alpha,\beta),\parhat^w_{\alpha\beta})$.
\end{prop}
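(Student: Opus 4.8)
The plan is to recognize the whole statement as an instance of Lemma~\ref{alprim01}, with $C=\cfhat(\alpha,\beta)$ and $D=\cfhat(\alpha,\delta)$. First I would fix, once and for all, the two doubly-pointed diagrams $(\Sigma,\alpha,\beta,w,z)$ and $(\Sigma,\alpha,\delta,w,z)$ with $z$ and $w$ positioned exactly as in Figure~\ref{Fig:figtwo}, arranging the diagram to be extremely weakly admissible so that the knot differentials $\parhat^w_{\alpha\beta}$ and $\parhat^w_{\alpha\delta}$ are defined. The decomposition $\talpha\cap\tbetaprime=(\talpha\cap\tbeta)\sqcup(\talpha\cap\tdelta)$ recorded above then identifies the $\ztwo$-module $\cfhat(Y^\delta)=\cfhat(\Sigma,\alpha,\betaprime)$ with $C\oplus D$: a generator is of $\beta$-type or $\delta$-type according as its $\betaprime_1$-coordinate lies on the $\beta$-part or the $\delta$-part of $\betaprime_1$, while the remaining coordinates on $\beta_2,\dots,\beta_{2g}$ are common to both diagrams.

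Next I would pin down the shape of the differential $\parhat$ on $\cfhat(Y^\delta)$. This is exactly the content of the \emph{main observation} carried out before the statement: for any class counted in $\parhat$ we have $n_z=0$, and the local picture around the single point $\delta\cap\beta_1$ (Figures~\ref{Fig:figone} and \ref{Fig:figtwo}) forces a $\beta$-boundary running from the $\delta$-part to the $\beta$-part of $\betaprime_1$ to pass through $\domstar$ or $\domststar$ with a negative local multiplicity, which is impossible for a holomorphic representative. Hence the block of $\parhat$ from $C$ to $D$ vanishes and
$$\parhat=\begin{pmatrix}A & f\\ 0 & B\end{pmatrix},$$
where $f$ is the single surviving off-diagonal part, running from the $\delta$-summand $D$ to the $\beta$-summand $C$ (the trichotomy of admissible $\beta$-boundaries lists exactly these three possibilities).

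The substantial step is to identify the diagonal blocks with the knot differentials. For $A$ I would take a class connecting two $\beta$-type generators whose $\beta$-boundary stays in the $\beta$-part of $\betaprime_1$ (case~(2) of the trichotomy) and check that its domain is supported in the complement of the twist region; since $w$ sits inside that region (in $\domstar$), such a class automatically has $n_w=0$, and its domain agrees, region by region, with a domain in the unmodified diagram $(\Sigma,\alpha,\beta,w,z)$. For a generic path of almost complex structures this yields an identification of the relevant moduli spaces and hence $A=\parhat^w_{\alpha\beta}$; the symmetric argument applied to case~(1) gives $B=\parhat^w_{\alpha\delta}$. I expect this domain-matching to be the main obstacle, as it is precisely the place where the positioning of $z$ and $w$ in Figure~\ref{Fig:figtwo} is essential: one must verify that the combination of $n_z=0$ with ``staying in the $\beta$-part'' really confines the domain away from the twist region and so forces $n_w=0$ for free.

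Finally, since $\cfhat(Y^\delta)$ is an honest chain complex, $\parhat^2=0$. Expanding the block product,
$$\begin{pmatrix}A & f\\ 0 & B\end{pmatrix}^{2}=\begin{pmatrix}A^{2} & Af+fB\\ 0 & B^{2}\end{pmatrix},$$
so $Af+fB=0$, which over $\ztwo$ reads $\parhat^w_{\alpha\beta}\circ f=f\circ\parhat^w_{\alpha\delta}$; thus $f$ is a chain map from $(\cfhat(\alpha,\delta),\parhat^w_{\alpha\delta})$ to $(\cfhat(\alpha,\beta),\parhat^w_{\alpha\beta})$. Consequently $\parhat$ coincides with the twisted differential $\partial^{f}$ of Lemma~\ref{alprim01}, and passing to homology gives the asserted isomorphism $\hfhat(Y^\delta)\cong H_*(\cfhat(\alpha,\beta)\oplus\cfhat(\alpha,\delta),\partial^{f})$.
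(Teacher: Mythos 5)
Your proposal is correct and follows essentially the same route as the paper's own proof: the same splitting of $\cfhat(\alpha,\betaprime)$ into $\beta$-type and $\delta$-type generators, the same positivity-of-intersections argument killing the block from $\beta$-type to $\delta$-type, the same boundary-condition/domain matching identifying the diagonal blocks with the knot differentials $\parhat^w_{\alpha\beta}$ and $\parhat^w_{\alpha\delta}$, and the same algebraic use of $(\parhat^\delta)^2=0$ together with Lemma~\ref{alprim01} to get the chain-map property of $f$ (the paper also offers a second, direct proof of this via ends of Maslov-index-$2$ moduli spaces, which you do not need). The only quibble is your parenthetical that $w$ sits in $\domstar$: in the paper's figures $w$ lies in its own region $\dom_w$, adjacent to but distinct from $\domstar$ and $\domststar$, though since $\dom_w$ still lies inside the twist region this does not affect your confinement argument.
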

\begin{proof}  
There is a natural identification of intersection points 
\begin{diagram}[size=2em,labelstyle=\scriptstyle]
  \talpha\cap\tbetaprime & &
  \pile{\rTo \\ \lTo} & 
  \talpha\cap\tbeta\sqcup\talpha\cap\tdelta,
\end{diagram}
i.e.~we get an isomorphism 
\[
  \epsilon\co\cfhat(\alpha,\betaprime)
  \overset{\cong}{\lra}\cfhat(\alpha,\beta)
  \oplus\cfhat(\alpha,\delta)
\] 
of modules. Pick an intersection point 
$x\in\talpha\cap\tbetaprime$ such that 
$\epsilon(x)\in\talpha\cap\tbeta$. Looking at the boundary
\begin{equation}
  \parhat^\delta x=\sum_{y}\sum_{\phi}\#\modhatphi\cdot y
  \label{boundary}
\end{equation}
we want to see that the moduli space of holomorphic discs 
connecting $x$ with an intersection 
$y\in\epsilon^{-1}(\talpha\cap\tdelta)$ is empty: Assume this 
were not the case. This means there were a holomorphic disc $\phi$ 
connecting $x$ with an element 
$y=(y_1,\dots,y_n)\in\epsilon^{-1}(\talpha\cap\tdelta)$. 
Observe that $y_1$ is a point in $\delta\cap\alpha_1$. 
Hence, $\dom(\phi)$ includes $\domstar$ or $\domststar$ since 
these are the only domains giving a 
connection between $\talpha\cap\tbeta$ and $\talpha\cap\tdelta$. 
Boundary orientations force the coefficient of $\phi$ at 
$\domstar$ or $\domststar$ to be negative. Since holomorphic 
maps are orientation preserving, this cannot be the case. So, 
the point $x$ can be connected to points in 
$\epsilon^{-1}(\talpha\cap\tbeta)$ only.\vspace{0.2cm}\\
Next observe that discs $\phi$ appearing in the sum 
$(\ref{boundary})$ all have the property 
$n_*(\phi)=n_{**}(\phi)=0$. Indeed, suppose there were a disc $\phi$
with nonnegative intersection $n_*$ or $n_{**}$. The 
$\beta$-boundary of $\phi$ starts at $x$ and runs through
$\partial\domstar$ or $\partial\domststar$. The disc
$\phi$ is holomorphic, so, the $\beta$-boundary runs from
the $\beta$-part to the $\delta$-part of $\tbetaprime$.
At the end of the $\beta$-boundary of $\phi$ the disc
converges to a point in $\talpha\cap\tbeta$. Thus, the
$\beta$-boundary of $\phi$ has to come back through 
either $\domstar$ or $\domststar$. The boundary orientation would
force $\phi$ to negatively intersect $\{*\}\times\symgmo$ or
$\{**\}\times\symgmo$. This cannot happen.
\begin{figure}[ht!]
\labellist\small\hair 2pt
\pinlabel {$\dom_*$} [Br] at 72 238
\pinlabel {$c$} [r] at 102 205
\pinlabel {$a$} [B] at 50 152
\pinlabel {$w$} [l] at 63 102
\pinlabel {$d$} [r] at 103 79
\pinlabel {$\alpha_1$} [t] at 12 50
\pinlabel {$\betaprime_1$} [t] at 122 50
\pinlabel {$\dom_{**}$} at 156 98
\pinlabel {$b$} [B] at 176 153
\pinlabel {$z$} [l] at 170 205

\pinlabel {$\dom_*$} [Br] at 315 238
\pinlabel {$c$} [r] at 345 205
\pinlabel {$z$} [l] at 413 205
\pinlabel {$w$} [B] at 303 98
\pinlabel {$d$} [r] at 346 79
\pinlabel {$\dom_{**}$} at 399 98
\pinlabel {$\alpha_1$} [t] at 255 50
\pinlabel {$\delta$} [r] at 348 145

\pinlabel {$\dom_*$} [Br] at 558 238
\pinlabel {$a$} [B] at 530 152
\pinlabel {$b$} [B] at 662 153
\pinlabel {$z$} [l] at 656 205
\pinlabel {$w$} [l] at 539 102
\pinlabel {$\dom_{**}$} at 642 98
\pinlabel {$\alpha_1$} [t] at 498 50
\pinlabel {$\beta_1$} [b] at 592 148

\endlabellist
\centering
\includegraphics[width=11cm]{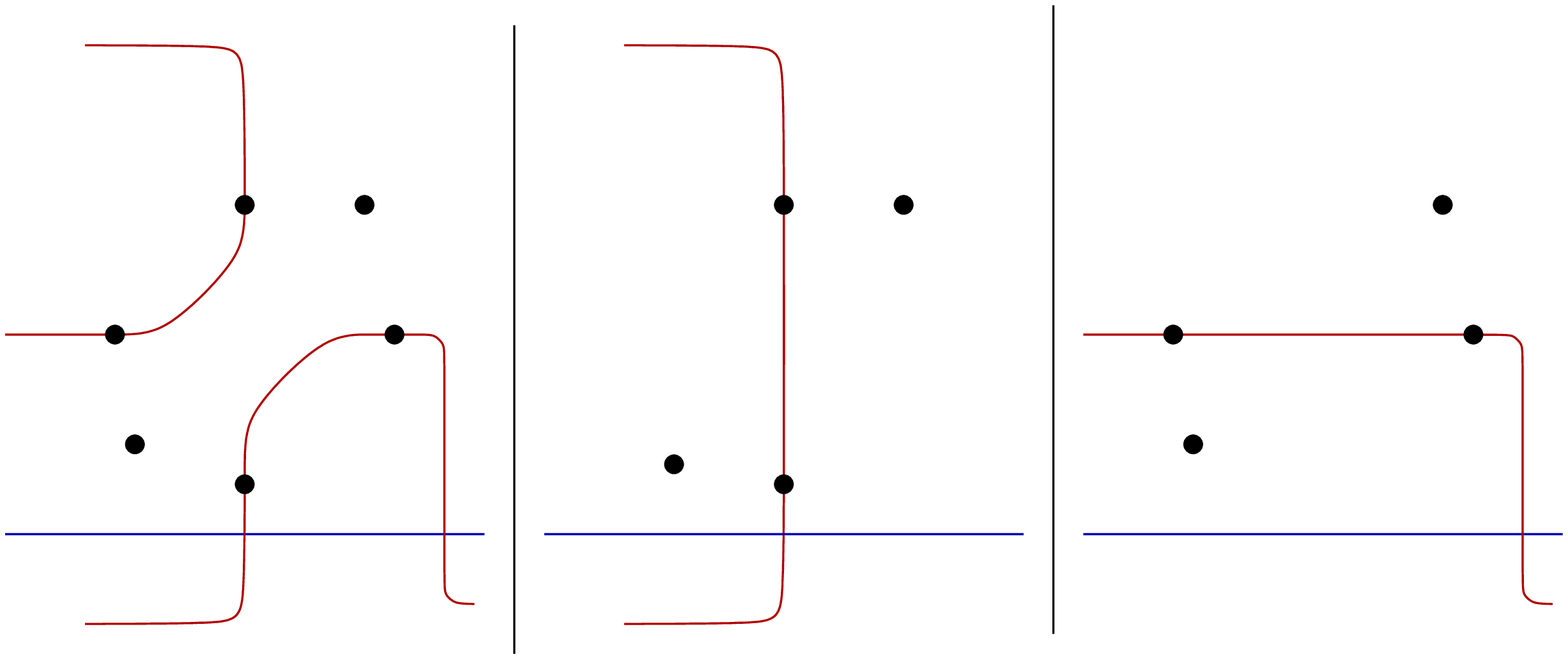}
\caption{Picture of the three different boundary conditions
arising in our discussion.}
\label{Fig:figfive}
\end{figure}

Denote by $[a,c]$ the small arc in $\beta_1'$ running through 
Figure~\ref{Fig:figfive} and define $[b,d]$ analogously. All 
discs arising in the sum have boundary conditions in $\talpha$ and 
\[
  \tbetaprime\backslash\{\{[a,c]\sqcup[b,d]\}
  \times\beta_2\times\ldots\times\beta_g\}.
\] 
Observe that 
$\tbetaprime\backslash\{\{[a,c]\sqcup[b,d]\}
\times\beta_2\times\ldots\times\beta_g\}$ has two components, one 
lying in $\tbeta$ and one lying in $\tdelta$. Since the 
$\beta$-boundary of the disc $\phi$ starts in, 
$\tbeta$ it remains there all the time. Moreover, looking at
discs $\phi$ in $(\Sigma,\alpha,\beta,z,w)$ with $n_z(\phi)=n_w(\phi)=0$, an 
analogous line of arguments as above shows that the $\beta$-boundary
of these discs stays away from 
\[
  [a,b]\times\beta_2\times\ldots\times\beta_g,
\]
where $[a,b]$ is the arc in $\beta$ pictured in the right of 
Figure~\ref{Fig:figfive}. Thus, the boundary conditions for
discs connecting intersections $\talpha\cap\tbeta$ are the same
in $(\Sigma,\alpha,\betaprime,z)$ and $(\Sigma,\alpha,\beta,z,w)$.
Thus, we have
\[
  \parhat^\delta x=\parhat_{\alpha\beta}^wx.
\]
Now suppose that $x\in\epsilon^{-1}(\talpha\cap\tdelta)$. Then 
\begin{eqnarray*}
  \parhat^\delta x
  &=&
  \sum_y
  \sum_{\phi}\#\modhatphi\cdot y
  \\
  &=&
  \sum_{y\in\talpha\cap\tdelta}
  \sum_{\phi}\#\modhatphi\cdot y 
  +
  \sum_{z\in\talpha\cap\tbeta}
  \sum_\phi\#\modhatphi\cdot z.
\end{eqnarray*}
With an analogous line of arguments as above we see that the 
first sum counts discs with $n_*=n_{**}=n_z=0$ only. The 
triviality of these intersection numbers and holomorphicity
implies that the discs have boundary conditions in 
$\talpha$ and 
\[
  \tbetaprime\backslash\{\{[a,c]\sqcup[b,d]\}
  \times\beta_2\times\ldots\times\beta_g\}.
\] 
As mentioned above this set has two components where one of them 
lies in $\tdelta$. The $\beta$-boundary of $\phi$ starts in 
$\tdelta$ and therefore remains there all the time. Again, we see
that discs connecting intersection points $\talpha\cap\tdelta$
in $(\Sigma,\alpha,\betaprime,z)$ and $(\Sigma,\alpha,\delta,z,w)$ 
have to fulfill identical boundary conditions. Thus, the moduli spaces
are isomorphic. This shows the equality
\[
  \parhat^\delta x
  =
  \parhat_{\alpha\delta}^wx
  +
  \sum_{z\in\talpha\cap\tbeta}
  \sum_\phi\#\modhatphi\cdot z.
\]
In the right sum we only count discs where $n_*\not=0$ or 
$n_{**}\not=0$. We will denote this right sum with $f(x)$.
We have to see that $f$ defines a chain map 
\[
  f\co(\cfhat(\alpha,\delta),\parhat^w_{\alpha\delta})
  \lra(\cfhat(\alpha,\beta),\parhat^w_{\alpha\beta}).
\] 
This can be proved in two ways: We know that 
$\partial^\delta=\partial^w_{\alpha\beta}
+\partial^w_{\alpha\delta}+f$. Hence, $f$ is a sum of three 
boundaries. The equality $0=(\partial^\delta)^2$ implies 
that $f$ is a chain map (cf.~Lemma~\ref{alprim01}). 
The second way is to test the chain map property directly. To do so, pick a generator 
$y\in\talpha\cap\tbetaprime$ lying in the preimage of 
$\talpha\cap\tdelta$ under $\epsilon$.
Observe that 
$(\parhat^w_{\alpha\beta}\circ f-f\circ\parhat^w_{\alpha\delta})(x)$
equals
\begin{eqnarray*}
  &\,&\sum_{z\in\talpha\cap\tdelta}
  \Bigl(\sum_{(y,\phi_2,\phi_1)}
  \#\Mhat(\phi_2)\#\Mhat(\phi_1)
  -\sum_{(y',\phi_2',\phi_1')}\#\Mhat(\phi_2')
  \#\Mhat(\phi_1')\Bigr)\cdot z\\
  &=&\sum_{z\in\talpha\cap\tdelta}c(x,z)\cdot z,
\end{eqnarray*}
where the first sum in the definition of $c(x,z)$ goes 
over elements $(y,\phi_2,\phi_1)$ in the set 
$\talpha\cap\tbeta\times\pitwo(y,z)\times\pitwo(x,y)$ with 
$\mu(\phi_2)=\mu(\phi_1)=1$, and the second sum goes over 
$(y',\phi_2',\phi_1')\in\talpha\cap\tdelta
\times\pitwo(y,z)\times\pitwo(x,y)$ 
with $\mu(\phi_2')=\mu(\phi_1')=1$. Furthermore, look 
at the 
boundary of a moduli space $\Mhat(\phi)$ connecting a 
point in $\talpha\cap\tdelta$ with a point in 
$\talpha\cap\tbeta$ with $\mu(\phi)=2$. Observe 
that we do not have to take care of boundary degenerations 
or spheres bubbling off since we are looking for maps with 
$n_z=0$ (cf.~\cite{OsZa01}). The only phenomenon 
appearing at the boundary is breaking. The boundary of 
$\Mhat(\phi)$ is modelled on
\[
  \bigsqcup_{\phi_1*\phi_2=\phi}\Mhat(\phi_1)\times\Mhat(\phi_2).
\]
There are two cases. Either $n_*(\phi_1)=n_*(\phi)$ or 
$n_*(\phi_2)=n_*(\phi)$ (the discussion for $n_{**}$ is analogous):
\begin{figure}[ht!]
\labellist\small\hair 2pt
\pinlabel {Intersection points in $\talpha\cap\tdelta$} [B] at 224 388
\pinlabel {$k$} at 18 227
\pinlabel {$0$} at 18 144
\pinlabel {$m$} at 405 227
\pinlabel {$n$} at 405 144
\pinlabel {$n_*\!=\!k$} at 212 231
\pinlabel {Intersection points in $\talpha\cap\tbeta$} [t] at 222 45
\endlabellist
\centering
\includegraphics[width=6cm]{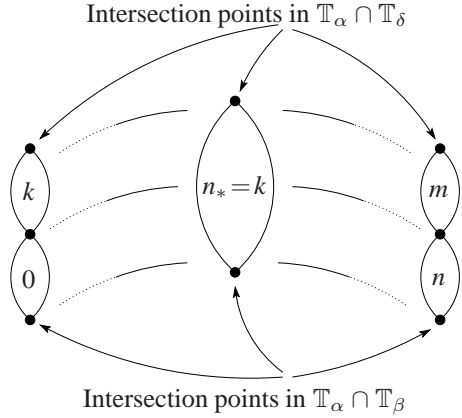}
\caption{Here we figure a moduli space with $\mu=2$ and its 
possible ends.}
\label{Fig:figsix}
\end{figure}

To prove this, we have to show that a given family of discs $\phi_n$
in $\Mhat(\phi)$ cannot converge to a broken disc $\phi_1*\phi_2$
with $n=n_*(\phi_1)\not=0$ and $m=n_*(\phi_2)\not=0$.
Figure~\ref{Fig:figsix} represents a moduli space of discs with $\mu=2$ 
and $n_*(\phi_n)=k$. We know that $n+m=k$, since intersection 
numbers behave additively under concatenation. Assume that $n,m$ were both 
non-zero: Since $n$ is non-zero, we know that $\phi_1$ connects 
a point in $\talpha\cap\tdelta$ with one in 
$\talpha\cap\tbeta$. The bottom intersection is a 
$\talpha\cap\tbeta$-intersection, since $\phi_n$ connects 
$\talpha\cap\tdelta$ with an $\talpha\cap\tbeta$-intersection by 
assumption. Hence, $\phi_2$ connects a point of 
$\talpha\cap\tbeta$ with a point in $\talpha\cap\tbeta$ and runs 
through the domain $\domstar$. This is simply not possible because 
of orientation reasons. Thus, either $n_*(\phi_1)=k$ and $n_*(\phi_2)=0$
or $n_*(\phi_1)=0$ and $n_*(\phi_2)=k$. This means the ends of 
$\Mhat(\phi)$ precisely look like
\[
  \Bigl(\bigsqcup_{\phi_2*\phi_1
  =\phi}\Mhat(\phi_2)^{*}\times\Mhat(\phi_1)\Bigr)
  \sqcup\Bigl(\bigsqcup_{\phi_2'*\phi_1'=\phi}\Mhat(\phi_2)
  \times\Mhat(\phi_1)^*\Bigr),
\]
where $*$ means that the associated discs have non-trivial 
intersection number $n_*$ or $n_{**}$. Now consider the union of 
moduli spaces of discs connecting the point $x$ and $z$ with 
Maslov index $2$. According to our discussion, the ends look like 
\[
  \Bigl(\bigsqcup_{(y,\phi_2,\phi_1)}
  \Mhat(\phi_2)\times\Mhat(\phi_1)^*\Bigr)
  \sqcup
  \Bigl(
  \bigsqcup_{(y',\phi_2',\phi_1')}\Mhat(\phi_2')^*
  \times\Mhat(\phi_1')\Bigr),
\]
where the first union goes over 
$(y,\phi_2,\phi_1)\in\talpha\cap\tbeta\times\pitwo(y,z)
\times\pitwo(x,y)$ with $\mu(\phi_2)=\mu(\phi_1)=1$
and the second union goes over $(y',\phi_2',\phi_1')\in
\talpha\cap\tdelta\times\pitwo(y,z)\times\pitwo(x,y)$ with 
$\mu(\phi_2')=\mu(\phi_1')=1$. Hence, the coefficients $c(x,z)$ all 
vanish, proving the theorem.
\end{proof}
An immediate, simple algebraic consequence 
(cf.~\S\ref{prelim:01:3}) of 
this description is the following Corollary.
\begin{cor}\label{motiv} Let $K\subset Y$ be the knot determined
by $\delta$. Then there is a long exact 
sequence
\begin{diagram}[size=2em,labelstyle=\scriptstyle]
 \dots & \rTo^{\partial_*} &\hfkhat(Y,K)&&\rTo^{\Gamma_1}&&
 \hfhat(Y_{-1}(K))&&\rTo^{\Gamma_2}&&\hfkhat(Y_0(K),\mu)&
 \rTo^{\partial_*}&\dots
\end{diagram}
with $\partial_*=-f_*$ where $f$ is the map defined in the proof of
Proposition~\ref{THMTHM}. The knot $\mu$ denotes a meridian of $K$.
\end{cor}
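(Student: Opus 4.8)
The statement is, as the author signals, essentially the algebraic Lemma~\ref{propexact} applied to the output of Proposition~\ref{THMTHM}, together with three topological identifications of the groups involved. First I would set $C=(\cfhat(\alpha,\beta),\parhat^w_{\alpha\beta})$ and $D=(\cfhat(\alpha,\delta),\parhat^w_{\alpha\delta})$, and recall that Proposition~\ref{THMTHM} exhibits the complex computing $\hfhat(Y^\delta)$ as exactly the complex $C\oplus D$ with upper-triangular differential $\partial^f$ and chain map $f\co D\lra C$. Lemma~\ref{propexact} then applies verbatim and produces a long exact sequence
\[
 \dots\lra H_*(C)\lra\hfhat(Y^\delta)\lra H_*(D)\lra\dots
\]
whose connecting homomorphism is $-f_*$, with $f$ the map built in the proof of Proposition~\ref{THMTHM}. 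This is the whole of the ``immediate algebraic consequence''; nothing further is needed on the homological-algebra side.

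It then remains to identify the three terms with the groups in the statement. For the first term, I would observe that $(\Sigma,\alpha,\beta,w,z)$ is by construction a doubly-pointed diagram subordinate (Definition~\ref{knotdiagram}) to the knot $K$ determined by $\delta$, and that the diagonal block $\parhat^w_{\alpha\beta}$ counts precisely the discs with $n_z=n_w=0$, i.e.~the knot differential of this subordinate diagram. Hence $H_*(C)=\hfkhat(Y,K)$, the meridian $\mu$ not yet entering.

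The remaining, genuinely geometric step---which I expect to be the main obstacle---is to recognize the other two manifolds through the open-book/surgery dictionary. For the middle term one must show $Y^\delta\cong Y_{-1}(K)$: composing the monodromy of the adapted open book with a positive (right-handed) Dehn twist along $\delta$ is exactly $(-1)$-surgery, with respect to the page framing, on the copy of $\delta$ realizing $K$. For the third term one must check that $(\Sigma,\alpha,\delta,w,z)$---obtained from the subordinate diagram by replacing the single curve $\beta_1$ by $\delta$, which meets $\beta_1$ once---represents the $0$-surgery $Y_0(K)$, and that the pair $(w,z)$ there reads off the meridian $\mu$ of $K$; this gives $H_*(D)=\hfkhat(Y_0(K),\mu)$.

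Both identifications hinge on tracking framings carefully: the page framing plays the role of the $0$-framing, the positive twist shifts it by $-1$, and the three boundary curves satisfy the relation $\betaprime_1\simeq\beta_1+\delta$ characteristic of the \ozs--\sza\ surgery triangle. Once these are in place, substituting the three identifications into the algebraic sequence above completes the proof, with $\partial_*=-f_*$ as asserted. The finer claim that the maps in the sequence are topological, depending only on the associated cobordism, I would leave to \S\ref{naturality}, where the invariance statements are set up for precisely this purpose.
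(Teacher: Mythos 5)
Your proposal is correct and follows essentially the same route as the paper: apply Lemma~\ref{propexact} to the splitting from Proposition~\ref{THMTHM}, identify the diagonal blocks with knot Floer homology via the subordinate diagrams (Proposition~\ref{knotfloer}), and then recognize $Y^\delta$ as $Y_{-1}(K)$ and the $\alpha\delta$-diagram as representing $(Y_0(K),\mu)$ with respect to the page framing. Your added remarks on framings and the relation $\betaprime_1\simeq\beta_1+\delta$ only flesh out what the paper dismisses as ``straightforward,'' and deferring the cobordism-invariance claim to \S\ref{naturality} matches the paper's own structure.
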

\begin{proof}
With Proposition~\ref{THMTHM} we see that 
$\hfhat(Y^\delta)$ fulfills the assumptions of 
Lemma~\ref{alprim01} and therefore Lemma~\ref{propexact}
applies. Finally, we apply Proposition~\ref{knotfloer} to identify
$H_*(\cfhat,\parhat^w)$ with the respective knot Floer homology. It 
is easy to observe that with respect to the framing induced by
the open book the manifold $Y^\delta$ 
equals $Y_{-1}(K)$, i.e.~the result of $(-1)$-surgery
along the knot $K$. We obtain the sequence
\begin{diagram}[size=2em,labelstyle=\scriptstyle]
 \dots & \rTo^{\partial_*} &\hfkhat(Y,K)&&\rTo^{\Gamma_1}&&
 \hfhat(Y_{-1}(K))&&\rTo^{\Gamma_2}&&\hfkhat(Y_{\alpha\delta},K_2)&
 \rTo^{\partial_*}&\dots,
\end{diagram}
where $(Y_{\alpha\delta},K_2)$ is the pair given by the 
data $(\Sigma,\alpha,\delta,z,w)$. It is easy to see that the pair
$(w,z)$ in the diagram $(\Sigma,\alpha,\delta)$ determines $\beta_1$ up
to orientation, i.e.~the attaching circle $\beta_1$ interpreted as a
knot in $Y_{\alpha\delta}$. This attaching circle $\beta_1$ is a meridian
for a tubular neighborhood $\mu$ of $K$ in $Y$. Finally, we have to
see that $Y_{\alpha\delta}$ equals the $0$-surgery along $K$ with respect
to the framing induced by the open book. This is straightforward.
\end{proof}
A few words about admissibility: The reader
may have noticed that we did not say anything about admissibility
of the Heegaard diagram $(\Sigma,\alpha,\delta,z,w)$, but nonetheless
talk about the knot Floer homology $\hfkhat(Y_{\alpha\delta},K_2)$
induced by this diagram. We could restrict to just saying we take
the homology induced by the data. The respective boundary operator
is well defined (finite sum) since $\parhat^\delta$ is. However, we 
would like to remark that the diagram $(\Sigma,\alpha,\delta,z,w)$
is always admissible {\it in a relaxed sense}. 
We may relax the weak-admissibility condition imposed by \ozs$\,$ and
\sza$\,$ for the definition of knot Floer homology to the 
extreme weak-admissibility condition given in Definition~\ref{extweakadm}.
The diagram $(\Sigma,\alpha,\delta,z,w)$ is always extremely 
weakly-admissible: Let $\dom$ be a non-trivial periodic domain with 
$n_w(\dom)=0$ (see~\S\ref{admsec}) and let $s$ be an arbitrary 
$\spinc$-structure such that $\bigl<c_s(s),\mathcal{H}(\dom)\bigr>=0$. By
definition of the boundary, $\partial\dom$ can be written as
\[
  \partial\dom
  =
  \sum_{i\geq1}\lambda_i\alpha_i
  +
  \kappa_1\delta
  +
  \sum_{j\geq2}\kappa_j\beta_j.
\]
Assuming that $\lambda_i\not=0$ for a $i\geq2$ or $\kappa_j\not=0$ for a
$j\geq2$, we see that $\dom$ has both positive and negative 
coefficients due to the fact that $\partial\dom$ runs through 
a configuration like given in 
Figure~\ref{Fig:zpointpos}. Thus, let us assume 
that $\lambda_i$ and $\kappa_j$ would vanish, for 
all $i,j\geq2$. The boundary of $\dom$ could be written 
as
\[
  \partial\dom
  =
  \lambda_1\alpha_1
  +
  \kappa_1\delta.
\]
However, $\kappa_1$ has to vanish, since $\delta$ runs through 
$\partial\overline{\dom_w}\cap\partial\overline{\dom_z}$ 
(see~Figure~\ref{Fig:figfive}). Finally, we get that 
$\partial\dom=\lambda_1\alpha_1$. Examining the middle part of 
Figure~\ref{Fig:figfive} we see that the part of $\alpha_1$ which is
at the right of $\delta$ is surrounded by the region $\dom_z$. Thus, 
$\lambda_1=0$.\vspace{0.3cm}\\
With help of the geometric realization of the $\bigwedge\,\!\!\!^*(H_1/Tor)$-module
structure given in \cite{OsZa01} we can easily prove the following 
proposition.
\begin{prop}\label{modstructure} 
The maps $\Gamma_1$ and $\Gamma_2$ from the exact sequence
of Corollary~\ref{motiv} respect the $\bigwedge\,\!\!\!^*(H_1/Tor)$-module 
structure of the Heegaard Floer groups in the following sense.
Let $\gamma\subset\Sigma$ be a curve. Then the following 
identities hold:
\begin{eqnarray*}
  A^{Y^\delta}_{[\gamma]_{Y^\delta}}(\Gamma_1(x))
  &=&
  \Gamma_1(A^Y_{[\gamma]_Y}(x))\\
  \Gamma_2(A^{Y^\delta}_{[\gamma]_{Y^\delta}}(x))
  &=&
  A^{Y_{\alpha\delta}}_{[\gamma]_{Y_{\alpha\delta}}}(\Gamma_2(x))
\end{eqnarray*}
\end{prop}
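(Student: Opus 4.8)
The plan is to recall the geometric description of the $\bigwedge^*(H_1/\mathrm{Tor})$-action from \cite{OsZa01}: a curve $\gamma\subset\Sigma$ acts on a generator $x=(x_1,\dots,x_g)\in\talpha\cap\tbeta$ by counting, with an extra multiplicity, holomorphic discs whose $\alpha$-boundary crosses $\gamma$. Concretely, for a disc $\phi\in H(x,y,1)$ one records the intersection number of the $\alpha$-boundary of $\phi$ with $\gamma$, and the operator $A_\gamma$ is
\[
  A_\gamma(x)=\sum_{y}\sum_{\phi\in H(x,y,1)}a(\gamma,\phi)\cdot\#\widehat{\M}(\phi)\cdot y,
\]
where $a(\gamma,\phi)$ is this boundary-crossing multiplicity. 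The key point, which I would isolate first, is that this multiplicity depends only on the homotopy class $\phi$ and on how $\gamma$ meets the $\alpha$-boundary, both of which are unchanged by the identifications built in the proof of Proposition~\ref{THMTHM}.

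Next I would carry the action through the isomorphism $\epsilon\co\cfhat(\alpha,\betaprime)\to\cfhat(\alpha,\beta)\oplus\cfhat(\alpha,\delta)$. The essential observation from Proposition~\ref{THMTHM} is that the moduli spaces of discs counted by $\parhat^{\delta}$ on $Y^\delta$ are \emph{identified} with the moduli spaces counted by $\parhat^w_{\alpha\beta}$, by $\parhat^w_{\alpha\delta}$, or by $f$, via an identification that preserves homotopy classes and their $\alpha$-boundaries. Since $\gamma$ can be chosen to lie away from the small neighborhood of $\delta\cap\beta_1$ depicted in Figure~\ref{Fig:figone}, the $\alpha$-boundary of a disc and its crossings with $\gamma$ are literally the same before and after passing through $\epsilon$. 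Therefore the action operators commute with $\epsilon$ on the nose at chain level, splitting as a block operator compatible with the block form of $\partial^{f}$. Because $\Gamma_1$ is induced by the inclusion $x\mapsto x\oplus 0$ and $\Gamma_2$ by the projection $x\oplus y\mapsto -y$ (Lemma~\ref{propexact}), the two claimed identities follow by checking commutativity of $A_\gamma$ with these chain-level maps and then passing to homology.

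The one subtlety to address carefully is that the homology class $[\gamma]$ may represent different elements of $H_1$ in the three manifolds $Y$, $Y^\delta$ and $Y_{\alpha\delta}$; this is why the statement decorates $A$ with the superscript indicating the ambient manifold and writes $[\gamma]_{Y}$, $[\gamma]_{Y^\delta}$, $[\gamma]_{Y_{\alpha\delta}}$. I would verify that the \emph{chain-level} operator induced by a fixed curve $\gamma\subset\Sigma$ is what appears in each complex, so that the identities are really statements about a single geometric curve viewed in the common Heegaard surface, and that the homological labels are just the respective images of $[\gamma]$ under the natural maps. The verification is then purely a matter of matching boundary-crossing multiplicities across the identification of moduli spaces.

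The main obstacle I anticipate is the bookkeeping at the chain level: one must confirm that the multiplicity $a(\gamma,\phi)$ assigned to a disc in $(\Sigma,\alpha,\betaprime)$ agrees with the multiplicity assigned to the corresponding disc in $(\Sigma,\alpha,\beta)$ or $(\Sigma,\alpha,\delta)$, including for the discs contributing to the off-diagonal map $f$. This reduces to the fact, established in Proposition~\ref{THMTHM}, that the relevant discs have identical $\alpha$-boundary conditions in all three pictures, so the geometric count defining $A_\gamma$ transports verbatim. Once this compatibility is in place, the commutation with $\Gamma_1$ and $\Gamma_2$ is immediate and the proposition follows.
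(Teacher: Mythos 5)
Your proposal is correct and takes essentially the same approach as the paper: both use the geometric realization of the $\bigwedge\,\!\!\!^*(H_1/Tor)$-action, transport the weights $a(\gamma,\phi)$ through the identification of homotopy classes and moduli spaces established in Proposition~\ref{THMTHM} (the weights agree because the $\alpha$-boundaries are common representatives), and exploit the resulting upper-triangular block structure of the chain-level operator --- no terms from $\talpha\cap\tbeta$ to $\talpha\cap\tdelta$ for the first identity, and the projection $\Gamma_2$ killing the off-diagonal terms for the second.
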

\begin{proof} Recall the geometric realization of 
the $\bigwedge\,\!\!\!^*(H_1/Tor)$-module structure. Given a point 
$x\in\talpha\cap\tbeta\subset\talpha\cap\tbetaprime$ 
(cf.~the proof of Proposition~\ref{THMTHM} for the appropriate
identification), by definition
\[
  A^{Y^\delta}_{[\gamma]_{Y^\delta}}(x)
  =
  \sum_{y}\sum_{\phi\in H(x,y,1)} a(\gamma,\phi)\cdot y,
\]
where $H(x,y,1)\subset\pitwo(x,y)$ is the set of Whitney discs
with $n_z=0$ and $\mu=1$. Furthermore, 
\[
  a(\gamma,\phi)
  =
  \#\modhatphi
  \cdot
  \#(u(\{-1\}\times\R),\gamma\times\symgmo)_{\talpha}
\]
where the right factor denotes the intersection number of $u(\{-1\}\times\R)$ 
and $\gamma\times\symgmo$ inside $\talpha$.
Fixing another point $y\in\talpha\cap\tbeta$, recall that these points
are connected by $\parhat^w_{\alpha\beta}$ if and only if they are
connected by $\parhat^\delta$. Moreover, there is an identification
of the respective moduli spaces. Thus, fixing a disc $\phi$ connecting
these points (in $\alpha\betaprime$), we know -- since $n_z(\phi)=0$ --
that $\phi$ connects these intersection points in the $\alpha\beta$-diagram,
too. Denoting by $[\phi]$ its class in $\pitwo$, we see that
\[
  \#\widehat{\mathcal{M}}^{\alpha\beta}_{[\phi]}
  =
  \#\widehat{\mathcal{M}}^{\alpha\betaprime}_{[\phi]}.
\]
Moreover, the intersection number in $\talpha$ used to define 
$a(\gamma,[\phi])$ coincides for both diagrams since $\phi$ is 
a common representative.
Thus, we see that
\[
  a^{Y^\delta}(\gamma,[\phi])=a^Y(\gamma,[\phi]).
\]
Recall that there are no connections from $\talpha\cap\tbeta$-intersections
to a $\talpha\cap\tdelta$-intersection in the $\alpha,\betaprime$-diagram.
Hence, the first equality given in the proposition follows.\vspace{0.3cm}\\
To show the second, fix a point
 $x\in\talpha\cap\tdelta\subset\talpha\cap\tbetaprime$. Use the same
line of arguments as above to show that the following identity holds:
\[
\begin{array}{ccccc}
  \displaystyle{
  A^{Y^\delta}_{[\gamma]_{Y^\delta}}(x)
  }
  &=&
  \displaystyle{
  \sum_y\sum_{\phi\in H(x,y,1)}a^{Y^\delta}(\gamma,\phi)\cdot y
  }
  &+&
  \displaystyle{
  \sum_z\sum_{\psi\in H(x,z,1)}a^{Y^\delta}(\gamma,\psi)\cdot z
  }
  \\
  &=&
  \displaystyle{
  A^{Y_{\alpha\delta}}_{[\gamma]_{Y_{\alpha\delta}}}(x)
  }
  &+&
  \displaystyle{
  \sum_z\sum_{\psi\in H(x,z,1)}a^{Y^\delta}(\gamma,\psi)\cdot z.
  }
\end{array}
\]
The second sum is an element in $\cfhat(\Sigma,\alpha,\beta,z,w)$.
Recall that $\Gamma_2$ is induced by the projection onto 
$\cfhat(\Sigma,\alpha,\delta,z,w)$. Hence, the second sum cancels
when projected under the map $\Gamma_2$. The second equality of the proposition
follows.
\end{proof}
In \S\ref{naturality} we will derive suitable naturality properties
of the sequence to show that the maps involved in the sequences 
are indeed topological. We will be interested in the maps denoted
by $\Gamma_1$ since these are directly related to the surgery
represented by the Dehn twist.
\begin{rem} A result similar to Corollary~\ref{motiv} and Corollary~\ref{motiv2}
can be derived that involves the groups $\hfkminus$ using methods applied in
this paragraph.
\end{rem}

\subsection{Negative Dehn Twists}\label{negtwist}
The approach for negative Dehn twists is pretty much the
same as for positive Dehn twists. In \S\ref{postwist} we already
mentioned that the situation indicated in Figure~\ref{Fig:figone}
is not suitable for performing negative Dehn twists. Performing a 
negative twist, we could not make an a priori statement 
about what generators can be connected by holomorphic discs like 
we did in \S\ref{postwist}. To get back into business we just 
need to isotope the curve $\delta$ inside the page a bit 
(or equivalently isotope some of the attaching circles). 
Figure~\ref{Fig:figthree} indicates a possible perturbation
suitable for our purposes. Comparing Figures~\ref{Fig:figtwo}
and~\ref{Fig:figthree} we see that we isotoped the curve $\delta$
a bit. 
\begin{figure}[ht!]
\labellist\small\hair 2pt
\pinlabel {boundary of $P$} [b] at 235 328
\pinlabel {boundary of $P$} [b] at 667 328
\pinlabel {$z$} [r] at 268 250
\pinlabel {$z$} [r] at 703 250
\pinlabel {$\dom_*$} at 158 118
\pinlabel {$\dom_*$} at 586 118
\pinlabel {$\dom_{**}$} [Bl] at 237 138
\pinlabel {$\dom_{**}$} [Bl] at 671 138
\pinlabel {$w$} at 202 157
\pinlabel {$w$} at 628 157
\pinlabel {$\dom_z$} [Bl] at 319 243
\pinlabel {$\dom_z$} [Bl] at 747 243
\pinlabel {$_2$} [l] at 348 122
\pinlabel {$_1$} [B] at 375 105
\pinlabel {$_2$} [l] at 780 122
\pinlabel {$_1$} [B] at 808 105
\pinlabel {$\beta_2$} [l] at 365 312
\pinlabel {$\alpha_2$} [l] at 365 276
\pinlabel {$\beta_1$} [l] at 365 204
\pinlabel {$\alpha_1$} [l] at 365 168
\pinlabel {$\alpha_2$} [l] at 365 60
\pinlabel {$\beta_2$} [l] at 365 25
\pinlabel {$\beta_2$} [l] at 796 312
\pinlabel {$\alpha_2$} [l] at 796 276
\pinlabel {$\betaprime_1$} [l] at 796 204
\pinlabel {$\alpha_1$} [l] at 796 168
\pinlabel {$\alpha_2$} [l] at 796 60
\pinlabel {$\beta_2$} [l] at 796 25
\endlabellist
\centering
\includegraphics[height=5cm]{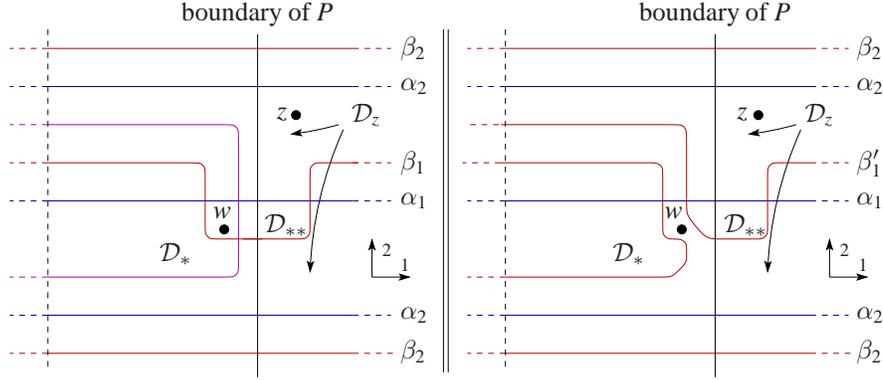}
\caption{Before and after a negative Dehn twist along $\delta$.}
\label{Fig:figthree}
\end{figure}
Observe that with this perturbation done, we again can read off the
behavior of holomorphic discs like in \S\ref{postwist} (carry over
the discussion of \S\ref{postwist} to this situation). As a consequence,
the following proposition can be proved. The proof of 
Proposition~\ref{THMTHM} carries over verbatim to a proof of
Proposition~\ref{THMTHM2}.
\begin{prop}\label{THMTHM2} Let $(\Sigma,\alpha,\beta)$ be a 
$\delta$-adapted Heegaard diagram of $Y$ and denote by $Y^\delta$ 
the manifold obtained from $Y$ by composing the gluing map, given 
by the attaching curves $\alpha$, $\beta$, with a negative 
Dehn twist along $\delta$ as hinted in Figure~\ref{Fig:figthree}. 
Then we have
\[
  \hfhat(Y^\delta)\cong 
  H_*(\cfhat(\alpha,\beta)\oplus
  \cfhat(\alpha,\delta),
  \partial^{f}),
\]
where $\partial^{f}$ is of the form
\[
  \left(\begin{matrix}\parhat_{\alpha\beta}^w & 0\\
  f&\parhat_{\alpha\delta}^w\end{matrix}\right)
\]
with $f$ a chain map 
between $(\cfhat(\alpha,\delta),\parhat^w_{\alpha\delta})$ and 
$(\cfhat(\alpha,\beta),\parhat^w_{\alpha\beta})$.\hfill $\square$
\end{prop}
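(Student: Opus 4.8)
The plan is to rerun the argument of Proposition~\ref{THMTHM} with a single geometric modification. As there, the identification of generators
\[
  \talpha\cap\tbetaprime=\talpha\cap\tbeta\sqcup\talpha\cap\tdelta
\]
yields an isomorphism of $\ztwo$-modules $\epsilon\co\cfhat(\alpha,\betaprime)\overset{\cong}{\lra}\cfhat(\alpha,\beta)\oplus\cfhat(\alpha,\delta)$, and the entire content of the proposition is the block form of the honest differential $\parhat^\delta$ of $(\Sigma,\alpha,\betaprime)$ (which computes $\hfhat(Y^\delta)$) read off through $\epsilon$. So first I would record this splitting and reduce the statement to an analysis of $\parhat^\delta$ on the two types of generators.

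The only input that changes is the location of the regions $\domstar$ and $\domststar$. In Figure~\ref{Fig:figthree} the curve $\delta$ has been isotoped inside the page so that the positivity-of-intersections argument of Proposition~\ref{THMTHM} now forbids the \emph{opposite} family of discs: it is a holomorphic disc whose $\beta$-boundary would run from the $\beta$-part of $\betaprime_1$ into its $\delta$-part that is forced to have $n_*(\phi)<0$ or $n_{**}(\phi)<0$, in contradiction to holomorphicity. Consequently the moduli spaces of discs from $\talpha\cap\tdelta$-generators to $\talpha\cap\tbeta$-generators are now empty, whereas the off-diagonal contribution is carried by discs passing from $\talpha\cap\tbeta$-generators into $\talpha\cap\tdelta$-generators. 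This is precisely what transposes the off-diagonal entry of $\partial^f$ from the upper-right corner to the lower-left corner.

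Carrying out the two cases exactly as before, for $x\in\epsilon^{-1}(\talpha\cap\tdelta)$ the base-point bookkeeping ($n_z=n_w=0$, so neither $\domstar$ nor $\domststar$ is available) confines the $\beta$-boundary of every counted disc to the $\tdelta$-component of $\tbetaprime$ away from the relevant arcs (the analogue of Figure~\ref{Fig:figfive}), giving $\parhat^\delta x=\parhat^w_{\alpha\delta}x$; this is the lower-right block. For $x\in\epsilon^{-1}(\talpha\cap\tbeta)$ the same reasoning yields $\parhat^\delta x=\parhat^w_{\alpha\beta}x+f(x)$, where $f(x)$ collects exactly the discs running through $\domstar$ or $\domststar$ and hence landing in $\talpha\cap\tdelta$. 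Thus $\parhat^\delta$ has the asserted lower-triangular shape with $f\co\cfhat(\alpha,\beta)\lra\cfhat(\alpha,\delta)$ in the lower-left position.

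It remains to see that $f$ is a chain map. The quickest route is algebraic: since $\parhat^\delta$ is a genuine differential and decomposes as the sum of its two diagonal blocks together with $f$, expanding $0=(\parhat^\delta)^2$ and using that the diagonal blocks square to zero forces the cross term $f\circ\parhat^w_{\alpha\beta}+\parhat^w_{\alpha\delta}\circ f$ to vanish (over $\ztwo$), which is exactly the chain-map relation; this is the lower-triangular variant of Lemma~\ref{alprim01} recorded at the end of \S\ref{prelim:01:3}. Alternatively one repeats the degeneration analysis of Proposition~\ref{THMTHM} (Figure~\ref{Fig:figsix}), where the ends of a Maslov-index-$2$ moduli space split so that a single factor absorbs the entire multiplicity $n_*$ (or $n_{**}$). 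I expect the only point genuinely requiring care to be the verification that the isotopy of Figure~\ref{Fig:figthree} really does reinstate the sign control in the reversed direction --- that no disc leaks through in the now-forbidden direction and that $n_z=n_w=0$ still singles out $\domstar$ and $\domststar$ as the only domains bridging the two components. Once this is checked, the remainder of the proof of Proposition~\ref{THMTHM} transfers verbatim.
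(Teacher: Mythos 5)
Your proposal is correct and takes essentially the same route as the paper, whose proof consists of the observation that after the isotopy of Figure~\ref{Fig:figthree} the proof of Proposition~\ref{THMTHM} carries over verbatim. You in fact supply the details the paper leaves implicit — correctly identifying that positivity now forbids $\beta$-boundaries running from the $\beta$-part to the $\delta$-part of $\betaprime_1$, so that the empty moduli spaces are those from $\talpha\cap\tdelta$-generators to $\talpha\cap\tbeta$-generators and the off-diagonal block becomes a chain map $f\co\cfhat(\alpha,\beta)\lra\cfhat(\alpha,\delta)$ in the lower-left corner, with the chain-map property following from $(\parhat^\delta)^2=0$ exactly as in Lemma~\ref{alprim01}.
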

\begin{cor}\label{motiv2}
Let $K\subset Y$ be the knot determined by $\delta$. Then there 
is a long exact sequence
\begin{diagram}[size=2em,labelstyle=\scriptstyle]
 \dots & \rTo^{\partial_*} &\hfkhat(Y_0(K),\mu)&&
 \rTo^{\Gamma_2}&&\hfhat(Y_{+1}(K))&&\rTo^{\Gamma_1}&&
 \hfkhat(Y,K)&\rTo^{\partial_*}&\dots
\end{diagram}
with $\partial_*=-f_*$ where $f$ is the map defined in the proof of
Proposition~\ref{THMTHM2}. The knot $\mu$
denotes a meridian of $K$. Moreover, identities
hold similar to those given in Proposition~\ref{modstructure}.\hfill$\square$
\end{cor}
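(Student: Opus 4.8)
The plan is to mirror the derivation of Corollary~\ref{motiv}, simply feeding Proposition~\ref{THMTHM2} into the algebraic machinery of \S\ref{prelim:01:3} instead of Proposition~\ref{THMTHM}. First I would observe that Proposition~\ref{THMTHM2} gives $\hfhat(Y^\delta)\cong H_*(\cfhat(\alpha,\beta)\oplus\cfhat(\alpha,\delta),\partial^f)$ with the differential now in lower-triangular form $\left(\begin{smallmatrix}\parhat^w_{\alpha\beta}&0\\f&\parhat^w_{\alpha\delta}\end{smallmatrix}\right)$, i.e.~exactly the situation of the \emph{second} form of Lemma~\ref{propexact}, where the map $f\co(\cfhat(\alpha,\delta),\parhat^w_{\alpha\delta})\lra(\cfhat(\alpha,\beta),\parhat^w_{\alpha\beta})$ goes ``the other way.'' Applying that version of Lemma~\ref{propexact} verbatim produces a long exact sequence linking $H_*(\cfhat(\alpha,\delta))$, $H_*(\cfhat(\alpha,\beta)\oplus\cfhat(\alpha,\delta),\partial^f)$ and $H_*(\cfhat(\alpha,\beta))$, with connecting homomorphism $\partial_*=-f_*$. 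The order of the three terms, and in particular the fact that the roles of $\Gamma_1$ and $\Gamma_2$ are interchanged relative to Corollary~\ref{motiv}, is dictated purely by which corner of the matrix carries the off-diagonal entry.

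Next I would translate the two diagonal homology groups into knot Floer groups using Proposition~\ref{knotfloer}, exactly as in the proof of Corollary~\ref{motiv}: $H_*(\cfhat(\alpha,\beta),\parhat^w_{\alpha\beta})\cong\hfkhat(Y,K)$ and $H_*(\cfhat(\alpha,\delta),\parhat^w_{\alpha\delta})\cong\hfkhat(Y_{\alpha\delta},K_2)$, where $(Y_{\alpha\delta},K_2)$ is the pair encoded by $(\Sigma,\alpha,\delta,z,w)$. The geometric identifications carry over unchanged: $\beta_1$, read as a knot in $Y_{\alpha\delta}$, is the meridian $\mu$ of $K$, and $Y_{\alpha\delta}$ is the $0$-surgery $Y_0(K)$ with respect to the open-book framing, so $\hfkhat(Y_{\alpha\delta},K_2)\cong\hfkhat(Y_0(K),\mu)$. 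The one genuine change from the positive-twist case is the identification of the total complex: here the \emph{negative} Dehn twist realizes $(+1)$-surgery rather than $(-1)$-surgery, so $Y^\delta\cong Y_{+1}(K)$ and $H_*(\cfhat(\alpha,\beta)\oplus\cfhat(\alpha,\delta),\partial^f)\cong\hfhat(Y_{+1}(K))$. Substituting these three identifications into the algebraically-produced sequence yields precisely the stated exact sequence.

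For the closing claim that identities analogous to those of Proposition~\ref{modstructure} hold, I would rerun the proof of Proposition~\ref{modstructure} with the obvious bookkeeping adjustments forced by the transposed differential. The geometric realization of the $\bigwedge\,\!\!\!^*(H_1/\mathrm{Tor})$-module action counts intersections $a(\gamma,\phi)$ of the $\alpha$-boundary of holomorphic discs with $\gamma\times\symgmo$, and the key facts used there---that discs with $n_z=0$ representing a class $\phi$ have identical moduli spaces and identical $a(\gamma,[\phi])$ in the $\alpha\betaprime$- and $\alpha\beta$- (resp.~$\alpha\delta$-) diagrams, and that there are no ``mixed'' connections between $\talpha\cap\tbeta$ and $\talpha\cap\tdelta$ of the forbidden type---are exactly the structural facts established in the (verbatim) proof of Proposition~\ref{THMTHM2}. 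The compatibility of $\Gamma_1$ and $\Gamma_2$ with the module structure then follows as before, the only difference being that the inclusion/projection now sit on the opposite summands.

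The main obstacle, such as it is, is bookkeeping rather than anything substantive: one must be careful to apply the \emph{correct} (``other way'') form of Lemma~\ref{propexact} so that the terms appear in the order $\hfkhat(Y_0(K),\mu)\to\hfhat(Y_{+1}(K))\to\hfkhat(Y,K)$ with $\Gamma_2$ preceding $\Gamma_1$, and to verify that a negative Dehn twist along $\delta$ in the open-book framing yields $(+1)$-surgery. This last surgery-coefficient computation is the one genuinely new geometric input, but it is immediate from the framing conventions, so the corollary follows with no essential difficulty beyond Corollary~\ref{motiv}.
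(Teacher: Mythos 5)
Your proposal is correct and follows exactly the route the paper intends: the corollary is stated with no separate proof precisely because it follows by feeding Proposition~\ref{THMTHM2} into the ``other way'' (lower-triangular) form of the exact-sequence construction from \S\ref{prelim:01:3}, then making the same identifications as in Corollary~\ref{motiv}, with the negative Dehn twist realizing $(+1)$-surgery. Your handling of the bookkeeping (the interchange of the roles of $\Gamma_1$ and $\Gamma_2$, and rerunning Proposition~\ref{modstructure} on the transposed differential) matches the paper's conventions.
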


\section{Invariance}\label{naturality}
Our goal in this paragraph is to show that the map $\Gamma_1$
in the sequences introduced are topological, i.e.~just depend
on the cobordism associated to the surgery represented by
the Dehn twist. To do that, we have to generalize our approach a
bit and try to see that everything we have done, especially the
proof of Proposition~\ref{THMTHM}, works without using a Heegaard
diagram that is necessarily induced by an open book.
Obviously, the geometric situation given in Figure~\ref{Fig:figfive} 
builds the foundation of the proof. To clarify the situation, look at
Figure~\ref{Fig:impsit}.
\begin{figure}[ht!]
\labellist\small\hair 2pt
\pinlabel {$z$} [l] at 221 205
\pinlabel {$\beta_1$} [t] at 285 123
\pinlabel {$\delta$} [l] at 149 44
\endlabellist
\centering
\includegraphics[height=3cm]{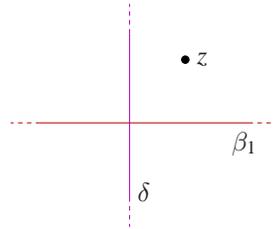}
\caption{The important geometric configuration.}
\label{Fig:impsit}
\end{figure}

We, for the moment, stick to the notation of \S\ref{introd}. We need the 
curve $\delta$ to intersect $\beta_1$ once, transversly and
to be disjoint from the other $\beta$-circles. In addition, the top right
domain at the point $\delta\cap\beta_1\in\Sigma$ has to contain the base point
$z$ (cf.~Figure~\ref{Fig:impsit}). Given this configuration, the proof of
Proposition~\ref{THMTHM} applies. The situation illustrated, does not occur 
exclusively when the Heegaard diagram is induced by an open book.
\begin{figure}[ht!]
\labellist\small\hair 2pt
\pinlabel {$K$} [B] at 18 264
\pinlabel {$K$} [B] at 355 264
\pinlabel {$K$} [B] at 686 264
\pinlabel {$z$} [l] at 247 227
\pinlabel {$z$} [l] at 584 227
\pinlabel {$z$} [l] at 917 227
\pinlabel {$\beta_1$} [tl] at 13 126
\pinlabel {$\beta_1$} [tl] at 351 126
\pinlabel {$\beta_1$} [tl] at 685 126
\pinlabel {$\alpha$} [Bl] at 229 112
\pinlabel {$\alpha$} [Bl] at 567 112
\pinlabel {$\alpha$} [Bl] at 901 112
\pinlabel {$\beta_2$} [tl] at 54 39
\pinlabel {$\beta_2$} [tl] at 392 39
\pinlabel {$\beta_2$} [tl] at 726 39
\pinlabel {$(a)$} at 178 32
\pinlabel {$(b)$} at 516 32
\pinlabel {$(c)$} at 846 32
\endlabellist
\centering
\includegraphics[width=12cm]{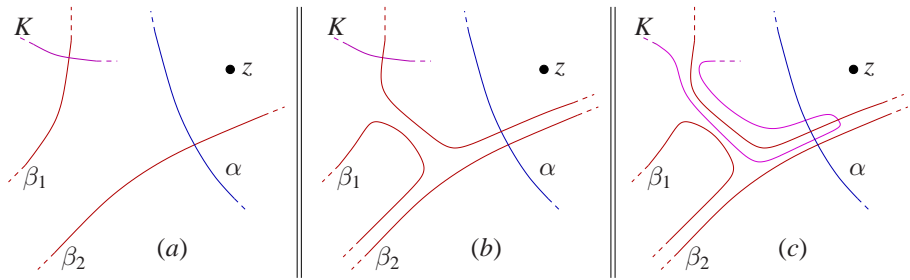}
\caption{Preparation of the Heegaard diagram.}
\label{Fig:slideprep}
\end{figure}

Given a Heegaard diagram subordinate to a knot $K$, we 
can isotope the knot $K$ onto the Heegaard surface. The isotoped
knot intersects just one $\beta$-circle once, transversely. Without loss of
generality $K$ intersects $\beta_1$. To generate a geometric configuration like 
indicated in Figure~\ref{Fig:impsit}, we 
may isotope the knot again to move the intersection $\beta_1\cap K$
to lie next to a $\dom_z$-region: 
Cutting the $\alpha$-circles out of the Heegaard surface, we obtain
a sphere with holes. The region $\dom_z$ is a region in this sphere.
Either $\dom_z$ is the whole sphere with holes or not. In case it 
is the whole sphere all the $\beta$-circles touch the region $\dom_z$ and
we are done. In case $\dom_z$ is not the whole sphere, there has to be 
at least one $\beta$-circle touching the boundary of $\dom_z$. If $\beta_1$ touches
the boundary of $\dom_z$, we are done. If $\beta_1$ does not touch the boundary of
$\dom_z$, we obtain a configuration like indicated in part $(a)$ of 
Figure~\ref{Fig:slideprep}. Without loss of generality we assume that $\beta_2$ touches
$\dom_z$. Note that it not possible for $\beta_2$ to separate $\dom_z$ from
$\beta_1$, since the complement of the $\beta$-circles in $\Sigma$ is 
connected. We are allowed to slide $\beta_1$
over this $\beta$-circle (cf.~part $(b)$ of 
Figure~\ref{Fig:slideprep}). After the 
handle slide there is a small arc $a$ 
inside $\beta_1$ touching $\dom_z$. By a small 
isotopy of the knot $K$ we can move the intersection 
point $K\cap\beta_1$ along 
the new $\beta_1$-circle until it enters 
the arc $a$ (cf.~part $(c)$ of Figure~\ref{Fig:slideprep}).\vspace{0.3cm}\\
Care has to be taken of the surgery framing. Here, we stick to
surgeries or to framed knots $K$ such that there exists a subordinate
Heegaard diagram with the framing induced by the Heegaard surface
coinciding with the framing of the knot. Evidence indicate that every
framing can be realized in this way.\vspace{0.3cm}\\
We saw that our discussion from the last paragraph can be carried over to a more
general situation. We, indeed, do not need the Heegaard diagram to be induced
by an open book. So far, we restricted the discussion to Heegaard diagrams
induced by open books, since we are interested in applications to the
contact geometric parts of the theory, which makes a discussion of this class of
diagrams inevitable.\\
Given two Heegaard diagrams subordinate to a pair $(Y,\delta)$, we transform the 
one diagram into the other by the moves introduced 
in Lemma~\ref{helplem}. These moves respect the knot complement of $\delta$. The 
goal is to show that each move preserves the exact sequence and the 
maps inherited. In the following we will call Heegaard diagrams, realizing
a geometric situation as given in Figure~\ref{Fig:figfive} for a
knot $\delta$, {\bf $\delta$-suitable}.
\begin{figure}[ht!]
\labellist\small\hair 2pt
\pinlabel {$\Theta$} [tr] at 17 34
\pinlabel {$x$} [B] at 126 235
\pinlabel {$y$} [tl] at 257 27
\pinlabel {$\talpha$} [Br] at 71 136
\pinlabel {$\tbeta$} [Bl] at 191 136
\pinlabel {$\talphaprime$} [t] at 126 26
\endlabellist
\centering
\includegraphics[width=4cm]{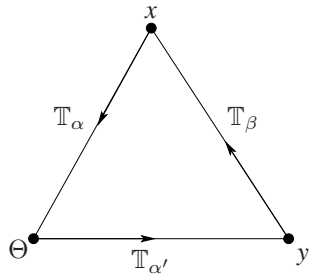}
\caption{Triangles that have to be counted for handle slides among the $\alpha$-curves.}
\label{Fig:figtriangle}
\end{figure}

We begin showing invariance under handle slides among the 
$\alpha$-curves. Although used in some papers it was never
explicitly mentioned which triangles are counted for handle slides
among the $\alpha$-curves. Given a Heegaard diagram
$(\Sigma,\alpha,\beta)$, denote by $\alpha'$ the attaching circles obtained
by a handle slide among the $\alpha$-curves. The associated map between the respective
homologies counts holomorphic triangles with boundary conditions in 
$\alpha$, $\alpha'$ and $\beta$. Figure~\ref{Fig:figtriangle} pictures a
Whitney triangle connecting a point $x\in\talpha\cap\tbeta$ with a point $y\in\talphaprime\cap\tbeta$.
Observe that in this situation $\Theta$ is a top-dimensional
generator of $\hfhat(\alpha',\alpha)$ (note the order of the
attaching circles). To not confuse the maps induced by handle slides
among the $\alpha$-circles with the maps induced by handle slides among
the $\beta$-circles, we introduce the following notation: let us denote
by $\Gamma_{\alpha,\alpha';\beta}$ the map induced by a handle slide among
the $\alpha$-circles (like indicated above) and by $\Gamma_{\alpha;\beta,\betaprime}$ the map induced
by a handle slide among the $\beta$-circles.
\begin{prop}\label{ch3invar01} Let $(\Sigma,\alpha,\beta,z)$ 
be a $\delta$-suitable Heegaard diagram and 
$(\Sigma,\alpha',\beta,z)$ be obtained by a handle slide of 
one of the $\alpha_i$. Denote by
\begin{eqnarray*}
\Gamma_{\alpha,\alpha';\beta}^w&\co&
\cfkhat(\Sigma,\alpha,\beta,z,w)\lra
\cfkhat(\Sigma,\alpha',\beta,z,w)\\
\Gamma_{\alpha,\alpha';\delta}^w&\co&
\cfkhat(\Sigma,\alpha,\delta,z,w)\lra
\cfkhat(\Sigma,\alpha',\delta,z,w)\\
\Gamma_{\alpha,\alpha';\beta'}&\co&
\cfhat(\Sigma,\alpha,\beta',z)\lra
\cfhat(\Sigma,\alpha',\beta',z)\\
\end{eqnarray*}
the induced maps. These maps induce a commutative diagram 
with exact rows
\[
\begin{diagram}[size=2em,labelstyle=\scriptstyle]
 \dots & 
 \rTo^{\partial_*} &   
 \hfkhat(\Sigma,\alpha,\beta,z,w)& 
 \rTo^{\Gamma_1} & 
 \hfhat(\Sigma,\alpha,\beta',z) & 
 \rTo^{\Gamma_2} &  
 \hfkhat(\Sigma,\alpha,\delta,z,w)& 
 \rTo^{\partial_*} &
 \dots\\
 && 
 \dTo^{\Gamma_{\alpha,\alpha';\beta}^{w,*}}
 & 
 &
 \dTo^{\Gamma_{\alpha,\alpha';\beta'}^{*}}
 & 
 &
 \dTo^{\Gamma_{\alpha,\alpha';\delta}^{w,*}}
 &&\\
 \dots & 
 \rTo^{\partial_*'} &
 \hfkhat(\Sigma,\alpha',\beta,z,w) & 
 \rTo^{\Gamma'_1} & 
 \hfhat(\Sigma,\alpha',\beta',z) & 
 \rTo^{\Gamma'_2} & 
 \hfkhat(\Sigma,\alpha',\delta,z,w) &
 \rTo^{\partial_*'} &
 \dots
\end{diagram}.
\]
\end{prop}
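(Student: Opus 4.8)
The plan is to assemble the commutative diagram from the interaction between the handle-slide maps (which count holomorphic triangles) and the three vertical columns determined by the exact sequence of Corollary~\ref{motiv}. The essential point is that all three induced maps $\Gamma_{\alpha,\alpha';\beta}^w$, $\Gamma_{\alpha,\alpha';\delta}^w$ and $\Gamma_{\alpha,\alpha';\beta'}$ are built from exactly the same triangle counts, distinguished only by which family of $\beta$-type curves ($\beta$, $\delta$, or $\betaprime$) plays the role of the third edge. Since the exact sequence itself arises, by Proposition~\ref{THMTHM}, from the splitting $\talpha\cap\tbetaprime=\talpha\cap\tbeta\sqcup\talpha\cap\tdelta$ together with the identification $\epsilon$ of chain modules, my first step is to exhibit the vertical map $\Gamma_{\alpha,\alpha';\beta'}$ on $\cfhat(\alpha,\betaprime)$ as respecting this splitting, i.e.~to show that $\epsilon\circ\Gamma_{\alpha,\alpha';\beta'}\circ\epsilon^{-1}$ is block upper-triangular with diagonal blocks $\Gamma_{\alpha,\alpha';\beta}^w$ and $\Gamma_{\alpha,\alpha';\delta}^w$.

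To do this I would run the same geometric argument as in the proof of Proposition~\ref{THMTHM}, but now one dimension up, for holomorphic triangles instead of discs. A triangle contributing to $\Gamma_{\alpha,\alpha';\betaprime}$ has one edge on $\talpha$, one on $\talphaprime$, and one on $\tbetaprime$, with the apex at the top-dimensional generator $\Theta\in\hfhat(\alpha',\alpha)$. Because $\delta$ meets $\beta_1$ in the single point visible in Figure~\ref{Fig:impsit}, and because the base point $z$ occupies the top-right region at $\delta\cap\beta_1$, the same positivity-of-intersection and boundary-orientation argument that forbade discs from crossing from the $\beta$-part to the $\delta$-part of $\betaprime_1$ (using $n_z=n_w=0$) now forbids such triangles from connecting a $\talpha\cap\tdelta$-type input to a $\talpha\cap\tbeta$-type output in the wrong direction. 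Concretely, a triangle whose input lies in $\epsilon^{-1}(\talpha\cap\tbeta)$ must, to reach a $\talpha\cap\tdelta$ output, force a negative coefficient at $\domstar$ or $\domststar$, which is impossible; so the off-diagonal block in the lower-left is zero, giving the upper-triangular form. The diagonal blocks then count triangles with boundary conditions genuinely supported in the $\tbeta$-component, respectively the $\tdelta$-component, and these triangle counts agree with those defining $\Gamma_{\alpha,\alpha';\beta}^w$ and $\Gamma_{\alpha,\alpha';\delta}^w$, exactly as the disc counts did in Proposition~\ref{THMTHM}.

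Once $\Gamma_{\alpha,\alpha';\betaprime}$ is seen to be a chain map of the form $\left(\begin{smallmatrix}\Gamma_{\alpha,\alpha';\beta}^w & * \\ 0 & \Gamma_{\alpha,\alpha';\delta}^w\end{smallmatrix}\right)$ intertwining the two differentials $\partial^f$ and $\partial'^{f'}$ of Lemma~\ref{alprim01}, the commutativity of the two squares not involving the connecting map is automatic: a morphism of mapping-cone complexes respecting the filtration induces a morphism of the associated short exact sequences $0\to C\to C\oplus D\to D\to 0$, hence a map of long exact sequences. The squares involving $\Gamma_1$ and $\Gamma_2$ therefore commute by naturality of the long exact sequence associated to a map of short exact sequences of complexes (the standard homological algebra underlying Lemma~\ref{propexact}), and the square involving $\partial_*=-f_*$ commutes because the induced map of mapping cones commutes with the off-diagonal chain homotopy-level data defining $f$.

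The main obstacle I anticipate is verifying the \emph{chain-map compatibility at the level of the off-diagonal term}, i.e.~that the upper-triangular triangle-counting map actually commutes with the two mapping-cone differentials rather than merely being upper-triangular. This requires analyzing the ends of the one-dimensional moduli spaces of index-one triangles (equivalently, a standard degeneration/gluing argument counting the ways a holomorphic triangle breaks into a triangle and a disc) and checking that the resulting associativity relation respects the block decomposition — in particular that no broken configuration crosses illegitimately between the $\tbeta$- and $\tdelta$-sectors. This is the genuine analytic input; I expect it to follow from the same orientation-and-positivity obstruction used throughout, applied now to each end of the triangle moduli space, so that the only surviving relations are precisely those asserting that $\Gamma_{\alpha,\alpha';\betaprime}$ is a chain map and that it descends to the claimed map of long exact sequences.
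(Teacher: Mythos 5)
Your proposal is correct and is essentially the paper's own proof: the geometric core --- using positivity of intersections at $\domstar$ and $\domststar$ to show that the triangle map $\Gamma_{\alpha,\alpha';\beta'}$ is upper triangular with diagonal blocks identified with $\Gamma^w_{\alpha,\alpha';\beta}$ and $\Gamma^w_{\alpha,\alpha';\delta}$ --- is exactly the argument given there. Your concluding appeal to naturality of the long exact sequence under a morphism of short exact sequences carries the same content as the paper's explicit computation, which expands the chain-map equation for $\Gamma_{\alpha,\alpha';\beta'}$ to exhibit $\bargamma$ as a chain homotopy between $\Gamma^w_{\alpha,\alpha';\beta}\circ f$ and $f'\circ\Gamma^w_{\alpha,\alpha';\delta}$; the ``obstacle'' you anticipate (degeneration analysis of index-one triangle moduli spaces) is precisely the paper's stated alternative argument, handled there by counting ends and using $\parhat\hattheta^+=0$.
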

\begin{proof} The proof of this proposition is quite similar to 
the proof of Proposition~\ref{THMTHM}. To keep the exposition 
efficient, we do not point out all details here.
Start looking at the 
map $\Gamma_{\alpha,\alpha';\beta'}$. It is defined by counting 
triangles with boundary conditions in $\talpha$, $\talphaprime$, 
$\tbetaprime$.
\begin{figure}[ht!]
\labellist\small\hair 2pt
\pinlabel {$\dom_*$} [Br] at 72 238
\pinlabel {$c$} [r] at 107 205
\pinlabel {$a$} [B] at 55 152
\pinlabel {$w$} [l] at 65 102
\pinlabel {$d$} [r] at 108 79
\pinlabel {$\alpha_1$} [t] at 20 35
\pinlabel {$\betaprime_1$} [t] at 192 35
\pinlabel {$\dom_{**}$} at 156 98
\pinlabel {$b$} [B] at 181 153
\pinlabel {$z$} [l] at 174 205

\pinlabel {$\dom_*$} [Br] at 315 238
\pinlabel {$c$} [r] at 350 205
\pinlabel {$z$} [l] at 417 205
\pinlabel {$w$} [B] at 305 98
\pinlabel {$d$} [r] at 351 79
\pinlabel {$\dom_{**}$} at 399 98
\pinlabel {$\alpha_1$} [t] at 265 35
\pinlabel {$\delta$} [r] at 352 145

\pinlabel {$\dom_*$} [Br] at 558 238
\pinlabel {$a$} [B] at 532 152
\pinlabel {$b$} [B] at 664 153
\pinlabel {$z$} [l] at 660 205
\pinlabel {$w$} [l] at 544 102
\pinlabel {$\dom_{**}$} at 642 98
\pinlabel {$\alpha_1$} [t] at 513 35
\pinlabel {$\alphaprime_1$} [B] at 513 61
\pinlabel {$\beta_1$} [b] at 592 148
\endlabellist
\centering
\includegraphics[width=11cm]{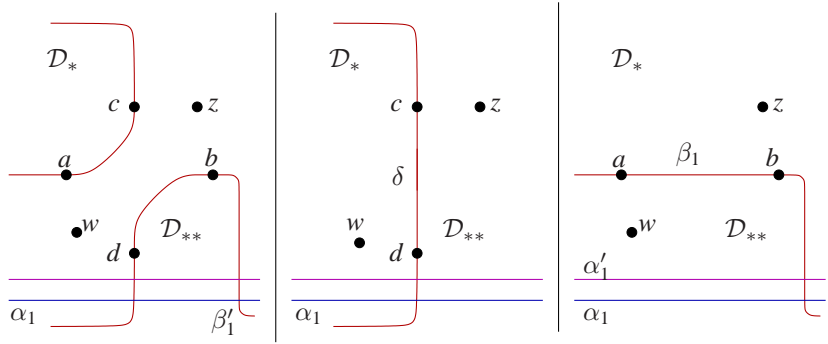}
\caption{Picture of the three different boundary conditions arising in our discussion.}
\label{Fig:figeight}
\end{figure}

Figure~\ref{Fig:figeight} illustrates the boundary conditions and how they 
look like near the region where the Dehn twist is performed. 
Analogous to the discussion in the proof of Proposition~\ref{THMTHM} 
the picture shows that 
\[
  \Gamma_{\alpha,\alpha';\beta'}
  =
  \left(
  \begin{matrix}
  \Gamma_{\alpha,\alpha';\beta}^w &
  \bargamma\\
  0&-\Gamma_{\alpha,\alpha';\delta}^w
  \end{matrix}\right),
\]
where $\bargamma$ is a map defined by counting triangles that 
connect $\talphaprime\cap\tdelta$-intersections 
with $\talpha\cap\tbeta$-intersections. This immediately shows 
commutativity of the first two boxes, i.e.
\begin{eqnarray*}
\Gamma_{\alpha,\alpha';\beta'}^*
\circ\Gamma_1&=&\Gamma'_1\circ\Gamma_{\alpha,\alpha';\beta}^{w,*}\\
\Gamma'_2\circ\Gamma_{\alpha,\alpha';\beta'}^*&=&-\Gamma_{\alpha,\alpha';\delta}^{w,*}\circ\Gamma_1.
\end{eqnarray*}
It remains to show that
\[
  \Gamma_{\alpha,\alpha';\beta}^{w,*}
  \circ
  \partial_*
  =
  \partial_*'
  \circ
  -\Gamma_{\alpha,\alpha';\delta}^{w,*}.
\]
Recall that $\partial_*$ equals the map $f$ in the definition 
of the boundary $\parhat^\delta$. These were defined by 
counting discs with $n_{*}\not=0$ or $n_{**}\not=0$. Look at 
the following expression
\[
  \Gamma_{\alpha,\alpha';\beta}^{w,*}
  \circ 
  f_*
  +
  f_*'
  \circ 
  \Gamma_{\alpha,\alpha';\delta}^{w,*}.
\]
The strategy to show its vanishing is analogous to the 
discussion of the chain map-property of $f$ in the proof 
of Proposition~\ref{THMTHM}. There are two ways to see 
this: Recall that $\Gamma_{\alpha,\alpha';\beta'}$
is a chain map. Hence, with the representation of 
$\parhat^\delta$ given in Proposition~\ref{THMTHM}, this
means that
\begin{equation}
f'\circ \Gamma_{\alpha,\alpha';\delta}^w
+\Gamma_{\alpha,\alpha';\beta}^w\circ f
=\parhat_{\alphaprime\beta}^w\circ\bargamma
+\bargamma\circ\parhat_{\alphaprime\delta}^w.\label{ch3eq01}
\end{equation}
Thus, 
\begin{eqnarray*}
0&=&
(f'\circ \Gamma_{\alpha,\alpha';\delta}^w
+\Gamma_{\alpha,\alpha';\beta}^w\circ f)_*\\
&=&f'_*\circ \Gamma_{\alpha,\alpha';\delta}^{w,*}
+\Gamma_{\alpha,\alpha';\beta}^{w,*}\circ f_*
\end{eqnarray*}
since all maps involved are chain maps. Hence, the third 
box commutes, too.
Alternatively, look at the ends of the 
moduli spaces of Whitney triangles with boundary conditions 
in $\talpha$, $\talphaprime$, $\tbetaprime$ with 
Maslov index $1$ and non-trivial intersection number 
$n_*$ or $n_{**}$. The ends look like given in Figure~\ref{Fig:figeleven}.
\begin{figure}[ht!]
\labellist\small\hair 2pt
\pinlabel {$\alphaprime$} [r] at 315 466
\pinlabel {$\alphaprime$} [r] at 305 365
\pinlabel {$\alphaprime$} [r] at 108 189
\pinlabel {$\alphaprime$} [r] at 523 189
\pinlabel {$\alphaprime$} [r] at 20 88
\pinlabel {$\alphaprime$} [r] at 315 88

\pinlabel {$\alpha$} [l] at 377 466
\pinlabel {$\alpha$} [l] at 383 365
\pinlabel {$\alpha$} [l] at 192 189
\pinlabel {$\alpha$} [l] at 602 189
\pinlabel {$\alpha$} [l] at 395 88
\pinlabel {$\alpha$} [l] at 682 105

\pinlabel {$\betaprime$} [t] at 346 299
\pinlabel {$\betaprime$} [t] at 148 127
\pinlabel {$\betaprime$} [t] at 562 127
\pinlabel {$\betaprime$} [l] at  96 62
\pinlabel {$\betaprime$} [r] at 614 62
\pinlabel {$\betaprime$} [t] at 355 28

\pinlabel {fixed point $\hattheta_+$} at 364 256

\endlabellist
\centering
\includegraphics[width=10cm]{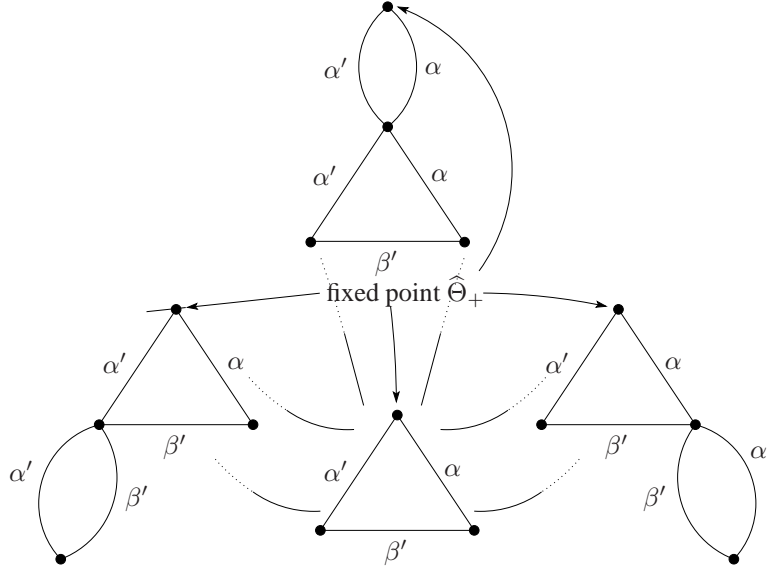}
\caption{The moduli space has three possible ends. But only
two of them count non-trivially, since $\parhat\hattheta^+=0$.}
\label{Fig:figeleven}
\end{figure}
There are three possible ends. But observe that 
the top end (cf.~Figure~\ref{Fig:figeleven}) corresponds to
 $\Gamma(x\otimes\parhat\hattheta^+)$, which vanishes since by 
definition $\parhat\hattheta^+=0$. Hence, for our situation 
there are just two 
possible types of ends to consider (the both at the bottom of 
Figure~\ref{Fig:figeleven}). Recall that breaking is the 
only phenomenon that appears here (cf.~proof of 
Proposition~\ref{THMTHM} or see \cite{OsZa01}). Proceeding as in
the proof of Proposition~\ref{THMTHM}, the commutativity of the
third box follows.
\end{proof}
\begin{prop}\label{ch3invar02} Isotopies of the 
$\alpha$-circles induce isomorphisms on the homologies such 
that all squares commute. Isotopies of the $\beta$-curves 
that miss the points $w$ and $z$ induce isomorphisms such 
that all squares commute.
\end{prop}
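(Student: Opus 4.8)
The plan is to mirror the proof of Proposition~\ref{ch3invar01} almost word for word, replacing the handle-slide maps by the maps induced by isotopy invariance. Recall from \cite{OsZa01} that an isotopy of a set of attaching circles induces a chain map counting index-$0$ holomorphic discs with a time-dependent boundary condition along the moving curves, subject to $n_z=0$ (and $n_z=n_w=0$ in the knot complexes); by the invariance results recalled in \S\ref{prelim:01:1} and Proposition~\ref{knotfloer} these maps are quasi-isomorphisms, which settles the ``isomorphism'' half of the statement. An isotopy of the $\alpha$-circles produces vertical maps
\[
  \Phi_{\alpha,\alpha';\beta}^w,\qquad
  \Phi_{\alpha,\alpha';\beta'},\qquad
  \Phi_{\alpha,\alpha';\delta}^w
\]
on the three complexes occurring in the rows of the diagram of Proposition~\ref{ch3invar01}, and an isotopy of the $\beta$-circles avoiding $w$ and $z$ produces the analogous vertical maps. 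In either case I would then show that the two squares not involving the connecting homomorphism commute, together with the square built from $\partial_*=-f_*$.

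First I would pin down the block shape of the middle map. An $\alpha$-isotopy does not cross the interval $I$ joining $w$ and $z$ (move $(m_1)$ of Lemma~\ref{helplem}), and a $\beta$-isotopy misses $w$ and $z$ by hypothesis; in both cases the local configuration of Figure~\ref{Fig:figfive}---the base point $z$ occupying the top-right domain at $\delta\cap\beta_1$, and $\delta$ meeting $\beta_1$ in a single point---persists throughout the isotopy. Hence the positivity-of-intersections argument of Proposition~\ref{THMTHM} carries over verbatim to the discs counted by the isotopy map: any such disc taking a $\talpha\cap\tbeta$-intersection to a $\tdelta$-intersection would be forced to have negative multiplicity at $\domstar$ or $\domststar$, which is impossible. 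With respect to the identification $\cfhat(\alpha,\betaprime)\cong\cfhat(\alpha,\beta)\oplus\cfhat(\alpha,\delta)$ this yields the upper-triangular form
\[
  \Phi_{\alpha,\alpha';\beta'}
  =
  \left(\begin{matrix}
  \Phi_{\alpha,\alpha';\beta}^w & \bargamma\\
  0 & \Phi_{\alpha,\alpha';\delta}^w
  \end{matrix}\right),
\]
the vanishing lower-left entry being precisely the excluded transition (signs play no role, as everything is done with $\ztwo$-coefficients).

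Reading off the diagonal entries of this matrix gives commutativity of the two squares not involving $\partial_*$, exactly as in Proposition~\ref{ch3invar01}. For the remaining square I would use that $\Phi_{\alpha,\alpha';\beta'}$ is a chain map for the twisted differentials $\partial^{f}$, $\partial^{f'}$ of Proposition~\ref{THMTHM}: expanding $\Phi_{\alpha,\alpha';\beta'}\circ\partial^{f}=\partial^{f'}\circ\Phi_{\alpha,\alpha';\beta'}$ in block form and comparing off-diagonal entries gives an identity of the same shape as equation~$(\ref{ch3eq01})$, namely
\[
  \Phi_{\alpha,\alpha';\beta}^w\circ f
  +
  f'\circ\Phi_{\alpha,\alpha';\delta}^w
  =
  \parhat_{\alpha'\beta}^w\circ\bargamma
  +
  \bargamma\circ\parhat_{\alpha'\delta}^w.
\]
Passing to homology the right-hand side vanishes, leaving $\Phi^{w,*}_{\alpha,\alpha';\beta}\circ f_{*}=f'_{*}\circ\Phi^{w,*}_{\alpha,\alpha';\delta}$, which is the commutativity of the square built from $\partial_*=-f_*$. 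Equivalently, one counts the ends of the one-dimensional moduli spaces of discs with moving boundary condition and non-trivial $n_*$ or $n_{**}$, as in Proposition~\ref{THMTHM}. The $\beta$-isotopy case runs identically, the hypothesis that the isotopy avoids $w$ and $z$ being exactly what preserves the configuration of Figure~\ref{Fig:figfive} and the bookkeeping $n_z=n_w=0$.

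The step I expect to be the main obstacle is establishing this block-triangular form for a count of discs with a \emph{moving} boundary condition rather than for the honest holomorphic-triangle count of Proposition~\ref{ch3invar01}. One must check that positivity of intersection with $\{z\}\times\symgmo$, and hence the exclusion of the lower-left transition, survives for these pseudo-holomorphic discs, and that the isotopy can be kept generic enough for the continuation map to be defined within the relaxed admissibility of \S\ref{admsec} while the local picture at $\delta\cap\beta_1$ stays intact. Once this geometric input is in place, the algebra is identical to that of Proposition~\ref{ch3invar01}.
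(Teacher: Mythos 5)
Your proposal is correct and follows essentially the same route as the paper's own (much briefer) proof: realize the isotopy by a Hamiltonian/dynamic boundary-condition continuation map, use the positivity argument of Proposition~\ref{THMTHM} to get the upper-triangular block form, read off commutativity of the first two squares, and handle the square with $\partial_*=-f_*$ via the block-expanded chain-map equation or by counting ends of moduli spaces. The details you supply (persistence of the Figure~\ref{Fig:figfive} configuration and of positivity of intersection with $\{z\}\times\symgmo$ under the moving boundary condition) are exactly the points the paper leaves implicit.
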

\begin{proof} We realize isotopies of the attaching circles by 
Hamiltonian isotopies. Hence, the induced map $\Phi$ on homology 
is defined by counting discs with dynamic boundary conditions in 
the $\alpha$-curves. The $\beta$-side remains untouched. Hence, 
by an analogous argument as in the proofs of Theorems~\ref{THMTHM} 
and \ref{ch3invar01} the map on homology splits into three 
components. The commutativity with $\Gamma_1$ and $\Gamma_2$ is 
then obviously true, and the only thing to show is the 
commutativity with the connecting homomorphism $\partial_*$ 
and $\partial_*'$. But this again can be done by counting 
appropriate ends of moduli spaces or by looking into the chain map 
equation of $\Phi$ with respect to the representation of 
$\parhat^\delta$.
\end{proof}
Consider the following situation: Let $(\Sigma,\alpha,\beta,z)$ be
a $\delta$-suitable Heegaard diagram. With the discussion in \S\ref{postwist}
we obtain a long exact sequence
\[
\begin{diagram}[size=2em,labelstyle=\scriptstyle]
 \dots & 
 \rTo^{\partial_*} &
 \hfkhat(\Sigma,\alpha,\beta,z,w) & 
 \rTo^{\Gamma_1} & 
 \hfhat(\Sigma,\alpha,\beta',z) &  
 \rTo^{\Gamma_2} & 
 \hfkhat(\Sigma,\alpha,\delta,z,w) &
 \rTo^{\partial_*} &
 \dots
\end{diagram}
\]
where we define the attaching circles 
\begin{eqnarray*}
  \betaprime&=&\{\betaprime_1,\beta_2,\dots,\beta_g\}\\
  \delta&=&\{\delta,\beta_2,\dots,\beta_g\}
\end{eqnarray*}
as it was done in \S\ref{postwist}. Define $\beta''$ by performing
a handle slide among the $\beta_i$, $i\geq2$, or by a 
handle slide of $\betaprime_1$ over $\beta_i$. Perform the same 
operation on the set of attaching circles $\beta$ to obtain $\widetilde{\beta}$.
Finally, take an isotopic push-off of $\delta$, $\delta'$ say, that
intersects $\delta$ in a cancelling pair of intersection points. Do
the same with the $\beta_i$, $i\geq2$, to get $\betaprime_i$, $i\geq 2$.
In this way we define another set of attaching circles $\delta'$ which
is given by
\[
  \deltaprime=\{\deltaprime,\betaprime_2,\dots,\betaprime_g\}.
\]
Using these data we have the following result.
\begin{prop}\label{ch3invar03} 
In the present situation, denote by
\begin{eqnarray*}
\Gamma_{\alpha;\beta,\widetilde{\beta}}^w&\co&
\cfkhat(\Sigma,\alpha,\beta,z,w)\lra
\cfkhat(\Sigma,\alpha,\widetilde{\beta},z,w)\\
\Gamma_{\alpha;\delta,\delta'}^w&\co&
\cfkhat(\Sigma,\alpha,\delta,z,w)\lra
\cfkhat(\Sigma,\alpha,\delta',z,w)\\
\Gamma_{\alpha;\beta',\beta''}&\co&
\cfhat(\Sigma,\alpha,\beta',z)\lra
\cfhat(\Sigma,\alpha,\beta'',z)\\
\end{eqnarray*}
the induced maps between the associated chain complexes. These maps induce a commutative diagram with exact rows
\[
\begin{diagram}[size=2em,labelstyle=\scriptstyle]
 \dots & 
 \rTo^{\partial_*} &
 \hfkhat(\Sigma,\alpha,\beta,z,w) & 
 \rTo^{\Gamma_1} & 
 \hfhat(\Sigma,\alpha,\beta',z) &  
 \rTo^{\Gamma_2} & 
 \hfkhat(\Sigma,\alpha,\delta,z,w) &
 \rTo^{\partial_*} &
 \dots\\
 && 
 \dTo^{\Gamma_{\alpha;\beta,\widetilde{\beta}}^{w,*}}
 & 
 &
 \dTo^{\Gamma_{\alpha;\beta',\beta''}^*}
 & 
 & 
 \dTo^{\Gamma_{\alpha;\delta,\delta'}^{w,*}} 
 &&\\
 \dots &
 \rTo^{\partial_*'} &
 \hfkhat(\Sigma,\alpha,\widetilde{\beta},z,w) & 
 \rTo^{\Gamma'_1} & 
 \hfhat(\Sigma,\alpha,\beta'',z) & 
 \rTo^{\Gamma'_2} & 
 \hfkhat(\Sigma,\alpha,\delta',z,w) &
 \rTo^{\partial_*'} &
 \dots
\end{diagram}.
\]
\end{prop}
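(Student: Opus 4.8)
The plan is to follow the proof of Proposition~\ref{ch3invar01} almost verbatim, transferring the role played there by the $\alpha$-handle slide to the present handle slide among the $\beta$-curves. First I would analyse the map $\Gamma_{\alpha;\beta',\beta''}$, which is defined by counting holomorphic triangles with boundary conditions in $\talpha$, $\tbetaprime$ and $\mathbb{T}_{\betapp}$ through the appropriate top generator $\hattheta^+$. Using the module decomposition
\[
  \cfhat(\Sigma,\alpha,\betaprime,z)
  \cong
  \cfhat(\Sigma,\alpha,\beta,z,w)\oplus\cfhat(\Sigma,\alpha,\delta,z,w)
\]
supplied by Proposition~\ref{THMTHM}, the aim is to show that $\Gamma_{\alpha;\beta',\beta''}$ is block upper-triangular,
\[
  \Gamma_{\alpha;\beta',\beta''}
  =
  \left(\begin{matrix}
  \Gamma_{\alpha;\beta,\betatilde}^w & \bargamma\\
  0 & \Gamma_{\alpha;\delta,\deltaprime}^w
  \end{matrix}\right),
\]
where $\bargamma$ counts triangles connecting a $\talpha\cap\tdelta$-intersection with a $\talpha\cap\tbeta$-intersection (the lower-right entry carries a sign as in Proposition~\ref{ch3invar01}, which is immaterial over $\ztwo$). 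The vanishing of the lower-left block is exactly the triangle analogue of the disc argument in Proposition~\ref{THMTHM}: any triangle whose incoming and outgoing vertices sit on opposite sides of the splitting must run through $\domstar$ or $\domststar$, and boundary orientations then force a negative local multiplicity there, contradicting holomorphicity. The key point is that the handle slides in play (among the $\beta_i$ with $i\geq2$, or of $\betaprime_1$ over some $\beta_i$), together with the cancelling push-off producing $\deltaprime$, can all be performed away from the neighbourhood of $\delta\cap\beta_1$ pictured in Figure~\ref{Fig:figfive}, so that the configuration making the diagram $\delta$-suitable, and in particular the domains $\domstar$ and $\domststar$, survive unchanged.

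Once the triangular form is established, commutativity of the first two squares is immediate: reading off the block decomposition against the inclusion and projection from Lemma~\ref{propexact} yields
\[
  \Gamma_{\alpha;\beta',\beta''}^*\circ\Gamma_1
  =
  \Gamma_1'\circ\Gamma_{\alpha;\beta,\betatilde}^{w,*}
  \qquad\text{and}\qquad
  \Gamma_2'\circ\Gamma_{\alpha;\beta',\beta''}^*
  =
  \Gamma_{\alpha;\delta,\deltaprime}^{w,*}\circ\Gamma_2.
\]

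The only place requiring genuine work is the third square, namely $\Gamma_{\alpha;\beta,\betatilde}^{w,*}\circ\partial_*=\partial_*'\circ\Gamma_{\alpha;\delta,\deltaprime}^{w,*}$. Since $\partial_*=-f_*$, this reduces to the vanishing on homology of $\Gamma_{\alpha;\beta,\betatilde}^{w}\circ f+f'\circ\Gamma_{\alpha;\delta,\deltaprime}^{w}$. Following the first route of Proposition~\ref{ch3invar01}, I would use that $\Gamma_{\alpha;\beta',\beta''}$ is a chain map: expanding $\Gamma_{\alpha;\beta',\beta''}\circ\parhat^\delta=\parhat^{\deltaprime}\circ\Gamma_{\alpha;\beta',\beta''}$ in block form and reading off the off-diagonal entry yields the chain-homotopy identity
\[
  f'\circ\Gamma_{\alpha;\delta,\deltaprime}^w
  +\Gamma_{\alpha;\beta,\betatilde}^w\circ f
  =
  \parhat_{\alpha\betatilde}^w\circ\bargamma
  +\bargamma\circ\parhat_{\alpha\deltaprime}^w,
\]
the analogue of equation~$(\ref{ch3eq01})$, whose right-hand side is a boundary and hence vanishes on homology. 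Alternatively one counts the ends of the Maslov-index-one triangle moduli spaces carrying nontrivial $n_*$ or $n_{**}$, exactly as in Figure~\ref{Fig:figeleven}; the top end involving $\parhat\hattheta^+$ drops out because $\parhat\hattheta^+=0$, leaving the two ends that assemble into the required identity. I expect the main obstacle to be bookkeeping rather than substance: confirming that the local model of Figure~\ref{Fig:figfive} and the domains $\domstar$, $\domststar$ really do persist through the chosen handle slides and push-off, so that the block-triangular shape and the conventions line up with those fixed in Proposition~\ref{THMTHM}.
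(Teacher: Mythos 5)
Your overall architecture does match the paper's: split $\Gamma_{\alpha;\beta',\beta''}$ into upper-triangular block form, read off commutativity of the first two squares, and get the third square from the chain-homotopy identity extracted from the chain-map equation of $\Gamma_{\alpha;\beta',\beta''}$, with $\bargamma$ serving as the homotopy. But there is a genuine gap at the heart of your proposal: you take the diagonal blocks of $\Gamma_{\alpha;\beta',\beta''}$ to \emph{be} the handle-slide maps $\Gamma^w_{\alpha;\beta,\betatilde}$ and $\Gamma^w_{\alpha;\delta,\deltaprime}$ of the sub-diagrams, asserting that this transfers from Proposition~\ref{ch3invar01}. It does not transfer. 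In Proposition~\ref{ch3invar01} the slide is on the $\alpha$-side, so the $\tbetaprime$-boundary of a contributing triangle is a single arc and the intermediate vertex lives on $\talphaprime\cap\talpha$, away from the split tori; consequently the boundary conditions of triangles match literally across the diagrams. Here the intermediate vertex $\hattheta$ lies on $\tbetaprime\cap\mathbb{T}_{\betapp}$, i.e.~exactly on the tori being split, and a triangle may ``switch direction'' there; moreover the comparison diagrams $(\Sigma,\alpha,\beta,\betatilde)$ and $(\Sigma,\alpha,\delta,\deltaprime)$ have their own, different, top generators, so there is no tautological matching of triangle moduli spaces. This is precisely the difficulty the paper flags before its proof: its Step 2 is devoted solely to proving $\Gamma_1=\Gamma^w_{\alpha;\beta,\betatilde}$ on the chain level, by writing $\Gamma_1=\Gamma^w_{\alpha;\beta,\betatilde}+R$ and killing $R$ through an explicit analysis of the triangles with $n_*$, $n_{**}$, $n_{y_1}$ or $n_{y_2}$ nonzero (Figure~\ref{Fig:figten}); and the paper \emph{never} identifies the lower-right block with $\Gamma^w_{\alpha;\delta,\deltaprime}$ at all -- it only shows that this block is a chain map, runs the commutative diagram with that block as the right-hand vertical map, and closes the argument with the $5$-Lemma. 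Your commutativity of the second square, $\Gamma'_2\circ\Gamma^*_{\alpha;\beta',\beta''}=\Gamma^{w,*}_{\alpha;\delta,\deltaprime}\circ\Gamma_2$, silently invokes exactly this unproven identification, so the right-hand portion of your diagram is not established as written.

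Two further points. First, your vanishing argument for the lower-left block (a negative local multiplicity at $\domstar$ or $\domststar$) is the disc argument of Proposition~\ref{THMTHM}; because of the possible direction switch at the intermediate vertex it does not go through verbatim for triangles, which is why the paper argues instead with the base point: from either intermediate point $a_1$, $a_2$ of $\betaprime_1\cap\betapp_1$, every admissible continuation of the boundary is forced into the region $\dom_z$, contradicting $n_z=0$. Second, your closing diagnosis -- that the remaining work is ``bookkeeping'' about the persistence of the local model through the slides -- misidentifies the obstacle: persistence is unproblematic by construction, whereas the cross-diagram comparison of triangle counts described above is the substantive issue. The quickest repair is to adopt the paper's route: prove the splitting by the $n_z$-argument, prove the identification only for the upper-left block, take the lower-right block itself as the right vertical map (its chain-map property follows from the block expansion you already wrote down; incidentally the last term of your displayed identity should be $\bargamma\circ\parhat^w_{\ad}$ rather than $\bargamma\circ\parhat^w_{\alpha\deltaprime}$, since $\bargamma$ is defined on the $\ad$-summand of the source), and then invoke the $5$-Lemma.
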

Before going {\it in medias res}, we would like to explain 
our strategy. The idea behind all main proofs
concerning the exact sequences was to show that certain 
holomorphic discs cannot exist. Up to this
point we always used the base points $w$ and $z$ in the
sense that we tried to see what implications can be made
from the conditions $n_z=n_w=0$. In addition, keeping in
mind that holomorphic maps between manifolds of the same 
dimension are orientation preserving, we 
were able to prove everything we needed. Here, however, 
it is not so easy. First we would like to to see that the map
$\Gamma_{\alpha;\beta',\beta''}$ 
can be written as
\[
  \Gamma_{\alpha;\beta',\beta''} 
  =
  \left(
  \begin{matrix}
  \Gamma_1 &  \overline{\Gamma}  \\
  0 &
  \Gamma_2
  \end{matrix}
  \right).
\]
This means we would like to show that there are no triangles
connecting $\ab$-intersections of $\talpha\cap\tbeta$ with
$\alpha\deltaprime$-intersections of
$\talpha\cap\mathbb{T}_{\beta''}$ (cf.~Figure~\ref{Fig:fignine}). This part is very similar
to the proofs already given. We could try to continue in
the same spirit and identify moduli spaces as we did before,
but this is quite messy in this situation. The reason is that
we are counting triangles, and being forced to make an
intermediate stop at the point $\hattheta$, we are able
to {\it switch our direction} there. So, comparing the boundary
conditions given in the three triple diagrams is not very
convenient. Unfortunately we were not able to avoid these
inconveniences completely, but could minimize them.
After proving the splitting, we stick to
$\Gamma_{\alpha;\beta',\beta''}$ and show that the maps
$\Gamma_1$, $\Gamma_2$, $\overline{\Gamma}$ are chain
maps and that all boxes in the diagram commute. This is
realized by counting ends of appropriate moduli spaces of
holomorphic triangles and squares. Finally, to minimize
the messy task of comparing triangles in three diagrams, we just
stick to $\Gamma_1$ and show that this map
essentially equals $\Gamma_{\alpha;\beta,\widetilde{\beta}}^w $
on the chain level. The $5$-Lemma then ends the proof.
\begin{figure}[ht!]
\labellist\small\hair 2pt
\pinlabel {$a_1$} [tr] at 225 372
\pinlabel {$z$} [r] at 348 362
\pinlabel {$\dom_*$} at 157 290
\pinlabel {$y_1$} [B] at 129 243
\pinlabel {$w$} [r] at 106 178
\pinlabel {$a_2$} [Bl] at 379 243
\pinlabel {$y_2$} [Br] at 249 208
\pinlabel {$\dom_{**}$} at 341 176
\pinlabel {$\alpha_1$} [t] at 17 138
\pinlabel {$\betaprime_1$} [Bl] at 240 43
\pinlabel {$\beta''_1$} [Br] at 219 43
\endlabellist
\centering
\includegraphics[width=8cm]{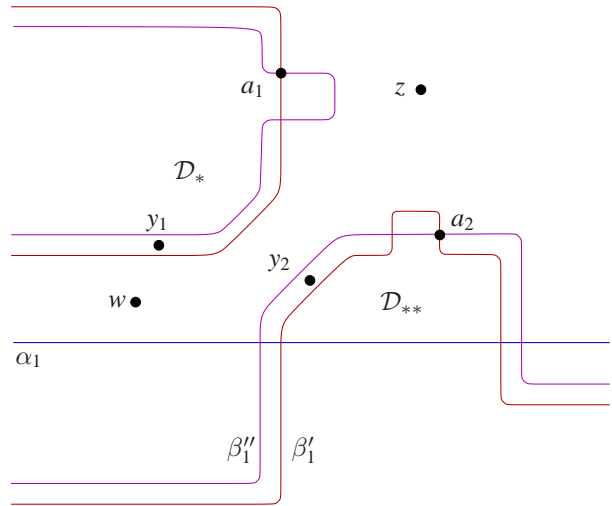}
\caption{The important part of the Heegaard diagram after handle slide.}
\label{Fig:fignine}
\end{figure}
\begin{proof} First observe that $\betaprime_1$ and $\beta''_1$
meet in two pairs of cancelling intersection points. Thus
\begin{eqnarray*}
  \Gamma_{\alpha;\beta',\beta''}
  &=&
  \fhat_{\alpha\beta'\beta''}(\,\cdot\otimes\hattheta)\\
  &=&
  \fhat_{\alpha\beta'\beta''}(\,\cdot\otimes\{a_1,\theta_2,\dots,\theta_g\})
  +
  \fhat_{\alpha\beta'\beta''}(\,\cdot\otimes\{a_2,\theta_2,\dots,\theta_g\}).
\end{eqnarray*}
So, we are looking for triangles with intermediate intersection 
$\{a_1,\theta_2,\dots,\theta_g\}$ and triangles with intermediate
intersection $\{a_2,\theta_2,\dots,\theta_g\}$.
\paragraph{Step 1 -- Splitting.}
Let $x\in\talpha\cap\tbeta$ and $y\in\talpha\cap\mathbb{T}_{\betatilde}$ be fixed.
Let
\[
  \left.\fhat_{\alpha\beta'\beta''}(x\otimes\{a_1,\theta_2,\dots,\theta_g\})\right|_y
\]
be the coefficient of $\fhat_{\alpha\beta'\beta''}(x\cdot\otimes\{a_1,\theta_2,\dots,\theta_g\})$
at the generator $y$. Suppose were is a triangle starting at $x$ and going 
to $y$ along the $\alpha$-boundary and then running to $a_1$
along its $\betaprime$-boundary. From that point we have to go 
back to $x$ again, following the red curve pictured in 
Figure~\ref{Fig:fignine}. At $a_1$ we have two choices:
we go upwards along the red curve, or we go downwards. Observe 
that going upwards, this would lead us to entering the $\dom_z$-region 
at some point and force $n_z$ to be non-zero in contradiction 
to our assumptions. Going downwards, we again enter the 
$\dom_z$-region and the boundary conditions force $n_z$ to be 
non-zero, again. Thus, there is no holomorphic triangle connecting
$x$ with $y$ along $a_1$. Thus
\[
  \left.\fhat_{\alpha\beta'\beta''}(x\otimes\{a_1,\theta_2,\dots,\theta_g\})\right|_y=0.
\]
The next step is to compute
\[
  \left.\fhat_{\alpha\beta'\beta''}(x\otimes\{a_2,\theta_2,\dots,\theta_g\})\right|_y.
\]
Suppose there were a triangle that contributes. Going along the boundary of that triangle we 
would start at $x$ and go to $y$ along the $\alpha$-boundary of the
triangle and then try to go to $a_2$ following the pink curve in Figure
\ref{Fig:fignine}. At some point we enter
$\dom_z$ forcing $n_z$ to be non-trivial. Hence, we have
\[
  \left.\fhat_{\alpha\beta'\beta''}(x\otimes\{a_2,\theta_2,\dots,\theta_g\})\right|_y=0.
\]
This shows that
\[
  \Gamma_{\alpha;\beta',\beta''} 
  =
  \left(
  \begin{matrix}
  \Gamma_1 &  \overline{\Gamma}  \\
  0 &
  \Gamma_2
  \end{matrix}
  \right).
\]
\paragraph{Step 2 -- $\Gamma_1=\Gamma_{\alpha;\beta,\widetilde{\beta}}^w$.}
First of all it is easy to see that holomorphic triangles, contributing in 
$\Gamma_{\alpha;\beta,\widetilde{\beta}}^w$, fulfill the property that $n_{y_1}=0$. Hence, together 
with $n_{w}=n_{z}=0$ the triangles have to stay away from the
regions surrounding $\beta\cap\delta$. Hence, we have
\[
  \Gamma_1=\Gamma_{\alpha;\beta,\widetilde{\beta}}^w+R.
\]
The map $R$ counts all holomorphic triangles not contributing to 
$\Gamma_{\alpha;\beta,\widetilde{\beta}}^w$.
Conversely, all holomorphic discs contributing to $\Gamma_1$ should be shown
to fulfill $n_*=n_{**}=n_{y_1}=n_{y_2}=0$. In this case $R=0$ and both maps
coincide on the chain level. Look at Figure~\ref{Fig:figten}: The situation for the $\alpha\beta\betatilde$-diagram is pictured.
\begin{enumerate}
\item Observe that there is exactly one holomorphic triangle with $n_{**}\not=0$.
This triangle contributes to $\overline{\Gamma}$.
\item There is no holomorphic triangle contributing to $\Gamma_1$ with $n_*\not=0$.
\item In a similar vein observe that these triangles in addition have trivial intersection
with $y_1$ and $y_2$.
\end{enumerate}
Thus, we see that $R=0$.
\begin{figure}[ht!]
\labellist\small\hair 2pt
\pinlabel {$z$} [r] at 257 292
\pinlabel {$y_1$} [B] at 46 215
\pinlabel {$a_2$} [Bl] at 289 176
\pinlabel {$w$} [l] at 73 111
\pinlabel {$\betatilde_1$} [l] at 357 111
\pinlabel {$\alpha_1$} [t] at 46 69
\pinlabel {$\beta_1$} [r] at 331 32
\endlabellist
\centering
\includegraphics[width=6cm]{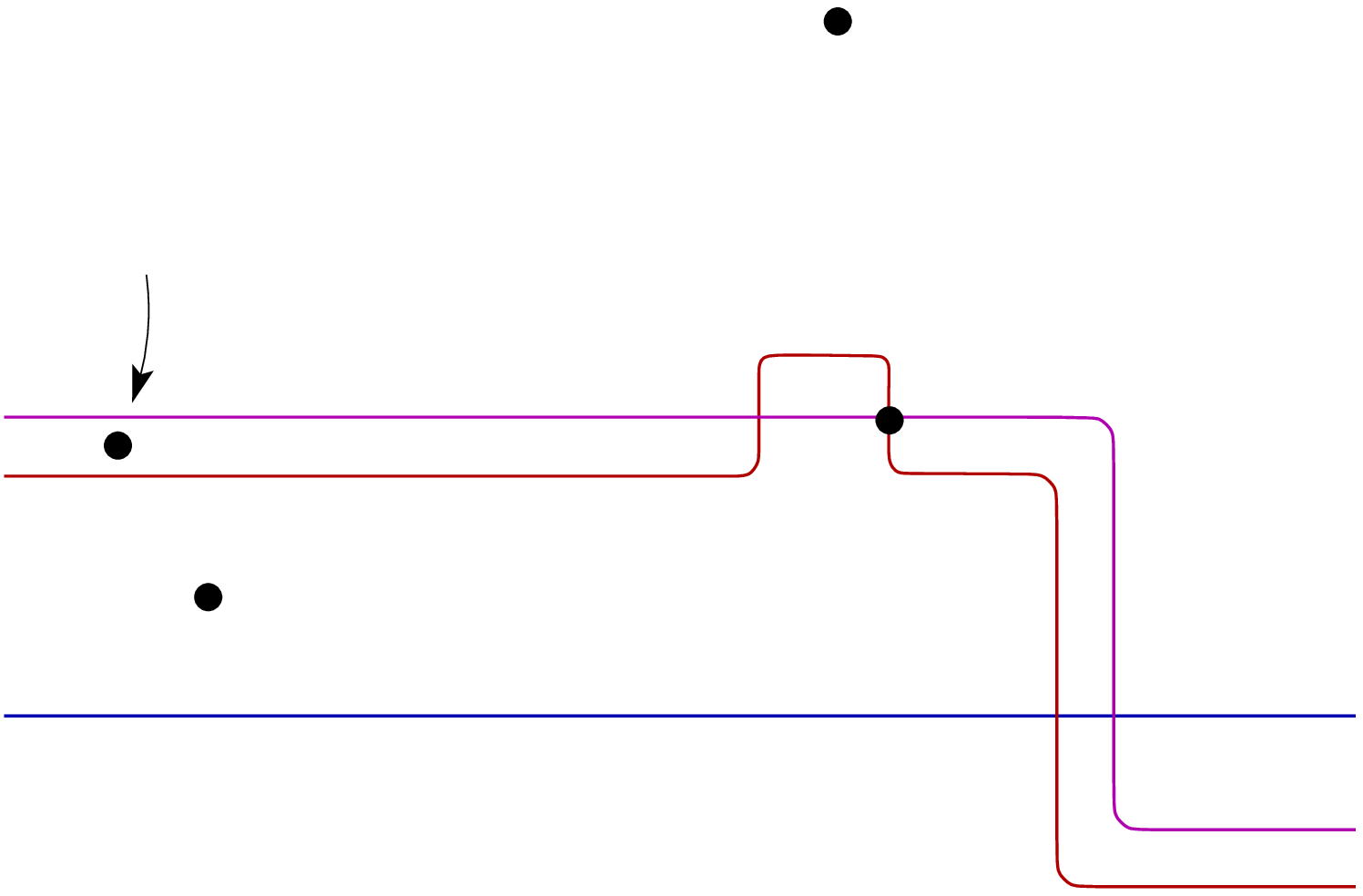}
\caption{What happens.}
\label{Fig:figten}
\end{figure}

\paragraph{Step 3 -- Chain map properties and commutativity.} Given points $x\in\talpha\cap\tdelta$ and
$y\in\talpha\cap\mathbb{T}_{\betatilde}$, look at the moduli space of holomorphic
triangles connecting $x$ with $y$, with Maslov index $1$. There are, a priori, eight ends 
from which we just write down four. The four ends missing in 
Figure~\ref{Fig:figtri} are those contributing to 
$\Gamma(\,\cdot\otimes\partial\hattheta)$, which vanishes since 
$\partial\hattheta=0$. We know that $\Gamma_{\alpha;\beta',\beta''}$ is a chain map, i.e.
\begin{eqnarray*}
  0&=&\partial\circ\Gamma_{\alpha;\beta',\beta''} 
   +\Gamma_{\alpha;\beta',\beta''} \circ\partial\\
   &=&
   \partial_{\alpha\betatilde}^w\circ\Gamma_1
   +\Gamma_1\circ\partial_{\ab}^w\\
   &&+
   \partial^w_{\alpha\betatilde}\circ\overline{\Gamma}
   +f'\circ\Gamma_2
   +\Gamma_1\circ f
   +\overline{\Gamma}\circ\partial_{\ad}^w\\
   &&+\partial^w_{\alpha\delta'}\circ\Gamma_2
   +\Gamma_2\circ\partial_{\alpha\delta}^w.
\end{eqnarray*}
The first two terms vanish since we identified $\Gamma_1$ 
with $\Gamma_{\alpha;\beta,\widetilde{\beta}}^w$, which
is a $(\partial_{\ab}^w,\partial_{\alpha\betatilde}^w)$-chain map.
The next four terms vanish since these correspond to the ends
illustrated in Figure~\ref{Fig:figtri}. Finally, since the whole
equation is zero. the last two terms cancel each other. Thus, $\Gamma_2$ is 
a chain map as desired. By construction, two of
three boxes in the diagram commute. We have to see that on
the level of homology
\[
  \Gamma_1\circ f = f'\circ\Gamma_2.
\]
Recall we showed that on the chain level
\[
  \partial^w_{\alpha\betatilde}\circ\overline{\Gamma}
   +f'\circ\Gamma_2
   +\Gamma_1\circ f
   +\overline{\Gamma}\circ\partial_{\ad}^w=0.
\]
Hence, $\overline{\Gamma}$ is a chain homotopy between
$\Gamma_1\circ f$ and $f'\circ\Gamma_2$.
\end{proof}
\begin{figure}[ht!]
\labellist\small\hair 2pt
\pinlabel {$\alpha\delta$} [B] at 89 374
\pinlabel {$\alpha\delta$} [B] at 278 374
\pinlabel {$\alpha\delta$} [l] at 97 256
\pinlabel {$\alpha\beta$} [l] at 287 256
\pinlabel {$\alpha\delta$} [B] at 468 264
\pinlabel {$\alpha\delta$} [B] at 657 264
\pinlabel {$(1)$} [t] at 89 140
\pinlabel {$(2)$} [t] at 278 140
\pinlabel {$(3)$} [t] at 468 140
\pinlabel {$(4)$} [t] at 657 140
\pinlabel {$\alpha\betatilde$} [t] at 27 140
\pinlabel {$\alpha\betatilde$} [t] at 216 140
\pinlabel {$\alpha\deltaprime$} [Br] at 415 158
\pinlabel {$\alpha\betatilde$} [Br] at 603 158
\pinlabel {$\alpha\betatilde$} [t] at 404 34
\pinlabel {$\alpha\betatilde$} [t] at 593 34
\endlabellist
\centering
\includegraphics[height=5cm]{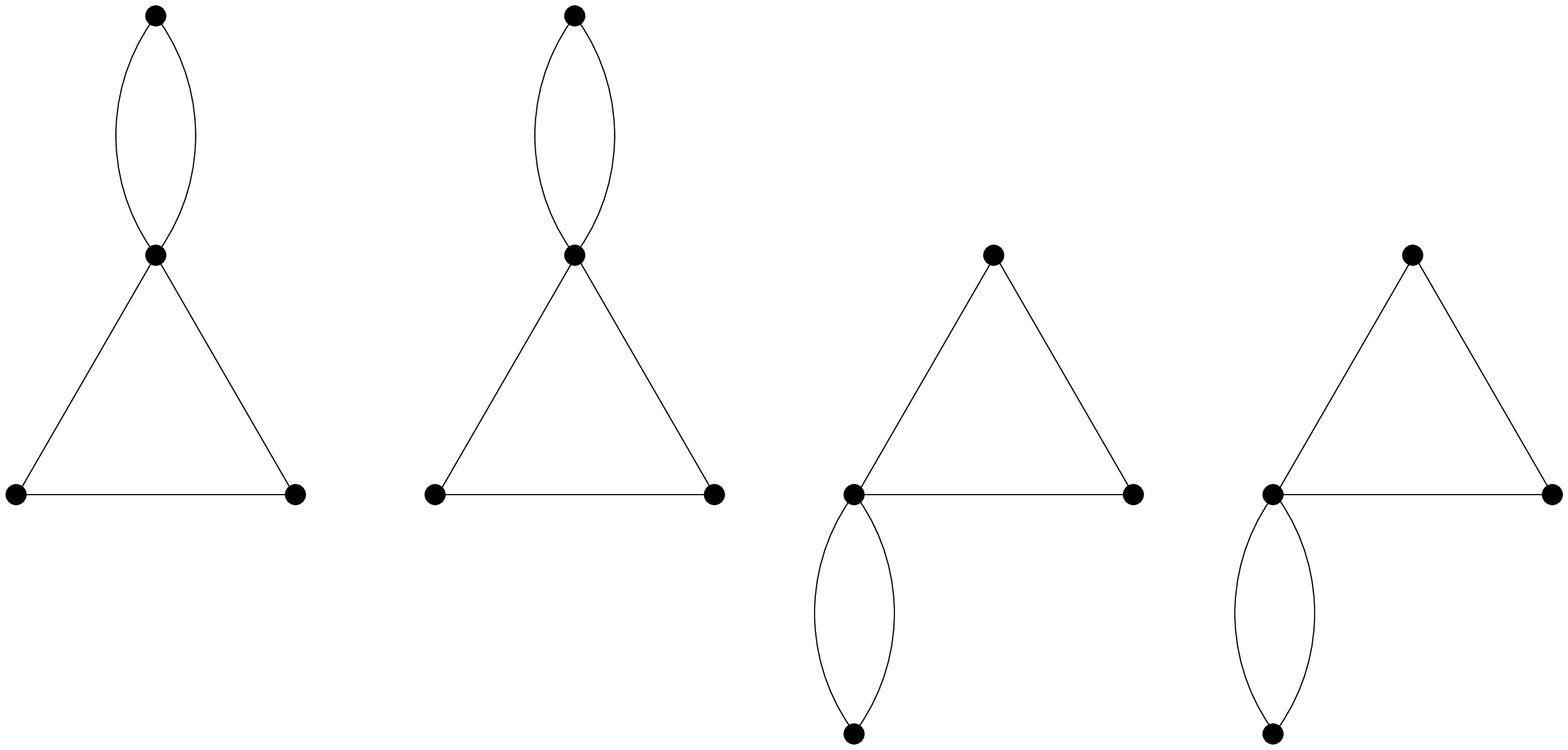}
\caption{The ends of the moduli space providing commutativity}
\label{Fig:figtri}
\end{figure}
%
%
In \cite{LOSS} the authors give an alternative proof for 
the independence of the contact element of the choice 
of cut system. We are especially interested in the technique
they used to prove Proposition 3.3 of \cite{LOSS}.
\begin{figure}[ht!]
\labellist\small\hair 2pt
\pinlabel {$\alpha_1$} at 268 254
\pinlabel {$\beta_1$} at 268 19
\endlabellist
\centering
\includegraphics[height=4cm]{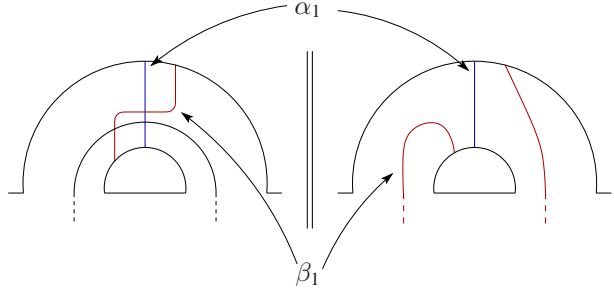}
\caption{Illustration of what happens while Giroux stabilizing.}
\label{Fig:figstabil}
\end{figure}
Recall, that given an open book $(P,\phi)$, a 
{\bf positive Giroux stabilization } of $(P,\phi)$ is 
the open book
$(P\cup h^1,\phi\circ D_\gamma^+)$ where $\gamma$ is
a closed curve in $P\cup h^1$ that intersects the
co-core of $h^1$ once, transversely. Fixing a homologically
essential, simple closed curve $\delta$ in $P$ we call the
Giroux stabilization {\bf $\delta$-elementary} if, after a
suitable isotopy, $\delta$ intersects $\gamma$ transversely in
at most one point (cf.~Definition~2.5. of \cite{LOSS}).
Their invariance proof relies on the fact that, given a 
positive Giroux stabilization, one can choose a 
cut system $a_1,\dots,a_n$
of $(P,\phi)$ such that the curve $\gamma$ does not intersect
any of the $a_i$. Observe that, given such a cut system for
$(P,\phi)$ and defining $a_{n+1}$ to be the co-core of the
handle $h^1$, then $a_1,\dots,a_{n+1}$ is a cut system for
the Giroux stabilized open book. Furthermore, observe that for $i\leq n$
\[
  \phi\circ D_\gamma^+(a_i)=\phi(a_i).
\]
Figure~\ref{Fig:figstabil} illustrates how $\phi\circ D_\gamma^+(\alpha_{n+1})$ looks like. Thus, all intersections
between $\alpha_i$ and $\beta_j$ for $i,j\leq n$ remain
unchanged, where $\alpha_{n+1}$ intersects only $\beta_{n+1}$
once, transversely. Furthermore, $D_\gamma^+(a_{n+1})$ is
disjoint from all $a_i$, $i\leq n$. And, hence, $\beta_{n+1}$ 
is disjoint from all $\alpha_i$, $i\leq n$. 
Thus, the induced Heegaard diagram looks like a stabilized
Heegaard diagram induced by the open book $(P,\phi)$ with
cut system $a_1,\dots,a_n$. Denote by $q$ the unique
intersection point of $\alpha_{n+1}$ and $\beta_{n+1}$. Then 
the map
\[
  \Phi
  \co
  \cfhat(P,\phi,\{a_1,\dots,a_n\})
  \lra
  \cfhat(P\cup h^1,D_\gamma^+\circ\phi,\{a_1,\dots,a_{n+1}\}),
\]
given by sending a generator $x$ of $\cfhat(P,\phi,\{a_1,\dots,a_n\})$
to $\Phi(x)=(x,q)$, is clearly an isomorphism of chain complexes preserving
the contact element. 
\begin{figure}[ht!]
\labellist\small\hair 2pt
\pinlabel {$\alpha_1$} at 268 254
\pinlabel {$\beta_1$} at 268 19
\endlabellist
\centering
\includegraphics[height=4cm]{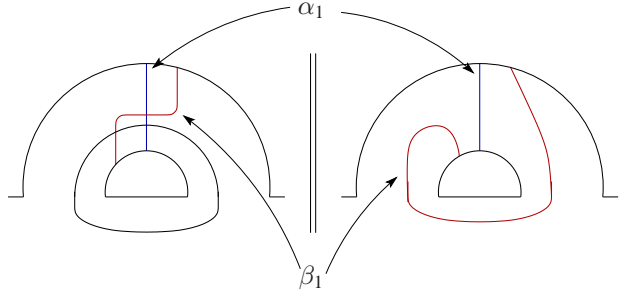}
\caption{The choice of $\gamma$ for a topological stabilization.}
\label{Fig:figstabil02}
\end{figure}

We will, however, focus our attention on a special version
of positive Giroux stabilization. Recall, that we call $(\Sigma\#T^2,\alpha',\beta')$ a 
stabilization of the Heegaard diagram $(\Sigma,\alpha,\beta)$ where we 
define $\alpha'=\alpha\cup\{\mu\}$ and $\beta'=\beta\cup\{\lambda\}$ with
$\mu$ a meridian and $\lambda$ a longitude of $T^2$. 
\begin{definition}\label{pGsrts} Let $(P,\phi)$ be an open book decomposition and let
$(P',\phi\circ D_\gamma^+)$ be a positive Giroux stabilization. We say
that the Giroux stabilization {\bf represents a topological stabilization}
if there is a cut system $\{a_1,\dots,a_n,a_{n+1}\}$ of $P'$ with the following
properties: 
\begin{enumerate}
\item[(1)] The set $\{a_1,\dots,a_n\}$ is a cut system for $P$.
\item[(2)] Denote by $(\Sigma,\alpha,\beta)$ the Heegaard diagram induced
by $(P,\phi,\{a_1,\dots,a_n\})$ and let $(\Sigma',\alpha',\beta')$ be the Heegaard
diagram induced by $(P',\phi\circ D_\gamma^+,\{a_1,\dots,a_{n+1}\})$. The diagram $(\Sigma',\alpha',\beta')$
is a stabilization of $(\Sigma,\alpha,\beta)$ up to isotopy of the attaching circles.
\end{enumerate}
\end{definition}
Look into 
Figure~\ref{Fig:figstabil02}. In this picture we present how to 
choose $\gamma$ such that the positive Giroux stabilization represents a 
topological stabilization. Indeed, the following lemma holds.
\begin{lem} Let $(P,\phi)$ be an open book decomposition and let $(P',\phi\circ D_\gamma^+)$
be a positive Giroux stabilization. The Giroux stabilization represents a topological stabilization
up to isotopy of the attaching circles if and only if $\gamma$ is isotopic to the black
curve pictured in Figure \ref{Fig:figstabil02}.
\end{lem}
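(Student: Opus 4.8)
The plan is to read off the two new attaching circles produced by the stabilized open book and to translate the condition of Definition~\ref{pGsrts} into explicit geometric constraints on $\gamma$. Following the construction preceding the lemma, choose $a_{n+1}$ to be the co-core of $h^1$; then the stabilized diagram $(\Sigma',\alpha',\beta')$ acquires exactly one new $\alpha$-curve $\alpha_{n+1}=a_{n+1}\cup\overline{a_{n+1}}$ and one new $\beta$-curve $\beta_{n+1}=b_{n+1}\cup\overline{\phi\circ D_\gamma^+(b_{n+1})}$, where $b_{n+1}$ is the push-off of $a_{n+1}$. Since $\phi$ is the identity on $h^1$ and $a_{n+1}$ lies entirely inside $h^1$, every intersection of $\alpha_{n+1}$ with another curve splits into a contribution on $P'\times\{1/2\}$ and one on $(-P')\times\{0\}$, and only the latter, through $D_\gamma^+$, depends on $\gamma$. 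I would organise both implications around the three defining features of a stabilisation in the sense recalled before Definition~\ref{pGsrts}: the pair $(\alpha_{n+1},\beta_{n+1})$ meets in a single point, $\beta_{n+1}$ is disjoint from the remaining $\alpha_i$, and the curves $\alpha_i,\beta_i$ with $i\le n$ reproduce $(\Sigma,\alpha,\beta)$ up to isotopy.

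For the ``if'' direction I would argue directly from Figure~\ref{Fig:figstabil02}. When $\gamma$ is the black curve it is disjoint from the cut system $\{a_1,\dots,a_n\}$, so $\phi\circ D_\gamma^+(a_i)=\phi(a_i)$ for $i\le n$ and the curves $\alpha_i,\beta_i$ ($i\le n$) are exactly the attaching circles of the unstabilised diagram; this gives condition (1) of Definition~\ref{pGsrts} and the reproduction of $(\Sigma,\alpha,\beta)$. Because the black curve is localised near $h^1$, the band $D_\gamma^+(b_{n+1})$ stays near $h^1$, where $\phi$ acts trivially, so a local inspection shows that $\beta_{n+1}$ meets $\alpha_{n+1}$ transversely in one point and is disjoint from all other attaching circles. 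Hence $\beta_{n+1}$ plays the role of the longitude $\lambda$, and $(\Sigma',\alpha',\beta')$ is a stabilisation of $(\Sigma,\alpha,\beta)$.

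The ``only if'' direction is the substantial one. Assuming the diagram is a topological stabilisation, I would extract constraints on $\gamma$ from the three features above. The equality $\#(\alpha_{n+1}\cap\beta_{n+1})=1$ forces the $(-P')\times\{0\}$-contribution $\#(a_{n+1}\cap\phi\circ D_\gamma^+(b_{n+1}))$ to vanish; since $\phi$ fixes $h^1$ pointwise while $\gamma$ crosses the co-core once, this controls how the $\gamma$-band may re-enter $h^1$ and so pins down the isotopy type of $\gamma$ in a neighbourhood of the handle. The requirement that $\beta_{n+1}$ be disjoint from each $\alpha_i$ with $i\le n$ forces $\phi\circ D_\gamma^+(b_{n+1})$, and therefore the $\gamma$-band carried along by $\phi$, to miss every $a_i$; combined with the reproduction of the old curves, which yields $D_\gamma^+(b_i)\simeq b_i$ and hence $\#(\gamma\cap a_i)=0$ for $i\le n$, this confines $\gamma$ to the trivial arc running once over $h^1$. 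Assembling the local and global constraints identifies $\gamma$ up to isotopy with the black curve of Figure~\ref{Fig:figstabil02}.

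The main obstacle is exactly this last step: one must show that the constraints do not merely restrict $\gamma$ but determine its isotopy class, and this forces one to control the composite $\phi\circ D_\gamma^+$ rather than $D_\gamma^+$ alone. The technical heart is that $\phi$ is the identity near $h^1$, so the local intersections of the new curves with the co-core are governed by $D_\gamma^+$ alone, whereas the disjointness of $\beta_{n+1}$ from the old $\alpha_i$ is a global condition forcing $\gamma$ off a cut system; reconciling the rigid local picture with the global one is what leaves no freedom in the isotopy class of $\gamma$.
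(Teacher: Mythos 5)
Your ``if'' direction coincides with the paper's. In the ``only if'' direction you take a genuinely different route, and there the proposal has a genuine gap: the step that actually proves the lemma --- that your constraints determine the isotopy class of $\gamma$ --- is only asserted (``assembling the local and global constraints identifies $\gamma$ up to isotopy with the black curve''), and your closing paragraph concedes that this is the main obstacle without resolving it. The constraints you extract do suffice, but one must say why, and the argument is short. From the reproduction of the old attaching circles you get $D_\gamma^+(b_i)\simeq b_i$, and since a Dehn twist along an essential simple closed curve fixes the isotopy class of a properly embedded arc only when their geometric intersection number vanishes, $i(\gamma,a_i)=0$ for all $i\le n$; as the $a_i$ are pairwise disjoint, $\gamma$ can be isotoped off all of them simultaneously. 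Now cut $P'=P\cup h^1$ along $a_1,\dots,a_n$. Because a cut system cuts $P$ into a disc, what remains is a disc with the handle $h^1$ attached, i.e.\ an annulus, inside which $\gamma$ now lies; by the definition of a Giroux stabilization $\gamma$ has algebraic intersection number $\pm1$ with the co-core of $h^1$, so it is essential in this annulus and hence isotopic to its core circle, which is exactly the black curve $\gamma_s$ of Figure~\ref{Fig:figstabil02}. Note that in this completed argument your constraints (1) and (2), and the curve $a_{n+1}$ itself, play no role; note also that in the converse direction you are not entitled to ``choose $a_{n+1}$ to be the co-core,'' since Definition~\ref{pGsrts} only provides \emph{some} cut system --- the paper must deduce that $a_{n+1}$ is isotopic to the co-core from the condition $\alpha'_{n+1}\simeq\mu_{T^2}$.

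Two further remarks. First, passing from an isotopy of closed attaching circles $b_i\cup\overline{\phi\circ D_\gamma^+(b_i)}\simeq b_i\cup\overline{\phi(b_i)}$ on $\Sigma'$ to the arc statement $D_\gamma^+(b_i)\simeq b_i$ in the page is not automatic, because an isotopy on the Heegaard surface need not respect the two pages; this deserves a sentence (the paper's own proof glosses the analogous point for its new curve). Second, for contrast: the paper's converse avoids your assembly problem entirely by never using the old curves. It uses only the two new ones --- $\alpha'_{n+1}\simeq\mu_{T^2}$ forces $a_{n+1}$ to be the co-core, and $\beta'_{n+1}\simeq\lambda_{T^2}$, compared against the model stabilization along $\gamma_s$, gives $\phi\circ D_{\gamma}^+(a_{n+1})\simeq\phi\circ D_{\gamma_s}^+(a_{n+1})$, hence $D_{\gamma}(a_{n+1})\simeq D_{\gamma_s}(a_{n+1})$, hence $\gamma\simeq\gamma_s$. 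Your route, once completed as above, is a legitimate and arguably more self-contained alternative, since it replaces comparison with a model by an explicit normal form for $\gamma$; as submitted, however, the decisive step is missing.
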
 
\begin{proof} Given an open book decomposition $(P,\phi)$ and a positive Giroux stabilization
$(P',\phi\circ D_\gamma^+)$ with $\gamma$ like indicated in Figure \ref{Fig:figstabil02}, this
stabilization clearly represents a topological stabilization up to isotopy: Recall that
$P'=P\cup h^1$. Choose a cut system $\{a_1,\dots,a_n\}$ of $P$ such that 
$\partial a_i$, $i=1,\dots,n$, is disjoint from the region where the handle $h^1$ is 
attached on. Define $a_{n+1}$ as the co-core of the handle $h^1$. Picturing the resulting
Heegaard diagrams we see that the positive Giroux stabilization represents a topological
stabilization up to isotopy.\\
Conversely, suppose we are given a Giroux stabilization representing a topolgical stabilization
up to isotopy, then we have to show that $\gamma$ is isotopic to the black curve, $\gamma_s$ say, 
indicated in Figure \ref{Fig:figstabil02}. First note that the handle is attached on one boundary
component of $P$. If $h^1$ connects two different boundary components of $P$, the genus of the 
resulting Heegaard surface would increase by $2$. By assumption there is a cut system 
$\{a_1,\dots,a_{n+1}\}$ for $P'$ fulfilling properties $(1)$ and $(2)$, given in 
Definition \ref{pGsrts}. As in Definition \ref{pGsrts}, denote by $(\Sigma,\alpha,\beta)$ and
$(\Sigma',\alpha',\beta')$ the respective Heegaard diagrams. By assumption, $\Sigma'=\Sigma\# T^2$
and, after applying suitable isotopies, $\alpha_i=\alpha'_i$ and $\beta_i=\betaprime_i$ for all
$i=1,\dots,n$. We have, that
\begin{eqnarray*}
  \alpha'_{n+1}&=& a_{n+1}\cup\overline{a_{n+1}}\\
  \beta'_{n+1}&\sim& a_{n+1}\cup\overline{\phi\circ D_\gamma^+(a_{n+1})}
\end{eqnarray*}
with 
\begin{eqnarray}
  \alpha'_{n+1}&\sim&\mu_{T^2}\label{alphaass}\\
  \beta'_{n+1}&\sim&\lambda_{T^2}\label{betaass}.
\end{eqnarray}
By (\ref{alphaass}), we see that $a_{n+1}$ is isotopic to the co-core of $h^1$. This can be
read off from Figure \ref{Fig:again}.
\begin{figure}[ht!]
\labellist\small\hair 2pt
\pinlabel {$\Sigma'$} at 125 488
\pinlabel {$P'$} at 667 488
\pinlabel {$h^1$} [r] at 65 112
\pinlabel {$a_{n+1}$} at 45 35
\pinlabel {$\mu_{T^2}$} [l] at 309 34
\pinlabel {$h^1$} [l] at 801 178
\pinlabel {$a_{n+1}$} [l] at 801 59
\endlabellist
\centering
\includegraphics[height=6cm]{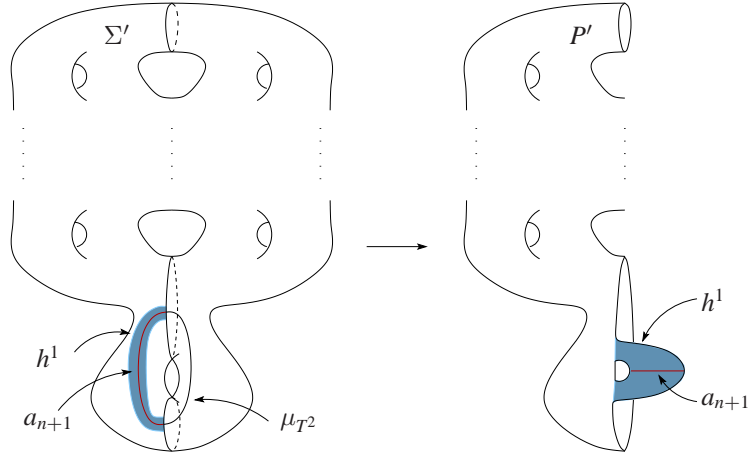}
\caption{The left portion pictures $\Sigma'$ and the right portion the page $P'$ and how
it is obtained from $P$.}
\label{Fig:again}
\end{figure}
Hence, we have
\[
  a_{n+1}\cup\overline{\phi\circ D_{\gamma}^+(a_{n+1})}
  =
  \beta_{n+1}
  \sim
  \lambda_{T^2}
  \sim
  a_{n+1}\cup\overline{\phi\circ D_{\gamma_s}^+(a_{n+1})}.
\]
So, $\phi\circ D_{\gamma}^+(a_{n+1})$ is isotopic to $\phi\circ D_{\gamma_s}^+(a_{n+1})$,
which is equivalent to saying that $D_{\gamma}(a_{n+1})$ is isotopic to $D_{\gamma_s}(a_{n+1})$. 
But this finally implies that $\gamma$ is isotopic to $\gamma_s$.
\end{proof}
%
%
\begin{prop}\label{ch3invar04} Let $(P,\phi)$ be an open book 
decomposition of $Y$ and $(P',\phi\circ D_\gamma^+)$ a positive
$\delta$-elementary Giroux stabilization representing a topological
stabilization (cf.~Definition~\ref{pGsrts} and look at Figure~\ref{Fig:figstabil02}). Then 
there are isomorphisms $\phi_1$, $\phi_2$ and $\phi_3$ on homology 
such that the following diagram commutes
\[
\begin{diagram}[size=2em,labelstyle=\scriptstyle]
 \dots & 
 \rTo^{\partial_*} &
 \hfkhat(P,\phi,\delta) & 
 \rTo^{\Gamma_1} & 
 \hfhat(P,D_\delta^+\circ\phi) & 
 \rTo^{\Gamma_2} & 
 \hfkhat(P,\widetilde{\phi}) &
 \rTo^{\partial_*} &
 \dots
 \\
 && 
 \dTo^{\phi_1}_\cong
 &
 &
 \dTo^{\phi_2}_\cong
 &
 &
 \dTo^{\phi_3}_\cong
 &&\\
 \dots & 
 \rTo^{\partial_*'} &
 \hfkhat(P',\phi\circ D_\gamma^+,\delta) & 
 \rTo^{\Gamma'_1} & 
 \hfhat(P',D_\delta^+\circ\phi\circ D_\gamma^+,z) &
 \rTo^{\Gamma'_2} &
 \hfkhat(P',\widetilde{\phi}\circ D_\gamma^+) & 
 \rTo^{\partial_*'} &
 \dots
\end{diagram}.
\]
\end{prop}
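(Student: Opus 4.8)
The plan is to produce all three vertical maps from the single combinatorial recipe $x\mapsto(x,q)$ that was shown, in the discussion preceding Definition~\ref{pGsrts}, to be a chain isomorphism for a positive Giroux stabilization representing a topological stabilization; here $q=\alpha_{n+1}\cap\beta_{n+1}$ is the unique new generator created by the canceling handle $h^1$ with co-core $a_{n+1}$. First I would observe that this same append map makes sense on each of the three complexes of the top row --- the $\cfkhat$ of the $(\alpha,\beta)$- and $(\alpha,\delta)$-diagrams and the $\cfhat$ of the $(\alpha,\betaprime)$-diagram --- and, by the standard stabilization invariance of the (knot) Floer groups, descends to the three isomorphisms $\phi_1$, $\phi_2$, $\phi_3$ of the proposition. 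Since bijectivity is thus automatic, the entire content is the commutativity of the three squares.

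The heart of the argument, which I would carry out next, is a locality statement: under the combined hypotheses the attached handle and the new point $q$ lie entirely away from the geometric configuration of Figure~\ref{Fig:figfive}. The topological-stabilization condition (Definition~\ref{pGsrts}, Figure~\ref{Fig:figstabil02}) guarantees that $h^1$ is a standard canceling handle, so that appending $q$ is a genuine chain isomorphism, while the $\delta$-elementary condition keeps $\gamma$, and hence the handle, disjoint from the regions $\domstar$, $\domststar$ and from the point $\delta\cap\beta_1$. Consequently the append map respects the splitting $\talpha\cap\tbetaprime=\talpha\cap\tbeta\sqcup\talpha\cap\tdelta$ of Proposition~\ref{THMTHM}: it carries $\talpha\cap\tbeta$-generators to $\talpha\cap\tbeta$-generators and $\talpha\cap\tdelta$-generators to $\talpha\cap\tdelta$-generators, so that under the identification of $\cfhat(\alpha,\betaprime)$ with the mapping cone $\cfhat(\alpha,\beta)\oplus\cfhat(\alpha,\delta)$ the map $\phi_2$ becomes block-diagonal, $\phi_2=\left(\begin{smallmatrix}\phi_1 & 0 \\ 0 & \phi_3\end{smallmatrix}\right)$.

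Granting this, the commutativity of all three squares is purely algebraic, and I expect no chain homotopies or appeal to the $5$-lemma to be necessary (in contrast to Proposition~\ref{ch3invar03}). Recalling from Lemma~\ref{propexact} that $\Gamma_1$ is inclusion and $\Gamma_2$ is projection, block-diagonality of $\phi_2$ makes the two outer squares commute strictly on the chain level. For the remaining square I would simply write out the chain-map equation $(\partial^f)'\circ\phi_2=\phi_2\circ\partial^f$ for the block-diagonal $\phi_2$, using the upper-triangular form $\partial^f=\left(\begin{smallmatrix}\parhat^w_{\alpha\beta} & f \\ 0 & \parhat^w_{\alpha\delta}\end{smallmatrix}\right)$ of Proposition~\ref{THMTHM}: its diagonal entries reproduce the statements that $\phi_1$ and $\phi_3$ are chain maps, while its $(1,2)$-entry reads precisely $f'\circ\phi_3=\phi_1\circ f$, which on homology is the commutativity $\partial_*'\circ\phi_3=\phi_1\circ\partial_*$ of the third square (recall $\partial_*=-f_*$). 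The main obstacle is therefore not algebraic but geometric: rigorously establishing the locality of the handle with respect to both the $\domstar$/$\domststar$ regions governing $f$ and the $\beta$/$\delta$ splitting, which is exactly where both the $\delta$-elementary and the topological-stabilization hypotheses are indispensable.
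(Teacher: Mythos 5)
Your proposal is correct and is essentially the paper's own proof: the paper likewise arranges all three stabilized open-book diagrams to be simultaneous stabilizations of the original ones, defines $\phi_1$, $\phi_2$, $\phi_3$ by appending the new generator $q$, and then asserts chain-level commutativity (which your block-diagonal $\phi_2$ together with the $(1,2)$-entry identity $\phi_1\circ f=f'\circ\phi_3$ makes explicit). One refinement on the geometric step: since the monodromy is composed as $D_\delta^+\circ\phi\circ D_\gamma^+$, the curve that must be kept disjoint from $\gamma\cap P$ is $\phi^{-1}(\delta)$, and the paper obtains this separation from the topological-stabilization choice of $\gamma$ (Figure~\ref{Fig:figstabil02}) together with a small isotopy, rather than from the $\delta$-elementary hypothesis alone.
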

\begin{rem} General positive Giroux stabilizations do not preserve
the exact sequence. The reason is that in the general situation $\gamma\cap P$
and $\phi^{-1}(\delta)$ might intersect and cannot be separated.
In the topological situation, however, the special choice of $\gamma$
makes it possible to separate $\gamma\cap P$ from $\phi^{-1}(\delta)$.
\end{rem}

\begin{proof}
Denote by $\gamma_1$ the part of $\gamma$ that runs through 
$P$. Since we are just doing a topological stabilization, we
can attach the handle $h^1$ in such a way that $\gamma_1$
and $\phi^{-1}(\delta)$ are disjoint. Just choose 
$\gamma$ like indicated in Figure~\ref{Fig:figstabil02}. 
Even if $\phi^{-1}(\delta)$ intersects $\gamma_1$, we can 
separate them with help of a small isotopy.
By choosing a cut system $\{a_1,\ldots,a_n\}$ for $(P,\phi)$ 
appropriately, we can extend this cut system to a cut system 
for the stabilized open book by choosing $a_{n+1}$ like indicated
in Figure~\ref{Fig:figstabil02}. For all Heegaard diagrams
in the following, we will use this cut system.
Since $\phi^{-1}(\delta)$ and $\gamma$ are disjoint, the 
associated Heegaard
diagram of $(P',D_\delta^+\circ\phi\circ D_\gamma^+)$ will
look like a stabilization of the Heegaard diagram induced by
the open book $(P,D_\delta^+\circ\phi)$. The same holds for $(-P',\widetilde{\phi})$
and $(-P',\widetilde{\phi}\circ D_\gamma^+)$. Using the isomorphism
induced by stabilizations as discussed above we can define $\phi_1$,
$\phi_2$ and $\phi_3$ as indicated in Proposition~\ref{ch3invar04}.
These maps are all isomorphisms and obviously commute on the chain
level. 
\end{proof}
\begin{theorem}\label{completeinvar} 
The map $\Gamma_1$ is topological, i.e.~it just
depends on the cobordism induced by the surgery.
\end{theorem}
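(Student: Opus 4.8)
The plan is to deduce Theorem~\ref{completeinvar} directly from the four invariance results, Propositions~\ref{ch3invar01}--\ref{ch3invar04}, together with the move-theorem Lemma~\ref{helplem}. The guiding principle is that $\Gamma_1$ is a priori only defined through a choice of $\delta$-suitable Heegaard diagram, and that the $2$-handle cobordism $W\co Y\lra Y_{-1}(K)$ attached along $K$ (with the framing induced by the Heegaard surface) is precisely the topological datum that remains once this choice is removed. First I would fix the framed knot $(Y,K)$ and invoke Lemma~\ref{helplem}: any two $\delta$-suitable Heegaard diagrams subordinate to $K$, with the framing realized by the Heegaard surface as arranged in \S\ref{naturality}, are connected by a finite sequence of the moves $(m_1)$--$(m_4)$, namely handle slides and isotopies among the $\alpha$-curves, handle slides and isotopies among $\beta_2,\dots,\beta_g$, handle slides of $\beta_1$ over the remaining $\beta$-curves, and stabilizations/destabilizations.

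The decisive observation is that each elementary move has already been promoted to a commutative ladder of exact sequences: $(m_1)$ by Propositions~\ref{ch3invar01} and \ref{ch3invar02}, $(m_2)$ by Propositions~\ref{ch3invar03} and \ref{ch3invar02}, $(m_3)$ by Proposition~\ref{ch3invar03}, and $(m_4)$ by Proposition~\ref{ch3invar04}, in every case with vertical maps that are the canonical handle-slide, continuation and stabilization isomorphisms of Heegaard Floer homology. From the left-hand square of each such ladder I would read off the identity $\phi_2\circ\Gamma_1=\Gamma_1'\circ\phi_1$, where $\phi_1$ and $\phi_2$ denote the canonical isomorphisms induced by the move on $\hfkhat(Y,K)$ and $\hfhat(Y_{-1}(K))$ respectively. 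Composing over the sequence of moves connecting two given diagrams, the composite vertical maps are exactly the canonical identifications used to define $\hfkhat(Y,K)$ and $\hfhat(Y_{-1}(K))$ as topological invariants. Hence $\Gamma_1$ is well defined independently of the $\delta$-suitable diagram, so it depends only on the framed isotopy class of $K$ in $Y$. Since $W$ is determined by precisely this framed knot, and conversely, $\Gamma_1$ depends only on $W$, which is the assertion.

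I expect the main obstacle to lie in the stabilization step $(m_4)$. Proposition~\ref{ch3invar04} establishes invariance only under Giroux stabilizations that \emph{represent a topological stabilization}, and the accompanying remark shows that a general positive Giroux stabilization destroys the exact sequence, because $\gamma\cap P$ and $\phi^{-1}(\delta)$ need not be separable. The delicate part of the assembly is therefore to verify that every topological Heegaard-diagram stabilization arising when connecting two $\delta$-suitable diagrams can be realized through the special $\delta$-elementary Giroux stabilizations of Definition~\ref{pGsrts}, so that Proposition~\ref{ch3invar04} genuinely applies at each such move; the positioning of $\gamma$ in Figure~\ref{Fig:figstabil02}, which keeps $\gamma\cap P$ disjoint from $\phi^{-1}(\delta)$, is exactly what makes this realizable. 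A secondary point I would check is that every move in Lemma~\ref{helplem} can be carried out while preserving $\delta$-suitability, i.e.~while retaining the geometric configuration of Figure~\ref{Fig:impsit}, so that the mapping-cone description of $\hfhat(Y^\delta)$ from Proposition~\ref{THMTHM}, and hence the exact sequence itself, is available at every intermediate stage.
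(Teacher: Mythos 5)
Your proposal follows essentially the same route as the paper's own proof: fix the framed knot type $(Y,K)$ determined by $\delta$ together with its page framing, connect any two subordinate Heegaard diagrams by the moves of Lemma~\ref{helplem}, and recover each move through the commutative ladders of Propositions~\ref{ch3invar01}--\ref{ch3invar04}, so that the composed vertical isomorphisms identify the $\Gamma_1$ maps of the two diagrams. The two caveats you flag (the stabilization step and the preservation of the $\delta$-suitable configuration along the sequence of moves) are exactly what the paper disposes of in its closing remark, namely that the cited propositions only use the geometric configuration of Figure~\ref{Fig:impsit} and not the open book structure itself, so leaving the class of open-book-induced diagrams is harmless.
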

\begin{proof} The cobordism induced by the Dehn twist
depends only on the $3$-manifold $Y$ and the framed knot 
type $K$ which the curve $\delta$, together with its page framing,
represents inside $Y$. This pair, on the other hand, is described
by an open book decomposition adapted to $\delta$ and a
$\delta$-adapted cut system. These data determine a Heegaard 
diagram subordinate to the pair $(Y,K)$ (cf.~\S\ref{knotfloerhomology}). 
Given another adapted open book together with an 
adapted cut system, the associated Heegaard diagram is 
equivalent to the first after a sequence of moves which are 
described in Lemma~\ref{helplem}. All of these moves are
recovered via 
 Proposition~\ref{ch3invar01},
 Proposition~\ref{ch3invar02},
 Proposition~\ref{ch3invar03}
 and Proposition~\ref{ch3invar04}. Of course, after some
point, we might leave the class of Heegaard diagrams
induced by open books. But the propositions cited do not
use this open book structure as discussed at the beginning
of the section.
\end{proof}

\section{Implications to Contact Geometry}\label{contsetup}
In this section we will focus our attention on contact manifolds
$(Y,\xi)$.
Let $(P,\phi)$ be an open book decomposition that is adapted
to the contact structure $\xi$ (cf.~\S\ref{prelim:01:2}).
Recall that the contact element and the invariant defined in
\cite{LOSS} sit in the Heegaard Floer cohomology 
(cf.~\S\ref{prelim:01:2}). Because of the 
well-known equivalence
\[
  \hfhat\,\!^*(Y)=\hfhat_*(-Y)
\]
we will be interested in the behavior of $-Y$ rather than $Y$.
Recall from \S\ref{prelim:01:2} that we have two choices to
extract the Heegaard Floer homology of $-Y$ from data
given by a Heegaard diagram of $Y$. We can either switch
the orientation of the Heegaard surface or switch the
boundary conditions.\vspace{0.3cm}\\
Let $L\subset Y$ be a Legendrian knot and denote 
by $Y_L^+$ the manifold obtained by doing a
$(+1)$-contact surgery along $L$. There is an open book 
decomposition $(P,\phi)$ adapted to $\xi$ such that $L$ sits
on the page $P\times\{1/2\}$ of the open book and the page
framing coincides with the contact framing. A $(+1)$-contact
surgery acts on the open book like a negative Dehn twist
along $L$, i.e.~$(P,\phi\circ D_L^{-,P})$ is an adapted
open book decomposition of $(Y_L^+,\xi_L^+)$ where $D_L^{-,P}$
denotes a negative Dehn twist along $L$ with respect to the
orientation of $P$. Observe that $L$ sits on the wrong
page for our construction of the exact sequence.
Fortunately, the identity
\begin{equation}
  \phi\circ D_L^{-,P}=D_{\phi(L)}^{-,P}\circ\phi \label{monIdent}
\end{equation}
holds. Thus, a surgery along $L$ can be interpreted
as a left-hand composition of the monodromy with a Dehn twist. In 
addition $(P,D_{\phi(L)}^{-,P}\circ\phi)$ is an adapted
open book decomposition of $(Y_L^+,\xi_L^+)$. To see the effect 
on the Heegaard Floer cohomology, we
have to change the surface orientation. We see that
\begin{equation}
  -Y_L^+=(-P,D_{\phi(L)}^{-,P}\circ\phi)=(-P,D_{\phi(L)}^{+,-P}\circ\phi).
  \label{csclearer}
\end{equation}
One very important ingredient for our construction is the fact 
that we may choose an $L$-adapted Heegaard diagram where $L$ 
sits on $P\times\{1/2\}$. Because of the identity 
$(\ref{monIdent})$ we need a Heegaard diagram with attaching
circles adapted to $\phi(L)$ in the following sense: the curve $\phi(L)$
intersects $\beta_1$ once, transversely and is disjoint from all
other $\beta$-circles. This condition is satisfied for $L$-adapted
Heegaard diagrams since $\phi(a_i)=b_i$. This means we are able to simultaneously match 
all conditions for setting up the exact sequence and seeing the
invariant $\loss(L)$. Recall that the sequence requires the point
$w$ defining $L$ to be in a specific domain of the Heegaard diagram.
This positioning of $w$ induces an orientation on $L$. On the other hand, a fixed 
orientation of $L$ determines where $w$ has to be placed. These two orientations, the one
coming from the sequence and the one from the knot $L$ itself, have to be observed carefully. We
have to see whether every possible choice of orientation of $L$ 
induces a positioning of $w$ inside the Heegaard diagram that is 
compatible with the requirements coming from the exact sequence.
\begin{theorem}\label{maps} Let $(Y,\xi)$ be a contact manifold and
$L\subset Y$ an oriented Legendrian knot.
\begin{itemize}
\item[(i)] 
Let $W$ be the cobordism induced by $(+1)$-contact surgery 
along $L$. Then the cobordism $-W$ induces a map 
\[
  \Gamma_{-W}\co\hfkhat(-Y,L)\lra\hfhat(-Y_L^+),
\]
such that $\Gamma_{-W}(\loss(L))=c(Y_L^+,\xi_L^+)$.
\item[(ii)] If $L$ carries a specific orientation and $W$ denotes
the cobordism induced by a $(-1)$-contact surgery along $L$. Then
the cobordism $-W$ induces a map
\[
  \Gamma_{-W}\co\hfhat(-Y_L^-)\lra\hfkhat(-Y,L)
\]
such that $\Gamma_{-W}(c(Y_L^-,\xi_L^-))=0$.
\end{itemize}
\end{theorem}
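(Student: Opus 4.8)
The plan is to realize both of the claimed maps as the map $\Gamma_1$ coming from the exact sequences of Corollaries~\ref{motiv} and~\ref{motiv2}, and then to trace the canonical contact generator through the explicit chain-level description of those maps. First I would fix an open book $(P,\phi)$ adapted to $\xi$ with $L$ sitting on the page $P\times\{1/2\}$ and page framing equal to the contact framing, and choose an $L$-adapted cut system. Since $\phi(a_i)=b_i$, the curve $\delta:=\phi(L)$ meets $\beta_1$ once transversely and is disjoint from the remaining $\beta_i$, so the induced diagram is $\delta$-adapted in the sense of Definition~\ref{DefOne}. Passing to cohomology means working with $-Y$, where by $(\ref{csclearer})$ the $(+1)$-surgery becomes a \emph{positive} Dehn twist along $\delta$ and the $(-1)$-surgery a \emph{negative} one (using also the identity $(\ref{monIdent})$). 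Thus Corollary~\ref{motiv} (resp.~Corollary~\ref{motiv2}), applied to $-Y$, furnishes exactly a map of the shape required in the statement, and Theorem~\ref{completeinvar} identifies the relevant $\Gamma_1$ with the topological map $\Gamma_{-W}$ depending only on the surgery cobordism.

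For part (i) I would use that $\loss(L)$ is represented by the canonical contact generator $x\in\talpha\cap\tbeta$ lying on $P\times\{1/2\}$, now read inside $\cfkhat(-Y,L)$. On the chain level $\Gamma_1$ is the inclusion $\gamma_1\co x\mapsto x\oplus 0$ of Lemma~\ref{propexact}, which under the identification $\epsilon$ of the proof of Proposition~\ref{THMTHM} carries $x$ to the corresponding point of $\talpha\cap\tbetaprime$. The key geometric observation is that the contact generator is \emph{monodromy-independent}: its components are the page intersections $a_i\cap b_i$, and these are untouched by a twist that only alters the $\overline{\phi(b_i)}$-part of the $\beta$-curves lying on $(-P)$. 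Hence $x\oplus 0$ is precisely the $EH$ generator of the twisted open book $(-P,D^{+,-P}_{\phi(L)}\circ\phi)$, which by $(\ref{csclearer})$ is adapted to $-Y_L^+$, so $\Gamma_{-W}(\loss(L))=c(Y_L^+,\xi_L^+)$.

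For part (ii) the same generator is again the relevant one, but now the map $\Gamma_1$ of Corollary~\ref{motiv2} is the \emph{projection} onto the $\cfhat(\alpha,\beta)$-summand, since for the negative twist it is $\cfhat(\alpha,\delta)$ that is the subcomplex (compare the lower-triangular form in Proposition~\ref{THMTHM2} and Lemma~\ref{alprim01}). After the isotopy of $\delta$ required for negative twists (Figure~\ref{Fig:figthree}), I would verify that the page-side intersection of the contact generator with $\betaprime_1$ lies on the $\delta$-part rather than the $\beta$-part; equivalently, the $EH$ cycle of $(-P,D^{-,-P}_{\phi(L)}\circ\phi)$ lands in the subcomplex $\cfhat(\alpha,\delta)$. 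The projection then annihilates it, giving $\Gamma_{-W}(c(Y_L^-,\xi_L^-))=0$. This is also where the orientation hypothesis on $L$ enters: only for one orientation does the position of $w$ forced by the knot orientation (Figure~\ref{Fig:wpointpos}) agree with the placement demanded by the negative-twist configuration of Figure~\ref{Fig:figthree}.

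The step I expect to be the main obstacle is precisely this chain-level bookkeeping in the two cases: showing rigorously that the canonical contact generator falls into the $\talpha\cap\tbeta$-summand for the positive twist and into the $\talpha\cap\tdelta$-summand for the negative twist, by reading off the correct arc of $\betaprime_1$ from Figures~\ref{Fig:figtwo} and~\ref{Fig:figthree}, together with the compatibility between the chosen orientation of $L$ and the base point $w$. Once the summand is pinned down, the two conclusions follow immediately from the explicit descriptions of $\gamma_1$ (inclusion) and of the projection in Lemma~\ref{propexact}, with Theorem~\ref{completeinvar} supplying the identification $\Gamma_1=\Gamma_{-W}$.
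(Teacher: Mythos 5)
Your proposal follows essentially the same route as the paper's own proof: fix an adapted open book with an $L$-adapted cut system so that the diagram is $\delta$-adapted for $\delta=\phi(L)$, observe via $(\ref{monIdent})$ and $(\ref{csclearer})$ that the $(\pm 1)$-contact surgeries become positive/negative Dehn twists on the $-P$ side, realize $\Gamma_{-W}$ as the inclusion-induced (resp.\ projection-induced) map of the exact sequences, and then track the canonical page generator through the splitting $\talpha\cap\tbetaprime=\talpha\cap\tbeta\sqcup\talpha\cap\tdelta$. The verification you defer to the end --- that the $EH$ generator lands in the $\talpha\cap\tbeta$-summand for the positive twist but in the subcomplex for the negative twist, and that the $w$-placement forced by the sequence is compatible with the knot orientation (both orientations for (i), only one for (ii)) --- is exactly what the paper settles by inspecting Figures~\ref{Fig:contactpo} and~\ref{Fig:contactmo}, so your outline reproduces the paper's argument point for point.
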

\begin{proof}
Recall that
\begin{eqnarray*}
-Y_L^+&=&(-P,D_{\phi(L)}^{+,-P}\circ\phi)\\
-Y_L^-&=&(-P,D_{\phi(L)}^{-,-P}\circ\phi).\\
\end{eqnarray*}
We choose a cut system which is $L$-adapted. This means that 
$L$ intersects $\alpha_1$ transversely, in a single point and is 
disjoint from the other $\alpha$-circles. Hence, $\phi(L)$ (sitting on the other side of
the Heegaard surface) intersects $\beta_1$ in a single point
and is disjoint from the other $\beta$-circles. We first try to
prove the results concerning the $(+1)$-contact surgery.
After possibly isotoping the knot $L$ slightly, we can achieve 
a neighborhood of $\phi(L)\cap\beta_1$ to look like the left 
or right part of Figure~\ref{Fig:contactpo}.
\begin{figure}[ht!]
\labellist\small\hair 2pt
\pinlabel {$w$} [l] at 780 317
\pinlabel {$L$} [B] at 38 205
\pinlabel {$L$} [B] at 602 205
\pinlabel {$z$} [r] at 225 231
\pinlabel {$z$} [r] at 789 231
\pinlabel {$\beta_1$} [r] at 300 200
\pinlabel {$\beta_1$} [r] at 862 200
\pinlabel {$\alpha_1$} [tl] at 467 147
\pinlabel {$\alpha_1$} [tl] at 1030 147
\pinlabel {$w$} [r] at 68 42
\pinlabel {binding of the open book} [l] at 168 41
\pinlabel {binding of the open book} [l] at 729 41
\pinlabel {$_2$} [l] at 434 120
\pinlabel {$_1$} [B] at 383 84
\pinlabel {$_2$} [l] at 999 120
\pinlabel {$_1$} [B] at 946 84
\endlabellist
\centering
\includegraphics[height=4cm]{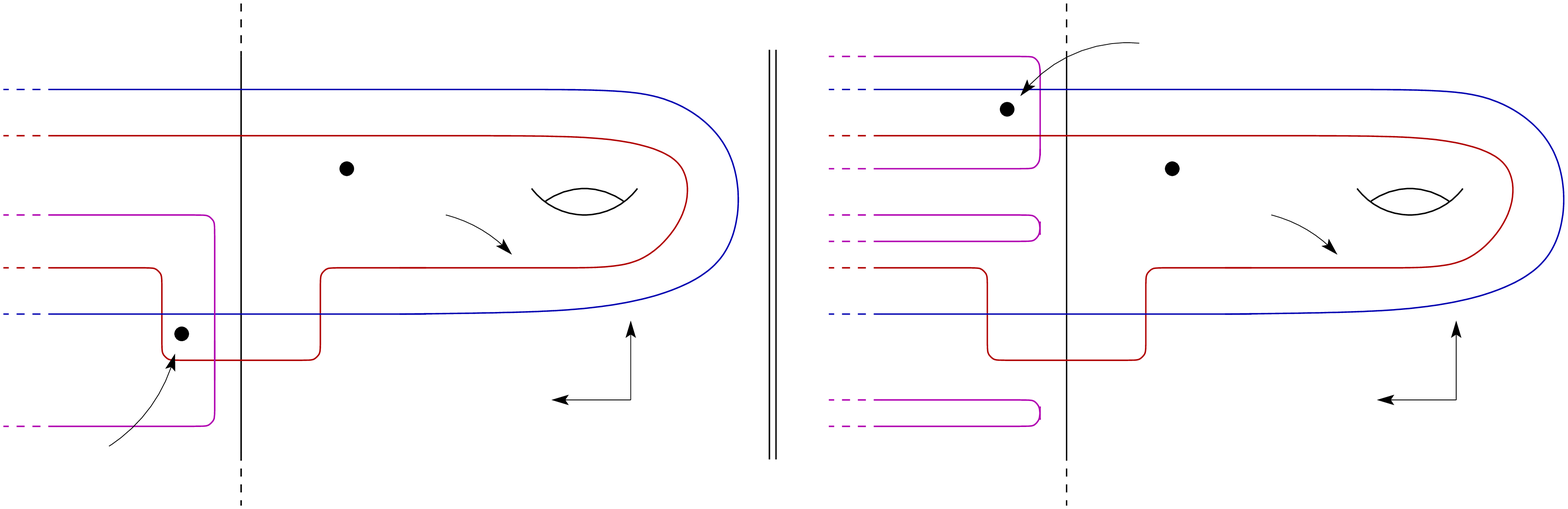}
\caption{Setting things up for a contact $(+1)$-surgery.}
\label{Fig:contactpo}
\end{figure}

In each part of the picture the knot $L$ and the point $w$ are placed
in such a way that the Dehn twist associated to the $(+1)$-contact
surgery connects the regions where the points $w$ and $z$ lie.
Thus, each picture shows a situation in which we may
apply the proof technique used for Proposition~\ref{THMTHM} 
(resp.~Proposition~\ref{THMTHM2}). Observe
that Figure~\ref{Fig:contactpo} shows the situation for each 
orientation of $L$.
Since we are doing a $(+1)$-contact surgery, we perform a
positive Dehn twist along $L$ with respect to the surface
orientation given in Figure~\ref{Fig:contactpo} (cf.~Equality (\ref{csclearer}) and cf.~discussion at
the beginning of this section). Thus, we
are able to define a map
\[
  \Gamma^+\co\hfkhat(-Y,L)\lra\hfhat(-Y_L^+).
\]
The situations in both pictures are designed to apply the proof
technique of Proposition~\ref{THMTHM}. The induced pair 
$(w,z)$ determines an orientation on $L$. To match the induced 
orientation with the one of the knot $L$ we either use the left 
or the right picture of Figure~\ref{Fig:contactpo}. By 
definition of $\Gamma^+$ we see that 
\[
  \Gamma^+(\loss(L))=c(Y_L^+,\xi_L^+).
\]
To cover $(-1)$-contact surgeries, look at Figure~\ref{Fig:contactmo}.
\begin{figure}[ht!]
\labellist\small\hair 2pt
\pinlabel {$L$} [B] at 38 205
\pinlabel {$z$} [r] at 225 231
\pinlabel {$z$} [r] at 789 231
\pinlabel {$\beta_1$} [r] at 307 200
\pinlabel {$\betaprime_1$} [r] at 873 200
\pinlabel {$\alpha_1$} [tl] at 467 147
\pinlabel {$\alpha_1$} [tl] at 1030 147
\pinlabel {binding of the open book} [l] at 163 31
\pinlabel {binding of the open book} [l] at 723 31
\pinlabel {$_2$} [l] at 434 105
\pinlabel {$_1$} [B] at 383 69
\pinlabel {$_2$} [l] at 999 105
\pinlabel {$_1$} [B] at 946 69
\pinlabel {$w$} [l] at 293 72
\pinlabel {$w$} [l] at 867 72
\pinlabel {Legendrian invariant} at 468 337
\pinlabel {contact element} [B] at 837 307
\endlabellist
\centering
\includegraphics[height=4cm]{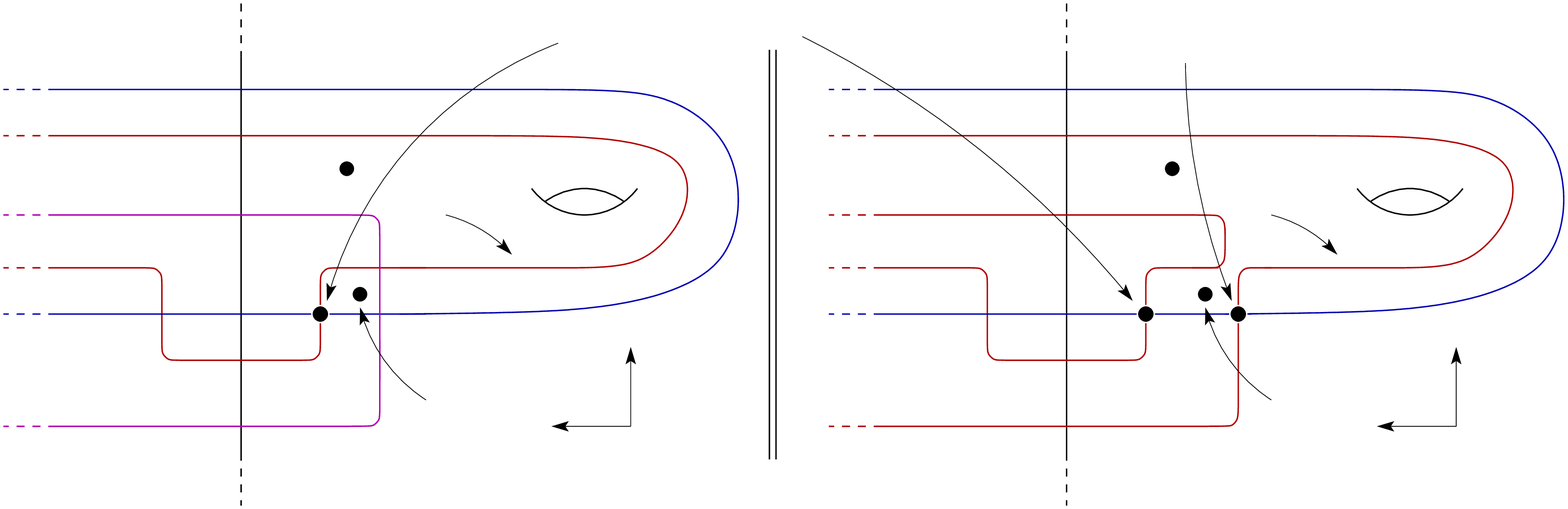}
\caption{Setting things up for a contact $(-1)$-surgery.}
\label{Fig:contactmo}
\end{figure}

The same line of arguments as above applies to define a map
\[
  \Gamma^-\co\hfhat(-Y_L^-)\lra\hfkhat(-Y,L).
\]
Again, recall that $w$ is placed in the Heegaard diagram in such 
a way that allows us to define the map $\Gamma^-$.
The pair $(w,z)$ induces an orientation on $L$. The opposite orientation
will be denoted by $\ob$.
What can be seen immediately from the picture is that the
Dehn twist separates the contact element and the invariant $\loss(L,\overline{\ob})$: 
The arguments show that we have the following exact sequence.
\begin{diagram}[size=2em,labelstyle=\scriptstyle]
 0 & \rTo &\cfkhat(Y_0(L),\mu)&&
 \rTo&&\cfhat(-Y_L^-)&&\rTo^{\Gamma^-}&&
 \cfkhat(-Y,(L,\overline{\ob}))&\rTo&0 \\
 &  &  \mbox{\textbullet} & &\rMto & &c & &     & &          & &\\
 &  &    & &      & & \mbox{\textbullet} & &\rMto & & \loss(L,\overline{\ob}) & &
\end{diagram}
To speak in the language of the proof of Proposition
\ref{THMTHM}: the element $c$ is an $\alpha\beta$-intersection, whereas 
the element $\loss(L,\overline{\ob})$ is an 
$\alpha\delta$-intersection. By exactness, the contact element 
$c$ lies in the kernel of $\Gamma^-$.
\end{proof}
\begin{definition} The orientation $\ob(P,\phi)$ from the last
proof is called the {\bf open book orientation}.
\end{definition}
To prove Corollary \ref{really} we have to recall that 
Honda, Kazez and Mati\'{c} introduced in \cite{HKM2} an invariant $EH(L)$ of 
a Legendrian knot $L$ in the Sutured Floer homology (cf.~\cite{AJU}) of a contact 
manifold with boundary. To be more precise, given $L\subset(Y,\xi)$, they
define an Legendrian isotopy invariant of $L$, called $EH(L)$, sitting in
$\sfh(-Y\backslash\nu L,\Gamma)$ where $\Gamma$ are suitably chosen sutures.
Furthermore, Stipsicz and Vertesi have shown in \cite{StipVert} that this invariant is equipped with a morphism
$\sfh(-Y\backslash\nu L,\Gamma)\lra\hfkhat(-Y,L)$ that maps $EH(L)$ to $\loss(L)$. Composing this morphism with the one coming from Theorem~\ref{maps} we get the following result.
\begin{cor}\label{really} There is a map
\[
  \gamma\co\sfh(-Y\backslash\nu L,\Gamma)\lra\hfhat(-Y_L^+)
\]
such that $\gamma(EH(L))=c(Y_L^+,\xi_L^+)$.\hfill $\square$
\end{cor}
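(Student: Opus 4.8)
The plan is to realize $\gamma$ as the composition of two maps that are already at our disposal, so that the statement reduces to bookkeeping on where the distinguished classes are sent. First I would invoke Theorem~\ref{maps}(i), which provides the map $\Gamma_{-W}\co\hfkhat(-Y,L)\lra\hfhat(-Y_L^+)$ induced by the reversed cobordism $-W$ of the $(+1)$-contact surgery along $L$, together with its key property $\Gamma_{-W}(\loss(L))=c(Y_L^+,\xi_L^+)$. Second, I would use the morphism of Stipsicz and Vertesi from \cite{StipVert}, which I denote $\Psi\co\sfh(-Y\backslash\nu L,\Gamma)\lra\hfkhat(-Y,L)$ and whose defining feature is $\Psi(EH(L))=\loss(L)$.

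The next step is to set $\gamma:=\Gamma_{-W}\circ\Psi$. Since the codomain $\hfkhat(-Y,L)$ of $\Psi$ coincides with the domain of $\Gamma_{-W}$, this composition is well defined and has the asserted source and target. Tracking the invariant through the two maps in order gives
\[
  \gamma(EH(L))=\Gamma_{-W}(\Psi(EH(L)))=\Gamma_{-W}(\loss(L))=c(Y_L^+,\xi_L^+),
\]
which is precisely the equality claimed in the corollary.

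The main point is that there is essentially no genuine obstacle beyond assembling the cited ingredients, since all the analytic content resides in Theorem~\ref{maps}(i) and in the Stipsicz--Vertesi comparison result, both of which we take as given. The only place where a little care is warranted is compatibility of conventions: one must ensure that the sutures $\Gamma$ chosen for $EH(L)$, the orientation on $L$ used to set up $\loss(L)$, and the surgery cobordism $W$ are matched so that the three invariants line up. These choices, however, are exactly the ones fixed in the two statements being composed, so no new verification is needed and the composition goes through immediately.
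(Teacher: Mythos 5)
Your proposal is correct and is exactly the paper's argument: the paper also obtains $\gamma$ by composing the Stipsicz--Vertesi morphism $\sfh(-Y\backslash\nu L,\Gamma)\lra\hfkhat(-Y,L)$, which sends $EH(L)$ to $\loss(L)$, with the map $\Gamma_{-W}$ from Theorem~\ref{maps}(i), which sends $\loss(L)$ to $c(Y_L^+,\xi_L^+)$. Nothing further is needed.
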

\begin{cor} Let $L$ be a Legendrian knot in a contact manifold $(Y,\xi)$. Then $EH(L)=0$ implies 
that $c(Y_L^+,\xi_L^+)=~0$.\hfill $\square$
\end{cor}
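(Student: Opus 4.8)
The plan is to deduce this directly from Corollary~\ref{really}, which already does all the work. That corollary furnishes a map
\[
  \gamma\co\sfh(-Y\backslash\nu L,\Gamma)\lra\hfhat(-Y_L^+)
\]
satisfying $\gamma(EH(L))=c(Y_L^+,\xi_L^+)$. The single additional fact I would record is that $\gamma$ is a homomorphism of $\ztwo$-vector spaces. This is immediate from its construction: $\gamma$ is the composite of the Stipsicz--Vertesi morphism $\sfh(-Y\backslash\nu L,\Gamma)\lra\hfkhat(-Y,L)$ with the map $\Gamma_{-W}$ of Theorem~\ref{maps}(i), and both factors are linear maps on homology. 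In particular $\gamma(0)=0$.

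Granting this, the argument is a one-liner. If $EH(L)=0$ in $\sfh(-Y\backslash\nu L,\Gamma)$, then applying $\gamma$ and using the defining property from Corollary~\ref{really} gives
\[
  c(Y_L^+,\xi_L^+)=\gamma(EH(L))=\gamma(0)=0.
\]
There is no real obstacle here: the entire geometric content of the statement has been absorbed into Corollary~\ref{really}, and the present corollary is just the observation that a linear map sends zero to zero. The only point worth verifying carefully is that $\gamma$ is genuinely additive, rather than merely a set-theoretic map that happens to carry the two distinguished classes to one another; this is guaranteed precisely because $\gamma$ is a composition of homomorphisms on the level of homology, as noted above.
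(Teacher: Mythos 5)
Your proposal is correct and matches the paper's intended argument exactly: the paper states this corollary with no written proof (the $\square$ signals it is an immediate consequence of Corollary~\ref{really}), namely applying the linear map $\gamma$ to $EH(L)=0$ gives $c(Y_L^+,\xi_L^+)=\gamma(0)=0$. Your extra remark that $\gamma$ is genuinely a homomorphism, being a composition of the Stipsicz--Vertesi map with $\Gamma_{-W}$ from Theorem~\ref{maps}, is precisely the (implicit) justification the paper relies on.
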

It is also possible to derive these corollaries using methods coming from
\cite{StipVert}. 
\begin{prop}\label{stabilorient} Let $L$ be a Legendrian knot in 
a contact manifold $(Y,\xi)$ carrying the open book orientation induced by
an adapted open book $(P,\phi)$. Let $(P',\phi')$ be the once-stabilized
open book that carries the Legendrian 
knot $S_+(L)$ (see Proposition~\ref{knotstabil}). The open book
orientation $\ob(P',\phi')$ coincides with the orientation incuded by
the stabilization.
\end{prop}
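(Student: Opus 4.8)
The plan is to reduce the statement to a purely local comparison in a neighbourhood of the intersection point $L\cap\alpha_1$ and there match the base-point data against the model for the positive stabilization supplied by Proposition~\ref{knotstabil}. The first thing I would do is recall exactly how the open book orientation is read off. By construction (see the proof of Theorem~\ref{maps} and the orientation conventions recorded in \S\ref{invariantLOSS}), the orientation $\ob(P,\phi)$ is determined \emph{entirely} by the position of the point $w$ relative to $z$ in a small neighbourhood of the intersection point $\phi(L)\cap\beta_1$ (equivalently $L\cap\alpha_1$) in the induced Heegaard diagram: the oriented arc joining $w$ and $z$ in the complement of the attaching curves, together with the opposite choice fixed in the definition, determines the orientation of $L$. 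Hence $\ob(P,\phi)$ is a local datum, depending only on the configuration drawn in Figure~\ref{Fig:contactmo}.

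Next I would invoke Proposition~\ref{knotstabil} to describe the realization of $S_+(L)$ on the page of the once-stabilized open book $(P',\phi')$. The stabilization is supported inside a handle $h^1$ attached to $P$, together with the extra pair of attaching circles it produces; away from this handle, $L$ and $S_+(L)$ agree as oriented arcs, and the orientation induced by the stabilization is precisely the extension of $\ob(P,\phi)$ across the new zig-zag created by $S_+$. The crucial arrangement is that the handle and the new intersection point may be placed disjointly from the region carrying $w$ and $z$; this is the same kind of freedom used in Proposition~\ref{ch3invar04} and in the preparation of $\delta$-suitable diagrams in \S\ref{naturality}. Consequently the local picture governing the open book orientation of $S_+(L)$ in $(P',\phi')$ is literally the configuration of Figure~\ref{Fig:contactmo} that defines $\ob(P,\phi)$ for $L$.

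With these two reductions in place the conclusion is immediate: since the base-point configuration is unchanged outside the stabilization handle and the two knots coincide there as oriented arcs, the orientation $\ob(P',\phi')$ agrees with the stabilization-induced orientation along the entire portion of $S_+(L)$ lying outside $h^1$. As the orientation of a connected knot is determined by its restriction to any subarc, the two orientations coincide, which is the assertion of the proposition.

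The hard part will be the remaining sign verification, namely checking that it is the \emph{positive} stabilization $S_+$, and not $S_-$, whose induced orientation is compatible with the base-point rule forced by the exact sequence. Concretely, one must compare the local front-projection model of $S_+$ (the sign of the added cusps) with the side on which $w$ is obliged to sit in order that the negative Dehn twist of the underlying $(-1)$-contact surgery separate the $\dom_w$- and $\dom_z$-regions as in Figure~\ref{Fig:contactmo}. I would settle this by drawing the explicit local model of the stabilized page, orienting the new arc of $S_+(L)$ according to Proposition~\ref{knotstabil}, and verifying that the resulting placement of $w$ relative to $z$ is exactly the one producing $\ob$; performing $S_-$ instead would reverse this placement. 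This is routine but genuinely sign-sensitive bookkeeping in the local picture, and it is where care is required.
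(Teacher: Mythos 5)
Your first paragraph sets up the right reduction, and it is the same one the paper uses: by the remark in \S\ref{invariantLOSS}, the open book orientation is read off from the placement of $w$ relative to $z$ at the distinguished intersection point of the knot with the cut system. The gap is the disjointness claim in your second paragraph. By Proposition~\ref{knotstabil} (see Figure~\ref{Fig:illustration}), the stabilized knot $S_+(L)$ \emph{runs over the new handle} $h^1$; that is what realizing the stabilization on the stabilized page means. Consequently, in a cut system of $P'$ adapted to $S_+(L)$, the distinguished cut curve, the distinguished intersection point, and hence the placement of $w$ that defines $\ob(P',\phi')$ cannot simply be declared to lie in the untouched part of the diagram: the knot itself enters the stabilization region, and the new cut curve must cut the complement of the old system (a disc with the handle attached, i.e.\ an annulus) into a disc. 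Your appeal to the freedom used in Proposition~\ref{ch3invar04} is a false analogy: there the curve $\gamma$ of the Giroux stabilization can be isotoped off $\phi^{-1}(\delta)$ because $\delta$ stays on the old page, whereas here the very knot whose orientation is being compared traverses $h^1$. This is why the paper's proof does not argue by disjointness; it extends the $L$-adapted cut system across the stabilization as indicated in Figure~\ref{Fig:proof2} and then compares, in that \emph{new} local picture, the stabilization-induced orientation with the orientation dictated by the $(w,z)$-rule. Indeed, the way Proposition~\ref{stabilorient} is used later (in the proof of Proposition~\ref{reproof}) is precisely to certify the position of $w$ inside the handle region of Figure~\ref{Fig:easyproof}.

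Your last paragraph concedes the resulting gap: you correctly identify that the $S_+$-versus-$S_-$ sign must be settled by drawing the explicit local model of the stabilized page, but you do not carry this out. Note also the internal tension in the proposal: if the orientation-determining configuration were literally unchanged, as your second and third paragraphs claim, no sign question could arise at all, since reversing the preferred placement of $w$ reverses the orientation and $\overline{S_+(L)}=S_-(\overline{L})$. That deferred, sign-sensitive comparison is not a residual detail; it is the entire content of the proposition, and it is exactly what the paper's proof (via Figure~\ref{Fig:proof2}) consists of. As written, your proposal reduces the statement to the correct local question but does not answer it.
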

We will give a proof of Proposition~\ref{stabilorient} in the
following paragraph.

\subsection{Stabilizations of Legendrian Knots and Open Books}

\subsubsection{Stabilizations as Legendrian Band Sums}
Recall that stabilization basically means to enter a 
zigzag into the front projection of a Legendrian knot. If we are 
not in the standard contact space, we perform this operation 
inside a Darboux chart. Which zigzag is regarded as a
positive/negative stabilization depends on the knot orientation.
Positivity/Negativity is fixed by the following equations
\begin{eqnarray*}
  tb(S_\pm(L))&=&tb(K)-1\\
  rot(S_\pm(L))&=&rot(L)\pm 1.
\end{eqnarray*}
This tells us that
\begin{equation}
\overline{S_+(L)}=S_-(\overline{L}).
\end{equation}
Given two Legendrian knots $L$ and $L'$, we can form their
{\bf Legendrian band sum} $L\#_{Lb}L'$ in the following
way: Pick a contact surgery representation of the contact manifold in such
a way that the surgery link $\mathbb{L}$ stays away from 
$L\cup L'$. In this way we can think of $L$ and $L'$ as sitting
in the standard contact space and, so, can perform the band sum.
We denote by $L_0$ and $\overline{L_0}$ the oriented Legendrian
shark with the orientations as indicated in Figure~\ref{Fig:shark}.
\begin{figure}[ht!]
\labellist\small\hair 2pt
\pinlabel {$L_0$} [t] at 50 28
\pinlabel {$\overline{L_0}$} [t] at 259 28
\endlabellist
\centering
\includegraphics[height=2cm]{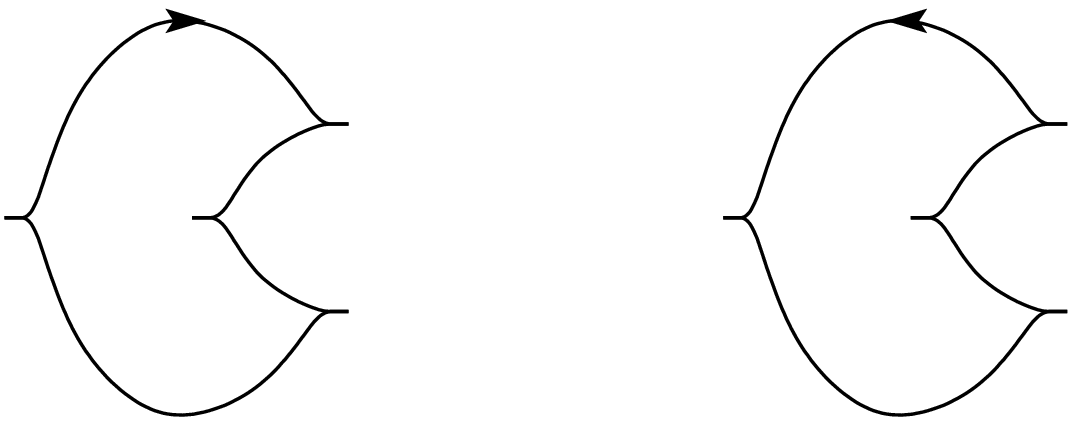}
\caption{The oriented Legendrian shark and its inverse.}
\label{Fig:shark}
\end{figure}

\begin{prop}\label{knotstabil02} Given a Legendrian 
knot $L$, we can realize its stabilizations as Legendrian
band sums, i.e.
\begin{eqnarray*}
 S_+(L)=L\#_{Lb}L_0\\
 S_-(L)=L\#_{Lb}\overline{L_0},
\end{eqnarray*}
where $\#_{Lb}$ denotes the Legendrian band-sum.
\end{prop}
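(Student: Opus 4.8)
The plan is to reduce the statement to an explicit, \emph{local} computation in the front projection inside a single Darboux chart, and then to extract the negative case from the positive one by an orientation-reversal symmetry, so that only one picture has to be analyzed in detail.

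First I would use the normalization already built into the definition of the Legendrian band sum: we fix a contact surgery presentation of $(Y,\xi)$ whose surgery link $\bbl$ is disjoint from $L$, which lets us regard a neighborhood of the region where the band is attached as sitting inside the standard contact space $\contstand$. Consequently every assertion becomes a statement about fronts in standard $\R^3$, and it suffices to analyze the local model in which $L$ appears as a single horizontal strand in the front projection and the shark $L_0$ is the small Legendrian unknot placed beside it as in Figure~\ref{Fig:shark} (concretely, $L_0$ is a once-stabilized unknot, so that $tb(L_0)=-2$ and, with the orientation of Figure~\ref{Fig:shark}, $rot(L_0)=+1$).

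The core step is then a direct front-projection argument. I would draw the two nearly parallel strands --- one coming from $L$, one from the adjacent edge of $L_0$ --- together with the Legendrian band joining them, and resolve the band sum. After isotoping the result by Legendrian Reidemeister moves (in particular destabilizing the spurious kinks created by the band) the merged front is exactly the front of $L$ with one up--down zigzag inserted, i.e. the front defining $S_+(L)$. As a consistency check I would verify the classical invariants via the band-sum formulas $tb(L\#_{Lb}L_0)=tb(L)+tb(L_0)+1$ and $rot(L\#_{Lb}L_0)=rot(L)+rot(L_0)$; with the values above these give $tb(L)-1$ and $rot(L)+1$, matching $tb(S_+(L))=tb(L)-1$ and $rot(S_+(L))=rot(L)+1$.

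Finally, the negative case requires no new picture. Using the identity $\overline{S_+(L)}=S_-(\overline L)$ recalled just above, together with the fact that reversing the orientation of a band sum reverses each summand, I would deduce $S_-(L)=\overline{S_+(\overline L)}=\overline{\,\overline L\#_{Lb}L_0\,}=L\#_{Lb}\overline{L_0}$, which is the second formula. The main obstacle is the core front-projection step: one must check carefully that the cusps contributed by the band and by the tip of the shark combine to produce \emph{exactly one} zigzag (not two, and not zero after cancellation), and that the orientation conventions force $L_0$ --- rather than $\overline{L_0}$ --- to yield the \emph{positive} stabilization. Everything else is bookkeeping of classical invariants and the orientation symmetry.
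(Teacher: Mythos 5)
Your core argument coincides with the paper's: both prove the positive case by performing the band sum locally in the front projection and exhibiting an explicit Legendrian isotopy that turns the summed front into the front of $L$ with exactly one extra zigzag (this is precisely what Figure~\ref{Fig:stabilizing} records). The paper is somewhat more specific about \emph{where} the band is attached --- at a right up-cusp or right down-cusp of $L$, joined to the corresponding cusp of $L_0$, with a small case distinction depending on which cusp is available --- a point your sketch leaves implicit but correctly flags as the step requiring care. Where you genuinely diverge is the negative case: the paper simply asserts it is ``proved in a similar fashion,'' i.e.\ by redrawing the pictures, whereas you deduce it formally from the positive case via $S_-(L)=\overline{S_+(\overline{L})}$ (substituting $\overline{L}$ into the identity $\overline{S_+(L)}=S_-(\overline{L})$ stated just before the proposition) together with the fact that reversing the orientation of a band sum reverses both summands, $\overline{L\#_{Lb}L_0}=\overline{L}\#_{Lb}\overline{L_0}$. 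That symmetry argument is a clean improvement: it halves the pictorial work and eliminates any risk of an orientation-convention slip in a second picture. Your consistency check via $tb$ and $rot$ (using $tb(L\#_{Lb}L_0)=tb(L)+tb(L_0)+1$ and $rot(L\#_{Lb}L_0)=rot(L)+rot(L_0)$ with $tb(L_0)=-2$, $rot(L_0)=+1$) is a useful sanity test but carries no proof weight, since the classical invariants do not classify Legendrian knots; the burden rests on the front-projection isotopy, which both you and the paper ultimately delegate to an explicit picture.
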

\begin{proof}
We prove the equality for positive stabilizations. The case of
negative stabilizations is proved in a similar fashion. No 
matter what orientation the knot $L$ carries, we will find
at least one right up-cusp or one right down-cusp. In case
of a right down-cusp we perform a band-sum involving
this right down-cusp on $L$ an the left up-cusp on $L_0$.
In case we use a right up-cusp we perform the band-sum
as indicated in the left part of Figure~\ref{Fig:stabilizing}.
In Figure~\ref{Fig:stabilizing} we indicate the Legendrian isotopy
that illustrates that we have stabilized positively.
\end{proof}
\begin{figure}[ht!]
\centering
\includegraphics[height=2.5cm]{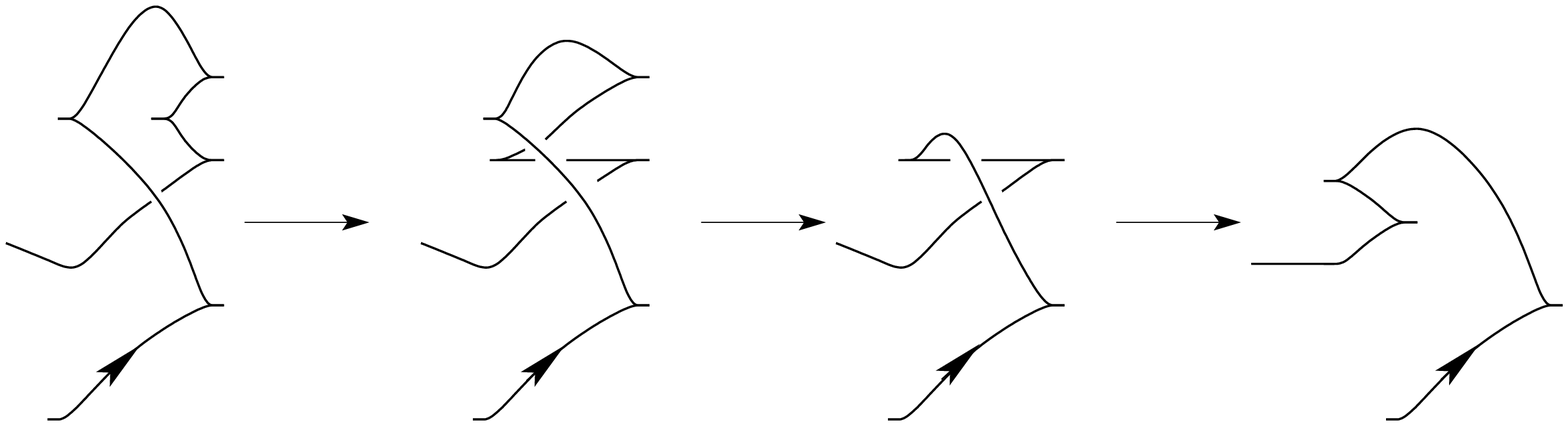}
\caption{The Legendrian band-sum in case of a right up-cusp and
a Legendrian isotopy.}
\label{Fig:stabilizing}
\end{figure}

\subsubsection{Open Books and Connected Sums}
Suppose we are given open books $(P_1,\phi_1)$ and $(P_2,\phi_2)$ for 
manifolds $(Y_1,\xi_1)$ and  $(Y_2,\xi_2)$. Let $B_1$ be the binding
of $(P_1,\phi_1)$. Denote by $\nu B_1$ an equivariant tubular 
neighborhood of $B_1$. Fix a point $p$ on $B_1$ and embed a $3$-ball
$D^3$ such that it is centered at $p$. 
Furthermore, the ball should sit
inside $\nu B_1$ such that the north and south pole of $D^3$ 
equal $B_1\cap\stwo$. Denote by $f_1\co D^3\lra \nu B_1\subset Y_1$ 
the embedding. Embed $g\co D^3\lra Y_2$ in the same fashion. 
Compose $g$ with a right-handed rotation $r$ that swaps the 
two hemispheres of $D^3$ to get another embedding $f_2=g\circ r$. Use 
these embeddings to perform the connected sum. By its definition, the 
gluing $f_2\circ f_1^{-1}$ preserves the open book structure. Note that
the rotation is needed to make the pages of the open book glue 
together nicely with their given orientation.
Moreover, we are able to explicitly describe the resulting 
open book. The new page $P$ equals $P_1\cup_{h^1}P_2$, where $h^1$ is a
$1$-handle connecting $P_1$ and $P_2$ and the 
binding $B$ equals $B_1\#B_2$. To define the monodromy, first 
extend $\phi_1$ and $\phi_2$ as the identity along the handle 
and the complementary page. Then define $\phi$ as the composition 
$\phi_2\circ\phi_1=\phi_1\circ\phi_2$.
\begin{lem}\label{weissnicht} The open book $(P,\phi)$ is 
an adapted open book for $(Y_1\#Y_2,\xi_1\#\xi_2)$.
\end{lem}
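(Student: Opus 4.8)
The statement to prove is that the connected-sum open book $(P,\phi)$ constructed above is an adapted open book for the contact connected sum $(Y_1\#Y_2,\xi_1\#\xi_2)$. "Adapted" has two parts: first, that $Y(P,\phi)$ is diffeomorphic to $Y_1\#Y_2$ as a smooth manifold, and second, that the contact structure $\xi_{(P,\phi)}$ supported by $(P,\phi)$ is contactomorphic to $\xi_1\#\xi_2$. Let me think about what I would actually need to verify.
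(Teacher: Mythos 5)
There is a genuine gap: your proposal ends before any mathematics begins. After splitting ``adapted'' into its two conditions (that $Y(P,\phi)$ is diffeomorphic to $Y_1\#Y_2$, and that the supported contact structure is $\xi_1\#\xi_2$), you carry out neither verification, so there is no argument to assess. In particular, the single observation that makes this lemma essentially a one-liner is missing: the connected-sum operation on open books described just before the lemma --- joining the pages by a $1$-handle to form $P=P_1\cup_{h^1}P_2$, taking the binding $B_1\# B_2$, and setting $\phi=\phi_1\circ\phi_2$ with each $\phi_i$ extended by the identity --- is precisely a (trivial) Murasugi sum, i.e.~a plumbing of the two open books performed in a ball around a point of the binding. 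The paper's proof consists of exactly this identification together with the standard fact, found in Etnyre's lecture notes \cite{Etnyre01} (going back to Giroux and Torisu), that such a Murasugi sum of open books adapted to $(Y_1,\xi_1)$ and $(Y_2,\xi_2)$ is an open book adapted to $(Y_1\#Y_2,\xi_1\#\xi_2)$.

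If you intended to avoid citing that fact, your plan would still need concrete content: for the smooth part, exhibit the separating $2$-sphere in $Y(P,\phi)$ coming from the co-core of the handle $h^1$ capped off through the binding region, showing $Y(P,\phi)\cong Y_1\# Y_2$; for the contact part, verify Giroux's adaptedness conditions (a contact form that is positive on the binding and whose differential is a positive area form on each page) for a form assembled from forms adapted to $(P_1,\phi_1)$ and $(P_2,\phi_2)$, and match the result with the uniqueness of the contact connected sum. None of this is present; as written, the proposal restates the statement rather than proving it.
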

\begin{proof} Observe that the given operation is a special 
case of the Murasugi sum. The lemma then follows from \cite{Etnyre01}.
\end{proof}
\begin{cor}\label{niceresult} 
Let $(Y,\xi)$ and $(Y',\xi')$ be contact manifolds and
$L\subset Y$ a Legendrian knot. Then we have
\[
  \begin{array}{ccccc}
   \hfkhat(-Y\#Y',L)&\cong&\hfkhat(-Y,L)&\otimes&\hfhat(-Y')\\
   \loss(Y\#Y',L)  &=    &\loss(Y,L)    &\otimes&c(\xi')
  \end{array}.
\]
\end{cor}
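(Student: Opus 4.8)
The plan is to establish Corollary~\ref{niceresult} as a direct consequence of the connected-sum description of open books given in Lemma~\ref{weissnicht}, combined with the connected-sum formula for Heegaard Floer homology and the behavior of the invariant $\loss$ under connected sums. First I would fix an adapted open book $(P_1,\phi_1)$ for $(Y,\xi)$ such that the Legendrian knot $L$ lies on the page $P_1\times\{1/2\}$ and an adapted open book $(P_2,\phi_2)$ for $(Y',\xi')$. By Lemma~\ref{weissnicht} the Murasugi-summed open book $(P,\phi)$ with $P=P_1\cup_{h^1}P_2$ and $\phi=\phi_1\circ\phi_2$ is an adapted open book for $(Y\#Y',\xi\#\xi')$. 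Since the connected sum is performed in a neighborhood of the binding, away from $L$, the knot $L$ still sits on the page $P\times\{1/2\}$ of the summed open book, so $(P,\phi)$ is an $L$-adapted open book for the pair $(Y\#Y',L)$.

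Next I would pass to Heegaard diagrams. Choose a cut system $a^{(1)}_1,\dots,a^{(1)}_{2g_1}$ of $P_1$ that is $L$-adapted in the sense of Definition~\ref{DefOne}/Lemma~\ref{LemOne}, and a cut system $a^{(2)}_1,\dots,a^{(2)}_{2g_2}$ of $P_2$; together with the co-core of the connecting $1$-handle $h^1$ these give a cut system for $P$. The resulting doubly-pointed Heegaard diagram for $(-Y\#Y',L)$ is precisely the connected sum (in the Heegaard-theoretic sense) of the $L$-adapted diagram for $(-Y,L)$ with the diagram for $(-Y')$, with the basepoints $w,z$ placed as for $(-Y,L)$ on the $P_1$-side. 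The key point, which I would spell out, is that because the attaching circles coming from $P_1$ and those coming from $P_2$ are supported on disjoint pieces of the Heegaard surface meeting only along the connect-sum region, the generators satisfy $\talpha\cap\tbeta \cong (\talpha\cap\tbeta)_{-Y}\times(\talpha\cap\tbeta)_{-Y'}$, and for the knot filtration only the $P_1$-factor carries the $(w,z)$-data. This gives the chain-level identification
\[
  \cfkhat(-Y\#Y',L)\;\cong\;\cfkhat(-Y,L)\otimes_{\ztwo}\cfhat(-Y').
\]
I would then invoke the standard connected-sum theorem for $\hfhat$ (due to \ozs\ and \sza\ in \cite{OsZa01}, together with its knot-filtered refinement in \cite{OsZa04}) to conclude that on homology the differential respects this tensor decomposition, yielding the first isomorphism of the Corollary.

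For the second identity I would track the distinguished generator. By the construction of the contact element and of $\loss$ (cf.~\S\ref{prelim:01:2} and \S\ref{invariantLOSS}), each adapted diagram has a unique intersection point supported on the page $P\times\{1/2\}$, and under the connected-sum splitting of generators this page-point for $(P,\phi)$ is exactly the product of the page-point $EH(P_1,\phi_1,a^{(1)})$ on the $P_1$-side with the page-point $EH(P_2,\phi_2,a^{(2)})$ on the $P_2$-side. Passing to the induced classes in the (filtered) homology, the $P_1$-factor represents $\loss(Y,L)$ and the $P_2$-factor represents the contact element $c(\xi')$, giving $\loss(Y\#Y',L)=\loss(Y,L)\otimes c(\xi')$ under the identification above. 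The main obstacle I anticipate is not any single deep computation but rather making the connected-sum identification of Heegaard diagrams fully rigorous at the chain-filtration level: one must check carefully that the $(w,z)$-basepoints and the knot filtration localize entirely on the $P_1$-summand and that no holomorphic disc crosses the connect-sum neck in a way that would mix the filtration levels of the two factors. This is exactly the mechanism already controlled in the proof of the connected-sum formula in \cite{OsZa01}, so I expect to reduce the filtered statement to that result once the diagram has been presented as a genuine connected sum with basepoints on one side.
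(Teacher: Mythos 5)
Your strategy is the same as the paper's: form the connected sum of adapted open books via Lemma~\ref{weissnicht}, choose cut systems on the two pages so that the induced Heegaard diagram is an honest connected sum of an $L$-adapted diagram for $(-Y,L)$ with a diagram for $-Y'$, invoke the Ozsv\'ath--Szab\'o connected-sum argument for the tensor splitting, and then track the distinguished page generator through the resulting isomorphism. However, one step is wrong as written: the union of the two cut systems \emph{together with the co-core of the connecting $1$-handle $h^1$} is not a cut system for $P=P_1\cup_{h^1}P_2$. By definition a cut system must have complement a single disc; since $\chi(P)=\chi(P_1)+\chi(P_2)-1$, the correct number of arcs is $\bigl(1-\chi(P_1)\bigr)+\bigl(1-\chi(P_2)\bigr)$, which is exactly the number in the union of the two cut systems, with no room for an extra arc. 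Indeed, cutting $P$ along the two cut systems already yields a boundary connected sum of two discs, i.e.~a disc; cutting additionally along the co-core of $h^1$ disconnects this into two discs. (Contrast this with positive Giroux stabilization, where the handle joins two arcs of the boundary of the \emph{same} page, and there the co-core is precisely the one new cut arc needed.) The paper's proof accordingly takes the cut system for $P$ to be just $\{a_1,\dots,a_n\}\cup\{a_1',\dots,a_m'\}$.

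If the extra arc were carried through, the construction would break down, not merely be redundant: on the Heegaard surface $\Sigma=\Sigma_1\#\Sigma_2$ the doubled co-core becomes the separating curve of the connected sum, which is null-homologous, so the $\alpha$-curves would fail the linear-independence requirement for attaching circles, and in any case you would have one more curve in each family than the genus of $\Sigma$ permits; the object obtained is not a Heegaard diagram and the identification of generators with products collapses. The repair is purely local --- delete the co-core --- after which your argument coincides with the paper's: the paper additionally imposes that the attaching region of $h^1$ avoid the endpoints of all cut arcs and lie in the closure of the regions containing the basepoints $z_1,z_2$ (conditions $(1)$ and $(2)$ of its proof), which forces $\phi(a_i)\cap a_j'=\emptyset$ and $a_i\cap\phi(a_j')=\emptyset$ and hence the clean splitting $\alpha=\alpha_1\cup\alpha_2$, $\beta=\beta_1\cup\beta_2$ with merged $z$-region; this is exactly the point you defer to the neck argument of \cite{OsZa01}, \cite{OsZa02}, and with it the tensor splitting and the identification $\varphi(\loss(-Y\#Y',L))=\loss(-Y,L)\otimes c(\xi')$ follow as you describe.
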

\begin{proof}
Let $(P_1,\phi_1)$ be an open book decomposition adapted to the knot
$L$ and the contact structure $\xi$. Denote by $(P_2,\phi_2)$ an 
open book for $(Y',\xi')$. We define an open book $(P,\phi)$ by using 
the open books for $Y$ and $Y'$
as given above. Recall, that the page $P$ is given by joining the pages
$P_1$ and $P_2$ with a $1$-handle $h^1$, i.e.
\[
  P=P_1\cup_{h^1}P_2.
\]
Denote by $f\co\partial h^1\lra\partial P_1\sqcup\partial P_2$ the
attaching map. Furthermore, let $\{a_1,\dots,a_n\}$ be a cut system 
for $P_1$ and $\{a_1',\dots,a_m'\}$ a cut system for $P_2$. Choose isotopic
push-offs $b_i$ of the $a_i$ so that $a_i$ and $b_i$ intersect
each other in a pair $x_i^+$, $x_i^-$ of intersection points. The push-offs 
are chosen like specified in \S\ref{prelim:01:2} (cf.~also 
Figure~\ref{Fig:zpointpos}). Analogously, the curves $b_j'$, $j=1,\dots,m$, 
are defined; denote the points of intersection by $y_j^+$, $y_j^-$, 
$j=1,\dots,m$. The names are attached to the intersection points in such a
way that $\{x_1^+,\dots,x_n^+\}$ represents the class $\loss(Y,L)$ and that
$\{y_1^+,\dots,y_m^+\}$ represents $c(\xi')$. We additionally fix 
base points $z_i\in P_i$, 
$i=1,2$, and a third one, $w$ say, in $P_1$ determining the knot $L$. These
choices induce Heegaard diagrams we denote by 
$(\Sigma_i,\alpha_i,\beta_i)$, $i=1,2$. We require the chosen cut systems
to fulfill the following two conditions:
\begin{enumerate}
\item[(1)] 
$\im(f)\cap
\Bigl(
\bigcup_{i=1}^{n}\partial a_i
\cup
\bigcup_{j=1}^{m}\partial a_j'
\Bigr)=\emptyset$
\item[(2)]
$\im(f)
\subset
\partial\overline{\dom_{z_1}}\cup\partial\overline{\dom_{z_2}}$
\end{enumerate}
As a consequence of these two conditions and the fact that by definition
$\left.\phi\right|_{P_i}=\phi_i$, $i=1,2$ and 
$\left.\phi\right|_{h^1}=\mbox{\rm id}_{h^1}$ we see that
\begin{equation}
  \phi(a_i)\cap a_j'=\emptyset\;\;\mbox{\rm and}\;\;a_i\cap
  \phi(a_j')=\emptyset.\label{thepoint}
\end{equation}
The set $\{a_1,\dots,a_n\}\cup\{a_1',\dots,a_m'\}$ is a cut system for the
open book $(P,\phi)$. Denote by $(\Sigma,\alpha,\beta)$ the induced Heegaard
diagram, then with $(\ref{thepoint})$, we see that
\[
  \Sigma=\Sigma_1\#\Sigma_2,
  \;\alpha=\alpha_1\cup\alpha_2,
  \;\beta=\beta_1\cup\beta_2
\]
and the points $z_i$, $i=1,2$, lie in the regions unified by the connected
sum tube. Choose a base point $z\in\Sigma$ lying in this unified region. 
Thus, --- with the same reasoning as in the proof of 
\cite{OsZa02}, Proposition 6.1. --- we see that
\begin{equation}
  \hfkhat(-Y\#Y,L)
  \cong
  \hfkhat(-Y,L)
  \otimes
  \hfhat(-Y).\label{isomorphism}
\end{equation}
By construction, the intersection point $\{x_1^+,\dots,x_n^+,y_1^+,\dots,y_m^+\}$ represents
the class $\loss(Y\#Y',L)$. But the isomorphism 
giving $(\ref{isomorphism})$, $\varphi$ say, has that property that
\[
  \{x_1^+,\dots,x_n^+,y_1^+,\dots,y_m^+\}
  \lmt
  \{x_1^+,\dots,x_n^+\}
  \otimes
  \{y_1^+,\dots,y_m^+\},
\]
i.e.~$\varphi(\loss(-Y\#Y',L))=\loss(-Y,L)\otimes c(\xi')$.
\end{proof}
\begin{lem}(\cite{Etnyre01})\label{endlich} If $\gamma$ is a non-separating 
curve on a page of an open book $(P,\phi)$, we can isotope the 
open book slightly such that $\gamma$ is Legendrian and the 
contact framing agrees with the page framing.
\end{lem}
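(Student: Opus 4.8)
The plan is to exhibit a single contact form compatible with $(P,\phi)$ that already realizes the conclusion; by Giroux's correspondence every contact structure supported by $(P,\phi)$ is isotopic to $\xi$, so transporting by this isotopy yields the asserted small isotopy of the open book.

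First I would recall the Thurston--Winkelnkemper model for the supported contact form in a collar of the page $P\times\{1/2\}$ carrying $\gamma$. Choosing the coordinate $t$ transverse to the pages (the direction of the Reeb field $R_\alpha$), one can write $\alpha=dt+\lambda$, where $\lambda$ is a Liouville form on $P$, i.e.~$d\lambda$ is a positive area form; the monodromy only affects the gluing near $t=0\sim 1$, away from our page, so on a collar of $P\times\{1/2\}$ this simple expression is available. A direct computation then shows that the contact plane $\xi=\ker\alpha$ coincides with the tangent plane $TP$ of the page exactly at the points where the covector $\lambda$ vanishes. Consequently, if I can arrange $\lambda|_\gamma\equiv 0$, then $T\gamma\subset TP=\xi$ along $\gamma$, so $\gamma$ is Legendrian, and moreover $\xi|_\gamma=TP|_\gamma$. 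This last identity is exactly what makes the two framings agree: the contact framing is the push-off of $\gamma$ inside $\xi$ and the page framing is the push-off inside $TP$, and these coincide whenever the two planes agree along $\gamma$.

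It therefore remains to produce a Liouville form $\lambda$ on $P$, agreeing with the standard model near $\partial P$, with $\lambda|_\gamma\equiv 0$. Near $\gamma$ I would take an annular neighborhood with coordinates $(\theta,s)$, $\theta$ along $\gamma$ and $s$ transverse, and set $\lambda=s\,d\theta$; then $d\lambda=ds\wedge d\theta$ is a positive area form and $\lambda$ vanishes on $\{s=0\}=\gamma$. Since $P$ has non-empty boundary we have $H^2(P)=0$, so any area form --- in particular one agreeing with $ds\wedge d\theta$ near $\gamma$ and with the standard form near $\partial P$ --- is exact; choosing a primitive that matches $s\,d\theta$ near $\gamma$ and the standard Liouville form near $\partial P$ (correcting by closed $1$-forms where necessary) gives the desired global $\lambda$. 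Here the hypothesis that $\gamma$ is non-separating is exactly what is needed: a separating curve could bound a subsurface $P_1\subset P$ with $\partial P_1=\gamma$, over which Stokes' theorem and $\lambda|_\gamma=0$ would force $\int_{P_1}d\lambda=0$, contradicting positivity of the area form; a non-separating curve bounds no such subsurface, so no such obstruction arises.

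The step I expect to be the main obstacle is precisely this global extension: matching the prescribed local primitive $s\,d\theta$ near $\gamma$, the standard Liouville behaviour near the binding, and the positivity of $d\lambda$ everywhere, all at once, while keeping the form compatible with the monodromy on the full mapping torus. Once that is settled, Giroux's theorem identifies the resulting contact structure with $\xi$ up to isotopy and completes the argument. (Alternatively, one could phrase the whole argument through convex surface theory, applying the Legendrian realization principle to the non-isolating curve $\gamma$ and invoking the standard fact that a Legendrian curve on a convex surface inherits the surface framing as its contact framing; this is closer in spirit to \cite{Etnyre01}.)
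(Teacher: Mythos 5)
Your proposal is correct in substance, but it takes a genuinely different route from the paper. The paper's entire proof is one sentence: it cites the Legendrian realization principle. In that approach one perturbs the closed surface obtained from two pages glued along the binding to a convex surface whose dividing set is the binding; since $\gamma$ is non-separating in the page, its complement in this surface is connected and meets the dividing set, so the realization principle makes $\gamma$ Legendrian by a perturbation supported near the surface, and disjointness from the dividing set forces the contact framing to equal the surface (hence page) framing. Your route instead builds an explicit Thurston--Winkelnkemper form whose Liouville form vanishes along $\gamma$ and then invokes Giroux uniqueness plus Gray stability. What this buys: it avoids convex surface theory entirely and makes the non-separating hypothesis transparent, both via the Stokes obstruction and because killing the period $\int_\gamma\mu$ of a primitive requires $[\gamma]\neq 0$ in $H_1(P;\mathbb{R})$. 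What it costs: the extension step you flag is genuine but standard (fix periods with closed forms dual to arcs in the connected complement $P\setminus\gamma$, cut off exact differences, and redistribute boundary periods to keep $\lambda$ positive on each boundary circle); your worry about the monodromy is a non-issue, since the interpolation from $\lambda$ to $\phi^*\lambda$ can be confined to a collar away from the middle page at the price of a large constant multiplying $dt$; the isotopy produced by Giroux uniqueness is global rather than ``slight'' (harmless, as the paper never uses smallness); and your method is strictly narrower than the paper's --- for a separating but non-isolating curve such as the core of an annulus page, the required Liouville form does not exist (Stokes forces a negative boundary period), yet Legendrian realization with matching framings still holds by the realization principle. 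Your closing parenthetical is, in fact, exactly the paper's proof.
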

This fact follows from the Legendrian realization principle. 
As a consequence, we get the following corollary.
\begin{cor}\label{keineAhnung} If the Legendrian knots $L_i\subset P_i$
sit on the ages, then, on the page $P$ of $(P,\phi)$, we will find a
Legendrian knot $L$ with the following property: There is a naturally
induced contactomorphism $\phi_c$ such that $\phi_c(L)$ equals 
$L_1\#_{Lb}L_2$ after performing a right-handed twist along the
Legendrian band. Indeed, we obtain $L$ by a band sum of $L_1$ and
$L_2$ on the page $P$.
\end{cor}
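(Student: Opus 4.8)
The plan is to manufacture $L$ directly on the combined page $P=P_1\cup_{h^1}P_2$ by a smooth band sum of $L_1$ and $L_2$, and then to Legendrian-realize it by invoking Lemma~\ref{endlich}. Concretely, I would first fix a band $b$ on $P$ running through the connecting handle $h^1$, joining an arc of $L_1$ to an arc of $L_2$, so that the curve $L:=(L_1\cup b\cup L_2)$, suitably surgered, is an embedded simple closed curve on $P$. Because each $L_i$ is homologically essential on its page $P_i$, the band sum $L$ can be arranged to be non-separating on $P$; at the plan level I would confirm this either by exhibiting a curve on $P$ meeting $L$ once transversally or by a Mayer--Vietoris computation as in the proof of Lemma~\ref{primitive}. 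With $L$ non-separating, Lemma~\ref{endlich} yields, after a small isotopy of the open book, that $L$ is Legendrian and that its contact framing agrees with its page framing. This produces the Legendrian knot $L\subset P$ asserted in the statement, obtained as a band sum of $L_1$ and $L_2$ on $P$.

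Next I would isolate the contactomorphism $\phi_c$. Recall that $(P,\phi)$ was built using embeddings $f_1,f_2\co D^3\lra Y_i$ centered on the bindings, with $f_2=g\circ r$ where $r$ is the right-handed rotation swapping the two hemispheres of $D^3$, and that the gluing $f_2\circ f_1^{-1}$ preserves the open book structure. The handle $h^1$ lives precisely in this gluing region, where the ambient contact structure is standard, i.e.~a Darboux chart adapted to the binding. The rotation $r$ therefore descends to a contactomorphism $\phi_c$ supported in a neighborhood of $h^1$, and this is the map named in the corollary.

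Finally I would compare $\phi_c(L)$ with the Legendrian band sum $L_1\#_{Lb}L_2$. By the definition recalled before Proposition~\ref{knotstabil02}, $L_1\#_{Lb}L_2$ is formed by viewing $L_1$ and $L_2$ inside a common Darboux chart and joining them with a standard Legendrian band. Under $\phi_c$ the two pieces $L_1,L_2$ are carried into such a chart and the page band $b$ is carried to a Legendrian band; the content of the statement is that the right-handed hemisphere swap $r$ contributes exactly one right-handed twist along this band. I expect this last identification to be the main obstacle: one must verify, in the Darboux model of the connected-sum neck, that the rotation $r$ translates into a single twist of the co-oriented band of the correct (right-handed) sign, and that this twist is the only discrepancy between the naive page band sum and the genuine Legendrian band sum. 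Carrying out this framing bookkeeping---tracking the action of $r$ on the band and matching it against the twist convention that distinguishes positive from negative stabilizations (cf.~Proposition~\ref{knotstabil02})---then gives $\phi_c(L)=L_1\#_{Lb}L_2$ with a right-handed twist, completing the proof.
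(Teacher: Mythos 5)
Your first step --- forming $L$ on $P$ by a band sum of $L_1$ and $L_2$ through the handle $h^1$ and Legendrian realizing it via Lemma~\ref{endlich} --- agrees with what the paper does. The gap lies in your second and third steps. You assert that the hemisphere-swapping rotation $r$ ``descends to a contactomorphism $\phi_c$ supported in a neighborhood of $h^1$.'' That is not justified, and it is in fact the crux of the whole statement: $r$ is merely a diffeomorphism used to modify the gluing map, and nothing in your argument shows that it, or any map supported near the handle, preserves the contact structure. In the paper, $\phi_c$ is not obtained by descending $r$; it is a contactomorphism between two \emph{differently glued} contact manifolds --- the open-book compatible connected sum $(Y,\xi)$ on one side and the standard contact connected sum $(\sthree(\mathbb{L}_1\sqcup\mathbb{L}_2),\xi_{\mathbb{L}_1\sqcup\mathbb{L}_2})$ on the other --- and it is produced abstractly: the characteristic foliations induced on the boundaries of the (thickened) connected sum tube agree, and contact structures on $\stwo\times[0,1]$ are determined up to isotopy by their boundary characteristic foliations (Lemma~4.12.1 and Theorem~4.9.4 of \cite{Geiges}). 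Only after $\phi_c$ exists in this form does one derive that it rotates the $\stwo$-factor once while traversing the tube, and that rotation is what introduces the single twist of the band. This is exactly the ``framing bookkeeping'' you defer, and your proposal offers no mechanism for it beyond the unproven local descent of $r$.

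There is a second, related omission. The Legendrian band sum $\#_{Lb}$ is defined in the paper via contact surgery presentations: one needs $L_1$ and $L_2$ to sit in the standard contact space, away from a surgery link, before the band sum even makes sense. Your plan never sets up such presentations, so there is no common arena in which $\phi_c(L)$ and $L_1\#_{Lb}L_2$ can be compared. The bulk of the paper's proof consists precisely of this setup: writing $(Y_i,\xi_i)\cong(\sthree(\mathbb{L}_i),\xi_{\mathbb{L}_i})$ using the open books and Theorem~2.7 of \cite{LOSS}, arranging $L_1$ to pass the binding inside a Darboux ball $D_1$ disjoint from $\mathbb{L}_1$, placing $\mathbb{L}_2\cup L_2$ inside $D_1$ (with $\widetilde{D_1}$ playing the complementary role for $L_2$), and then realizing both the page band sum and the Legendrian band sum inside $\sthree(\mathbb{L}_1\sqcup\mathbb{L}_2)$. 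Without this, the final comparison in your proposal cannot be carried out.
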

\begin{proof} Let $(P_i,\phi_i)$ be open books adapted to
$(Y_i,\xi_i,L_i)$, $i=1,2$. On $P_i$ there is a set of embedded, simple
closed curves $c^i_1,\dots,c^i_n$ whose associated Dehn twists generate
the mapping class groups of $P_i$. The associated Dehn twists can be
interpreted as contact surgeries along suitable Legendrian 
knots (cf.~Theorem~2.7 in \cite{LOSS}). Thus, using the open book
decomposition we are able to find a (maybe very inefficient) contact
surgery representation of $(Y_i,\xi_i)$ which is suitable for our
purposes to perform the Legendrian band sum (cf.~beginning of this
section). Moreover, we can think of $L_1$ to pass the binding $B_1$
of $P_1$ very closely at some point: this means that there is a 
point $p_1$ in the binding, and a Darboux ball $D_1$ around
$p_1$, such that the curve intersects this Darboux ball. Suppose this 
is not the case, then we can isotope the Legendrian knot $L_1$, which 
sits on $P_1$, as a curve in $P_1$, to pass the binding closely (as described
above). The isotopy is not necessarily a Legendrian isotopy. However, by 
Theorem~2.7 of \cite{LOSS}, we know that the isotoped curve determines a 
uniquely defined Legendrian knot, which is Legendrian isotopic to
$L_1$. With a slight isotopy of the open book, we can think of this
new knot as sitting on $P_1$. By abuse of notation, we call the new
knot $L_1$. After possibly isotoping the open book we can think of 
$\mathbb{L}_1$ as sitting in the complement of $D_1$. We obtain
a situation like indicated in the top row of Figure \ref{Fig:ap01}.
\begin{figure}[ht!]
\labellist\small\hair 2pt
\pinlabel {\Large $\sthree(\mathbb{L}_1)=$} [r] at 157 755
\pinlabel {\Large $\sthree(\mathbb{L}_2)=$} [r] at 157 232
\pinlabel {Binding} [l] at 364 971
\pinlabel {Binding} [l] at 972 446
\pinlabel {$\widetilde{D}_1$} [l] at 477 930
\pinlabel {$\widetilde{D}_1$} [l] at 477 397
\pinlabel {$D_1$} [B] at 805 924
\pinlabel {$D_1$} [B] at 805 397
\pinlabel {$\mathbb{L}_1$} at 336 813
\pinlabel {$\mathbb{L}_2$} at 963 286
\pinlabel {\large $\partial$} [tl] at 665 720
\pinlabel {\large $\partial$} [tl] at 665 220
\endlabellist
\centering
\includegraphics[height=10cm]{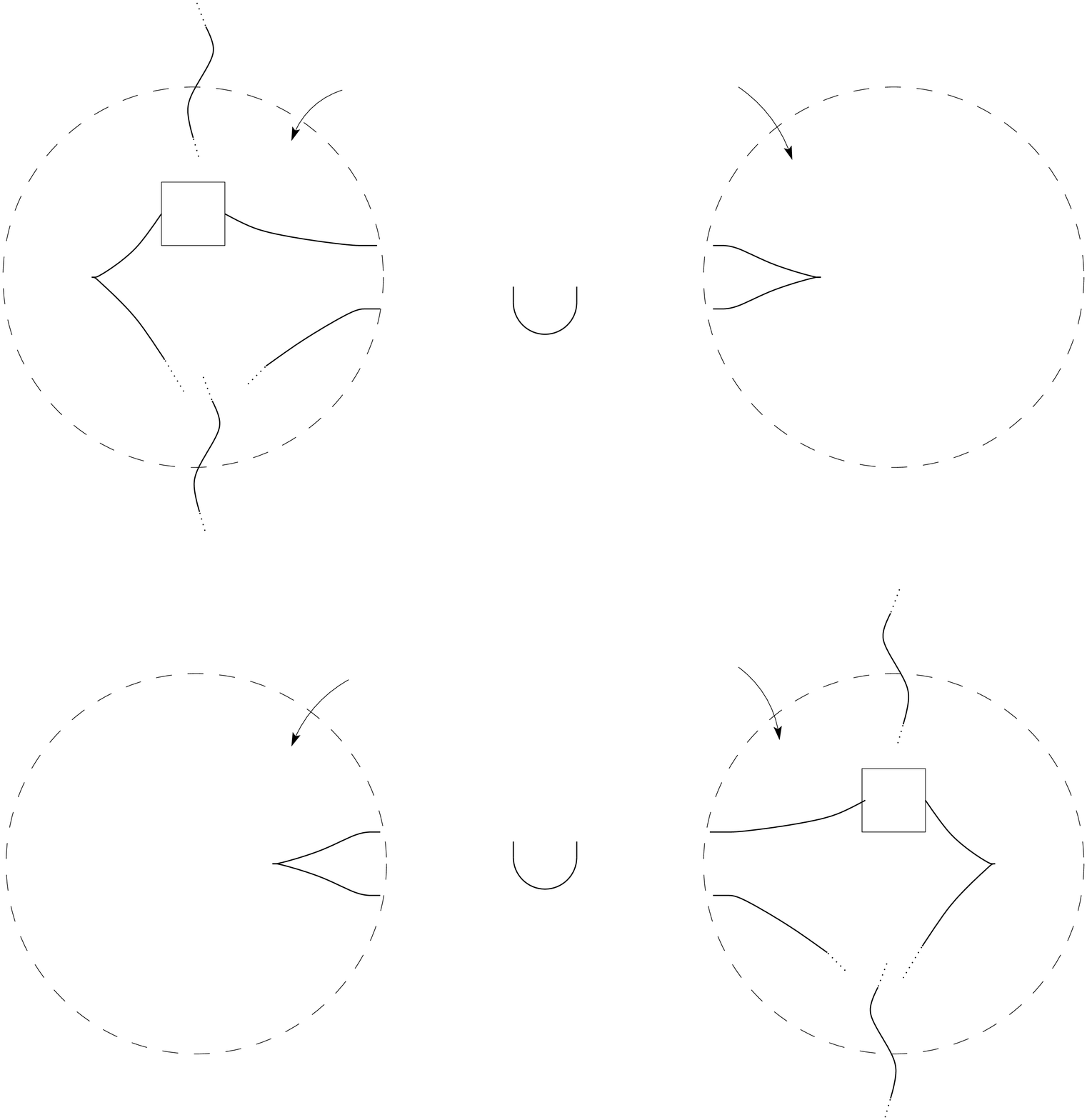}
\caption{Our specific arrangement for performing the connected sum.}
\label{Fig:ap01}
\end{figure}
Since we have the identification $(Y_1,\xi_1)\cong(\sthree(\mathbb{L}_1),\xi_{\mathbb{L}_1})$,
the ball $D_1$ can be thought of as sitting in $\sthree$. The complement
of $D_1$ in $\sthree$ is again a ball we denote by $\widetilde{D_1}$. 
We may make similar arrangements for $L_2$: however, we would like $L_2$ and the 
associated surgery link $\mathbb{L}_2$ to sit inside $D_1$ and $\widetilde{D_1}$ to be 
the ball in which $L_2$ comes close to $B_2$ (cf.~bottom row of Figure \ref{Fig:ap01}). We 
can form the connected sum
\begin{equation}
  \sthree(\mathbb{L}_1\sqcup\mathbb{L}_2)
  =
  \sthree(\mathbb{L}_1)\backslash D_1
  \cup_\partial\stwo\times[0,1]\cup_\partial
  \sthree(\mathbb{L}_2)\backslash\widetilde{D_1}
  \label{glubabyglu}
\end{equation}
where the gluing is determined by the naturally given embeddings (cf.~\S4.12 in \cite{Geiges})
\[
 \iota_1\co D_1\hookrightarrow\sthree
 \;\;\mbox{\rm and }\;\;
 \iota_2\co\widetilde{D_1}\hookrightarrow\sthree.
\]
For a detailed discussion of connected sums of contact manifolds we point the
reader to \cite{Geiges}. The induced contact structure is the connected sum 
$\xi_{\mathbb{L}_1}\#\xi_{\mathbb{L}_2}=\xi_{\mathbb{L}_1\sqcup\mathbb{L}_2}$ 
(cf.~\S4.12 of \cite{Geiges}). The knots $L_1$ and $L_2$ are contained in this 
connected sum and, here, we can perform the Legendrian band sum as defined at 
the beginning of this section; we can perform a band sum which looks like 
given in Figure \ref{Fig:ap03}.
\begin{figure}[ht!]
\labellist\small\hair 2pt
\pinlabel {$\mathbb{L}_1$} at 172 848
\pinlabel {$\mathbb{L}_2$} at 746 848
\pinlabel {$\mathbb{L}_1$} at 243 290
\pinlabel {$\mathbb{L}_2$} at 666 290
\pinlabel {Performing the band sum} [l] at 448 521
\endlabellist
\centering
\includegraphics[height=10cm]{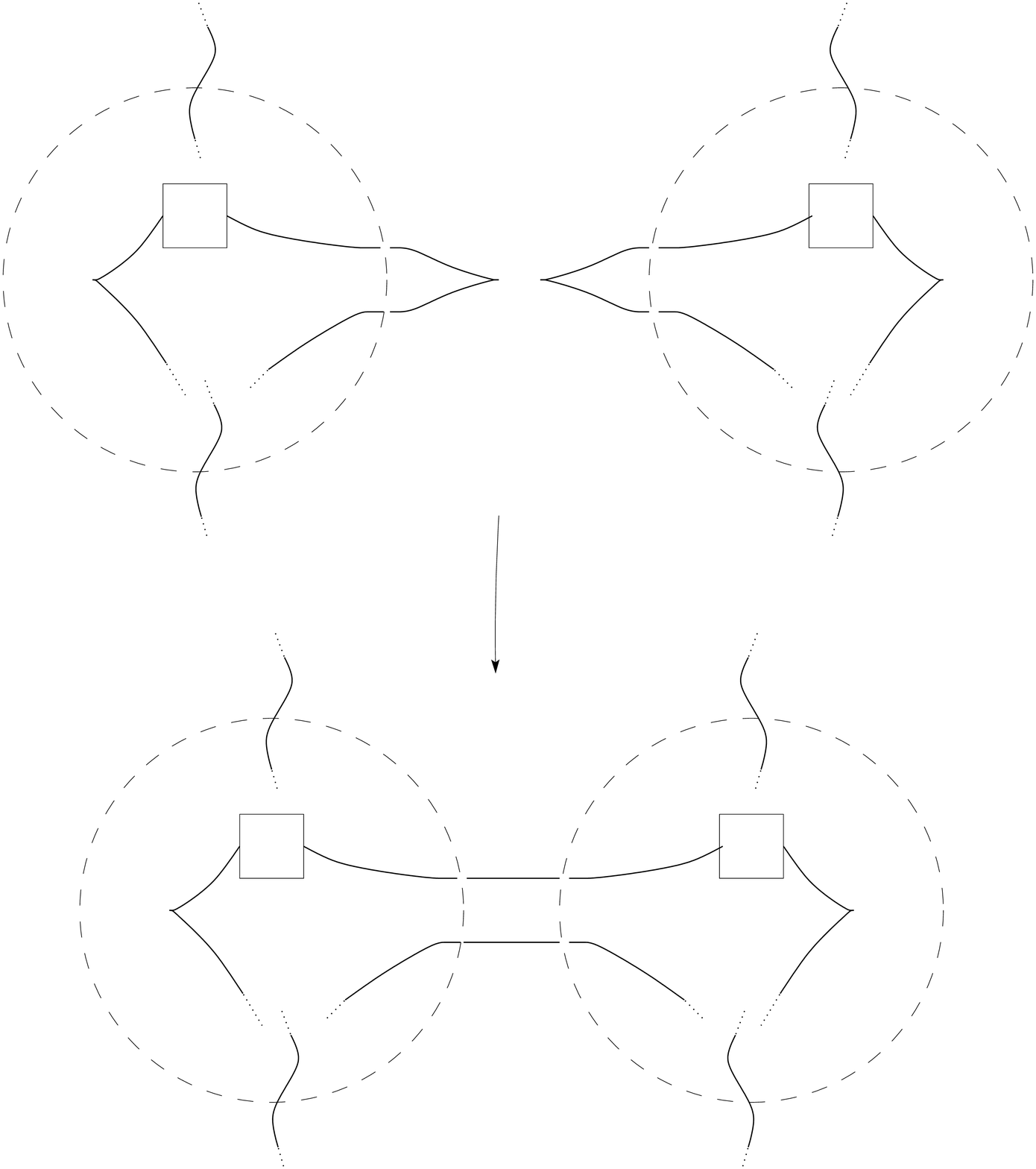}
\caption{Performing a band sum of $L_1$ and $L_2$ inside $\sthree(\mathbb{L}_1\sqcup\mathbb{L}_2)$.}
\label{Fig:ap03}
\end{figure}
Recall that we introduced a connected sum operation such that the open 
books $(P_i,\phi_i)$ glue together to give the open book $(P,\phi)$ 
where $P=P_1\cup_{h_1}P_2$ and $\phi$ is given as the composition of the two monodromies
$\phi_1$ and $\phi_2$. To perform the connected sum operation such that the 
open book structures are preserved, we have to modify the construction 
slightly. We modify the inclusion $\iota_1$ by composing it with a rotation 
about the $y$-axis with angle $\pi$. 
Without loss of generality we can think $L_1\cap\partial D_1$ and $L_2\cap\partial\widetilde{D_1}$ to be 
identified by the gluing induced by the inclusion maps $\iota_1$ and $\iota_2$. 
We can also assume that the rotation $r$ swaps the two intersection points $L_1\cap\partial D_1$. We 
obtain a new gluing map, $f$ say, and get
\[
  Y
  =
  \sthree(\mathbb{L}_1)\backslash D_1
  \cup_{f}
  \sthree(\mathbb{L}_2)\backslash\widetilde{D_1}
\]
with induced contact structure $\xi$. With this identification
the knots $L_1$ and $L_2$ glue together to give a knot $L$. This
knot $L$ corresponds to a band sum of $L_1$ and $L_2$ on the page
$P$ (after possibly applying Proposition~\ref{endlich}). Recall that contact structures on
$\stwo\times[0,1]$ are uniquely determined, up to isotopy, by the
characteristic foliations on $\stwo\times\{j\}$, $j=0,1$ 
(cf.~Lemma~4.12.1 and Theorem~4.9.4 of \cite{Geiges}). Consider 
the connected sum tube used in (\ref{glubabyglu}),
and extend it with small collar neighborhoods of the boundaries of
$\sthree(\mathbb{L}_1)\backslash D_1$ and $\sthree(\mathbb{L}_2)\backslash\widetilde{D_1}$.
The characteristic foliation $\xi_{\mathbb{L}_1\sqcup\mathbb{L}_2}$
induces at the boundary will coincide with the characteristic
foliation $\xi$ induces on a suitably chosen tubular
neighborhood of $\partial D_1\cong\stwo\times[0,1]$ in $Y$. Thus, there 
is a contactomorphism between $\nu D_1$ and this thickened connected sum tube.
Moreover, the contactomorphism can be extended to a contactomorphism
\[
  \phi_c
  \co
  (Y,\xi)
  \lra
  (\sthree(\mathbb{L}_1\sqcup\mathbb{L}_2),\xi_{\mathbb{L}_1\sqcup\mathbb{L}_2})
\]
which just affects the connected sum tube and fixes the rest. As 
one can derive with some effort, this contactomorphism basically rotates the $\stwo$-factor
once while going through the handle $\stwo\times[0,1]$. Thus, $\phi_c(L)$ looks 
like a band sum $L_1\#_{Lb}L_2$ in $\sthree(\mathbb{L}_1\sqcup\mathbb{L}_2)$
after twisting the band once. Figure \ref{Fig:ap02} applies.
\begin{figure}[ht!]
\labellist\small\hair 2pt
\pinlabel {$\mathbb{L}_1$} at 172 848
\pinlabel {$\mathbb{L}_2$} at 746 848
\pinlabel {$\mathbb{L}_1$} at 243 290
\pinlabel {$\mathbb{L}_2$} at 666 290
\pinlabel {Connected sum after applying $\phi_c$} [l] at 448 521
\endlabellist
\centering
\includegraphics[height=10cm]{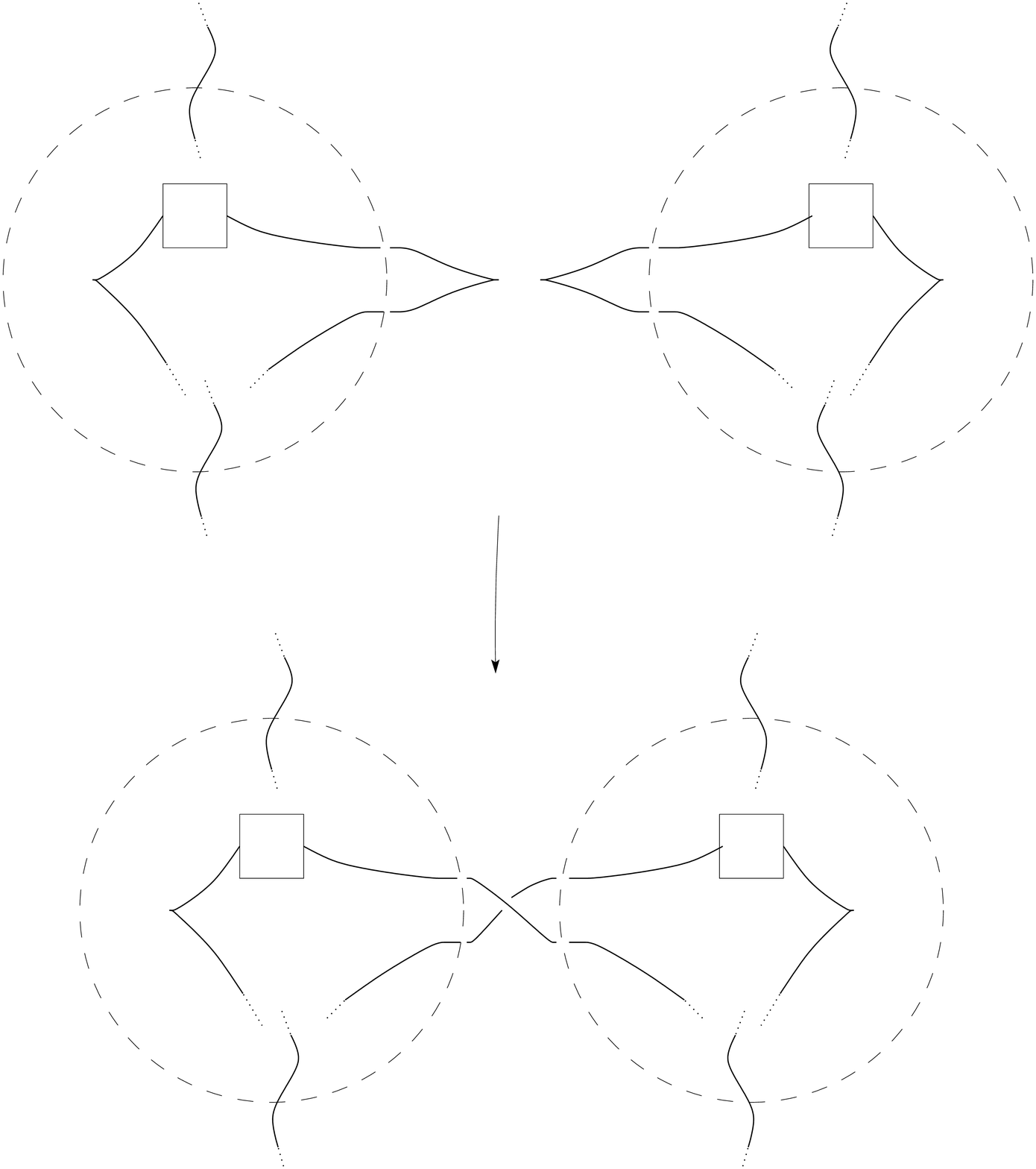}
\caption{Schematic picture of the band bum after idenifying $(Y,\xi)$ with
$(\sthree(\mathbb{L}_1\sqcup\mathbb{L}_2),\xi_{\mathbb{L}_1\sqcup\mathbb{L}_2})$.}
\label{Fig:ap02}
\end{figure}
\end{proof}
The following statement is due to Etnyre. Since there is no proof
in the literature, we include a proof here for the convenience of
the reader.
\begin{prop}(\cite{Etnyre01})\label{knotstabil} 
Let $(Y,\xi,L)$ be a contact manifold with Legendrian knot and
$(P,\phi)$ and open book adapted to $\xi$ with $L$ on its page such that
the page framing and contact framing coincide. By stabilizing the open
book once we can arrange either the stabilized knot $S_+(L)$ or 
$S_-(L)$ to sit on the page of the stabilized open book as 
indicated in Figure~\ref{Fig:illustration}.
\end{prop}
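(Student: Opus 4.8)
The plan is to realize the positive (resp.\ negative) Giroux stabilization as a binding connected sum of $(Y,\xi,L)$ with the positive Hopf band open book of $(\sthree,\xistd)$ carrying a Legendrian shark, and then to feed this into the band-sum machinery of Corollary~\ref{keineAhnung} together with Proposition~\ref{knotstabil02}. Let $(A,D_c^+)$ denote the positive Hopf band, i.e.\ the annulus $A$ with a single positive Dehn twist about its core $c$; this is an adapted open book for $(\sthree,\xistd)$. First I would observe that the connected-sum operation introduced before Lemma~\ref{weissnicht}, applied to $(P,\phi)$ and $(A,D_c^+)$, produces the open book with page $P\cup_{h^1}A$ and monodromy $\phi\circ D_c^+$. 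Since the annulus is a disc with one $1$-handle attached, the binding connected sum of $P$ with $A$ is homeomorphic to a single $1$-handle stabilization of $P$, and under this identification the core $c$ becomes exactly a curve meeting the co-core of the new handle once; that is, $c$ plays the role of the curve $\gamma$ in a positive Giroux stabilization. By Lemma~\ref{weissnicht} the result is adapted to $(Y\#\sthree,\xi\#\xistd)=(Y,\xi)$, so this connected sum is precisely a single positive Giroux stabilization of $(P,\phi)$.

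Next I would place the Legendrian shark $L_0$ (resp.\ $\overline{L_0}$, see Figure~\ref{Fig:shark}) on the page $A$ of the Hopf band so that its page framing coincides with its contact framing; since the relevant curve is non-separating on $A$, this is guaranteed by Lemma~\ref{endlich}. Applying Corollary~\ref{keineAhnung} to the two open books $(P,\phi,L)$ and $(A,D_c^+,L_0)$ then yields a Legendrian knot $\widetilde{L}$ sitting on the stabilized page $P\cup_{h^1}A$, together with a contactomorphism $\phi_c$ satisfying $\phi_c(\widetilde{L})=L\#_{Lb}L_0$ after the right-handed twist of the band. By Proposition~\ref{knotstabil02} we have $L\#_{Lb}L_0=S_+(L)$, so $\widetilde{L}$ is Legendrian isotopic to $S_+(L)$ and sits on the page of the stabilized open book with page framing equal to contact framing. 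Running the same argument with $\overline{L_0}$ in place of $L_0$ and using $S_-(L)=L\#_{Lb}\overline{L_0}$ handles the negative stabilization, and one then checks that the resulting configuration is the one drawn in Figure~\ref{Fig:illustration}.

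The main obstacle I anticipate is bookkeeping the \emph{right-handed twist along the band} that appears in Corollary~\ref{keineAhnung}: a priori this extra twist could alter the isotopy type of the band sum, so one must verify that the shark is positioned (and the band chosen) so that the twisted band sum on the page is precisely the zigzag defining $S_+(L)$ rather than some other Legendrian modification. Concretely, this amounts to tracking the orientations of $L$ and of the shark through the connected-sum rotation and confirming that the inserted cusp pair realizes $tb(S_+(L))=tb(L)-1$ and $rot(S_+(L))=rot(L)+1$, matching the equations recorded before Proposition~\ref{knotstabil02}. A secondary, more routine point is to confirm carefully that the binding connected sum with the positive Hopf band genuinely coincides with one positive Giroux stabilization, which follows from the absorption of the disc part of the annulus described above and a comparison of the monodromies $\phi\circ D_c^+$ and $\phi\circ D_\gamma^+$ on the two descriptions of $P\cup_{h^1}A$.
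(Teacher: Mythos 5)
Your overall architecture is the same as the paper's: realize the Giroux stabilization as a connected sum of open books with an auxiliary open book of $(\sthree,\xistd)$ (Lemma~\ref{weissnicht} and the construction preceding it), then identify the knot this places on the stabilized page via Corollary~\ref{keineAhnung} and Proposition~\ref{knotstabil02}. The gap is in what you put on the auxiliary page. You claim Lemma~\ref{endlich} lets you place the shark $L_0$ on the annulus page $A$ of the positive Hopf band with page framing equal to contact framing. It does not: the only homologically essential simple closed curve on an annulus is its core, whose page framing in the positive Hopf band is $-1$, so Legendrian realization produces the standard unknot with $tb=-1$ and $rot=0$. Lemma~\ref{endlich} only provides \emph{some} Legendrian representative of a given curve whose contact framing agrees with the page framing; it does not let you prescribe the Legendrian isotopy class, and the shark has $tb=-2$, which no essential curve on the Hopf band page can realize. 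This framing obstruction is exactly why the paper's auxiliary open book (Figure~\ref{Fig:finalopenbook}) is the one carrying the Legendrian unknot with $tb=-1$ and $rot=0$, not the shark.

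The second problem compounds the first and is not the routine bookkeeping you defer: Corollary~\ref{keineAhnung} identifies the page knot with the Legendrian band sum only \emph{after} a right-handed twist of the band, and that twist changes $tb$ by $-1$. If the shark genuinely sat on the auxiliary page, the twisted band sum would have $tb=tb(L)+tb(L_0)+1-1=tb(L)-2$, i.e.\ you would obtain a doubly stabilized knot, not $S_+(L)$. The twist is in fact the whole point of the construction: with the $tb=-1$, $rot=0$ unknot on the page the untwisted band sum would merely reproduce $L$ (that unknot is the identity for Legendrian connected sum), and it is the extra right-handed twist that inserts the single zigzag. The paper closes this step by an explicit Legendrian isotopy (Figures~\ref{Fig:hopefulend} and~\ref{Fig:hopefulend02}) showing that the resulting knot $L_2$ is Legendrian isotopic to $L\#_{Lb}L_0=S_\pm(L)$. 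Your argument becomes correct precisely if you replace the shark by the standard $tb=-1$ unknot on the auxiliary page and supply that isotopy; as written, the construction cannot be carried out.
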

\begin{figure}[ht!]
\labellist\small\hair 2pt
\pinlabel {Legendrian knot} [B] at 156 192
\pinlabel {Legendrian knot} [r] at 540 19
\pinlabel {positive stabilization} [Bl] at 448 254
\endlabellist
\centering
\includegraphics[height=3.5cm]{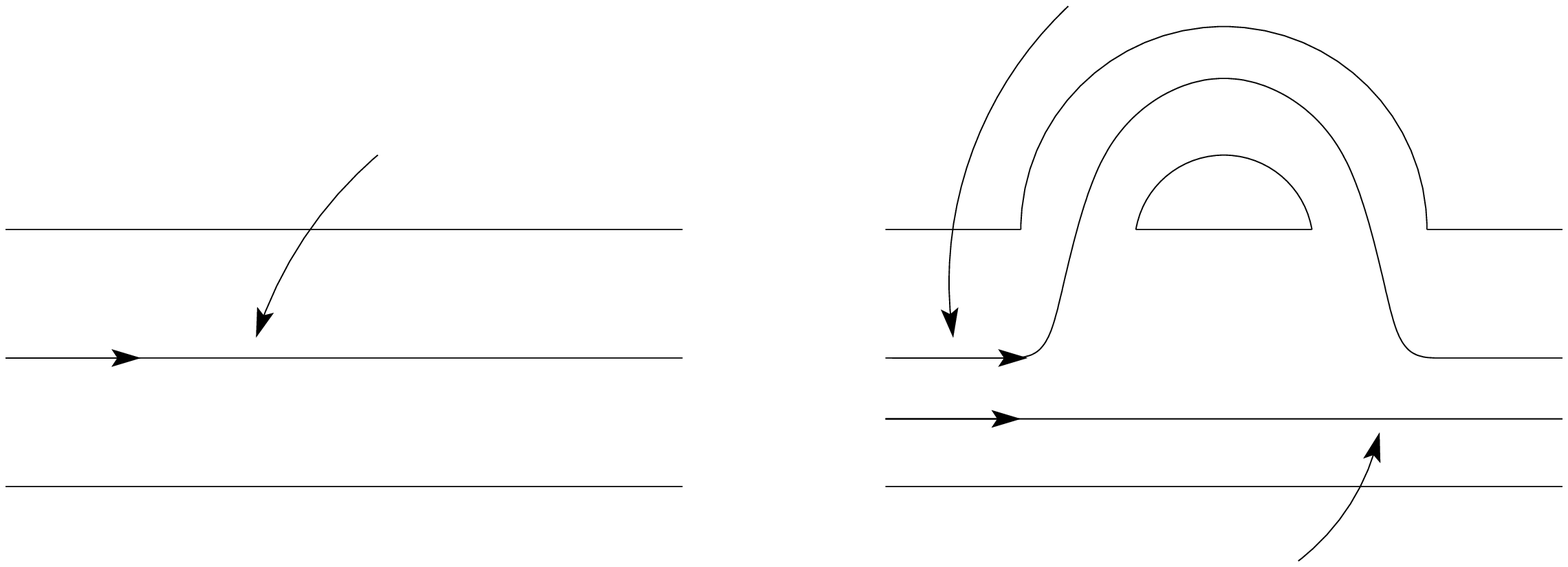}
\caption{The stabilized open book and a positive Legendrian stabilization.}
\label{Fig:illustration}
\end{figure}
The following result concerning the vanishing of the Legendrian invariant
under positive stabilizations is due to Lisca, Ozsv\'ath, Stipsicz and Szab\'o 
and follows from their connected sum formula given in \cite{LOSS}. Their proof
carries over verbatim even for knots which are homologically non-trivial. 
Here we reprove a special case of Theorem 7.2. of \cite{LOSS} using different
methods.
\begin{prop}[\cite{LOSS}, Theorem 7.2]\label{reproof} Given any 
Legendrian knot $L$ in a contact manifold $(Y,\xi)$, we have $\loss(S_+(L))=0$.
\end{prop}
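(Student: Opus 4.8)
The plan is to reduce the general statement to the model case of a stabilised unknot by means of the connected-sum machinery just developed, and then to settle the model case by an explicit computation. First I would invoke Proposition~\ref{knotstabil02} to rewrite the positive stabilisation as a Legendrian band sum $S_+(L)=L\#_{Lb}L_0$, where $L_0\subset(\sthree,\xistd)$ is the oriented Legendrian shark; on the underlying knot level this is a connected sum, and $L_0$ is the once positively stabilised Legendrian unknot (so that $tb(L_0)=-2$ and the band sum lowers $tb$ by one as required). Using Corollary~\ref{keineAhnung} I would realise this band sum geometrically: choosing an open book $(P_1,\phi_1)$ adapted to $(Y,\xi,L)$ and the positive Hopf-band open book $(P_2,\phi_2)$ adapted to $(\sthree,\xistd,L_0)$, the contactomorphism $\phi_c$ identifies the knot built on the page of the connected-sum open book $(P,\phi)$, with $P=P_1\cup_{h^1}P_2$, with $L\#_{Lb}L_0$ (after the right-handed twist along the band). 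The orientation bookkeeping — that the knot orientation determining the placement of the point $w$ is the open book orientation — is supplied by Proposition~\ref{stabilorient}, so the distinguished page generator really represents $\loss(S_+(L))$.

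Next I would run the argument of Corollary~\ref{niceresult} in the symmetric version in which \emph{both} summands carry a Legendrian knot. Exactly as in that proof, arranging the cut systems so that $\im(f)$ avoids all $\partial a_i,\partial a_j'$ and lies in $\partial\overline{\dom_{z_1}}\cup\partial\overline{\dom_{z_2}}$ forces $\phi(a_i)\cap a_j'=\emptyset$ and $a_i\cap\phi(a_j')=\emptyset$, whence the induced Heegaard diagram splits as a connected sum $\Sigma=\Sigma_1\#\Sigma_2$ with $\alpha=\alpha_1\cup\alpha_2$, $\beta=\beta_1\cup\beta_2$, and the distinguished page intersection point factors as a product. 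The only change from Corollary~\ref{niceresult} is that the $\Sigma_2$-factor now also carries a point $w_2$ recording $L_0$, so the corresponding factor is the Legendrian generator of $\hfkhat(-\sthree,L_0)$ rather than the bare contact generator $c(\xistd)$. With the same reasoning via \cite{OsZa02} used there, this gives the connected-sum isomorphism together with the formula
\[
  \loss(S_+(L))=\loss(L)\otimes\loss(L_0)
\]
under $\hfkhat(-Y,S_+(L))\cong\hfkhat(-Y,L)\otimes\hfkhat(-\sthree,L_0)$. Hence it suffices to prove the model statement $\loss(L_0)=0$.

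Finally, for the model case I would compute $\loss(L_0)$ directly. Since $L_0$ is the once positively stabilised unknot, the positive Hopf-band open book carrying it induces a small, explicit doubly-pointed Heegaard diagram, and in this diagram I expect to exhibit a single holomorphic Whitney disc with $n_z=n_w=0$ and $\mu=1$ whose $\parhat^w$-count connects another generator to the page generator $\loss(L_0)$; this makes $\loss(L_0)$ a boundary, hence zero in $\hfkhat(-\sthree,L_0)$. Geometrically this is the bigon produced by the stabilising zig-zag, which lies away from both base points precisely because of the way $w$ is placed relative to $\beta_1$. I expect this last step to be the main obstacle: everything before it is a formal unwinding of the connected-sum construction and of the tensor formula, whereas here one must pin down the model diagram exactly and verify that this disc is the unique holomorphic representative contributing to the differential (equivalently, that the zig-zag produces exactly one such bigon). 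One could alternatively try to deduce $\loss(L_0)=0$ from Theorem~\ref{maps}, but that route only places $\loss(L_0)$ in the kernel of the relevant surgery map and would require an extra injectivity input, so the direct model computation is cleaner. Once $\loss(L_0)=0$ is established, the displayed tensor formula yields $\loss(S_+(L))=0$ for every $L$.
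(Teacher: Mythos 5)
Your strategy is essentially the original Lisca--Ozsv\'ath--Stipsicz--Szab\'o route: decompose $S_+(L)=L\#_{Lb}L_0$ via Proposition~\ref{knotstabil02}, feed this into a connected-sum/tensor formula, and kill the model invariant $\loss(L_0)$ by an explicit bigon. The paper explicitly positions its proof against that route --- the proposition is introduced as a reproof of \cite{LOSS}, Theorem 7.2 ``using different methods.'' The paper's proof has no connected-sum step at all: it stabilizes the open book once (Proposition~\ref{knotstabil}) so that $S_+(L)$ sits on a page, checks the placement of $w$ via Proposition~\ref{stabilorient}, and then exhibits in the induced diagram (Figure~\ref{Fig:easyproof}) a single disc-shaped region showing that the generator $\{x_1,x_2,x_3,\dots,x_{2g}\}$ representing $\loss(S_+(L))$ equals $\parhat^w Q$ for $Q=\{p,q,x_3,\dots,x_{2g}\}$. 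In other words, the bigon argument you postpone to the model case is run directly, in full generality, because the stabilization region is local. What your route buys is modularity (a tensor formula of independent interest, from which vanishing under stabilization follows for free); what the paper's buys is brevity and self-containedness within its own toolkit.

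The genuine gap in your write-up is the tensor step. Corollary~\ref{niceresult} covers only the asymmetric case --- a Legendrian knot in one summand, a bare contact structure in the other --- and its proof attaches the connected-sum tube along the two $z$-regions and chooses a single new $z$ there. The symmetric statement you need, an isomorphism $\hfkhat(-(Y\#\sthree),L\#_{Lb}L_0)\cong\hfkhat(-Y,L)\otimes\hfkhat(-\sthree,L_0)$ carrying $\loss(L\#_{Lb}L_0)$ to $\loss(L)\otimes\loss(L_0)$, does not follow by ``the only change'' of adding a point $w_2$: with two doubly-pointed diagrams the tube must instead join the $z_1$-region to the $w_2$-region, so that the surviving pair $(w_1,z_2)$ encodes the band-summed knot, and the naive union of the two adapted cut systems is not adapted to $L\#_{Lb}L_0$ at all --- the band-summed knot meets one cut curve from each summand, so the distinguished page generator and the base-point bookkeeping live on a genuinely modified diagram, not the one appearing in the proof of Corollary~\ref{niceresult}. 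This symmetric statement is precisely the connected-sum formula of \cite{LOSS}; it is true, but it is nowhere proved in this paper, so as written your reduction rests on an unproven input whose proof is not the claimed routine modification. Your model computation itself is sound (it coincides with the paper's Figure~\ref{Fig:easyproof} argument), so the cleanest repair is to drop the reduction entirely and run that bigon argument on the once-stabilized open book carrying $S_+(L)$ --- which is exactly the paper's proof.
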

\begin{proof} Let $(P,\phi)$ be an open book decomposition adapted
to $(Y,\xi,L)$. By
Proposition~\ref{knotstabil} we know that a stabilized
open book $(P',\phi')$ carries the stabilized knot $S_+(L)$.
Furthermore, from Figures~\ref{Fig:illustration} and \ref{Fig:proof2}
we can see how the induced Heegaard diagram (adapted to 
capturing the contact geometric information) will look like near
the base point $w$. This is done in Figure~\ref{Fig:easyproof}.
\begin{figure}[ht!]
\labellist\small\hair 2pt
\pinlabel {$x_1$} [B] at 449 414
\pinlabel {$x_2$} [l] at 534 336
\pinlabel {$w$} [tl] at 374 282
\pinlabel {$z$} [Bl] at 564 184
\pinlabel {$p$} [t] at 136 131
\pinlabel {$q$} [t] at 197 117
\pinlabel {Binding} [t] at 335 31
\pinlabel {$_1$} [t] at 456 47
\pinlabel {$_2$} [l] at 488 77
\endlabellist
\centering
\includegraphics[height=6cm]{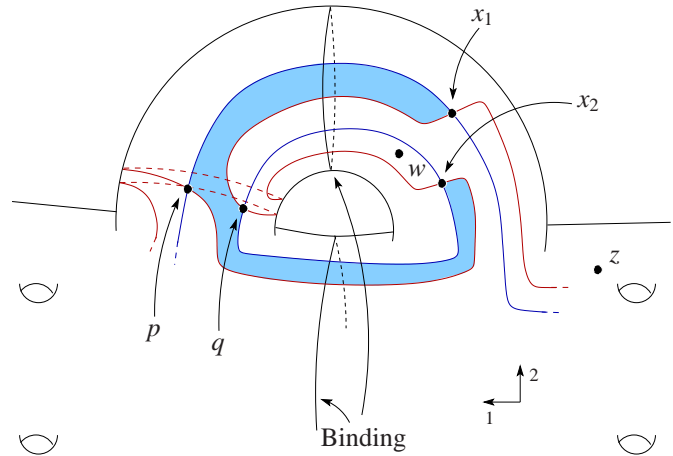}
\caption{Parts of the Heegaard diagram induced by the open book 
carrying the stabilized knot.}
\label{Fig:easyproof}
\end{figure}
We may use Proposition~\ref{stabilorient} to check that the
positioning of the point $w$ in Figure~\ref{Fig:easyproof} is 
correct. First observe that $\loss(S_+(L))$ is the homology class 
induced by the point
\[
  \{x_1,x_2,x_3,\dots,x_{2g}\}.
\]
Recall that by definition of the points $x_i$ every holomorphic
disc emanating from $x_i$ is constant.
Thus, a holomorphic disc emanating from 
$Q:=\{p,q,x_3,\dots,x_{2g}\}$ can only be non-constant
at $p,q$. By orientation reasons and the placement of $w$ the 
shaded region is the only region starting at $p,q$ which can 
carry a holomorphic disc. Since it is disc-shaped, it
does carry a holomorphic disc. Hence
\[
  \parhat^w Q
  =
  \{x_1,x_2,x_3,\dots,x_{2g}\}
\] 
showing that $\loss(S_+(L))$ vanishes.
\end{proof}
\begin{figure}[ht!]
\labellist\small\hair 2pt
\pinlabel {The knot $L$} [l] at 305 311
\pinlabel {The curve along which} [l] at 341 221
\pinlabel {to perform a Dehn Twist} [l] at 341 190
\endlabellist
\centering
\includegraphics[height=4cm]{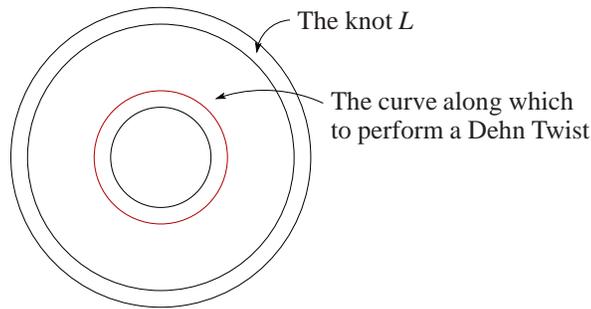}
\caption{The open book necessary to carry the Legendrian unknot with $tb=-1$ and $rot=0$.}
\label{Fig:finalopenbook}
\end{figure}
\begin{proof}[Proof of Proposition~\ref{knotstabil}] Given a triple
$(Y,\xi,L)$, there is an open book $(P,\phi)$ adapted to $\xi$ such that
$L$ sits on a page of the open book. By Proposition~\ref{knotstabil02},
Lemma~\ref{weissnicht} and Corollary~\ref{keineAhnung} we perform a 
connected sum $(Y,\xi)\#(\sthree,\xistd)$ on the level of open books using the 
open book of $(\sthree,\xistd)$ pictured in 
Figure~\ref{Fig:finalopenbook}. By construction, the new open book
carries the Legendrian knot $L_2$ pictured in Figure \ref{Fig:hopefulend}.
\begin{figure}[ht!]
\labellist\small\hair 2pt
\pinlabel {$\mathbb{L}_1$} at 95 173
\pinlabel {$L_2$} [tl] at 222 40
\pinlabel {$-1$} [l] at 460 185
\endlabellist
\centering
\includegraphics[height=2cm]{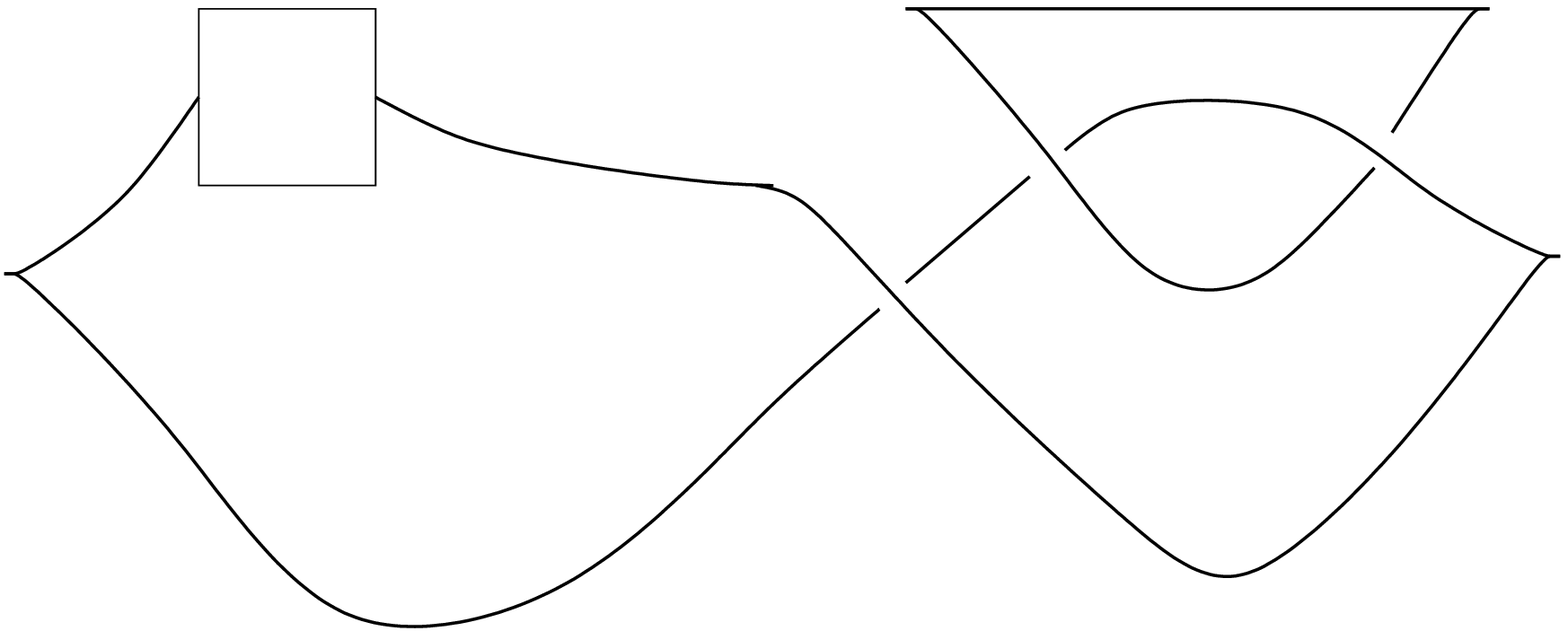}
\caption{The knot $L_2$ in $(Y,\xi)\#(\sthree,\xistd)$.}
\label{Fig:hopefulend}
\end{figure}
In Figure \ref{Fig:hopefulend02} an isotopy is given, showing that
$L_2$ corresponds to the band sum $L\#_{Lb}L_0$ and, thus, represents $S_\pm(L)$.

\begin{figure}[ht!]
\labellist\small\hair 2pt
\pinlabel {$\mathbb{L}_1$} at 82 1179
\pinlabel {$\mathbb{L}_1$} at 82 977
\pinlabel {$\mathbb{L}_1$} at 82 740
\pinlabel {$\mathbb{L}_1$} at 82 541
\pinlabel {$\mathbb{L}_1$} at 82 343
\pinlabel {$\mathbb{L}_1$} at 82 130
\pinlabel {$\mathbb{L}_1$} at 82 130
\pinlabel {$\mathbb{L}_1$} at 497 1179
\pinlabel {$\mathbb{L}_1$} at 497 977
\pinlabel {$\mathbb{L}_1$} at 497 740
\pinlabel {$\mathbb{L}_1$} at 497 541
\pinlabel {$\mathbb{L}_1$} at 497 343
\pinlabel {$\mathbb{L}_1$} at 497 130
\pinlabel {$\mathbb{L}_1$} at 497 130
\pinlabel {$L_2$} [t] at 90 1106
\pinlabel {$-1$} [B] at 248 1145
\pinlabel {$-1$} [t] at 84 890
\pinlabel {$-1$} [t] at 189 648
\pinlabel {$-1$} [B] at 251 503
\pinlabel {$-1$} [B] at 231 345
\pinlabel {$-1$} [B] at 200 156
\pinlabel {$-1$} [t] at 499 1092
\pinlabel {$-1$} [t] at 496 888
\pinlabel {$-1$} [B] at 653 767
\pinlabel {$-1$} [B] at 640 544
\pinlabel {$-1$} [B] at 594 380
\pinlabel {$-1$} [B] at 653 159
\endlabellist
\centering
\includegraphics[width=8cm]{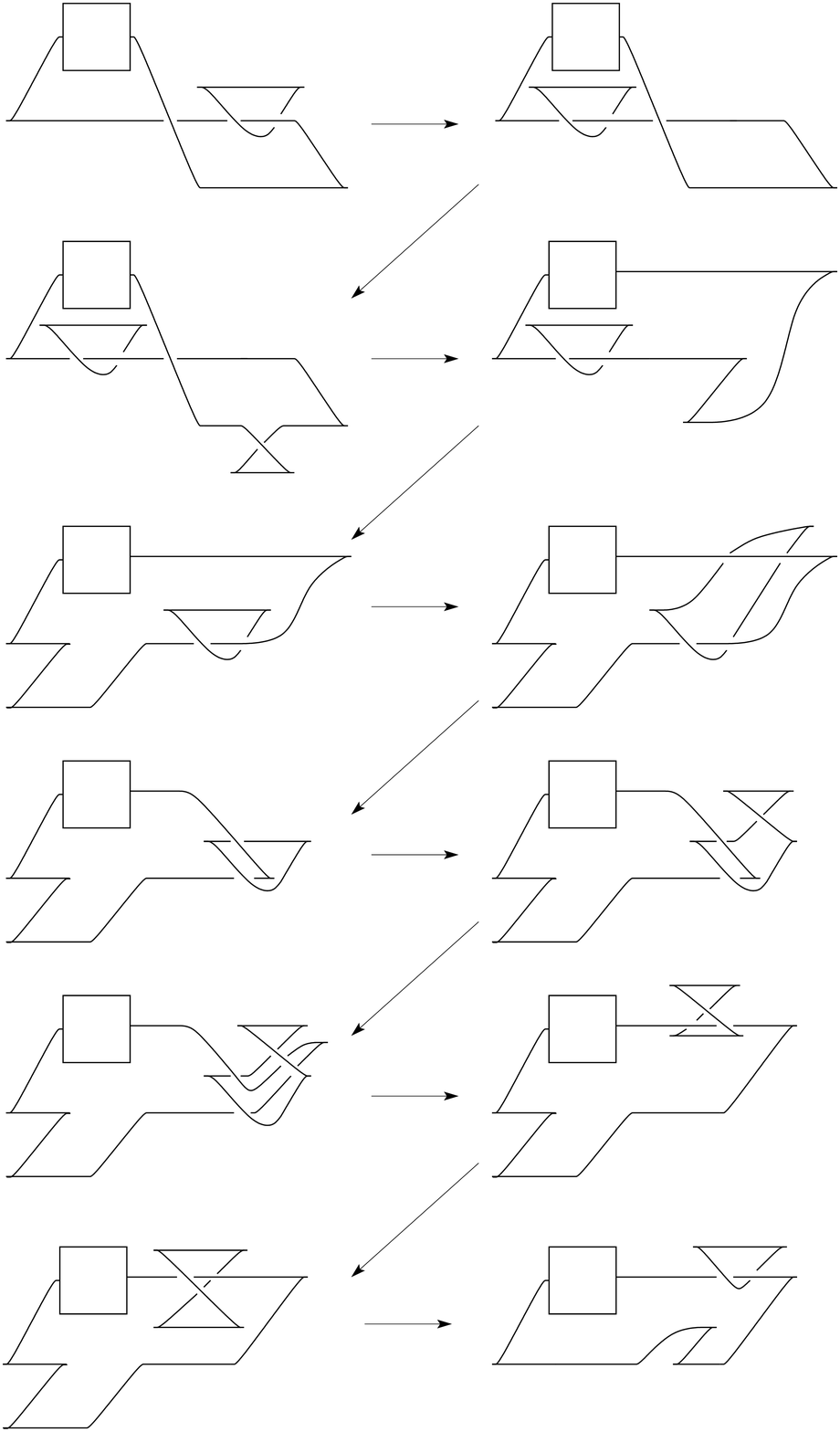}
\caption{Legendrian isotopy showing that $L_2$ corresponds to the Legendrian band sum of $L$ with the Legendrian shark $L_0$.}
\label{Fig:hopefulend02}
\end{figure}
By Figure~\ref{Fig:finalopenbook} what happens on the level of open books can 
be pictured as in Figure~\ref{Fig:proof}.
\begin{figure}[ht!]
\labellist\small\hair 2pt
\pinlabel {$(a)$} [t] at 90 104
\pinlabel {$(b)$} [t] at 347 104
\pinlabel {$(c)$} [t] at 599 104
\pinlabel {$L$} [tl] at 205 21
\pinlabel {stabilized knot} [t] at 419 35
\endlabellist
\centering
\includegraphics[height=5cm]{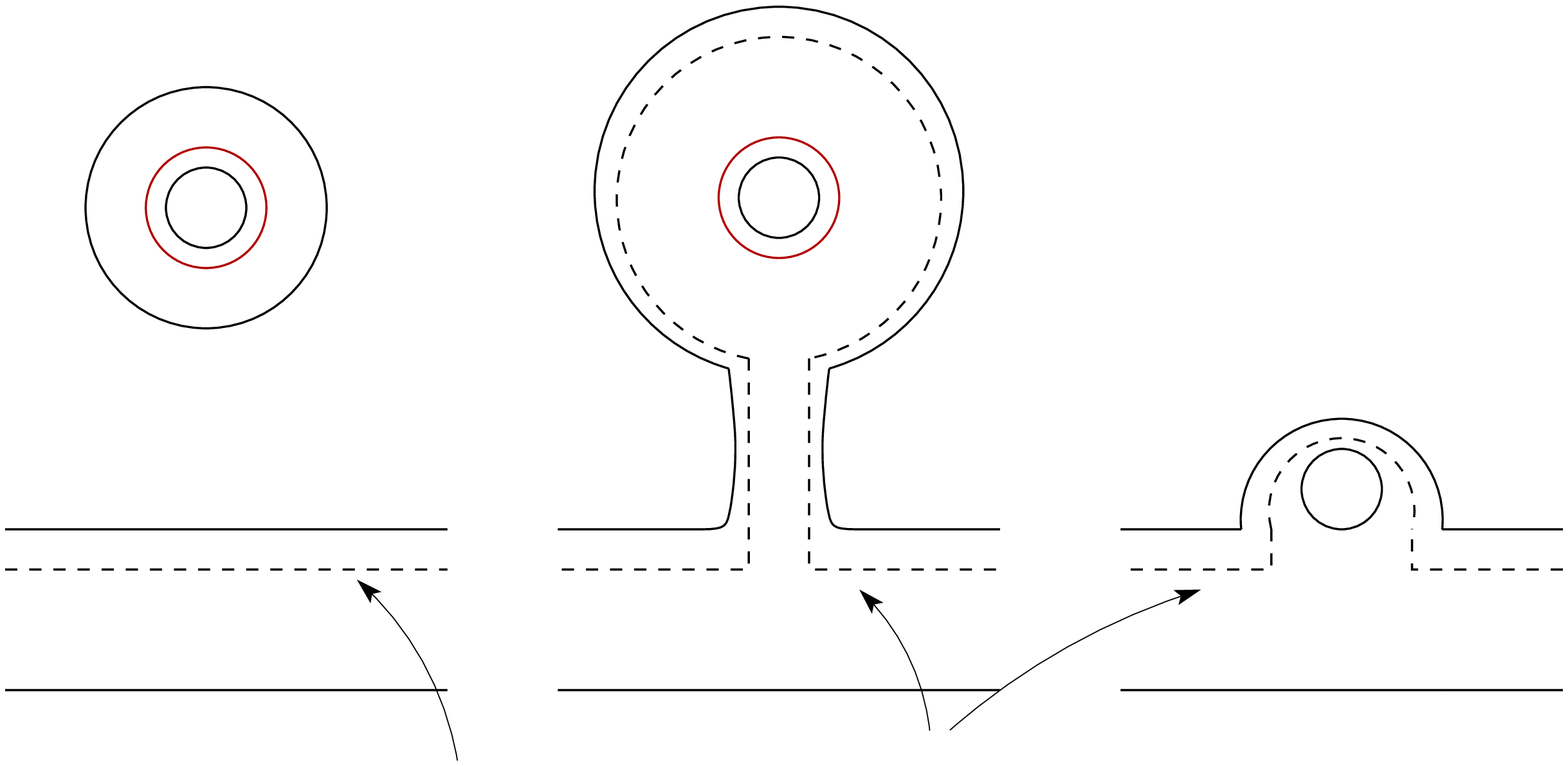}
\caption{What happens during stabilization.}
\label{Fig:proof}
\end{figure}
\end{proof}
\begin{proof}[Proof of Proposition~\ref{stabilorient}]
Using Proposition~\ref{knotstabil}, we have a tool to compare the
open book orientation before and after the stabilization. We start
with an open book adapted to the triple $(Y,\xi,L)$ and choose
an $L$-adapted cut system. By Proposition~\ref{knotstabil} we
can generate an open book adapted to the positive stabilization
by stabilizing the open book. Doing this appropriately,
we may extend the cut system to an adapted cut system of the
stabilized open book as indicated in Figure~\ref{Fig:proof2}. 
Recall the rule with which the knot orientation is determined 
by the points $(w,z)$ (see remark in \S\ref{invariantLOSS}).
In Figure~\ref{Fig:proof2} we can now compare the open book
orientation of the stabilized knot with the orientation induced
by the stabilization.
\begin{figure}[ht!]
\labellist\small\hair 2pt
\pinlabel {open book orientation on its stabilization} [B] at 167 312
\pinlabel {$S_+(L)$} [B] at 475 235
\pinlabel {$L$} [t] at 477 41
\pinlabel {open book orientation} [l] at 202 40
\pinlabel {on the knot} [l] at 202 20
\endlabellist
\centering
\includegraphics[height=5cm]{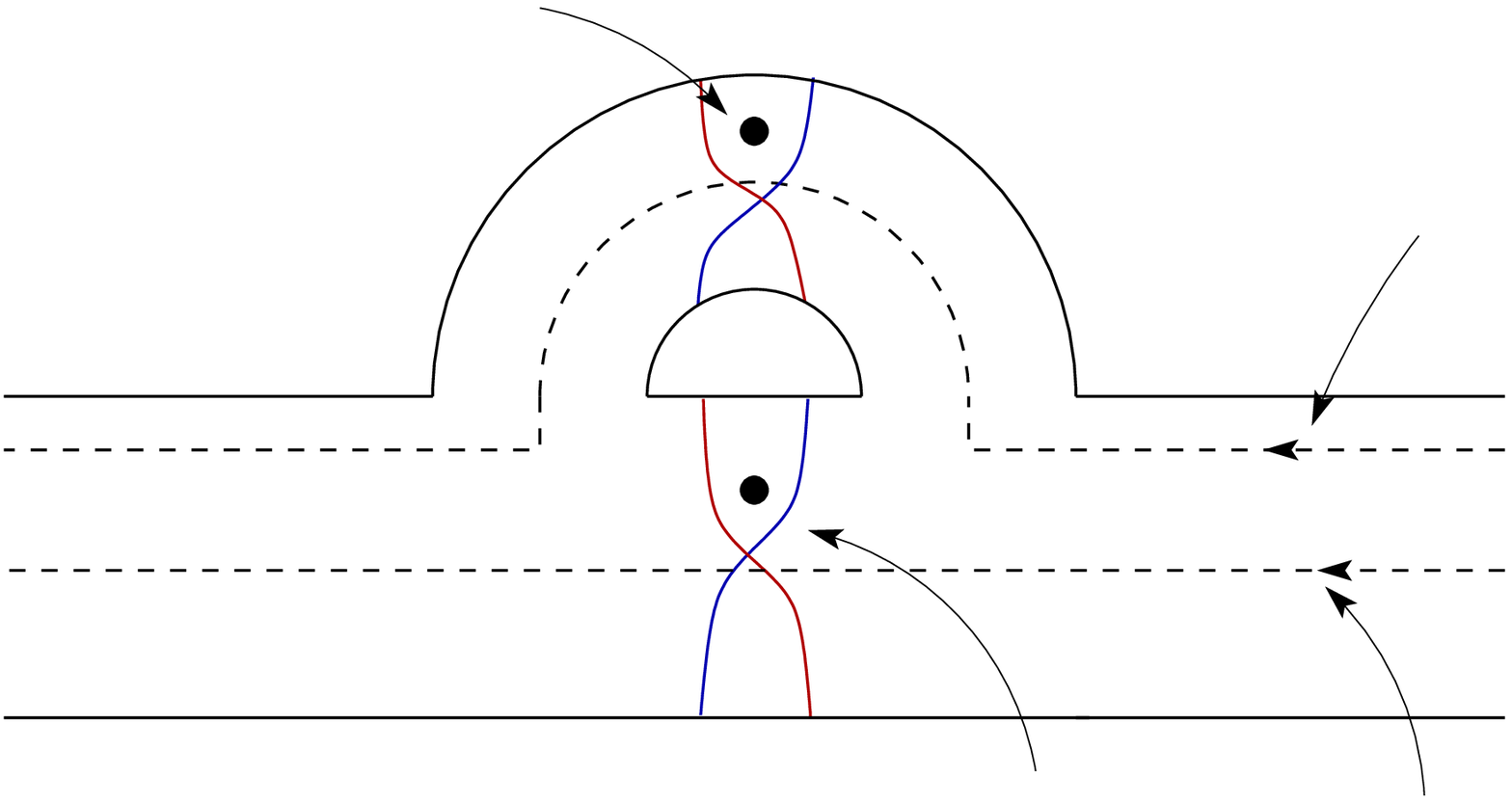}
\caption{Comparing induced with open book orientation.}
\label{Fig:proof2}
\end{figure}
We see that the orientations coincide.
\end{proof}
\section{Applications -- Vanishing Results of the Contact Element}\label{applics}
In this section we want to derive some applications of the theory
developed in \S\ref{introd}, \S\ref{naturality} and \S\ref{contsetup}.
First to mention would be Proposition~\ref{result01}, which can also be
derived using methods developed in \cite{Stip01}. There, Lisca and
Stipsicz show that $(+1)$-contact surgery along stabilized Legendrian
knots yield overtwisted contact manifolds, which implies the vanishing
of the contact element. A second application would be 
Proposition~\ref{calculation}, which is meant as a demonstration that
calculating the Legendrian knot invariant and using Theorem~\ref{maps}
to get information about a contact element under investigation can 
be more convenient than using other methods, since the 
knot Floer homologies have additional structures we may use. A third application 
would be Theorem~\ref{result02} which is a vanishing result of the 
contact element which can be easily read off from a surgery 
representation. This application uses the knot Floer homology for 
arbitrary knots and makes use of a phenomenon that seems
to be special about these, namely that 
there are knots for which the knot Floer homology vanishes. We do 
not know any other example with this property.
\begin{prop}\label{result01} If $(Y,\xi)$ is obtained 
from $(Y',\xi')$ by $(+1)$-contact surgery along a 
Legendrian knot $L$ which can be destabilized, the element $c(\xi)$ vanishes.
\end{prop}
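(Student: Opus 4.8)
The plan is to realise $(Y,\xi)$ as the outcome of a $(+1)$-contact surgery and then transport the vanishing of the Legendrian invariant across the functorial map supplied by Theorem~\ref{maps}. By hypothesis $L\subset Y'$ is Legendrian and $(Y,\xi)=\bigl((Y')_L^+,(\xi')_L^+\bigr)$. Applying Theorem~\ref{maps}(i) to the pair $(Y',\xi')$ and the knot $L$ yields a cobordism map
\[
  \Gamma_{-W}\co\hfkhat(-Y',L)\lra\hfhat\bigl(-(Y')_L^+\bigr)
\]
satisfying $\Gamma_{-W}(\loss(L))=c\bigl((Y')_L^+,(\xi')_L^+\bigr)=c(\xi)$. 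Hence it will be enough to arrange that $\loss(L)=0$.

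To achieve this I would use the destabilizability of $L$. First fix an orientation on $L$ for which the destabilizing zigzag is a \emph{positive} stabilization, so that $L=S_+(L')$ for some oriented Legendrian knot $L'$. Such an orientation always exists: with the opposite orientation the same zigzag becomes a negative stabilization, and the identity $\overline{S_+(L')}=S_-(\overline{L'})$ shows that reversing the orientation of $L$ interchanges the two types of stabilization; thus exactly one of the two orientations of $L$ exhibits it as a positive stabilization. For that orientation, Proposition~\ref{reproof} gives $\loss(L)=\loss(S_+(L'))=0$.

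Putting the two steps together yields $c(\xi)=\Gamma_{-W}(\loss(L))=\Gamma_{-W}(0)=0$, as claimed. The one point requiring care is the orientation bookkeeping: the contact element $c(\xi)$ does not see the orientation of $L$, whereas $\loss(L)$ does, and Theorem~\ref{maps}(i) is phrased for an oriented knot. I therefore need the orientation chosen to invoke Proposition~\ref{reproof} to be the same one used to build $\Gamma_{-W}$. This is harmless, because the construction in the proof of Theorem~\ref{maps} produces the desired map for \emph{either} orientation of $L$ (the placement of $w$ in Figure~\ref{Fig:contactpo} is carried out for each orientation), so the argument runs through for whichever orientation presents $L$ as a positive stabilization.
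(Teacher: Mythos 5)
Your proof is correct and takes essentially the same route as the paper's: both reduce the claim to the vanishing of $\loss(L)$ via the map $\Gamma_{-W}$ of Theorem~\ref{maps}, invoke Proposition~\ref{reproof} for positive stabilizations, and use the identity $\overline{S_+(L')}=S_-(\overline{L'})$ to reverse the orientation when the given destabilization is negative. Your explicit remark that the construction in Theorem~\ref{maps} works for either orientation of $L$ (via the two placements of $w$ in Figure~\ref{Fig:contactpo}) makes precise a point the paper leaves implicit, but the argument is the same.
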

\begin{proof} There are two cases to cover. Give the knot $L$
an orientation $\orient$. Suppose that
\[
  (L,\orient)=S_+(L',\orient').
\]
Then Proposition~\ref{reproof} shows the vanishing 
of $\loss(L,\orient)$. By Theorem~\ref{maps} the element $c(\xi)$ vanishes, too. Now assume that
\[
  (L,\orient)=S_-(L',\orient').
\]
We see that
\[
  (L,\overline{\orient})
  =\overline{S_-(L',\orient')}
  =S_+(L',\overline{\orient}),
\]
hence, $\loss(L,\overline{\orient})=0$. By 
Theorem~\ref{maps} again $c(\xi)=0$.
\end{proof}
There are some immediate consequences we may derive from this
theorem. The first corollary is well-known but with help of our
results we are able to reprove it.
\begin{cor}[Ozsv\'ath and Szab\'o] If $(Y,\xi)$ is overtwisted, the 
contact element vanishes.
\end{cor}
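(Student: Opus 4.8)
The plan is to exhibit an overtwisted $(Y,\xi)$ as the result of a $(+1)$-contact surgery along a Legendrian knot that can be destabilized, and then to invoke Proposition~\ref{result01}. Since the contact element is an isotopy invariant, it suffices to produce a single such surgery presentation of $(Y,\xi)$; once it is in hand, the Floer-theoretic side of the argument is immediate.

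First I would isolate the purely contact-topological input. By Eliashberg's flexibility results, $(Y,\xi)$ being overtwisted means it contains an overtwisted disc, and equivalently that $(Y,\xi)$ is obtained from a reference contact structure by a Lutz modification supported near a transverse knot. On the level of contact-surgery diagrams a Lutz modification is realized by a $(+1)$-contact surgery along a \emph{stabilized} Legendrian knot (cf.~the surgery-theoretic framework of \cite{DiGei04} and \cite{DiGei}); this is exactly the mechanism by which overtwistedness is created in the Lisca--Stipsicz picture recalled in \S\ref{applics}. Thus there is a contact manifold $(Y',\xi')$ together with a Legendrian knot $L\subset Y'$ of the form $L=S_{\pm}(L')$ for some Legendrian $L'$, such that $(+1)$-contact surgery along $L$ yields $(Y,\xi)$.

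With such a presentation, Proposition~\ref{result01} applies verbatim: since $L$ can be destabilized, Proposition~\ref{reproof} gives $\loss(L)=0$, and Theorem~\ref{maps}(i) then forces $c(\xi)=\Gamma_{-W}(\loss(L))=0$, which is the desired conclusion.

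The main obstacle is precisely the first step, namely guaranteeing the destabilizable-knot surgery presentation out of overtwistedness. This is the converse of the observation, recalled in \S\ref{applics}, that $(+1)$-contact surgery along a stabilized Legendrian knot produces an overtwisted manifold, and it is a statement entirely about contact topology, independent of the Heegaard Floer machinery developed here. Everything on the Floer-theoretic side then reduces to a single application of Proposition~\ref{result01}.
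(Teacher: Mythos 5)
Your overall strategy---reduce to Proposition~\ref{result01} by exhibiting $(Y,\xi)$ as the result of a $(+1)$-contact surgery along a destabilizable Legendrian knot---is exactly the paper's; what differs is the mechanism you use to produce that presentation, and that mechanism needs repair at two points. First, your assertion that ``a Lutz modification is realized by a $(+1)$-contact surgery along a stabilized Legendrian knot'' cannot be right as written: a Lutz twist does not change the underlying $3$-manifold, whereas a single $(+1)$-contact surgery in general does. The actual surgical description (due to Ding--Geiges--Stipsicz, i.e.\ \cite{DiGeiSti}, not \cite{DiGei04} or \cite{DiGei}) uses a topologically cancelling \emph{pair} of $(+1)$-contact surgeries, one along a Legendrian approximation $L$ of the transverse knot and one along a stabilized push-off of $L$. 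Your argument survives the correction---perform the surgery along $L$ first, note that the stabilized knot remains a stabilization in the resulting contact manifold (its zigzag sits in a Darboux ball disjoint from the surgery region), and apply Proposition~\ref{result01} to the remaining surgery---but the step must be stated this way. Second, the claim that every overtwisted $(Y,\xi)$ is obtained from a reference contact structure by a simple Lutz twist along a single transverse knot is not a directly quotable ``flexibility result'': it follows from Lutz--Martinet (every homotopy class of plane fields is realized by Lutz twists along a transverse \emph{link}, starting from any contact structure) together with Eliashberg's classification \cite{eliash} (overtwisted structures homotopic as plane fields are isotopic), plus the observation that one may peel off the last twist in the sequence. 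You correctly flag this as the crux, but you leave it at the level of an assertion.

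For comparison, the paper's proof avoids the Lutz machinery altogether: it takes the explicit overtwisted $(\sthree,\xi')$ of Figure~\ref{Fig:s3overtwisted}, which is homotopically standard by \cite{DiGei03}, and invokes Eliashberg's theorem to conclude $(Y,\xi)=(Y,\xi)\#(\sthree,\xi')$---this is precisely where overtwistedness of $\xi$ enters. Placing the model diagram next to any contact surgery presentation of $(Y,\xi)$ then exhibits $(Y,\xi)$ as $(+1)$-contact surgery along the unlinked shark $K$, which destabilizes, and Proposition~\ref{result01} finishes. That argument is more self-contained (its only external inputs are \cite{eliash}, \cite{DiGei03} and the existence of contact surgery presentations), whereas your route, once repaired, additionally needs \cite{DiGeiSti}; on the other hand, your route makes transparent why this corollary and the Lutz-twist remark immediately following it in \S\ref{applics} are two faces of the same argument.
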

\begin{proof} Recall that the surgery diagram given in 
Figure~\ref{Fig:s3overtwisted} is an overtwisted contact 
structure $\xi'$ on $\sthree$. 
\begin{figure}[ht!]
\labellist\small\hair 2pt
\pinlabel $+1$ [Bl] at 425 165
\pinlabel $+1$ [Bl] at 100 165
\pinlabel $-1$ [Bl] at 278 206
\endlabellist
\centering
\includegraphics[height=3cm]{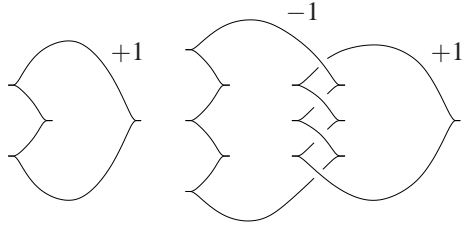}
\caption{Surgery diagram for an overtwisted $\sthree$ in the 
homotopy class of $\xistd$.}
\label{Fig:s3overtwisted}
\end{figure}

This overtwisted contact structure
is homotopic to $\xistd$ as $2$-plane fields (cf.~\cite{DiGei03}).
By Eliashberg's classification theorem (see \cite{eliash}), a
connected sum of $(Y,\xi)$ with $(\sthree,\xi')$ does not
change the contact manifold, i.e.
\[
  (Y,\xi)=(Y,\xi)\#(\sthree,\xi').
\]
Denote by $K$ the shark on the left of Figure~\ref{Fig:s3overtwisted}.
The manifold $(Y,\xi)$ admits a surgery representation $\sthree(\mathbb{L})$
where $\mathbb{L}=K\sqcup\mathbb{L}'$. Furthermore, $K$ and $\mathbb{L}'$
are not linked. Denote by $(Y',\xi'')$ the contact manifold
with surgery representation $\sthree(\mathbb{L}')$. We obtain $(Y,\xi)$
out of $(Y',\xi'')$ by $(+1)$-contact surgery along $K$, which can be
destabilized inside~$Y'$. Proposition~\ref{result01} implies the vanishing
of $c(\xi)$.
\end{proof}
\begin{rem} For a detailed discussion of the homotopy
invariants of overtwisted contact structures on $\sthree$ see
\cite{DiGeiSti02}. 
\end{rem}
Another consequence is that performing a simple Lutz twist along a transverse
knot kills the contact element. The resulting contact structure is clearly overtwisted. 
Thus, by work of Ozsv\'{a}th and Szab\'{o} the contact element vanishes. 
But besides this approach we can show the vanishing of the contact element 
without referring to overtwistedness at all. In \cite{DiGeiSti} a 
surgical description for simple Lutz twists along transverse knots is presented. This 
description involves $(+1)$-contact surgeries along a Legendrian approximation $L$ of 
the transverse knot and another Legendrian knot which is
a stabilized version of $L$. Proposition~\ref{result01} then implies the 
vanishing of the contact element.\vspace{0.3cm}\\
When looking at a homologically trivial knot $L$, to show the 
vanishing of a contact element after surgery along $L$ it can
be convenient to show the vanishing of $\loss(L)$ and then apply
Theorem~\ref{maps}, because of the various gradings on the knot Floer
homological level. The following proposition is meant 
as an illustration of this fact.
\begin{prop}\label{calculation} A $(+1)$-contact surgery 
along the Legendrian realizations $L_n$ given 
in Figure~\ref{Fig:elchek} of the Eliashberg-Chekanov twist 
knots $E_n$ with 
$n\in-2\mathbb{N}$ all give contact manifolds with vanishing 
contact element.
\end{prop}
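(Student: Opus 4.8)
The plan is to use Theorem~\ref{maps}(i): a $(+1)$-contact surgery along a Legendrian knot $L$ induces a map $\Gamma_{-W}$ sending the invariant $\loss(L)$ to the contact element $c(Y_L^+,\xi_L^+)$. Hence it suffices to prove that $\loss(L_n)=0$ for each of the Legendrian realizations $L_n$ pictured in Figure~\ref{Fig:elchek}. Since the target of $\Gamma_{-W}$ is exactly the contact element of the surgered manifold, the vanishing of $\loss(L_n)$ immediately forces $c(\xi)=0$, which is the assertion.

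**Reducing to a $\loss$-computation.**
First I would set up an adapted open book $(P,\phi)$ for $(\sthree,\xistd)$ carrying the Legendrian knot $L_n$ on its page, with the page framing equal to the contact framing, exactly as required in \S\ref{contsetup}. For the twist knots $E_n$ with $n\in-2\mathbb{N}$ this is feasible because the relevant monodromy is a product of positive Dehn twists (the open book is that of the standard tight $\sthree$), into whose page the Legendrian realization $L_n$ embeds. I would then choose an $L_n$-adapted cut system (Definition~\ref{DefOne}, Lemma~\ref{LemOne}), producing a doubly-pointed Heegaard diagram $(\Sigma,\alpha,\beta,w,z)$ in which the generator $\loss(L_n)$ is the distinguished intersection point $EH$ lying on $P\times\{1/2\}$, interpreted as a cycle in $\cfkhat(-Y,L_n)$.

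**The core argument.**
The main step is to exhibit $\loss(L_n)$ as a \emph{boundary} in $\cfkhat(-Y,L_n)$. The natural route is to mimic the mechanism of Proposition~\ref{reproof}: the twist knots $E_n$ for $n\in-2\mathbb{N}$ are, after a Legendrian isotopy, stabilizations (or band sums with the Legendrian shark $L_0$, in the spirit of Proposition~\ref{knotstabil02}), so their front projections contain a zig-zag. I would locate in the Heegaard diagram a neighborhood of the base point $w$ looking like Figure~\ref{Fig:easyproof}, identify a second generator $Q$ differing from $\loss(L_n)$ only at two points $p,q$ near the stabilization, and argue by the usual positivity-of-intersections and placement-of-$w$ reasoning that the only disc-shaped region carrying a holomorphic disc with $n_z=n_w=0$ runs from $Q$ to $\loss(L_n)$. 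This yields $\parhat^w Q=\loss(L_n)$, so $[\loss(L_n)]=0$ in $\hfkhat(-Y,L_n)$. Finally, Theorem~\ref{maps}(i) gives $c(\xi)=\Gamma_{-W}(\loss(L_n))=0$.

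**Main obstacle.**
The hard part will be verifying, for the entire family $E_n$ with $n\in-2\mathbb{N}$ simultaneously, that the Legendrian realizations $L_n$ genuinely admit an adapted open book on whose page one sees a stabilization (a destabilizable configuration), and that the associated Heegaard diagram localizes near $w$ as in Figure~\ref{Fig:easyproof}. In other words, the delicate point is the explicit Legendrian-isotopy/open-book bookkeeping—checking the page framing matches and that the relevant holomorphic disc is indeed disc-shaped and unique—rather than the Floer-theoretic conclusion, which is then purely formal via Theorem~\ref{maps}.
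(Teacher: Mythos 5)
Your reduction via Theorem~\ref{maps}(i) to proving $\loss(L_n)=0$ is exactly the paper's first step, but the mechanism you propose for that vanishing has a genuine gap. You assert that, after a Legendrian isotopy, each $L_n$ with $n\in-2\mathbb{N}$ is a positive stabilization (equivalently, a band sum with the shark $L_0$), and then run the argument of Proposition~\ref{reproof}. That destabilization claim is the entire mathematical content of your proof, not ``bookkeeping'': it is a statement about the Legendrian classification of the twist knots $E_n$ (your representatives have $tb=-4$, $rot=1$, so you must produce a representative with $tb=-3$, $rot=0$ of which $L_n$ is the positive stabilization), and neither your outline nor anything in the paper supplies such an isotopy for $n\neq 0$. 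The paper exhibits an explicit destabilization only for $L_0$ (Figure~\ref{Fig:elchek2}); for $n\neq 0$ it deliberately avoids the question, and for good reason: if destabilizability were available, the proposition would be an immediate corollary of Proposition~\ref{result01}, whereas the paper's stated purpose in \S\ref{applics} is precisely to demonstrate a method that applies when no destabilization is in sight. Note also that you cannot argue backwards: vanishing of $\loss$ does not imply that a Legendrian knot destabilizes, so the truth of the proposition gives you no help in closing this gap.

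What the paper actually does for $n\neq0$ is purely computational and bypasses Legendrian geometry entirely. Since $\loss(L_n)$ lives in $\hfkhat(-\sthree,E_n)\cong\hfkhat(\sthree,\overline{E_n})$ and $\overline{E_n}$ is alternating, Theorem 1.3 of \cite{OsZa08} determines the knot Floer homology completely from the Alexander polynomial $\Delta_{\overline{E_n}}(T)=(1-n)+\frac{n}{2}(T+T^{-1})$ and the signature: it is supported in Alexander gradings $i=-1,0,1$, with the Maslov grading in each Alexander grading pinned by the signature. On the other hand, the bigrading of $\loss(L_n)$ is computed from classical invariants via $2A(L_n)=tb(L_n)-rot(L_n)+1$ and $d_3(\xistd)=2A(L_n)-M(L_n)$, giving $(A,M)=(-1,-2)$ from $tb=-4$, $rot=1$; one then checks that for $n\neq0$ the group $\hfkhat_{-2}(\sthree,\overline{E_n},-1)$ is zero, so $\loss(L_n)=0$ for grading reasons alone, and Theorem~\ref{maps} finishes the proof. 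To salvage your route you would need, for every $n\neq 0$, either an explicit front isotopy exhibiting the destabilization or an appeal to a classification of Legendrian twist knots; both are substantially harder inputs than the grading computation, and the former may not exist in any readily visible form.
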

\begin{figure}[ht!]
\labellist\small\hair 2pt
\pinlabel {$n$} at 126 155 
\pinlabel {$n$} at 418 155
\pinlabel {$L_n$} [Bl] at 212 246
\pinlabel {$E_n$} [Br] at 342 254
\pinlabel {Legendrian realizations of the twists} [l] at 135 80
\endlabellist
\centering
\includegraphics[width=8cm]{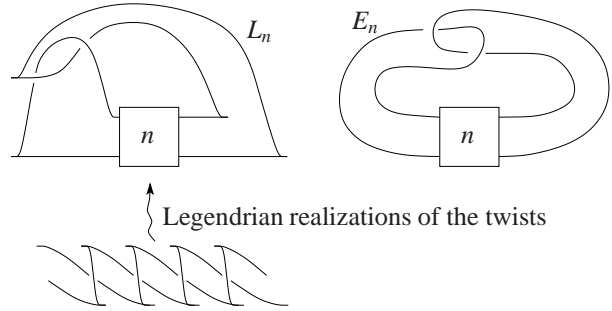}
\caption{The Eliashberg-Chekanov twist knots $E_n$ and 
Legendrian realizations $L_n$.}
\label{Fig:elchek}
\end{figure}
\begin{proof} Since the invariant $\loss(L_n)$ of the Legendrian 
realizations $L_n$ of the
knots $E_n$ live in $\hfkhat(-\sthree,E_n)$, and because of the
correspondence
\[
  \hfkhat(-\sthree,E_n)=\hfkhat(\sthree,\overline{E_n}),
\]
where $\overline{E_n}$ denotes the mirror knot, we have to compute the 
groups $\hfkhat(\sthree,\overline{E_n})$. The knots are all alternating. 
Therefore we will stick to Theorem 1.3 of \cite{OsZa08} for a convenient
computation of the groups. We compute the Alexander-Conway polynomial using 
its skein relation and get
\[
  \Delta_{\overline{E_n}}(T)= (1-n)+\frac{n}{2}(T^1+T^{-1}).
\]
To compute the signature of the knots $E_n$, we use the formula given in
Theorem 6.1 of \cite{OsZa08} and see that all these knots have signature 
$\sigma(\overline{E_n})=-n-2$. By Theorem 1.3 of \cite{OsZa08}, which describes
the knot Floer homology groups of an alternating knot in terms of the 
coefficients of the associated Alexander-Conway polynomial, the knot 
Floer homology of $\overline{E_n}$ looks like
\[
  \hfkhat_j(\sthree,\overline{E_n},i)
  =
  \left\{
  \begin{matrix}
  \Z^{-n/2},& i=-1,j=-1+\frac{-n-2}{2}\\ 
  \Z^{|1-n|},& i=0,j=\frac{-n-2}{2} \\
  \Z^{-n/2},& i=1,j=1+\frac{-n-2}{2}\\
  0        ,& \mbox{\rm otherwise}
  \end{matrix}\right. .
\]
According to \cite{OsSti}, the Legendrian invariant $\loss(L_n)$ lives 
in $\hfkhat_{M(L_n)}(-\sthree,E_n,A(L_n))$ where $A(L_n)$ is 
the {\it Alexander grading} of $L_n$ and $M(L_n)$ is called 
{\it Maslov grading}. These gradings are computed using the 
formulas (see \cite{OsSti})
\begin{eqnarray*}
  2\cdot A(L_n)&=&tb(L_n)-rot(L_n)+1\\
  d_3(\xistd)&=& 2A(L_n)-M(L_n),
\end{eqnarray*}
where $d_3$ denotes the Hopf-invariant (cf.~\cite{GoSt}). However, note that with 
the conventions used in Heegaard Floer theory $d_3(\xistd)=0$. With 
a straightforward computation we 
see that $tb(L_n)=-4$ and $rot(L_n)=1$, which give the following 
Alexander gradings and Maslov gradings
\begin{eqnarray*}
  A(L_n)&=&-1\\
  M(L_n)&=&-2.
\end{eqnarray*}
Consequently, we can show, by using the computed Alexander and Maslov gradings, that 
for every knot $L_n$, $n\not=0$, the invariant $\loss(L_n)$ is an element of a 
vanishing subgroup of $\hfkhat(\sthree,\overline{E_n})$. To show the vanishing 
of $\loss(L_0)$ we observe that $L_0$ can be destabilized.
\begin{figure}[ht!]
\centering
\includegraphics[width=10cm]{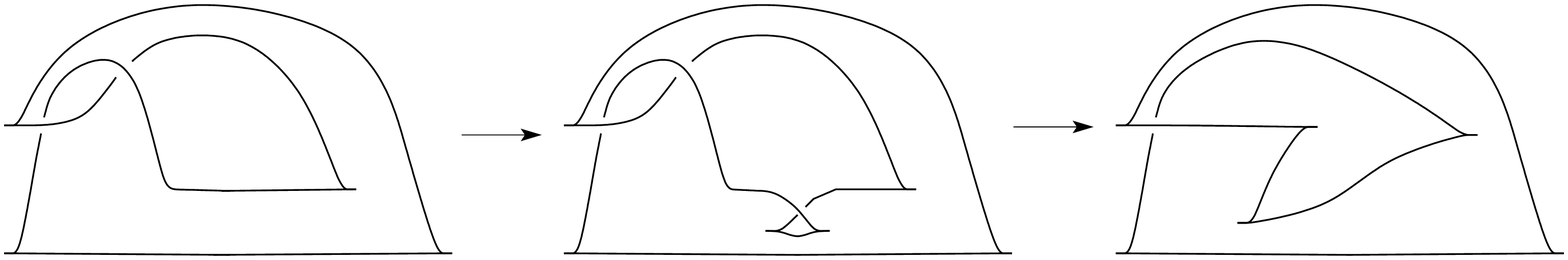}
\caption{The Legendrian isotopy showing that $L_0$ can be destabilized.}
\label{Fig:elchek2}
\end{figure}

The isotopy is pictured in Figure~\ref{Fig:elchek2}. 
By Proposition~\ref{result01} $c(\xi^+_{L_0})$ vanishes, too.  Using 
Theorem~\ref{maps} the proposition follows.
\end{proof}
The following theorem is a new vanishing result of the contact element, which 
uses the knot Floer homology for arbitrary knots.
Furthermore, we make use of the fact that in $\stwo\times\sone$ there are
homologically non-trivial knots whose associated knot Floer homology
vanishes.
\begin{theorem}\label{result02} Let $(Y,\xi)$ be a contact manifold 
given as a contact
surgery along a Legendrian link in $(\sthree,\xistd)$. If the surgery
diagram contains a configuration like given in 
Figure~\ref{Fig:convanish}, the contact element $c(Y,\xi)$ vanishes.
\end{theorem}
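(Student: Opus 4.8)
The plan is to realise $(Y,\xi)$ as the result of a single $(+1)$-contact surgery and then feed the Legendrian invariant into Theorem~\ref{maps}. Concretely, let $L$ denote the Legendrian knot singled out in Figure~\ref{Fig:convanish} along which we perform the $(+1)$-contact surgery, and let $(Y',\xi')$ be the contact manifold described by the remainder of the surgery diagram, so that $(Y,\xi)=({Y'}^{+}_{L},{\xi'}^{+}_{L})$. Theorem~\ref{maps}(i) then provides a map
\[
  \Gamma_{-W}\co\hfkhat(-Y',L)\lra\hfhat(-Y)
\]
with $\Gamma_{-W}(\loss(L))=c(Y,\xi)$. Hence it suffices to prove that $\loss(L)$ vanishes, and for this it is enough to show that the entire group $\hfkhat(-Y',L)$ is trivial.

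The next step is to read off from the configuration that the surgery producing $Y'$ contains a $(+1)$-contact surgery along a Legendrian unknot $U$ with $tb(U)=-1$, unlinked from the rest of the diagram except that it links $L$ geometrically once. Smoothly this is $0$-framed surgery on an unknot, so it splits off a connected summand $\stwo\times\sone$; and since $L$ links $U$ once, it represents a generator $L_0$ of $H_1(\stwo\times\sone)=\Z$ inside that summand. Writing $Y'=(\stwo\times\sone)\#Y''$ with $L\subset\stwo\times\sone$, the connected sum formula of Corollary~\ref{niceresult} gives
\[
  \hfkhat(-Y',L)\cong\hfkhat(\stwo\times\sone,L_0)\otimes\hfhat(-Y''),
\]
where I have used $-(\stwo\times\sone)\cong\stwo\times\sone$. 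Everything thus reduces to the vanishing of $\hfkhat(\stwo\times\sone,L_0)$.

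This last vanishing is the heart of the matter and is where I expect the real work to lie. I would prove it by exhibiting an explicit genus-one doubly-pointed Heegaard diagram $(T^2,\alpha_1,\beta_1,w,z)$ for the pair $(\stwo\times\sone,L_0)$: take $\alpha_1$ and $\beta_1$ to be isotopic curves meeting in a cancelling pair $a,b$, and place both basepoints $w$ and $z$ in the large complementary region, so that the small embedded bigon from $a$ to $b$ avoids $z$ and $w$. This bigon is then the unique contributing disc, whence $\parhat^{w}a=b$ and the complex is acyclic, i.e.\ $\hfkhat(\stwo\times\sone,L_0)=0$. The subtlety is that such a diagram is not weakly admissible in the sense of \ozs$\,$ and \sza; here I would invoke the relaxation set up in \S\ref{admsec}, checking, exactly as in the discussion following Corollary~\ref{motiv}, that the diagram is extremely weakly admissible in the sense of Definition~\ref{extweakadm}, so that $\parhat^{w}$ is a well-defined differential. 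Combining the three steps yields $\hfkhat(-Y',L)=0$, hence $\loss(L)=0$, and therefore $c(Y,\xi)=\Gamma_{-W}(\loss(L))=0$.

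The main obstacle is this vanishing computation together with its admissibility bookkeeping: one must verify that the homologically essential knot $L_0$ genuinely admits a diagram whose only holomorphic bigon is the acyclic one, and that the relaxed admissibility of \S\ref{admsec} legitimises the count. What makes the vanishing possible at all is precisely that, unlike the null-homologous case, there is no spectral sequence from $\hfkhat$ to $\hfhat$ forcing the group to be nonzero — this is the phenomenon special to homologically non-trivial knots that the argument exploits.
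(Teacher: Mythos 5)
Your three-step strategy --- reduce to $\loss(L)=0$ via Theorem~\ref{maps}, split off the $\stwo\times\sone$ summand produced by the topologically $0$-framed unknot, and conclude from the vanishing of the knot Floer homology of the $H_1$-generator --- is exactly the paper's. But the step you yourself call the heart of the matter is carried out incorrectly. If $\alpha_1,\beta_1\subset T^2$ are isotopic essential curves meeting in a cancelling pair $a,b$, then $T^2\setminus(\alpha_1\cup\beta_1)$ has \emph{three} components: \emph{two} bigons (the finger-tip bigon and the long bigon forming the rest of the strip between the two curves) and one annular region; moreover both bigons represent classes in $\pitwo(a,b)$, as is forced by $\parhat^2=0$, and this is precisely why $\hfhat(\stwo\times\sone)\cong\ztwo\oplus\ztwo$ rather than $0$. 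If you place both $w$ and $z$ in the large annular region, then \emph{both} bigons satisfy $n_z=n_w=0$, so $\parhat^w a = 2b = 0$ over $\ztwo$ and the homology is $\ztwo\oplus\ztwo$, not $0$. Consistently with this, such a diagram is not subordinate to the generator at all: by the construction following Definition~\ref{knotdiagram}, $w$ and $z$ must be the endpoints of a small interval of the knot straddling its unique intersection with $\beta_1$, hence must lie in the two regions adjacent to $\beta_1$ at that point --- and for this diagram those are always a bigon and the annulus, never the annulus twice. The correct picture (this is the paper's Figure~\ref{Fig:modelknotfh}) places $w$ inside one bigon and $z$ in the annular region; then exactly one bigon survives the condition $n_z=n_w=0$, giving $\parhat^w a=b$ and an acyclic complex. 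With this placement the periodic domain given by the difference of the two bigons has coefficients of both signs, so the diagram is weakly admissible in the ordinary sense and your appeal to \S\ref{admsec} and Definition~\ref{extweakadm} is not actually needed.

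There is a second, independent gap in the splitting step: you assume $L$ lies inside the $\stwo\times\sone$ summand, but linking the surgery unknot once is only a homological statement, and the configuration of Figure~\ref{Fig:convanish} is meant to sit inside an arbitrary surgery diagram, so $L$ may be knotted and may link the remaining surgery curves. In general $L$ only decomposes as a connected sum $L=K''\# L_0$ of a knot $K''\subset Y''$ with the standard generator $L_0\subset\stwo\times\sone$, so Corollary~\ref{niceresult} --- which requires the knot to be contained in a single summand --- does not apply. What is needed, and what the paper invokes, is the connected-sum formula for pairs from \cite{LOSS}, $\hfkhat(Y_1\# Y_2,K_1\# K_2)\cong\hfkhat(Y_1,K_1)\otimes\hfkhat(Y_2,K_2)$; with it one gets $\hfkhat(-Y',L)\cong\hfkhat(-Y'',K'')\otimes\hfkhat(\stwo\times\sone,L_0)=0$, and the rest of your argument goes through. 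As written, your proof establishes the theorem only in the special case where $L$ is an unknot split from the remainder of the diagram.
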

\begin{figure}[ht!]
\labellist\small\hair 2pt
\pinlabel {$+1$} [B] at 236 62
\pinlabel {$+1$} [tl] at 180 25
\pinlabel {$K'$} [B] at 14 62
\pinlabel {$K$} [tr] at 55 20
\endlabellist
\centering
\includegraphics[height=1.5cm]{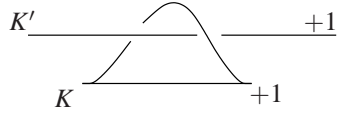}
\caption{Configuration in a surgery diagram of $(Y,\xi)$ killing 
the contact element.}
\label{Fig:convanish}
\end{figure}
\begin{proof} We start looking at the knot Floer homology group of the
pair $(\stwo\times\sone,G)$ where $G$ is a specific knot representing
a generator of $H_1(\stwo\times\sone)$: 
\begin{figure}[ht!]
\labellist\small\hair 2pt
\pinlabel {$\alpha$} [l] at 279 334
\pinlabel {$x$} [Bl] at 283 244
\pinlabel {$z$} [l] at 422 219
\pinlabel {$w$} [l] at 310 183
\pinlabel {$y$} [tl] at 284 121
\pinlabel {$\beta$} [r] at 146 44
\endlabellist
\centering
\includegraphics[width=6cm]{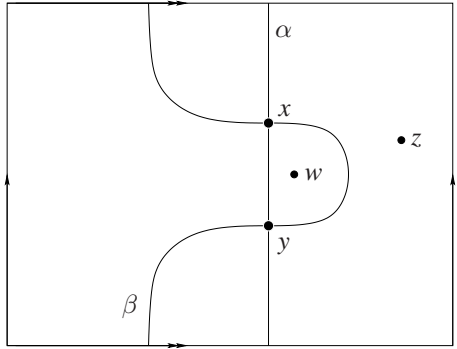}
\caption{Heegaard diagram adapted to $G$}
\label{Fig:modelknotfh}
\end{figure}

Figure~\ref{Fig:modelknotfh} 
is a Heegaard diagram adapted to this specific knot $G$. A 
straightforward calculation gives $\hfkhat(\stwo\times\sone,G)=0$.
In Figure \ref{Fig:identknot} we see a surgery diagram of $\stwo\times\sone$ 
with the knot $G$ in it. 
\begin{figure}[ht!]
\labellist\small\hair 2pt
\pinlabel {$G$} [Bl] at 110 100
\pinlabel {$0$} [tl] at 76 17
\endlabellist
\centering
\includegraphics[width=2.5cm]{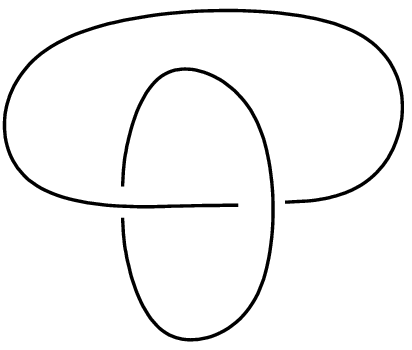}
\caption{Surgery diagram of $\stwo\times\sone$ with knot $G$ in it.}
\label{Fig:identknot}
\end{figure}

Returning to Figure~\ref{Fig:convanish}, we can interpret $K'$ as an 
ordinary knot and remove it from the surgery description. We obtain a
contact manifold $(Y'\#\stwo\times\sone,\xi')$ 
and $K'$ is a Legendrian knot in it. A $(+1)$-contact surgery along $K'$ 
will yield $(Y,\xi)$. Furthermore, as a topological knot, $K'$ can be 
written as $K''\#G$ where $K''\subset Y$ and $G\subset\stwo\times\sone$ is 
the knot given in Figure \ref{Fig:identknot}. Hence, we 
have (cf.~\cite{LOSS})
\[
  \hfkhat(Y'\#(\stwo\times\sone),K')
  =
  \hfkhat(Y',K'')\otimes\hfkhat(\stwo\times\sone,G)
  =0.
\]
The same holds if we reverse the orientation on the manifold.
We perform a $(+1)$-contact surgery along $K'$ to obtain $(Y,\xi)$.
Denote by $W$ the induced cobordism. By Theorem~\ref{maps} this induces
a map
\[
  \Gamma_{-W}
  \co
  \hfkhat(-Y'\#(\stwo\times\sone),K')
  \lra
  \hfhat(-Y)
\]
with $c(Y,\xi)=\Gamma_{-W}(\loss(K'))$. So, the contact element vanishes, since $\loss(K')=0$.
\end{proof}

\bibliographystyle{amsplain}

\providecommand{\bysame}{\leavevmode\hbox to3em{\hrulefill}\thinspace}
\providecommand{\MR}{\relax\ifhmode\unskip\space\fi MR }
\providecommand{\MRhref}[2]{%
  \href{http://www.ams.org/mathscinet-getitem?mr=#1}{#2}
}
\providecommand{\href}[2]{#2}

\end{document}